\numberwithin{equation}{section}
\newcommand{\A}{\mathcal{A}}
\newcommand{\B}{\mathcal{B}}
\newcommand{\Ff}{\mathcal{F}}
\newcommand{\Spec}{\text{Spec}}
\newcommand{\Hom}{\text{Hom}}
\newcommand{\Fib}{\text{Fib}}
\newcommand{\Z}{\mathbb{Z}}
\newcommand{\OO}{\mathcal{O}}
\newcommand{\p}{\mathfrak{p}}
\newcommand{\q}{\mathfrak{q}}
\newcommand{\M}{\mathcal{M}}
\newcommand{\U}{\mathcal{U}}
\newcommand{\CC}{\mathcal{C}}
\newcommand\restr[2]{{
  \left.\kern-\nulldelimiterspace 
  #1 
  \vphantom{\big|} 
  \right|_{#2} 
  }}
\newtheorem{theorem}{Theorem}[section]
\newtheorem*{theorem*}{Theorem}
\theoremstyle{definition}
\newtheorem{definition}[theorem]{Definition}
\newtheorem*{definition*}{Definition}
\newtheorem{lemma}[theorem]{Lemma}
\newtheorem*{lemma*}{Lemma}
\newtheorem{proposition}[theorem]{Proposition}
\newtheorem*{proposition*}{Proposition}
\newtheorem{corollary}{Corollary}[theorem]
\newtheorem{example}{Example}[section]
\newtheorem*{corollary*}{Corollary}
\theoremstyle{remark}
\newtheorem{claim}{Claim}
\theoremstyle{remark}
\newtheorem{remark}{\textit{Remark}}[section]
\begin{document}

\title{\'Etale Covers and Fundamental Groups of schematic Finite Spaces}

\author{J. S\'anchez Gonz\'alez, C. Tejero Prieto}
\address{Departamento de Matem\'{a}ticas, Instituto de F\'isica Fundamental y Matem\'aticas, Universidad de Salamanca,
Plaza de la Merced 1-4, 37008 Salamanca, Spain}
\email{javier14sg@usal.es}
\email{carlost@usal.es}

\subjclass[2020]{14A15, 18E50, 14E20, 06A11}
\keywords{schematic finite space, ringed space, finite poset, \'etale  fundamental group, \'etale covers, Galois category}

\thanks {The authors were supported by research project MTM2017-86042-P (MEC). The first author was supported by Santander and Universidad de Salamanca.The second author was supported by the GIR STAMGAD (JCyL)}

\maketitle

\begin{abstract} 

We introduce the category of finite \'etale covers of an arbitrary schematic finite space $X$ and show that, equipped with an appropriate natural fiber functor, it is a Galois Category. This allows us to define the \'etale fundamental group of schematic spaces. If $X$ is a finite model of a scheme $S$, we show that the resulting Galois theory on $X$ coincides with the classical theory of finite \'etale covers on $S$ and therefore we recover the classical \'etale fundamental group introduced by Grothendieck. In order to prove these results it is crucial to find a suitable geometric notion of connectedness for schematic finite spaces and also to study  their geometric points. We achieve these goals by means of the strong cohomological constraints enjoyed by schematic finite spaces.
\end{abstract}

\section*{Introduction}

Let $S$ be a quasi-compact and quasi-separated scheme (qc-qs) and consider a finite affine covering $\mathcal{U}=\{U_i\}$ such that for every $s\in S$, the open set $U^s:=\bigcap_{s\in U_i}U_i$ is also affine. A classical topological construction that goes back to McCord \cite{McCord} allows us to define a finite poset (equivalently, a $T_0$ finite topological space) $X$ associated to $(S, \mathcal{U})$, along with a natural continuous projection $\pi:S\longrightarrow X$. 

In his paper \cite{Fernando schemes}, F. Sancho showed that if we endow $X$ with the sheaf of rings $\OO_X$ defined by the push forward of the structure sheaf $\OO_S$, then $(X, \OO_X)$ is a ringed space such that the projection $\pi$ induces an  equivalence between the categories of quasi-coherent sheaves $\mathbf{Qcoh}(X)\simeq \mathbf{Qcoh}(S)$ and an isomorphism at the level of their cohomology. We will call these finite ringed topological spaces \textit{finite models} of our schemes. Furthermore, one can recover the original scheme from any finite model via a \textit{recollement} procedure.  We can also interpret these spaces as representations of a quiver with values in the category of commutative rings with unit, or simply as a way of ordering descent data. 

In general, ringed finite topological spaces enjoy a number of nice properties that do not hold for arbitrary ringed or locally ringed spaces and provide a nice framework to study schemes from their <<building blocks>> (see \cite{Fernando schemes},\cite{preprint cohaces}), providing a more constructive and tangible approach to scheme theory (and more general locally ringed spaces) that still allows us to do what one would intuitively call <<geometry>>. For instance, a sheaf is defined by giving a finite set of data and the appropriate restriction morphisms (corresponding to the partial order of the underlying poset), the conditions of quasi-coherence and  coherence adopt a simple combinatorial form, in the topological case we have a well-behaved notion of \textit{cosheaf} that allows us to define homology for arbitrary sheaves (because the derived category has enough $K$-projectives in this finite case), every sheaf has explicit finite acyclic resolutions, etc.

In order to formalize these ideas, F. Sancho (\cite{Fernando schemes}) identified an adequate class of ringed finite topological spaces whose objects share many <<geometric>> properties with schemes and called them \textit{schematic finite spaces}. With an appropriate notion of \textit{schematic morphisms} that behaves well with respect to quasi-coherent sheaves, they define a category: $\mathbf{SchFin}$. This category contains all finite models of schemes and is strictly larger than it, as shown in Example \ref{example:schematic-not-scheme}. Furthermore, he defined a localizing family of morphisms in this category, whose elements are called \textit{qc-isomorphisms} (because they preserve the categories of quasi-coherent sheaves) or sometimes \textit{weak equivalences} (in the language of homotopy categories); such that for the localized category $\mathbf{SchFin}_{qc}$ there is a fully faithful functor
\begin{equation*}
\Phi:\textbf{Schemes}^{qc-qs}\longrightarrow \mathbf{SchFin}_{qc}
\end{equation*}
whose image can be described explicitly. Moreover, via a generalized \textit{recollement} procedure,  $\mathbf{SchFin}_{qc}$ gets identified with a subcategory $\mathbf{PSchemes}$ of the category \textbf{LRS} of locally ringed spaces that behaves like ${\textbf{Schemes}}^{qc-qs}$ at least at the cohomological level. The category $\mathbf{PSchemes}$ is larger than $\mathbf{Schemes}$, but in a different way than algebraic spaces. Aspects like the derived geometry of schemes can also be studied using adequate finite models, recovering advanced classical results like Grothendieck's general duality theorem (\cite{Fernando y JF}).

The category $\mathbf{PSchemes}$ is actually the subcategory of locally ringed spaces obtained by gluing affine schemes in the \textit{site of flat monomorphisms} of schemes. The basic theory and good behavior of these morphisms in problems of descent were originally considered by Raynaud in \cite{Raynaud}. Additionally, this site is closely related to the site of pro-open immersions (and coincides with it in some cases, see \cite{prufer spaces}; see also the analogous relationship between weak \'etale morphisms and pro-\'etale morphisms in \cite{scholze}), which has been recently employed in the contexts of birrational geometry and adic spaces to study Pr\"ufer algebraic spaces and Pr\"ufer pairs (see \cite{adic tame} and \cite{prufer spaces}).

Regarding the combinatorial side of our perspective, although in some ways it may seem similar to other constructive approaches such as simplicial schemes, it will become apparent that our objects are much more rigid and better behaved with respect to natural geometric notions. \medskip

From a purely topological perspective, the construction $\pi:S\longrightarrow X$ also provides results concerning homotopical notions, for the time being just in a traditional sense (see \cite{Fernando homotopy} for a ringed version of Stone's homotopical classification). In particular, one can show that if the covering $\mathcal{U}:=\{U_i\}$ defining $\pi$ is chosen appropriately, then there is an induced isomorphism at the level of fundamental groups
\begin{align*}
\pi_*:\pi_1(S, s)\overset{\sim}{\longrightarrow}\pi_1(X, \pi(s)).
\end{align*}
In other words, the categories of locally constant sheaves on both $S$ and $X$ are equivalent (actually, they have the same weak homotopy type, as $X$ plays the role of the \textit{\v{C}ech nerve of $\mathcal{U}$} in this context). Fundamental groups of (finite) posets can be described explicitly in terms of \textit{edge-paths} (see \cite{edgepaths}) and used to approach problems of group cohomology, since one can construct a (finite) poset whose fundamental group is any given (finite) group.

\medskip

This topological analogy and the good behavior of cohomology theories in $\mathbf{SchFin}$ are our main motivations to study the problem of existence of an analogue of Grothendieck's \textit{\'etale} fundamental group $\pi_1^{et}$ for schematic finite spaces using the standard axiomatization of Galois Categories. To achieve this, we introduce beforehand a number of new techniques regarding schematic finite spaces.

The first key point is finding a notion of connectedness in $\mathbf{SchFin}$ that is geometric. Not only in the sense that it reflects the ordinary connectedness of the locally ringed space associated to a given schematic finite space $X$ (connectedness of the spectrum of $\OO_X(X)$ already does this and in general is stronger than connectedness of the underlying poset), but also in that it allows us to perform geometric constructions such as the decomposition into <<connected>> components. This notion is called \textit{well-connectedness}. In order to define it, we strengthen connectedness of $\Spec(\OO_X(X))$ with an \textit{a priori} unrelated property called \textit{pw-connectedness},  that requires all stalk rings of $X$ to have connected spectrum. This depends on how the algebraic data contained in $X$ is organized. Eventually, we prove the following result, which showcases the geometric behavior of well-connectedness and justifies its importance.

\begin{theorem*}[\ref{theorem connectedness}]
A schematic finite space $X$ is well-connected if and only if for every decomposition $X=X_1\amalg X_2$ in $\mathbf{SchFin}$, either $X_1$ or $X_2$ is qc-isomorphic to $\emptyset$.
\end{theorem*}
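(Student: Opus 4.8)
The plan is to exploit the fact that the coproduct in $\mathbf{SchFin}$ is the disjoint union of ringed spaces, so that a decomposition $X=X_1\amalg X_2$ induces $\OO_X(X)\cong\OO_{X_1}(X_1)\times\OO_{X_2}(X_2)$ and, more generally, splits the entire structure sheaf of $X$ and all of its stalks. A basic fact I would record first and use in both directions is that a schematic finite space $Y$ with $\OO_Y(Y)=0$ is qc-isomorphic to $\emptyset$: for every point $y$ the restriction $\OO_Y(Y)\to\OO_{Y,y}$ to the sections over the minimal open $U_y$ is a homomorphism of unital rings, so it can only exist when $\OO_{Y,y}=0$; hence $\OO_Y$ is the zero sheaf, $\mathbf{Qcoh}(Y)$ is trivial, and the unique morphism $\emptyset\to Y$ is a qc-isomorphism. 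With this, the whole statement reduces to a question about when idempotents can be ``spread out'' over $X$.

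For the implication $(\Rightarrow)$ I would argue directly. Assume $X$ is well-connected, so in particular $\Spec(\OO_X(X))$ is connected, and let $X=X_1\amalg X_2$. Then $\Spec(\OO_X(X))\cong\Spec(\OO_{X_1}(X_1))\amalg\Spec(\OO_{X_2}(X_2))$, and connectedness forces one of the two rings, say $\OO_{X_2}(X_2)$, to be the zero ring; by the observation above, $X_2$ is qc-isomorphic to $\emptyset$. Note that only the connectedness of $\Spec(\OO_X(X))$ is used here; pw-connectedness is what will be needed for the converse.

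For $(\Leftarrow)$ I would argue by contraposition, producing from the failure of well-connectedness a decomposition $X=X_1\amalg X_2$ in which neither factor is qc-isomorphic to $\emptyset$. If the underlying poset of $X$ is disconnected, write it as a disjoint union of two clopen pieces and, using the fact above, absorb any piece with vanishing global sections into the other; this reduces the problem to $X$ with connected poset. Now non-well-connectedness means that $\Spec(\OO_X(X))$ is disconnected or that $X$ is not pw-connected, and in either case a nontrivial idempotent is available: a nontrivial $e\in\OO_X(X)$, or a nontrivial idempotent in some stalk $\OO_{X,x_0}$. If there is a \emph{locally constant} nontrivial idempotent $e\in\OO_X(X)$, meaning every component $e_x\in\OO_{X,x}$ equals $0$ or $1$, the argument finishes easily: since each restriction map $\OO_{X,x}\to\OO_{X,y}$ is a unital ring homomorphism sending $e_x$ to $e_y$, the value of $e_x$ is constant on comparable points and hence on connected components, so $Z:=\{x:e_x=1\}$ is a clopen subposet which is nonempty and proper because $e\neq 0,1$; restricting $X$ to $Z$ and to $X\setminus Z$ — which remain schematic since the schematic axioms are local — yields the desired decomposition, with $\OO_Z(Z)=\OO_X(X)\,e\neq 0$ and symmetrically on the complement.

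The main obstacle is the remaining case, in which no locally constant nontrivial idempotent is at hand: some stalk $\OO_{X,x_0}$ has disconnected spectrum even though the poset of $X$ is connected, so $X$ cannot be decomposed on the nose. Here the plan is to invoke the strong cohomological constraints on schematic finite spaces to replace $X$, up to qc-isomorphism, by a schematic finite space all of whose stalks have connected spectrum, so that every idempotent of its global sections is automatically locally constant and the previous step applies. The natural construction is to ``split'' a point $x_0$ with $\OO_{X,x_0}\cong C_1\times C_2$ into two points carrying $C_1$ and $C_2$, checking that the new restriction maps stay flat epimorphisms (they are composites of the old ones with the projections $C_1\times C_2\twoheadrightarrow C_i$), that the schematic axioms survive this operation as well as restriction to clopen subposets, and that the category of quasi-coherent sheaves is unaffected; iterating and then applying the clopen-decomposition argument concludes. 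The hard part, and the place where the combinatorial and cohomological control of stalks developed earlier in the paper is essential, is exactly this verification that the schematic structure is preserved — and it is precisely to forbid this sort of ``hidden'' decomposition that pw-connectedness is built into the definition of well-connectedness.
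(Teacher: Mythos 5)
Your forward direction is correct and in fact more economical than the paper's: you only use connectedness of $\Spec(\OO_X(X))$ together with $\OO_X(X)\cong\OO_{X_1}(X_1)\times\OO_{X_2}(X_2)$ and the observation that a schematic space with zero ring of global sections has zero structure sheaf, whereas the paper routes this direction through the functor $\mathbf{pw}$. Your converse is, in outline, the paper's own argument: the ``point-splitting'' you describe is exactly $\mathbf{pw}$, and the verification you defer (that splitting preserves schematicity and yields a qc-isomorphism) is precisely Proposition \ref{proposition pw is schematic} and Theorem \ref{theorem pw connectification}, which precede the theorem and may simply be quoted. The clopen-decomposition step via a locally constant idempotent is the same as the paper's decomposition of $\OO_X$ into its well-connected components.

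The gap is in the last case of your contraposition. You claim that after replacing $X$ by the split space ``every idempotent of its global sections is automatically locally constant and the previous step applies''; but the previous step needs a \emph{nontrivial} idempotent of the global sections to exist, and splitting does not create one, since $\mathbf{pw}(X)\to X$ is a qc-isomorphism and hence $\OO_{\mathbf{pw}(X)}(\mathbf{pw}(X))=\OO_X(X)$. If $\Spec(\OO_X(X))$ is connected while some stalk is disconnected, then $\mathbf{pw}(X)$ is top-connected (Lemma \ref{lemma relation connected and geo}) and no decomposition is produced, so ``not well-connected $\Rightarrow$ decomposable'' is not established in this case. Concretely, take the finite model of $S=\Spec(k[u,v]/(uv))$ with the covering $\{S,\,S\setminus\{\mathfrak m_0\}\}$, where $\mathfrak m_0$ is the origin: this is $X=\{x<y\}$ with $\OO_{X,x}=k[u,v]/(uv)$ and $\OO_{X,y}=k[u]_u\times k[v]_v$, which is connected but not pw-connected, yet $\mathbf{pw}(X)$ is top-connected and admits no nontrivial decomposition even up to qc-isomorphism. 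What your argument actually proves is the equivalence of the decomposition condition with \emph{connectedness} of $X$ (equivalently, well-connectedness of $\mathbf{pw}(X)$); this is also all that the paper's own proof establishes, since its converse only shows that $\OO_X$ has a single well-connected component. The stronger conclusion with ``well-connected'' cannot be reached this way: pw-connectedness is not a qc-isomorphism invariant, while the right-hand side of the theorem is, so the remaining case would have to be handled by an argument of a different nature (or the statement weakened to ``connected'').
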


The proof relies on the fact that pw-connected spaces being well-behaved with respect to the notion of connectedness, in the sense that for them, well-connectedness and ordinary topological connectedness are equivalent. 

Moreover, pw-connected spaces define a subcategory $\mathbf{SchFin}^{pw}\subseteq \mathbf{SchFin}$ and we prove that this inclusion has a right adjoint $\mathbf{pw}$ (Theorem \ref{theorem pw connectification}) such that the natural counit $\mathbf{pw}(X)\to X$ is a qc-isomorphism for every $X$. Since the categories we will be considering are stable under qc-isomorphisms (Section \ref{section stability}), this will allow us to assume pw-connectedness without loss of generality.

Before introducing \'etale covers, we spend some time describing the \textit{geometric points} of $X$, understood as elements of $X^\bullet(\Omega)$ for $\Omega$ an algebraically closed field. If $X$ is a finite model of a scheme $S$,  we show that hey are in bijection with the geometric points of $S$. The discussion makes apparent that the condition of being schematic, despite its \textit{a priori} cohomological definition, is far from arbitrary and from only being relevant in cohomology theory. This fact critically distinguishes our spaces and morphisms from more abstract forms of <<ordered descent data>>.

Finally, we define the main ingredients of this paper. For this, we  notice that the category of finite \'etale covers of a scheme $S$, $\textbf{Fet}_S$, understood as sheaves of quasi-coherent algebras, is equivalent to a certain subcategory of sheaves on any of its finite models $X$, denoted $\mathbf{Qcoh}^{fet}(X)\subset \mathbf{Qcoh}(X)$. This category can be defined even if $X$ is a schematic finite space that is not a finite model of a scheme. Furthermore, we endow it with a natural fiber functor $\mathrm{Fib}_{\overline{x}}$  (for some $\overline{x}\in X^\bullet(\Omega)$). This fiber functor, which \textit{a priori} is defined using scheme-theoretic tools, will coincide with (the underlying set of) a fibered product in $\mathbf{SchFin}^{pw}$ (rather than in $\mathbf{SchFin}$), thus retaining the original geometric meaning of the concept of <<fibers>>.

Moreover, $\mathbf{Qcoh}(X)$ is the underlying category of a \textit{site} $X_{\mathbf{Qcoh}}^{fppf}$ in such a way that $\mathbf{Qcoh}^{fet}(X)$ is simply its corresponding category of finite locally constant objects, just as in the case of the fppf or \'etale topologies for schemes. After all these reductions within the same qc-isomorphism class, we prove the main result:

\begin{theorem*}[\ref{theorem main}]
Let $X$ be a schematic finite space whose ring of global sections has connected spectrum and let $\overline{x}\in X^\bullet(\Omega)$ be a geometric point. Then $(\mathbf{Qcoh}^{fet}(X), \mathrm{Fib}_{\overline{x}})$ is a Galois Category (Definition \ref{definition galois category}). Furthermore, if $X$ is the finite model of a scheme $S$ and $\overline{s}\in S^\bullet(\Omega)$ is the corresponding geometric point (\ref{proposition points scheme are schematic points of the model}), then there is an isomorphism of profinite groups
\begin{equation*}
\pi_1^{et}(S, \overline{s})\simeq \pi_1^{et}(X, \overline{x}),
\end{equation*}
where $\pi_1^{et}(X, \overline{x}):=\mathrm{Aut}_{[\mathbf{Qcoh}^{fet}(X)^{\mathrm{op}}, \mathbf{Set}_f]}(\mathrm{Fib}_{\overline{x}})$. 
\end{theorem*}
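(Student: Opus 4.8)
The plan is to verify the six Grothendieck axioms for a Galois category directly on the pair $(\mathbf{Qcoh}^{fet}(X), \mathrm{Fib}_{\overline{x}})$, and then to identify the resulting fundamental group with the classical one when $X$ is a finite model. First I would reduce to the most convenient representative in the qc-isomorphism class of $X$: by the adjunction $\mathbf{pw}$ (Theorem \ref{theorem pw connectification}) together with the stability results of Section \ref{section stability}, I may replace $X$ by $\mathbf{pw}(X)$ and hence assume $X$ is pw-connected; combined with the hypothesis that $\Spec(\OO_X(X))$ is connected, Theorem \ref{theorem connectedness} then tells me $X$ is well-connected, so the decomposition calculus behaves geometrically. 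Under this reduction, the fiber functor $\mathrm{Fib}_{\overline{x}}$, which is a priori scheme-theoretic, becomes the underlying set of a fibered product in $\mathbf{SchFin}^{pw}$, and this geometric description is what I would use to check the axioms concretely.

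Next I would go through the axioms. The existence of finite limits and finite colimits in $\mathbf{Qcoh}^{fet}(X)$ follows from the corresponding constructions in $\mathbf{Qcoh}(X)$ (which is abelian, hence has all finite limits/colimits), once one checks that the fet condition—being finite locally constant in the site $X_{\mathbf{Qcoh}}^{fppf}$—is preserved by these operations; this is the usual descent argument, carried out locally on the cover that trivializes the objects involved. Every morphism factoring as a strict epimorphism followed by a monomorphism onto a direct summand is again a local computation using that a locally constant sheaf of algebras splits, after refinement, as a product. Exactness and faithfulness of $\mathrm{Fib}_{\overline{x}}$ is where the geometric point genuinely enters: by the description of $X^\bullet(\Omega)$ and geometric points developed before the theorem, $\overline{x}$ corresponds to a point of $\Spec$ of some stalk ring, all of whose spectra are connected by pw-connectedness, and fibering at $\overline{x}$ is an exact, conservative operation because it amounts to base change to an algebraically closed field composed with taking global sections of a finite locally constant sheaf; faithfulness uses connectedness of the base to rule out a nonzero morphism dying at the chosen point. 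Finiteness of $\mathrm{Fib}_{\overline{x}}(F)$ is the defining finiteness of the fet objects.

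For the comparison with the classical theory I would invoke the equivalence $\mathbf{Fet}_S \simeq \mathbf{Qcoh}^{fet}(X)$ recalled just before the statement, which is induced by the push-forward $\pi_*$ along $\pi: S \to X$ (and its quasi-inverse via recollement), together with the identification of the geometric point $\overline{s} \in S^\bullet(\Omega)$ with $\overline{x} \in X^\bullet(\Omega)$ from Proposition \ref{proposition points scheme are schematic points of the model}. The key point is that this equivalence is compatible with the fiber functors: the classical fiber functor on $\mathbf{Fet}_S$ at $\overline{s}$ corresponds, under the equivalence, to $\mathrm{Fib}_{\overline{x}}$—which is exactly what the promised description of $\mathrm{Fib}_{\overline{x}}$ as a fibered product in $\mathbf{SchFin}^{pw}$ is designed to guarantee, since fibered products in finite models compute the scheme-theoretic fiber on the nose. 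An equivalence of Galois categories commuting with the fiber functors induces an isomorphism of the associated automorphism (profinite) groups, giving $\pi_1^{et}(S,\overline{s}) \simeq \pi_1^{et}(X,\overline{x})$.

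The main obstacle I anticipate is the exactness and faithfulness of $\mathrm{Fib}_{\overline{x}}$, and more precisely showing that the a priori scheme-theoretic definition of the fiber functor agrees with the intrinsic fibered-product-in-$\mathbf{SchFin}^{pw}$ description: one must check that the fibered product, which in $\mathbf{SchFin}$ need not have the expected underlying set, does behave correctly once restricted to the pw-subcategory, and that this is compatible with base change to $\Omega$ and with the cohomological comparison $\pi_*$. This is where the schematic hypothesis—via the cohomological constraints it imposes—is essential, so I would isolate it as a separate lemma before assembling the axiom check.
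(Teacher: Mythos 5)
Your proposal follows essentially the same route as the paper: reduce to a well-connected representative via $\mathbf{pw}$ and the qc-isomorphism stability of both $\mathbf{Qcoh}^{fet}$ and the geometric points, identify $\mathrm{Fib}_{\overline{x}}$ with the underlying set of the fibered product in $\mathbf{SchFin}^{pw}$, verify the axioms by stalkwise checks combined with local triviality and the constancy of the degree, and transfer to the scheme case via the equivalence $\mathbf{Fet}_S\simeq\mathbf{Qcoh}^{fet}(X)$ and the identification of geometric points. The only slips are cosmetic: the relevant fact that connected plus pw-connected gives well-connected is Lemma \ref{lemma relation connected and geo} rather than Theorem \ref{theorem connectedness}, and the (co)limits needed are tensor and direct products of algebras (checked at stalks) rather than the abelian-category operations of $\mathbf{Qcoh}(X)$.
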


Besides the previous citations, some work has been done regarding \'etale fundamental groups and descent data, for example the proof of a general version of the Seifert-Van Kampen Theorem in \cite{van kampen}. In future work we will recover this type of results using the very general machinery of sheaves and stacks on finite spaces and  use them to provide computational examples. We also hope to be able to adapt the theory of \'Etale Homotopy to our context, as well as to extend the pro-\'etale topology (\cite{scholze}) to the realm of finite schematic spaces, since they are compatible with the definition of weakly \'etale ring homomorphisms without additional considerations.  Actually, most of our theory should generalize, \textit{mutatis mutandis}, to arbitrary ringed posets (or even more generally, to Kolmogorov spaces); where the non-finite ones correspond to non-quasi-compact spaces.
\medskip

Let us describe now the structure of the paper. Section 1 reviews many basic properties of schematic finite spaces. In Section 2 we develop the main techniques for this paper, defining the subcategory of pw-connected finite schematic spaces $\mathbf{SchFin}^{pw}$, and proving Theorem \ref{theorem connectedness}. We also describe geometric points of schematic finite spaces and their corresponding fiber functors. Section 3 introduces and describes the notion of finite \'etale covers in this context, proving that they can be understood as locally constant objects in a precise sense. In Section 4 we prove the invariance of the pair $(\mathbf{Qcoh}^{fet}(X), \mathrm{Fib}_{\overline{x}})$ under qc-isomorphisms (which applied in combination with section 2, reduces our problem to the better behaved subcategory $\mathbf{SchFin}^{pw}$). In Section 5 we prove our main result (Theorem \ref{theorem main}). In Section 6 we outline some basic examples, that will be further elaborated in future works, and give some ideas about the potential usefulness of \'etale fundamental groups of schematic spaces for studying singularities. 

In this paper all ringed spaces, unless otherwise explicitly mentioned, are assumed to be stalkwise Noetherian, see Definition \ref{defi:stalkwise-Noetherian}.

\section{Generalities on Schematic Finite Spaces}

In this section, for the reader's convenience and in order to fix notations, we compile a series of general results and definitions concerning ringed finite topological spaces as developed in \cite{Fernando schemes} and \cite{Fernando homotopy}, with a few additional results (Corollaries \ref{corollary qcoh restrictions between affines}, \ref{corollary caracterizacion afines} and Proposition \ref{prop cohomological characterization qciso}) and---hopefully---insightful remarks.
\medskip

Let $X$ be a finite $T_0$ topological space, or equivalently, a finite poset. Given a point $x\in X$, we denote by $U_x$ the minimal open set containing $x$. In terms of the order defined by the topology of $X$ one has $U_x=\{x'\in X:x\leq x'\}$. Similarly, $C_x:=\overline{\{x\}}=\{x'\in X:x'\leq x\}$ is the closure of $x$.

A sheaf of abelian groups (or sets, rings, etc.) $\mathcal{F}\in\textbf{Sh}(X)$ is equivalent to giving, for every $x\in X$, an abelian group $\mathcal{F}_x$ and, for every $x\leq y$, a restriction morphism $f_{xy}:\mathcal{F}_x\rightarrow \mathcal{F}_y$ such that for every $x\leq y\leq z$, $f_{xz}=f_{yz}\circ f_{xy}$. Therefore, the category of abelian sheaves on a finite topological space $X$ coincides with the representations of its associated Hasse quiver with values in the category of abelian groups. This holds because the minimal open sets $\{U_x\}_{x\in X}$ are a basis for the topology and the sheaf condition allows us to compute sections on any other open subset. However, this is clearly not true for presheaves. The stalk of $\mathcal{F}$ at $x$ is just $\mathcal{F}_x= \mathcal{F}(U_x)$. 
A morphism of sheaves $\phi:\mathcal{F}\rightarrow \mathcal{G}$ is determined by a collection of morphisms $\phi_x:\mathcal{F}_x\rightarrow \mathcal{G}_x$ (for all $x\in X$) that are compatible with restriction maps, i.e. it is a morphism of representations of the Hasse quiver of $X$.

\begin{remark}[Data on $X$]\label{remark covariant sheaves}
Let us introduce some very general terminology. Recall that the topological dual of $X$, denoted $X^*$, is constructed by reversing the partial order of $X$. If we see $X$ as a category (whose objects are the points and such that there is an arrow $x\to y$ whenever $x\leq y$), $X^*$ is just the opposite category $X^{\mathrm{op}}$. 

In general, given any category $\mathfrak{C}$, we can define the \textit{category of $\mathfrak{C}$-data on $X$} as the category of functors $\mathfrak{C}\text{-}\mathbf{data}_X:=\mathbf{Func}(X, \mathfrak{C})$. It is equivalent to the category of sheaves with values on $\mathfrak{C}$ if $\mathfrak{C}$ has finite limits---a datum $\Ff$ defines a sheaf $\Ff_{\mathrm{sh}}\colon X_{\mathrm{top}}^{\mathrm{op}}\to\mathfrak{C}$ via $\Ff_{\mathrm{sh}}(U)=\varprojlim_{x\in U}\mathfrak{C}(x)$, with $X_{\mathrm{top}}$ the topological site of $X$---and to the category of \textit{cosheaves} on $X$ if $\mathfrak{C}$ has finite colimits (see \cite{preprint cohaces} for a study of the homology  of cosheaves of abelian groups on finite ringed spaces). We may also define the category of \textit{$\mathfrak{C}$-codata on $X$} as $\mathfrak{C}\text{-}\mathbf{codata}_X:=\mathfrak{C}\text{-}\mathbf{data}_{X^*}=\mathfrak{C}^{\mathrm{op}}\text{-}\mathbf{data}_{X}$.

We can allow the base poset to vary as well: if $f\colon X\to Y$ is a continuous map, we have a functor $f^*\colon\mathfrak{C}\text{-}\mathbf{data}_Y\to \mathfrak{C}\text{-}\mathbf{data}_X$. Now define $\mathfrak{C}\text{-}\mathbf{data}$ to be the category of $\mathfrak{C}$-data on any poset, whose objects are pairs $(X, \Ff)$. Define a morphism $(X, \Ff)\to (Y, \mathcal{G})$ to be a pair $f\colon X\to Y$ and a natural transformation $f^\#\colon f^*\mathcal{G}\to\Ff$. These are an example of \textit{heteromorphisms} and  the replacement of maps between simplicial objects in the language of data on posets. Note that, if $\mathbf{CRing}$ denotes the category of commutative rings with unit, $\mathbf{CRing}\text{-}\mathbf{data}$ is the category of finite ringed spaces. Similarly, one defines $\mathfrak C\text{-} \mathbf{codata}:=\mathfrak C^{\mathrm{op}}\text{-}\mathbf{data}$.

We can also change the category of ``coefficients'': if $\Phi\colon\mathfrak{C}\to\mathfrak{C'}$ is a functor, we have an induced functor $\Phi_*\colon \mathfrak{C}\text{-}\mathbf{data}\to \mathfrak{C}'\text{-}\mathbf{data}$ that sends every $\Ff$ to $\Phi\circ \Ff$.

This idea admits a generalization if we consider $2$-categories and $2$-limits (or $2$-colimits). For instance, if $\mathfrak{C}=\mathbf{Cat}$ is the $2$-category of categories, the category of $\mathbf{Cat}$-data is precisely the category of stacks (understood merely as $2$-sheaves of categories) on the topological site $X_{\mathrm{top}}$.
\end{remark}

In a (finite) poset, every sheaf $\mathcal{F}$ has a natural (finite) acyclic resolution $C^\bullet\mathcal{F}$ called the \textit{standard resolution}. Its sections on $U\subseteq X$ are given by:
\begin{align}\label{equation standard resolution}
(C^i\mathcal{F})(U):=\prod_{x_i>...>x_0\in U}\mathcal{F}_{x_i} & & i\geq 0
\end{align}
with the obvious restriction morphisms and a natural differential given by alternate sums (see \cite{Fernando y JF}).

\medskip

Now, let $(X, \OO_X)$ be a finite \textit{ringed} space and denote by $r_{xx'}:\OO_{X, x}\longrightarrow\OO_{X, x'}$ the restriction morphisms of its structure sheaf. 

\begin{definition}\label{defi:stalkwise-Noetherian}
We say that a ringed space $(X,\mathcal O_X)$ is stalkwise Noetherian if $\mathcal O_{X,x}$ is a Noetherian ring for every point $x\in X$.
\end{definition}

We have the following characterization of quasi-coherent ($\mathbf{Qcoh}(X)$) and coherent sheaves on a finite ringed space $X$:
\begin{proposition}\label{proposition: characterization qcoh, ftype, coh}
Let $(X, \OO_X)$ be a finite ringed space and $\M$ a sheaf of $\OO_X$-modules.
\begin{itemize}
\item[1)]$\M\in\mathbf{Qcoh}(X)$ if and only if for every $x\leq x'$, $\M_x\otimes_{\OO_{X, x}}\OO_{X, x'}\simeq \M_{x'}$.
\item[2)]$\M$ is of finite type if and only if for every $x\in X$, $\M_x$ is a finite $\OO_{X, x}$-module and for every $x\leq x'$, $\M_x\otimes_{\OO_{X, x}}\OO_{X, x'}\longrightarrow \M_{x'}$ is surjective. 
\item[3)]$\M$ is coherent if and only if it is of finite type and verifies that <<for every $x\in X$, every sub-$\OO_{X, x}$-module of finite type $\mathcal{N}\subset \M_x$ is of finite presentation and $\mathcal{N}\otimes_{\OO_{X, x}}\OO_{X, x'}\rightarrow \M_{x'}$ is injective>>.
\end{itemize}
\end{proposition}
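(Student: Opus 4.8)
The plan is to reduce everything to statements about modules over the stalk rings by using the basis of minimal opens. First I would record the three structural facts we will lean on: $\{U_x\}_{x\in X}$ is a basis, $U_x$ is the smallest neighbourhood of $x$, and $\Gamma(U_x,-)$ is the stalk functor at $x$ (so $\OO_X(U_x)=\OO_{X,x}$ and $\M(U_x)=\M_x$), while on a finite space stalk functors commute with colimits (in particular with arbitrary direct sums) and $-\otimes_{\OO_{X,x}}\OO_{X,x'}$ is right exact and commutes with direct sums. Because the three notions in the statement are local and every neighbourhood of $x$ contains $U_x$, testing any of them amounts to testing the corresponding ``local at $x$'' condition after restriction to $U_x$, where it becomes a purely algebraic statement about the $\OO_{X,x}$-module $\M_x$ together with the restriction maps $\M_x\to\M_{x'}$.

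For 1) I would argue both directions through a free presentation. If the tensor condition holds, take a presentation $\OO_{X,x}^{(J)}\to\OO_{X,x}^{(I)}\to\M_x\to0$ of $\M_x=\M(U_x)$; the generators give a morphism $\OO_{U_x}^{(I)}\to\M|_{U_x}$, and tensoring the presentation by $\OO_{X,x'}$ for each $x'\geq x$ shows, via the hypothesis $\M_x\otimes_{\OO_{X,x}}\OO_{X,x'}\simeq\M_{x'}$, that $\OO_{U_x}^{(J)}\to\OO_{U_x}^{(I)}\to\M|_{U_x}\to0$ is exact, so $\M|_{U_x}$ and hence $\M$ is quasi-coherent. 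Conversely, quasi-coherence yields such a presentation on $U_x$; comparing, for $x'\geq x$, the cokernel of $\OO_{X,x'}^{(J)}\to\OO_{X,x'}^{(I)}$, which is $\M_{x'}$, with $\M_x\otimes_{\OO_{X,x}}\OO_{X,x'}$ gives the required isomorphism. For 2) I would run the identical argument with ``presentation'' weakened to ``epimorphism $\OO^n\twoheadrightarrow\M$'' and ``isomorphism'' to ``surjection'', using that surjectivity of sheaf maps on a finite space is checked on stalks and that the stalk at $x'$ of any $\OO_{U_x}^n\to\M|_{U_x}$ factors through the canonical map $\M_x\otimes_{\OO_{X,x}}\OO_{X,x'}\to\M_{x'}$.

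For 3) I would use the definition: $\M$ is coherent iff it is of finite type and, for every open $U$ and every $\psi\colon\OO_U^n\to\M|_U$, the subsheaf $\ker\psi$ is of finite type. For the forward direction, given a finite-type submodule $\mathcal N\subset\M_x$, its generators (as elements of $\M(U_x)$) span a finite-type subsheaf $\mathcal N^{\mathrm{sh}}\subset\M|_{U_x}$ with $(\mathcal N^{\mathrm{sh}})_x=\mathcal N$; since a finite-type subsheaf of a coherent sheaf is coherent, $\mathcal N^{\mathrm{sh}}$ is coherent, hence quasi-coherent, hence by 1) $(\mathcal N^{\mathrm{sh}})_{x'}=\mathcal N\otimes_{\OO_{X,x}}\OO_{X,x'}$, which injects into $\M_{x'}$ because $\mathcal N^{\mathrm{sh}}$ is a subsheaf; and the kernel of a free cover $\OO_{U_x}^k\twoheadrightarrow\mathcal N^{\mathrm{sh}}$ is of finite type by coherence, so by 2) its stalk at $x$ presents $\mathcal N$. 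For the converse, given the stalk condition and $\psi\colon\OO_U^n\to\M|_U$, I would verify via 2) that $(\ker\psi)_x$ is finitely generated (using finite presentation of $\mathcal N:=\mathrm{im}\,\psi_x$ and Schanuel's lemma applied to $0\to(\ker\psi)_x\to\OO_{X,x}^n\to\mathcal N\to0$) and that $(\ker\psi)_x\otimes_{\OO_{X,x}}\OO_{X,x'}\to(\ker\psi)_{x'}$ is onto: tensoring that sequence identifies the left-hand side with the kernel of $\OO_{X,x'}^n\twoheadrightarrow\mathcal N\otimes_{\OO_{X,x}}\OO_{X,x'}$, while the injectivity hypothesis for $\mathcal N\otimes_{\OO_{X,x}}\OO_{X,x'}\to\M_{x'}$ identifies that kernel with $\ker\psi_{x'}=(\ker\psi)_{x'}$, since $\psi_{x'}$ factors as $\OO_{X,x'}^n\twoheadrightarrow\mathcal N\otimes_{\OO_{X,x}}\OO_{X,x'}\hookrightarrow\M_{x'}$.

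The main obstacle is this last point. Since $-\otimes_{\OO_{X,x}}\OO_{X,x'}$ is only right exact, $(\ker\psi)_x\otimes_{\OO_{X,x}}\OO_{X,x'}$ need not map isomorphically onto $(\ker\psi)_{x'}$ in general, and the whole role of the injectivity clause in the coherence criterion is to guarantee that it is at least surjective; getting this right, and more generally keeping track of the naturality of all the comparison maps (so that phrases like ``$\psi_{x'}$ factors through the canonical map'' are literally correct rather than merely up to isomorphism), is the part that needs care. Everything else is a routine translation of the definitions of quasi-coherent, finite-type and coherent sheaves into the Hasse-quiver description of $\mathbf{Sh}(X)$.
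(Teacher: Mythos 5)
The paper does not prove this proposition itself --- it is quoted from \cite{Fernando schemes}, where the argument is exactly the reduction you describe: test everything on the minimal opens $U_x$, where sections equal stalks, and translate presentations, generation and kernels into statements about $\M_x\to\M_{x'}$ and base change along $\OO_{X,x}\to\OO_{X,x'}$. Your proof is correct, and you have correctly isolated the one delicate point (that the injectivity clause in 3) is precisely what forces $\ker\psi_{x'}$ to coincide with the kernel of $\OO_{X,x'}^n\twoheadrightarrow\mathcal N\otimes_{\OO_{X,x}}\OO_{X,x'}$, so that right-exactness of the tensor product suffices to get surjectivity of $(\ker\psi)_x\otimes_{\OO_{X,x}}\OO_{X,x'}\to(\ker\psi)_{x'}$), so nothing further is needed.
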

\begin{corollary}\label{corollary qcoh is abelian}
$\OO_X$ is coherent if and only if for every $x\in X$, every finitely-generated ideal of $\OO_{X, x}$ is of finite presentation and, for every $x\leq x'$, $r_{xx'}$ is flat. 

In particular, if $X$ is stalkwise Noetherian, then $\OO_X$ is coherent if and only if $r_{xx'}$ is flat for every $x\leq x'$. Furthermore, in this case $\mathbf{Qcoh}(X)$ is a Grothendieck abelian subcategory of the category of all sheaves of modules.
\end{corollary}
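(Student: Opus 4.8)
My plan is to extract the first two equivalences directly from Proposition \ref{proposition: characterization qcoh, ftype, coh} together with a classical flatness criterion, and then to treat the Grothendieck statement separately. For the first equivalence, apply part 3) of that proposition to $\M=\OO_X$. Note first that $\OO_X$ is automatically of finite type in the sense of part 2): $\OO_{X,x}$ is finitely generated over itself, and for $x\le x'$ the map $\OO_{X,x}\otimes_{\OO_{X,x}}\OO_{X,x'}\to\OO_{X,x'}$ is the identity, hence surjective. Thus part 3) says that $\OO_X$ is coherent if and only if, for every $x$, every finitely generated sub-$\OO_{X,x}$-module of $\OO_{X,x}$ --- i.e.\ every finitely generated ideal $\mathcal{N}$ --- is of finite presentation and, for every $x\le x'$, the canonical map $\mathcal{N}\otimes_{\OO_{X,x}}\OO_{X,x'}\to\OO_{X,x'}$ is injective. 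By the standard local flatness criterion --- injectivity of $I\otimes_A M\to M$ for all finitely generated ideals $I\subseteq A$ is equivalent to flatness of $M$, the general ideal case following because ideals are filtered unions of finitely generated ones and $\otimes$ commutes with filtered colimits --- the injectivity condition, as $\mathcal{N}$ ranges over all finitely generated ideals of $\OO_{X,x}$, is exactly flatness of $r_{xx'}$. This proves the characterization; the ``in particular'' is then immediate, since over a Noetherian ring every finitely generated module, hence every finitely generated ideal, is of finite presentation, so the first condition is vacuous.

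For the last assertion, assume $X$ stalkwise Noetherian and $\OO_X$ coherent, so every $r_{xx'}$ is flat. Recall that $\mathbf{Mod}(\OO_X)$, the category of all sheaves of $\OO_X$-modules, is Grothendieck. Using the description of $\mathbf{Qcoh}(X)$ in part 1) of the proposition, I would verify that it is closed in $\mathbf{Mod}(\OO_X)$ under cokernels and arbitrary direct sums --- since cokernels are computed stalkwise and each functor $-\otimes_{\OO_{X,x}}\OO_{X,x'}$ is right exact and cocontinuous --- and, using flatness, under kernels: for $0\to\M'\to\M\to\M''$ with $\M,\M''$ quasi-coherent, applying the exact stalk functors and tensoring with the flat ring $\OO_{X,x'}$ identifies $\M'_x\otimes_{\OO_{X,x}}\OO_{X,x'}$ with $\ker(\M_{x'}\to\M''_{x'})=\M'_{x'}$. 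Hence the inclusion $\mathbf{Qcoh}(X)\hookrightarrow\mathbf{Mod}(\OO_X)$ is exact and cocontinuous, so $\mathbf{Qcoh}(X)$ is an abelian subcategory, and since filtered colimits are computed as in $\mathbf{Mod}(\OO_X)$ and are exact there, it satisfies (AB5).

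The remaining point --- and the one I expect to be the main obstacle --- is producing a generator. The plan is to show that every $\M\in\mathbf{Qcoh}(X)$ is the directed union of its coherent subsheaves: under our hypotheses a quasi-coherent sheaf is coherent precisely when it is of finite type, and isomorphism classes of such sheaves form a set (finite modules over the fixed Noetherian rings $\OO_{X,x}$ together with compatible restriction maps), so a direct sum of representatives is a generator and $\mathbf{Qcoh}(X)$ is Grothendieck. The subtlety, absent for schemes, is that the $\OO_X$-submodule of $\M$ generated by a single section $m\in\M_x=\M(U_x)$ is \emph{not} quasi-coherent in general --- it vanishes at points not above $x$ while being nonzero above $x$, breaking the identity of part 1). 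One must enlarge it by prescribing, at each lower point $y$ at which the identity fails for some $y'\ge y$, finitely many elements of $\M_y$ witnessing a presentation of the offending section as an element of $\M_y\otimes_{\OO_{X,y}}\OO_{X,y'}$, and iterating; finiteness of the poset and of the modules in play forces the process to stabilize, yielding a finite-type (hence coherent) subsheaf of $\M$ whose stalk at $x$ contains $m$. Since the sum of two coherent subsheaves is again coherent (flatness, exactly as for kernels), these subsheaves form a directed family with union $\M$. Alternatively, one can bypass the construction: $\mathbf{Qcoh}(X)$ arises inside $\mathbf{Mod}(\OO_X)$ as the inverter of the finitely many natural transformations $(-)_x\otimes_{\OO_{X,x}}\OO_{X,x'}\to(-)_{x'}$ between functors preserving filtered colimits, hence is accessible and accessibly embedded, and being cocomplete and abelian it is therefore Grothendieck.
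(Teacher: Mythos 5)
The paper does not actually prove this corollary: it is one of the results compiled from \cite{Fernando schemes} at the start of Section 1, so there is nothing to compare your argument against line by line. On its own terms, your derivation of the first equivalence is correct and is surely the intended one: $\OO_X$ is automatically of finite type, part 3) of Proposition \ref{proposition: characterization qcoh, ftype, coh} then reduces coherence to the two conditions on finitely generated ideals, and the injectivity condition over all finitely generated ideals is precisely the standard ideal-theoretic criterion for flatness of $r_{xx'}$. The Noetherian specialization is immediate, and your verification that $\mathbf{Qcoh}(X)$ is closed under kernels (via flatness), cokernels, direct sums and filtered colimits, hence is an exact, cocontinuous, AB5 abelian subcategory, is also correct. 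The only step you leave as a sketch is the generator. Your first route (every quasi-coherent sheaf is a directed union of finite-type quasi-coherent subsheaves) has the right idea and correctly identifies the genuine subtlety --- the subsheaf generated by one section fails quasi-coherence below $x$ --- but the bookkeeping of the witness-adding process is more delicate than your one-sentence description suggests: the obligations are indexed by pairs $y\le y'$ with $y'$ above the point carrying a chosen section but $y$ possibly incomparable to it, so the induction has to be organized carefully (e.g.\ along a linear extension of the poset) to see termination; ``finiteness of the modules in play'' is not the reason it stops, finiteness of the poset is. Your second route via accessibility (the inverter of finitely many natural transformations between filtered-colimit-preserving functors on the locally presentable category $\mathbf{Mod}(\OO_X)$ is accessible and accessibly embedded, hence, being cocomplete and AB5 abelian, is Grothendieck) is complete modulo standard facts from Ad\'amek--Rosick\'y and quietly repairs any gap in the first; it is heavier machinery than the reference presumably uses, but it is valid. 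Overall I would accept the proof, with the generator construction either written out in full or replaced outright by the accessibility argument.
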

In the rest of the paper all ringed spaces are assumed to be stalkwise Noetherian. If more general objects were necessary, techniques of Noetherian reduction may be applied. Under this hypothesis, Corollary \ref{corollary qcoh is abelian} motivates the following definition.

\begin{definition}\label{definition finite spaces}
A \textit{finite space} $X$ is a ringed finite topological space whose restriction morphisms $r_{xx'}:\OO_{X, x}\rightarrow \OO_{X, x'}$ are flat.
\end{definition}

\begin{remark} The category of topological spaces $\textbf{Top}$ can be seen as a fully faithful subcategory of $\textbf{RS}$ (ringed spaces) via $X\mapsto (X, \Z_X)$. Quasi-coherent sheaves of $\Z-$modules are exactly locally constant sheaves of abelian groups, which enables an easy description of covers and the fundamental group, as seen in \cite{Fernando homotopy}.
\end{remark}
\subsection{Finite models and \textit{recollement}}
Certain finite ringed spaces serve as models for quasi-compact schemes in the sense that they encode ordered descent data. 

In general, let $S$ be a quasi-compact topological space. Consider a finite open covering $\mathcal{U}=\{U_i\}$ and for every $s\in S$ define $U^s:=\bigcap_{s\in U_i}U_i$. We can consider the following equivalence relation on $S$:
\begin{align*}
s\sim t\Longleftrightarrow U^s=U^t
\end{align*}
and endow the finite set $X:=S/\sim$ with the preorder given by $
[s]\leq[t]\Longleftrightarrow U^t\subseteq U^s$.

The finite topological space $X$ (with the natural continuous projection $\pi:S\longrightarrow X$) is called \textit{the finite model} of the pair $(S, \mathcal{U})$ or simply \textit{a} finite model of $S$.

\begin{remark}
Actually, the finite model of $(S, \U)$ is the prime spectrum of the finite distributive lattice generated by $\U$, denoted $\tau_\U$, as a sublattice of the topology of $S$, denoted $\tau_S$. The projection is the natural map $S\to\Spec(\tau_S)\to\Spec(\tau_\U)=X$.
\end{remark}

Let $S$ be a quasi-compact and quasi-separated (qc-qs) scheme and consider a finite affine covering $\mathcal{U}=\{U_i\}$ such that $U^s$ is affine for every $s$. Such coverings exist thanks to the quasi-separatedness hypothesis and will be called \textit{locally affine coverings}. 

Let $\pi:S\longrightarrow X$ be the finite model associated to the topological space $S$ and the covering $\mathcal{U}$. Now, we endow $X$ with the sheaf of rings $\OO_X=\pi_*\OO_S$, which verifies $\OO_{X, x}=\OO_S(U^s)$ for any $s$ such that $\pi(s)=x$. The ringed finite space $X\equiv (X, \OO_X)$ (with the natural projection $\pi\colon S\to X$) is called \textit{the finite model} associated to the pair $(S, \mathcal{U})$ or simply a finite model of the scheme $S$. Notice that $X$ is a finite space (Definition \ref{definition finite spaces}).

The choice of locally affine coverings ensures that the notion of quasi-coherent sheaf is <<trivial>> at each $\pi^{-1}(U_x)=U^s$. This implies the following fundamental result:

\begin{proposition}[\cite{Fernando homotopy}, Theorem 2.9]
Let $\pi:S\longrightarrow X$ be as above, then the morphism of topoi $(\pi^*, \pi_*):\mathbf{Sh}(S)\to \mathbf{Sh}(X)$ restricts to an equivalence $$(\pi^*, \pi_*):\mathbf{Qcoh}(S)\simeq \mathbf{Qcoh}(X)$$ such that $\mathrm{R}^i\pi_*\M=0$ for every $\M\in \mathbf{Qcoh}(S)$ and $i>0$.
\end{proposition}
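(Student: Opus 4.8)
The plan is to reduce the entire statement to the affine opens $U^s=\pi^{-1}(U_x)$, $x=\pi(s)$, which form an open cover of $S$, using that a sheaf on the finite space $X$ is determined by its stalks and that these stalks are computed as sections over the minimal opens $U_x$ (so a statement about sheaves on $X$ need only be checked on the groups of sections over each $U_x$). I would record at the outset that $\OO_{X,x}=\OO_S(U^s)$ and that each overlap $U^s\cap U^t=\pi^{-1}(U_x\cap U_y)$ is a union of affine opens $\pi^{-1}(U_z)$ with $z\ge x$ and $z\ge y$, so that a quasi-coherent sheaf on $S$ is nothing but a family of quasi-coherent sheaves on the $U^s$ compatible under restriction. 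I would first dispose of the higher direct images: since $R^i\pi_*\M$ is the sheafification of the presheaf $U\mapsto H^i(\pi^{-1}(U),\M)$ and stalks are values on minimal opens, $(R^i\pi_*\M)_x=\varinjlim_{U\ni x}H^i(\pi^{-1}(U),\M)=H^i(U^s,\restr{\M}{U^s})$, which is zero for $i>0$ because $U^s$ is affine and $\M$ is quasi-coherent. In particular $\pi_*$ is exact on $\mathbf{Qcoh}(S)$ and preserves cohomology.

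Next I would check that the adjoint pair $(\pi^*,\pi_*)$ really restricts to the two categories of quasi-coherent sheaves. For $\M\in\mathbf{Qcoh}(S)$ one has $(\pi_*\M)_x=\M(U^s)$, and for $x\le x'$ the inclusion of affine opens $U^{s'}\subseteq U^s$ gives $\M(U^s)\otimes_{\OO_S(U^s)}\OO_S(U^{s'})\overset{\sim}{\longrightarrow}\M(U^{s'})$, i.e. exactly the criterion of Proposition \ref{proposition: characterization qcoh, ftype, coh}, so $\pi_*\M\in\mathbf{Qcoh}(X)$. In the other direction, for $\mathcal{F}\in\mathbf{Qcoh}(X)$ and $t\in U^s$ one computes $(\pi^*\mathcal{F})_t=\mathcal{F}_{\pi(t)}\otimes_{\OO_{X,\pi(t)}}\OO_{S,t}\cong\mathcal{F}_x\otimes_{\OO_S(U^s)}\OO_{S,t}$, where the isomorphism uses $\pi(t)\ge x$ together with the quasi-coherence of $\mathcal{F}$ on $X$; this identifies the canonical map $\widetilde{\mathcal{F}_x}\to\restr{\pi^*\mathcal{F}}{U^s}$ (adjoint to the map $\mathcal{F}_x\to\Gamma(U^s,\pi^*\mathcal{F})$ coming from the presheaf inverse image) as an isomorphism on stalks, hence an isomorphism of sheaves, so $\restr{\pi^*\mathcal{F}}{U^s}\cong\widetilde{\mathcal{F}_x}$ is quasi-coherent, and therefore $\pi^*\mathcal{F}\in\mathbf{Qcoh}(S)$ since quasi-coherence is local and the $U^s$ cover $S$.

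It then remains to see that the adjunction unit $\mathcal{F}\to\pi_*\pi^*\mathcal{F}$ and counit $\pi^*\pi_*\M\to\M$ are isomorphisms on quasi-coherent sheaves; by the reductions above this is tested on each affine $U^s$, where it collapses to the tautologies $\Gamma(U^s,\widetilde{\mathcal{F}_x})=\mathcal{F}_x$ and $\widetilde{\M(U^s)}\cong\restr{\M}{U^s}$ for quasi-coherent sheaves on the affine scheme $U^s$, using the local identification of the previous paragraph. I expect the main obstacle to be precisely that local identification: one has to keep careful track of the sheafification hidden in $\pi^{-1}$ and verify that $\restr{\pi^*\mathcal{F}}{U^s}$ is genuinely the module sheaf $\widetilde{\mathcal{F}_x}$, not merely a sheaf with the expected stalks, and to check that the unit and counit restrict on $U^s$ to the expected canonical isomorphisms rather than to some other map. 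Once this is in place, exactness of $\pi_*$ on $\mathbf{Qcoh}(S)$ together with the (co)unit computation closes the equivalence, and the vanishing of $R^i\pi_*$ has already been obtained.
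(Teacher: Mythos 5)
Your proof is correct and follows the standard argument: everything is reduced to the affine opens $U^s=\pi^{-1}(U_x)$, where the key identifications $(\pi_*\M)_x=\M(U^s)$, $\restr{\pi^*\mathcal{F}}{U^s}\cong\widetilde{\mathcal{F}_x}$, and $(R^i\pi_*\M)_x=H^i(U^s,\M)$ collapse the quasi-coherence checks, the unit/counit isomorphisms, and the cohomology vanishing to the affine case. The paper only cites this result (from \cite{Fernando homotopy}, Theorem 2.9) without reproducing a proof, and your argument is essentially the one given there.
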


Conversely, given a ringed finite topological space $X$, we have a \textit{recollement} functor, denoted $\mathfrak{Spec}$, that constructs a locally ringed space. It is defined as follows:
\begin{itemize}
\item For $x\leq x'$, the restriction morphism induces $r_{xx'}^*\colon\Spec(\OO_{X, x'})\longrightarrow\Spec(\OO_{X, x})$. 
\item We obtain a codatum $\Spec(\OO_X)\in\mathrm{Ob}(\mathbf{LRS}\text{-}\mathbf{codata}_X)$ and define:
$$\mathfrak{Spec}(X):=\mathrm{colim}\,\Spec(\OO_{X}).$$
\end{itemize}
This colimit always exists, coinciding with the colimit as ringed spaces ($\mathbf{RS}$) because the forgetful functor $\mathbf{LRS}\to\mathbf{RS}$ preserves all colimits, and satisfies the following universal property: for every ringed space $Y$ there is a bijection
\begin{align*}
\Hom_{\textbf{LRS}}(
\mathrm{colim}\,\Spec(\OO_{X}), Y)\simeq \Hom_{\mathbf{LRS}\text{-}\mathbf{codata}_X}(\Spec(\OO_X), Y).
\end{align*}
In other words $\mathfrak{Spec}$ is a left-adjoint to the natural fully faithful inclusion $\mathbf{LRS}\hookrightarrow \mathbf{LRS}\text{-}\mathbf{codata}_X$ sending every locally ringed space to the constant functor. Explicitly, it is a quotient of $\coprod_x\Spec(\OO_{X, x})$ by a suitable equivalence relation determined by the poset. A morphism of ringed finite spaces $f\colon X\to Y$ induces $\mathfrak{Spec}(f)\colon\mathfrak{Spec}(X)\to \mathfrak{Spec}(Y)$ in $\mathbf{LRS}$.

\medskip

Of course, we can also write this colimit as the coequalizer (and equalizer at the level of sheaves of rings) of a suitable collection of arrows. Notice that $\OO_{\mathfrak{Spec}(X)}(\mathfrak{Spec}(X))=\OO_X(X)$. Furthermore, if every morphism of the inductive system is an open immersion of schemes, then $\mathfrak{Spec}(X)$ is a scheme (by standard \textit{recollement}). In particular, if $X$ is a finite model of a scheme $S$, we have $\mathfrak{Spec}(X)\simeq S$. In this case, we will say that $X$ has \textit{open restrictions}. A classical characterization due to Grothendieck says that this happens if and only if the restriction maps are flat epimorphisms of rings that are of finite presentation (see \cite[IV, 17.9,1]{EGAIV}).

\begin{remark}
If $\mathbf{CRings}^{\mathrm{op}}\simeq\mathbf{AffSchemes}\subseteq \mathbf{LRS}$ denotes the category of affine schemes as a subcategory of locally ringed spaces, we have (Remark \ref{remark covariant sheaves}) that $\mathfrak{Spec}$ coincides with the composition:
\begin{align*}
\mathbf{CRings}\text{-}\mathbf{data}\simeq \mathbf{AffSchemes}\text{-}\mathbf{codata}\to \mathbf{LRS}\text{-}\mathbf{codata}\overset{\mathrm{colim}}{\to} \mathbf{LRS}.
\end{align*}
\end{remark}

\subsection{Affine and Schematic finite spaces. Qc-isomorphisms.}

In this section we identify an appropriate subcategory of finite spaces (Definition \ref{definition finite spaces}) that captures the local behavior of schemes, as introduced in \cite{Fernando schemes}. This also allows us to describe ringed spaces more general than schemes. The omitted proofs can be found in \cite{Fernando schemes}. For a detailed analysis and a combinatorial description of these spaces  we refer the reader to  \cite{fernandoypedro}.

\begin{definition}
Let $X$ be a finite space. We say that $X$ is \textit{affine} if
\begin{itemize}
\item[1)]$X$ is acyclic, i.e. $H^i(X, \OO_X)=$ for $i>0$.
\item[2)]The projection to a point $\pi:X\longrightarrow (\star, \OO_X(X))$ induces an equivalence of quasi-coherent topoi $(\pi^*, \pi_*)\colon \mathbf{Qcoh}(X)\simeq \OO_X(X)\textbf{-mod}$ (notice that $\pi_*=\Gamma(X, -)$). 
\end{itemize}
\end{definition}
In particular, thanks to the characterization of quasi-coherent modules, every minimal open set $U_x$ is easily seen to be affine. Note that the second condition says that, for every $x\in X$ and $\M\in\mathbf{Qcoh}(X)$, there is a natural <<localization>> isomorphism
\begin{align}\label{equation fiber of qcoh in affine spaces}
\M(X)\otimes_{\OO_X(X)}\OO_{X, x}\overset{\sim}{\to}\M_x.
\end{align}
\begin{proposition}
Let $X$ be a finite space. The following are equivalent:
\begin{itemize}
\item $X$ is affine.
\item $X$ is acyclic and every $\M\in\mathbf{Qcoh}(X)$ is generated by its global sections (which means that $\M(X)\otimes_{\OO_{X}(X)}\OO_{X, x}\to\M_x$ is surjective for every $x\in X$).
\item Every $\M\in\mathbf{Qcoh}(X)$ is generated by its global sections and is acyclic. 
\end{itemize}
\end{proposition}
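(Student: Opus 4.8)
The plan is to prove the cycle of implications ``affine $\Rightarrow$ second $\Rightarrow$ third $\Rightarrow$ affine''. Throughout write $A=\OO_X(X)$ and let $\pi\colon X\to(\star,A)$ be the projection, with its adjoint pair $(\pi^*,\pi_*)$, where $\pi_*=\Gamma(X,-)$ and $(\pi^*N)_x=N\otimes_A\OO_{X,x}$; by Proposition \ref{proposition: characterization qcoh, ftype, coh}(1), $\pi^*N$ is quasi-coherent for every $A$-module $N$. Two features of the \emph{finite} space $X$, both immediate from the standard resolution \eqref{equation standard resolution}, will do most of the work: $H^i(X,-)$ commutes with arbitrary direct sums (each $\Gamma(X,C^j-)$ is a finite product of stalks, and finite products commute with $\bigoplus$), and $H^i(X,-)=0$ for $i\geq\ell$, where $\ell$ is the number of elements of a maximal chain of $X$ (since $C^j=0$ in that range). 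I also use that $\mathbf{Qcoh}(X)$ is abelian (Corollary \ref{corollary qcoh is abelian}), so kernels and direct sums of quasi-coherent sheaves are quasi-coherent, that $\pi^*$ (a left adjoint) commutes with direct sums, and that $\pi_*=\Gamma(X,-)$ commutes with direct sums precisely because $X$ is finite; in particular $\pi^*A^{(I)}=\OO_X^{(I)}$ and $\pi_*\OO_X^{(I)}=A^{(I)}$. The implication affine $\Rightarrow$ second is then immediate: acyclicity of $X$ is part of the definition, and if $(\pi^*,\pi_*)$ is an equivalence then every $\M\in\mathbf{Qcoh}(X)$ equals $\pi^*\M(X)$, so $\M_x\cong\M(X)\otimes_A\OO_{X,x}$ as in \eqref{equation fiber of qcoh in affine spaces} and $\M$ is generated by its global sections.

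For second $\Rightarrow$ third the only new point is that every $\M\in\mathbf{Qcoh}(X)$ is acyclic, which I would prove by dimension shifting. Since $\M$ is generated by global sections, there is a surjection $\OO_X^{(I)}\twoheadrightarrow\M$; its kernel $K$ is again quasi-coherent. The long exact sequence of $0\to K\to\OO_X^{(I)}\to\M\to0$, together with $H^i(X,\OO_X^{(I)})=\bigoplus_I H^i(X,\OO_X)=0$ for $i\geq1$ (acyclicity of $X$ plus commutation with $\bigoplus$), gives $H^i(X,\M)\cong H^{i+1}(X,K)$ for all $i\geq1$. Repeating the argument with $K$ (still quasi-coherent and generated by global sections) produces $H^i(X,\M)\cong H^{i+k}(X,K_k)$ for every $k\geq1$, and taking $k$ with $i+k\geq\ell$ forces $H^i(X,\M)=0$.

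For third $\Rightarrow$ affine, acyclicity of $X$ follows by applying the hypothesis to $\OO_X$. The key auxiliary fact is that $\pi_*$ is \emph{exact} on $\mathbf{Qcoh}(X)$: for a short exact sequence in $\mathbf{Qcoh}(X)$, left exactness of $\pi_*$ together with the vanishing of $H^1$ on the (quasi-coherent) sub-object, granted by the third condition, gives exactness on the right. It then remains to show that the unit $\eta_N\colon N\to\pi_*\pi^*N$ and the counit $\varepsilon_\M\colon\pi^*\pi_*\M\to\M$ of the adjunction are isomorphisms. Both are the identity for $N=A$ and for $\M=\OO_X$, hence are isomorphisms on all free objects $A^{(I)}$, $\OO_X^{(I)}$, since $\pi^*$ and $\pi_*$ commute with $\bigoplus$. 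For $\eta$: pick a free presentation $A^{(J)}\to A^{(I)}\to N\to0$, apply the right exact $\pi^*$ and then the exact $\pi_*$ to get a presentation $\pi_*\pi^*A^{(J)}\to\pi_*\pi^*A^{(I)}\to\pi_*\pi^*N\to0$, and conclude by the five lemma, $\eta$ being invertible on the two free terms. For $\varepsilon$: using that both $\M$ and $\ker(\OO_X^{(I)}\twoheadrightarrow\M)$ are generated by global sections, pick a free presentation $\OO_X^{(L)}\to\OO_X^{(I)}\to\M\to0$, apply the exact $\pi_*$ and then the right exact $\pi^*$ to obtain $\pi^*\pi_*\OO_X^{(L)}\to\pi^*\pi_*\OO_X^{(I)}\to\pi^*\pi_*\M\to0$, and again invoke the five lemma. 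Hence $(\pi^*,\pi_*)$ is an equivalence $A\textbf{-mod}\simeq\mathbf{Qcoh}(X)$ and $X$ is affine.

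The main obstacle is the implication second $\Rightarrow$ third --- upgrading acyclicity of $\OO_X$ to acyclicity of \emph{every} quasi-coherent sheaf. This is exactly where the two special features of finite spaces (commutation of cohomology with direct sums and finiteness of the cohomological dimension) are used in an essential way; once those are available, together with the exactness of $\pi_*$ on $\mathbf{Qcoh}(X)$ in the third step, the remaining work is routine diagram-chasing with the adjunction $(\pi^*,\pi_*)$.
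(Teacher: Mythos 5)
Your proof is correct. The paper itself omits the proof of this proposition (it is among those deferred to \cite{Fernando schemes}), so there is nothing to compare line by line, but your argument is exactly the standard Serre-criterion strategy one finds there: the two finiteness features of $X$ (commutation of $H^i(X,-)$ with direct sums via the standard resolution, and bounded cohomological dimension) power the dimension-shifting step, and the unit/counit comparison via free presentations, using exactness of $\pi_*$ on $\mathbf{Qcoh}(X)$, closes the cycle. All the auxiliary facts you invoke (quasi-coherence of kernels and of $\pi^*N$, surjectivity of $\OO_X^{(\M(X))}\to\M$ from generation by global sections) are legitimately available from Proposition \ref{proposition: characterization qcoh, ftype, coh} and Corollary \ref{corollary qcoh is abelian}.
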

An important property that will be of use later is the following:
\begin{proposition}\label{proposition characterization of affines}
Let $X$ be an affine finite space, then $\OO_X(X)\longrightarrow\prod_{x\in X}\OO_{X, x}$ is faithfully flat. In particular, for any $U\subset X$ affine, $\OO_X(X)\longrightarrow\OO_X(U)$ is flat.
\end{proposition}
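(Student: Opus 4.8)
The plan is to deduce both assertions from the defining equivalence $\Gamma(X,-)\colon\mathbf{Qcoh}(X)\overset{\sim}{\longrightarrow}\OO_X(X)\text{-}\mathbf{mod}$; write $A:=\OO_X(X)$ and denote by $M\mapsto\widetilde{M}$ the inverse equivalence, so that $(\widetilde{M})_x\simeq M\otimes_A\OO_{X,x}$ for every $A$-module $M$ and every $x\in X$ by \eqref{equation fiber of qcoh in affine spaces}. First I would show that $A\to\OO_{X,x}$ is flat for each $x\in X$. The functor $M\mapsto M\otimes_A\OO_{X,x}$ is naturally isomorphic to the composite of $M\mapsto\widetilde{M}$ with the stalk functor $\M\mapsto\M_x$; the first functor is exact, being an equivalence of abelian categories with $\mathbf{Qcoh}(X)$ an abelian subcategory of $\mathbf{Sh}(X)$ by Corollary \ref{corollary qcoh is abelian}, and the stalk functor is exact because on a finite space $\M_x=\M(U_x)$ is a filtered colimit over the neighbourhoods of $x$ that stabilises at $U_x$. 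Hence $-\otimes_A\OO_{X,x}$ is exact, i.e. $\OO_{X,x}$ is $A$-flat; and since $X$ is finite, $\prod_{x\in X}\OO_{X,x}$ is a finite product of flat $A$-modules, hence flat.

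For faithful flatness it then suffices to show that $M\otimes_A\prod_{x\in X}\OO_{X,x}=0$ implies $M=0$. Finiteness of $X$ turns the product into a finite direct sum, so $M\otimes_A\prod_{x\in X}\OO_{X,x}\simeq\prod_{x\in X}(M\otimes_A\OO_{X,x})\simeq\prod_{x\in X}(\widetilde{M})_x$; if this vanishes then all stalks of $\widetilde{M}$ vanish, so $\widetilde{M}=0$, and therefore $M\simeq\Gamma(X,\widetilde{M})=0$. This proves that $A\to\prod_{x\in X}\OO_{X,x}$ is faithfully flat.

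For the final statement, let $U\subseteq X$ be an affine open subspace, and note that $\OO_{U,x}=\OO_{X,x}$ for $x\in U$ because $U_x\subseteq U$. Applying the part just proved to $U$ in place of $X$ shows that $\OO_X(U)\to\prod_{x\in U}\OO_{X,x}$ is faithfully flat. Moreover the restriction maps make the composite $A\to\OO_X(U)\to\prod_{x\in U}\OO_{X,x}$ coincide with $A\to\prod_{x\in X}\OO_{X,x}\to\prod_{x\in U}\OO_{X,x}$, where the first arrow is faithfully flat by the previous paragraph and the second is the projection onto a direct factor, hence flat; so the common composite $A\to\prod_{x\in U}\OO_{X,x}$ is flat. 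Since flatness of $A\to\OO_X(U)$ may be tested after the faithfully flat base change $\OO_X(U)\to\prod_{x\in U}\OO_{X,x}$, we conclude that $A\to\OO_X(U)$ is flat.

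I expect the only subtle step to be the exactness of the stalk functor on $\mathbf{Qcoh}(X)$: what is really needed is that kernels and cokernels of morphisms of quasi-coherent sheaves, formed in $\mathbf{Sh}(X)$, remain quasi-coherent, so that short exact sequences in $\mathbf{Qcoh}(X)$ stay short exact after taking stalks. This is precisely the content of Corollary \ref{corollary qcoh is abelian} and rests on the flatness of the restriction maps $r_{xx'}$; the remaining manipulations are formal.
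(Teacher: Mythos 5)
Your proof is correct, and it follows the route the paper intends (the proof is deferred to \cite{Fernando schemes}, where the argument is exactly this one): identify $-\otimes_{\OO_X(X)}\OO_{X,x}$ with the composite of the equivalence $\mathbf{Qcoh}(X)\simeq\OO_X(X)\text{-}\mathbf{mod}$ and the stalk functor to get flatness, use that a quasi-coherent sheaf with vanishing stalks is zero to get faithfulness, and descend flatness of $\OO_X(X)\to\OO_X(U)$ along the faithfully flat map $\OO_X(U)\to\prod_{x\in U}\OO_{X,x}$. You also correctly isolate the one genuinely load-bearing input, namely that $\mathbf{Qcoh}(X)$ is an abelian subcategory of all sheaves of modules (Corollary \ref{corollary qcoh is abelian}), so that exactness can be tested on stalks.
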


\begin{corollary}\label{corollary qcoh restrictions between affines}
If $X$ is finite, $x\in X$, $U\subseteq U_x$ is an affine open subset and $\M\in\mathbf{Qcoh}(X)$, there is an isomorphism $\M_x\otimes_{\OO_{X, x}}\OO_X(U)\simeq \M(U)$.
\begin{proof}
Consider the natural morphism $\M_x\otimes_{\OO_{X, x}}\OO_X(U)\to\M(U)$. Tensoring by the faithfully flat morphism $\OO_X(U)\to\prod_{y\in U}\OO_{X, y}$ of the previous proposition we obtain a morphism
\begin{align*}
\M_x\otimes_{\OO_{X, x}}\prod_{y\in U}\OO_{X, y}\to\M(U)\otimes_{\OO_X(U)}\prod_{y\in U}\OO_{X, y};
\end{align*}
but since $\M\in\mathbf{Qcoh}(X)$ and $U$ is affine, it is isomorphic to the identity $\prod_{y\in U}\M_y\to \prod_{y\in U}\M_y$. By faithful flatness, we conclude.
\end{proof}
\end{corollary}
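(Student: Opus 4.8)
The plan is to exhibit the natural comparison morphism and then prove it is bijective by faithfully flat descent, the descent datum being the faithfully flat map supplied by Proposition \ref{proposition characterization of affines} for the affine open $U$. First I would construct the map: since $\M(U_x)=\M_x$ and $\OO_X(U_x)=\OO_{X,x}$, the restriction morphism $\M_x=\M(U_x)\to\M(U)$ is linear over $\OO_{X,x}$, where $\OO_{X,x}$ acts on $\M(U)$ through the ring map $\OO_{X,x}=\OO_X(U_x)\to\OO_X(U)$ induced by $U\subseteq U_x$. Extension of scalars then produces an $\OO_X(U)$-linear map
\[
\varphi\colon\M_x\otimes_{\OO_{X,x}}\OO_X(U)\longrightarrow\M(U),
\]
and the content of the corollary is that $\varphi$ is an isomorphism.

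Next I would run the descent argument. Because $U$ is affine, Proposition \ref{proposition characterization of affines} gives that $\OO_X(U)\to\prod_{y\in U}\OO_{X,y}$ is faithfully flat; note that $U$ is finite, so this product is finite and the usual tensor identities apply without caveats. Hence $\varphi$ is an isomorphism if and only if $\varphi\otimes_{\OO_X(U)}\prod_{y\in U}\OO_{X,y}$ is. On the source, associativity of tensor products and the quasi-coherence characterization of Proposition \ref{proposition: characterization qcoh, ftype, coh}(1) applied to each pair $x\leq y$ (valid since $y\in U\subseteq U_x$) give
\[
\M_x\otimes_{\OO_{X,x}}\prod_{y\in U}\OO_{X,y}\;\simeq\;\prod_{y\in U}\bigl(\M_x\otimes_{\OO_{X,x}}\OO_{X,y}\bigr)\;\simeq\;\prod_{y\in U}\M_y.
\]
On the target, since $U$ is affine and $\M$ is quasi-coherent, the localization isomorphism \eqref{equation fiber of qcoh in affine spaces} gives $\M(U)\otimes_{\OO_X(U)}\OO_{X,y}\simeq\M_y$, whence
\[
\M(U)\otimes_{\OO_X(U)}\prod_{y\in U}\OO_{X,y}\;\simeq\;\prod_{y\in U}\M_y.
\]
Under these identifications $\varphi\otimes\mathrm{id}$ becomes the identity of $\prod_{y\in U}\M_y$, so it is an isomorphism, and faithful flatness of $\OO_X(U)\to\prod_{y\in U}\OO_{X,y}$ lets me conclude that $\varphi$ itself is an isomorphism.

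The only step that genuinely requires care is the bookkeeping that, after base change to $\prod_{y\in U}\OO_{X,y}$, the morphism $\varphi$ really becomes the identity under the two chains of canonical isomorphisms described above; this is a compatibility between several canonical maps that I would verify componentwise by chasing a local section $m\in\M_x$ and its image in each $\M_y$. Everything else—finiteness of $U$, the relevant flatness, and the quasi-coherence criterion—is already available from the results recorded above, so I do not expect any serious obstacle beyond this routine diagram chase.
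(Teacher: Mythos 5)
Your proposal is correct and follows exactly the paper's argument: tensor the natural comparison map with the faithfully flat morphism $\OO_X(U)\to\prod_{y\in U}\OO_{X,y}$ from Proposition \ref{proposition characterization of affines}, identify both sides with $\prod_{y\in U}\M_y$ via quasi-coherence and affineness, and descend. The extra care you take in constructing the map and verifying the componentwise compatibility is just an expanded version of what the paper leaves implicit.
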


\begin{definition}
Let $f:X\longrightarrow Y$ be a morphism (of ringed spaces) between finite spaces. We say that $f$ is an affine morphism if it preserves quasi-coherence (i.e. for any $\M\in\mathbf{Qcoh}(X)$, $\mathbb{R}f_*\M\in {D}_{qc}(Y)$) and $f^{-1}(U_y)$ is affine for every $y\in Y$. 
\end{definition}

From now on, given a morphism $f:X\longrightarrow Y$, $ \Gamma_f:X\longrightarrow X\times Y$ will denote its graphic. If $f=Id_X$, $\Gamma_f\equiv \Delta$ is the diagonal of $X$.
\begin{definition}[Schematic finite spaces and morphisms]\label{definition schematic}
We say that a morphism of ringed spaces $f:X\longrightarrow Y$ is \textit{schematic} if $\mathbb{R}\Gamma_{f*}\OO_X\in D_{qc}(X\times Y)$. Equivalently, if for every $i\geq 0$, $x\leq x'\in X$, $y\leq y'\in Y$, we have isomorphisms
\begin{align}
& H^i(U_x\cap f^{-1}(U_y), \OO_X)\otimes_{\OO_{X, x}}\OO_{X, x'}\overset{\sim}{\longrightarrow}H^i(U_{x'}\cap f^{-1}(U_y), \OO_X),\\
& H^i(U_x\cap f^{-1}(U_y), \OO_X)\otimes_{\OO_{X, y}}\OO_{X, y'}\overset{\sim}{\longrightarrow}H^i(U_{x}\cap f^{-1}(U_{y'}), \OO_X).\nonumber
\end{align}
We say that a space $X$ is \textit{schematic} if $Id_X:X\longrightarrow X$ is schematic. 
\end{definition}

The condition of $f$ being schematic is local on $X$, and if $Y$ is schematic, it is also local on $Y$. In particular, a space $X$ is schematic if and only if $U_x$ is schematic for all $x\in X$. Additionally, one proves that for any $\M\in\mathbf{Qcoh}(X)$, $f$ schematic implies that $\mathbb{R}f_*\M\in D_{qc}(Y)$ itself if quasi-coherent; and the converse holds if $X$ itself is schematic.

For $i=0$, Definition \ref{definition schematic} yields the first part of the following fundamental property:
\begin{proposition}\label{prop restrictions are flat epic}
If $X$ is schematic, the restrictions $r_{xx'}:\OO_{X, x}\longrightarrow\OO_{X, x'}$ are flat epimorphisms in the category of rings. Additionally, if $X$ is affine, $r_x\colon \OO_X(X)\to \OO_{X, x}$ is also a flat ring epimorphism.
\end{proposition}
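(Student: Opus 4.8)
The plan is to deduce both assertions from the $i=0$ instances of the two comparison isomorphisms in Definition~\ref{definition schematic}. Flatness of the maps $r_{xx'}$ is already part of being a finite space (Definition~\ref{definition finite spaces}), so the only new content of the first assertion is that these maps are ring epimorphisms; equivalently, that the multiplication maps $\OO_{X,x'}\otimes_{\OO_{X,x}}\OO_{X,x'}\to\OO_{X,x'}$ are isomorphisms.

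For the first part, fix $x\le x'$. As $x\le x'$ we have $U_{x'}\subseteq U_x$, hence $U_x\cap U_{x'}=U_{x'}=U_{x'}\cap U_{x'}$. I would apply the first isomorphism of Definition~\ref{definition schematic} (with $f=\mathrm{Id}_X$, $i=0$) to the pair $x\le x'$, taking the second index there to be $x'$ itself. The canonical morphism that it declares invertible is the one induced by the restriction $\OO_X(U_x\cap U_{x'})\to\OO_X(U_{x'}\cap U_{x'})$ together with the $\OO_{X,x'}$-module structure on the target; with the present choices both opens equal $U_{x'}$ and that restriction is the identity of $\OO_{X,x'}$, so the induced morphism is exactly the multiplication
\begin{equation*}
\OO_{X,x'}\otimes_{\OO_{X,x}}\OO_{X,x'}\longrightarrow\OO_{X,x'},\qquad a\otimes b\longmapsto ab .
\end{equation*}
Its being an isomorphism is precisely the statement that $r_{xx'}$ is a ring epimorphism, and we already know it is flat.

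For the second part, assume in addition that $X$ is affine. Flatness of $r_x\colon\OO_X(X)\to\OO_{X,x}$ is Proposition~\ref{proposition characterization of affines} applied to the affine open $U_x\subseteq X$ (note $\OO_X(U_x)=\OO_{X,x}$). For the epimorphism property I would introduce $\mathcal N:=j_*(\OO_X|_{U_x})$, with $j\colon U_x\hookrightarrow X$ the inclusion; its stalks are $\mathcal N_z=\OO_X(U_x\cap U_z)$. The $i=0$ case of the second isomorphism of Definition~\ref{definition schematic}, with first index held equal to $x$, is exactly the assertion $\mathcal N_z\otimes_{\OO_{X,z}}\OO_{X,z'}\overset{\sim}{\to}\mathcal N_{z'}$ for $z\le z'$, so $\mathcal N\in\mathbf{Qcoh}(X)$ by Proposition~\ref{proposition: characterization qcoh, ftype, coh}. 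Since $\mathcal N(X)=\OO_X(U_x)=\OO_{X,x}=\mathcal N_x$ with the restriction $\mathcal N(X)\to\mathcal N_x$ the identity, plugging $\mathcal N$ into the localization isomorphism~(\ref{equation fiber of qcoh in affine spaces}) for the affine space $X$ gives an isomorphism $\OO_{X,x}\otimes_{\OO_X(X)}\OO_{X,x}\overset{\sim}{\to}\OO_{X,x}$, which, by the same identification of the comparison map with $a\otimes b\mapsto ab$, says that $r_x$ is a ring epimorphism.

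The routine ingredients are the poset identities $U_x\cap U_{x'}=U_{x'}$ and $(j_*\mathcal F)_z=\mathcal F(U_x\cap U_z)$, and the identification of the canonical comparison maps of Definition~\ref{definition schematic} and of~(\ref{equation fiber of qcoh in affine spaces}) with the obvious multiplication and localization maps. The one step that takes a small idea rather than a computation is, in the second part, producing the sheaf $\mathcal N=j_*(\OO_X|_{U_x})$ and observing that its quasi-coherence is equivalent to the degree-$0$ part of the second ($Y$-direction) isomorphism in the schematic condition; that is the only place where the schematic hypothesis, beyond flatness of restrictions and affineness, is genuinely used.
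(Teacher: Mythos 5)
Your proof is correct and, for the first assertion, is exactly the argument the paper intends: the $i=0$ instance of the first comparison isomorphism in Definition~\ref{definition schematic} with $y=x'$ collapses to the multiplication map $\OO_{X,x'}\otimes_{\OO_{X,x}}\OO_{X,x'}\to\OO_{X,x'}$, whose invertibility is the ring-epimorphism condition, flatness being part of Definition~\ref{definition finite spaces}. For the second assertion the paper omits the proof (deferring to the reference), but your self-contained argument --- quasi-coherence of $j_*(\OO_X|_{U_x})$ from the $i=0$ case of the second isomorphism, then the localization isomorphism~(\ref{equation fiber of qcoh in affine spaces}) identifying $\OO_{X,x}\otimes_{\OO_X(X)}\OO_{X,x}\overset{\sim}{\to}\OO_{X,x}$ with the multiplication map --- is sound and complete.
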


\begin{remark}
Flat epimorphisms of rings $A\to B$, which <<only>> differ from open immersions of affine schemes in that we remove the condition finite presentation, are quite interesting objects. They are in correspondence (modulo isomorphism) with certain \textit{smashing} subcategories of the derived category $D(A)$, as well as with \textit{coherent} subsets of $\Spec(A)$ closed under generalization. Furthermore, they are related to the notion of \textit{universal localizations} (obtaining a bijection under hypothesis such as $\Spec(A)$ being a smooth algebraic variety). If $A$ is Noetherian and $A\to B$ is an epimorphism (so $B$ is Noetherian), it is flat if and only $\mathrm{Tor}_1^A(B, B)=0$. These ideas are developed in \cite{lidia} and \cite{krauser}.
\end{remark}
The following is a classical result from commutative algebra that sheds light into the local structure of these morphisms and will be enough for the treatment here:
\begin{proposition}\cite[Prop. 2.4.]{lazard}\label{proposition flat epic are local iso}
Let $\phi:A\longrightarrow B$ be a morphism of commutative rings with unit. $\phi$ is a flat epimorphism if and only if for every $\p\in\mathrm{Spec}(B)$, the localized map $A_{\phi^{-1}(\p)}\overset{\sim}{\to} B_\p$ is an isomorphism. Furthermore, this holds if and only if for every $\q\in \mathrm{Spec}(A)$, either $B=\q B$ or $A_\q\overset{\sim}{\to} B_\q$ is an isomorphism. In other words, $i:\mathrm{Spec}(B)\hookrightarrow \mathrm{Spec}(A)$ is a flat monomorphism of affine schemes if and only if $i^{-1}\widetilde{B}\simeq \widetilde{A}$, where, for a ring $R$, $\widetilde{R}$ denotes the structure sheaf $\mathrm{Spec}(R)$.
\end{proposition}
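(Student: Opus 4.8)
The plan is to pass from the module-theoretic description of epimorphisms to the two spectral conditions. Recall that $\phi\colon A\to B$ is an epimorphism if and only if the multiplication map $\mu\colon B\otimes_A B\to B$ is an isomorphism, equivalently the two coprojections $b\mapsto b\otimes 1$ and $b\mapsto 1\otimes b$ into $B\otimes_A B$ coincide; thus ``$\phi$ a flat epimorphism'' means ``$\phi$ flat and $\mu$ an isomorphism''. I would record first two elementary facts that carry most of the argument: (i) a flat local homomorphism of local rings is automatically faithfully flat; and (ii) a faithfully flat ring epimorphism $R\to S$ is an isomorphism --- indeed faithful flatness makes $0\to R\to S\xrightarrow{\,d\,}S\otimes_R S$, $d(s)=s\otimes 1-1\otimes s$, exact, while being an epimorphism forces $d=0$, so $R\to S$ is surjective by exactness and injective by faithful flatness.

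For the implication ``flat epimorphism $\Rightarrow$ the local condition on $\Spec(B)$'', I would fix $\mathfrak{p}\in\Spec(B)$, set $\mathfrak{q}=\phi^{-1}(\mathfrak{p})$, base change $\phi$ along $A\to A_{\mathfrak{q}}$, and then localize the result further at $\mathfrak{p}$. Since flatness and ``$\mu$ an isomorphism'' are stable under base change and localization, the resulting map $A_{\mathfrak{q}}\to B_{\mathfrak{p}}$ is again a flat epimorphism, and it is a local homomorphism by construction; by (i)--(ii) it is an isomorphism. It is worth noting here --- and this will matter below --- that a flat epimorphism is injective on spectra: each fibre ring $B\otimes_A\kappa(\mathfrak{q})$ is $0$ or $\kappa(\mathfrak{q})$, because a commutative $\kappa(\mathfrak{q})$-algebra $C$ for which the two coprojections $C\rightrightarrows C\otimes_{\kappa(\mathfrak{q})}C$ agree has, on any $\kappa(\mathfrak{q})$-basis, no element outside the ground field; hence $\Spec(B)\to\Spec(A)$ has fibres of at most one point. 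This is the injectivity already encoded in the notation $i\colon\Spec(B)\hookrightarrow\Spec(A)$.

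For the converse, assume $A_{\phi^{-1}(\mathfrak{p})}\overset{\sim}{\to}B_{\mathfrak{p}}$ for every $\mathfrak{p}\in\Spec(B)$ (together with the injectivity of $\Spec(B)\to\Spec(A)$ built into the statement). Flatness of $\phi$ is checked stalkwise on $\Spec(B)$, using that $\mathrm{Tor}^A_1(A/I,B)_{\mathfrak{p}}\simeq\mathrm{Tor}^{A_{\mathfrak{q}}}_1(A_{\mathfrak{q}}/I_{\mathfrak{q}},B_{\mathfrak{p}})$ for $B$-modules and that each $B_{\mathfrak{p}}\simeq A_{\mathfrak{q}}$ is $A_{\mathfrak{q}}$-flat. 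For ``$\mu$ an isomorphism'' I would argue fibrewise over $\Spec(A)$: $\mu$ is a surjection of flat $A$-modules, so its kernel $K$ is $A$-flat, and it suffices to see $K\otimes_A\kappa(\mathfrak{q})=0$ for every $\mathfrak{q}$ (a flat $A$-module with all residue-field fibres zero vanishes). Now $(B\otimes_A B)\otimes_A\kappa(\mathfrak{q})\cong C\otimes_{\kappa(\mathfrak{q})}C$ with $C=B\otimes_A\kappa(\mathfrak{q})$; if no prime lies over $\mathfrak{q}$ then $C=0$, and if some $\mathfrak{p}$ does, injectivity makes it unique, so $C$ is a ring with a single prime and $A_{\mathfrak{q}}\simeq B_{\mathfrak{p}}$ gives $C=C_{\mathfrak{p}}=A_{\mathfrak{q}}/\mathfrak{q}A_{\mathfrak{q}}=\kappa(\mathfrak{q})$; in either case $C\otimes_{\kappa(\mathfrak{q})}C\to C$ is an isomorphism and hence $K\otimes_A\kappa(\mathfrak{q})=0$. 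So $\phi$ is a flat epimorphism.

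The equivalence with the condition on $\Spec(A)$ then drops out of the same fibre picture: with $\phi$ flat, $\mathfrak{q}B=B$ is equivalent to $B\otimes_A\kappa(\mathfrak{q})=0$ --- the non-formal direction being that $B/\mathfrak{q}B$ is flat, hence torsion-free, over the domain $A/\mathfrak{q}$, so it must vanish once it does after $\otimes\kappa(\mathfrak{q})$ --- while in the remaining case $B\otimes_A\kappa(\mathfrak{q})=\kappa(\mathfrak{q})\neq 0$ makes $B_{\mathfrak{q}}$ flat over the local ring $A_{\mathfrak{q}}$ with nonzero closed fibre, hence faithfully flat, and an epimorphism by base change, so $A_{\mathfrak{q}}\overset{\sim}{\to}B_{\mathfrak{q}}$ by (ii); running the argument backwards gives the reverse implication. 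The step I expect to be the main obstacle is precisely this last block together with the ``$\mu$ an isomorphism'' half of the previous converse: one cannot argue ``pointwise isomorphism $\Rightarrow$ isomorphism'' directly (it fails without the injectivity of $i$ --- witness $k\to k\times k$), nor invoke Nakayama, since $B$ need not be module-finite over $A$; the correct route is the detour through residue fields together with the principle that a faithfully flat epimorphism is an isomorphism.
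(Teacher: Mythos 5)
The paper does not actually prove this proposition: it is quoted from Lazard's seminar expos\'e, so there is no internal argument to compare yours against. Your proof is the standard one and is correct in outline. The two pillars --- a faithfully flat epimorphism is an isomorphism (via exactness of the Amitsur complex $0\to R\to S\to S\otimes_R S$) and a flat local homomorphism of local rings is faithfully flat --- are exactly right, and you deserve credit for noticing that the first displayed condition is \emph{not} equivalent to being a flat epimorphism as literally written (your $k\to k\times k$ example is decisive): injectivity of $\Spec(B)\to\Spec(A)$ must be added, as in Lazard's original statement and as the paper's hooked arrow $i\colon\Spec(B)\hookrightarrow\Spec(A)$ tacitly indicates. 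Two points are leaned on without proof. First, the lemma that a flat $A$-module all of whose residue-field fibres vanish is zero: this is true but not formal (Nakayama is unavailable since $K$ need not be finite); in the paper's standing Noetherian setting it follows by reducing modulo the nilradical and intersecting the finitely many minimal primes, and you could avoid it altogether by checking $\ker(\mu)_P=0$ at each prime $P$ of $B\otimes_AB$, using that $(B\otimes_AB)_P$ is a localization of $B_{\p_1}\otimes_{A_{\q}}B_{\p_2}\simeq A_{\q}$. Second, ``running the argument backwards'' for the $\Spec(A)$-condition hides a small but real step: when $\q B=B$ you do not yet know that $B\otimes_AA_{\q}$ is flat, so you should first observe that the $\Spec(A)$-condition implies the $\Spec(B)$-condition together with injectivity (every $P\in\Spec(B)$ contracts to some $\q'$ with $\q'B\neq B$, whence $B\otimes_AA_{\q'}\simeq A_{\q'}$ and, localizing further at the unique prime over $\q'$, $B_P\simeq A_{\q'}$), and then invoke the converse you already established. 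Neither point is a wrong turn; both are fillable along the lines you indicate.
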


If $X$ is a schematic finite space, then $\mathfrak{Spec}(X)$ is a gluing of affine schemes along flat monomorphisms of affine schemes. 
\begin{proposition}[Extension Theorem]\label{theorem extension of quasicoherent sheaves}
Let $X$ be a schematic space and $j:U\hookrightarrow X$ the inclusion of an open subset; then $j$ is schematic. In particular the extension
\begin{align*}
j_*:\mathbf{Qcoh}(U)\longrightarrow\mathbf{Qcoh}(X)
\end{align*}
is fully faithful.
\end{proposition}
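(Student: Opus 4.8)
The plan is to verify that $j$ satisfies the cohomological criterion of Definition~\ref{definition schematic} directly, using the single observation that the minimal open set of a point of $U$ already lies inside $U$. Indeed, for $f=j\colon U\hookrightarrow X$ one has $j^{-1}(U_y)=U\cap U_y$ for every $y\in X$, while for any $x\in U$ minimality of $U_x$ among open sets containing $x$ gives $U_x\subseteq U$; hence $U_x\cap j^{-1}(U_y)=U_x\cap U\cap U_y=U_x\cap U_y$, and likewise with $x$ replaced by $x'$ or $y$ by $y'$. Since moreover $H^i(V,\OO_U)=H^i(V,\OO_X)$ for every open $V\subseteq U$, the two families of arrows that must be shown to be isomorphisms for $j$ are, for $i\ge 0$, $x\le x'$ in $U$, $y\le y'$ in $X$,
\[
H^i(U_x\cap U_y,\OO_X)\otimes_{\OO_{X,x}}\OO_{X,x'}\to H^i(U_{x'}\cap U_y,\OO_X),
\]
\[
H^i(U_x\cap U_y,\OO_X)\otimes_{\OO_{X,y}}\OO_{X,y'}\to H^i(U_x\cap U_{y'},\OO_X).
\]

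First I would check that these are literally instances of the cohomological criterion for $\mathrm{Id}_X\colon X\to X$ (where $\mathrm{Id}_X^{-1}(U_y)=U_y$): the pairs $x\le x'$, $y\le y'$ do lie in $X$, and one verifies that the $\OO_{X,x}$-- and $\OO_{X,y}$--module structures, as well as the connecting maps, on the displayed cohomology groups are induced by the very same restriction maps of $\OO_X$ in both situations (for $j$ the composite $\OO_X(U_y)\to\OO_X(U\cap U_y)\to\OO_X(U_x\cap U_y)$ is, by transitivity of restriction, the restriction $\OO_X(U_y)\to\OO_X(U_x\cap U_y)$ used for $\mathrm{Id}_X$). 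Since $X$ is a schematic space, $\mathrm{Id}_X$ is schematic and these arrows are isomorphisms; therefore so are the ones for $j$, and $j$ is schematic.

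For the ``in particular'' part, once $j$ is known to be schematic the fact recalled after Definition~\ref{definition schematic}---that a schematic morphism $f$ sends $\mathbf{Qcoh}$ into $D_{qc}$---applied to $j$ yields $\mathbb{R}j_*\M\in D_{qc}(X)$ for every $\M\in\mathbf{Qcoh}(U)$; taking $\mathcal{H}^0$ gives $j_*\M\in\mathbf{Qcoh}(X)$, so $j_*\colon\mathbf{Qcoh}(U)\to\mathbf{Qcoh}(X)$ is well defined. Full faithfulness is then formal: $j^{-1}\dashv j_*$ and $j^{-1}j_*\cong\mathrm{id}$ because $j$ is an open immersion, so for $\M,\mathcal{N}\in\mathbf{Qcoh}(U)$ one has $\Hom_{\mathbf{Qcoh}(X)}(j_*\M,j_*\mathcal{N})=\Hom_{\mathbf{Sh}(X)}(j_*\M,j_*\mathcal{N})\cong\Hom_{\mathbf{Sh}(U)}(j^{-1}j_*\M,\mathcal{N})=\Hom_{\mathbf{Qcoh}(U)}(\M,\mathcal{N})$, using that $\mathbf{Qcoh}$ is full in $\mathbf{Sh}$ on both spaces. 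The only step needing genuine care---hence the main, if modest, obstacle---is the bookkeeping just described: one must make sure that the identification $U_x\cap j^{-1}(U_y)=U_x\cap U_y$ carries the module structures and differentials of the criterion for $j$ onto those for $\mathrm{Id}_X$, so that ``isomorphism for $\mathrm{Id}_X$'' genuinely transports to ``isomorphism for $j$''; everything else is formal.
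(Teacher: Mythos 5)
Your proof is correct. The paper itself omits the proof of this proposition (deferring to \cite{Fernando schemes}), and your argument---observing that $U_x\subseteq U$ for $x\in U$ so that the schematicity conditions for $j$ become literally a subfamily of those for $\mathrm{Id}_X$, then deducing quasi-coherence of $j_*\M$ from the fact that schematic morphisms preserve $D_{qc}$ and full faithfulness from $j^{-1}j_*\cong\mathrm{id}$---is exactly the standard one.
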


\begin{corollary}\label{corollary caracterizacion afines}
Let $X$ be a schematic finite space and let $x\in X$ be a point. An open subset $U\subseteq U_x$ is affine if and only if $\OO_X(U)\to \prod_{x\in U}\OO_{X, x'}$ is faithfully flat.
\begin{proof}
One direction is just Proposition \ref{proposition characterization of affines}. Assume that $\OO_X(U)\to \prod_{x\in U}\OO_{U, x'}$ is faithfully flat. To see that any quasi-coherent module is generated by its global sections, apply the extension theorem to $U\hookrightarrow U_x$ and note that $U_x$ is affine because $X$ is schematic. Furthermore $H^i(U, \OO_U)=0$ for every $i>0$ due to the faithful flatness hypothesis, which follows from an easy computation using the standard resolution of the structure sheaf. We conclude by the characterization of affine finite spaces.
\end{proof}
\end{corollary}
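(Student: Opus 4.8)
The plan is to prove the two implications separately, the forward one being immediate. The \emph{only if} direction is just Proposition \ref{proposition characterization of affines}: if $U$ is affine then it is an affine finite space, and (using $\OO_U(U)=\OO_X(U)$ and $\OO_{U,x'}=\OO_{X,x'}$) that proposition says precisely that $\OO_X(U)\to\prod_{x'\in U}\OO_{X,x'}$ is faithfully flat. For the converse, assume $\OO_X(U)\to B:=\prod_{x'\in U}\OO_{X,x'}$ is faithfully flat. Since $X$ is schematic, the minimal open set $U_x$ is both affine and schematic, and as $U\subseteq U_x$ is open, the Extension Theorem (Proposition \ref{theorem extension of quasicoherent sheaves}), applied with $U_x$ in the role of the ambient space, shows that $j\colon U\hookrightarrow U_x$ is schematic; in particular $j_*$ preserves quasi-coherence and $\mathbb{R}j_*\mathcal F\in D_{qc}(U_x)$ for every $\mathcal F\in\mathbf{Qcoh}(U)$. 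By the characterization of affine finite spaces it then suffices to show that (i) every $\M\in\mathbf{Qcoh}(U)$ is generated by its global sections and (ii) $U$ is acyclic.

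For (i), given $\M\in\mathbf{Qcoh}(U)$, the sheaf $j_*\M$ is quasi-coherent on the \emph{affine} space $U_x$, hence generated by its global sections, so $(j_*\M)(U_x)\otimes_{\OO_{X,x}}\OO_{X,x'}\to(j_*\M)_{x'}$ is surjective for all $x'\in U_x$. For $x'\in U$ one computes $(j_*\M)(U_x)=\M(U\cap U_x)=\M(U)$ and $(j_*\M)_{x'}=\M(U\cap U_{x'})=\M_{x'}$, and the restriction $\OO_{X,x}\to\OO_{X,x'}$ factors through $\OO_X(U)$; passing to the tensor product over the larger ring $\OO_X(U)$ only imposes more relations, so the surjection descends to a surjection $\M(U)\otimes_{\OO_X(U)}\OO_{X,x'}\to\M_{x'}$, which is what is required.

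For (ii), the idea is to encode $H^n(U,\OO_U)$ for $n>0$ in a single quasi-coherent sheaf on $U_x$ and then kill it with the faithful flatness hypothesis. Fix $n>0$ and set $\mathcal G:=\mathbb R^n j_*\OO_U$, which is quasi-coherent on $U_x$ because $j$ is schematic. Its stalks are $\mathcal G_p=H^n(U\cap U_p,\OO_U)$; for $p\in U$ one has $U\cap U_p=U_p$, which has a minimum, so $\Gamma(U_p,-)$ is exact and $\mathcal G_p=0$, while $\mathcal G_x=H^n(U,\OO_U)$. Quasi-coherence of $\mathcal G$ gives $\mathcal G_x\otimes_{\OO_{X,x}}\OO_{X,p}\simeq\mathcal G_p=0$ for every $p\in U$. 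Since $\mathcal G_x=H^n(U,\OO_U)$ is naturally an $\OO_X(U)$-module and each $\OO_{X,p}$ ($p\in U$) is an $\OO_X(U)$-algebra, $\mathcal G_x\otimes_{\OO_X(U)}\OO_{X,p}$ is a quotient of $\mathcal G_x\otimes_{\OO_{X,x}}\OO_{X,p}=0$ and hence vanishes; taking the finite product over $p\in U$ yields $H^n(U,\OO_U)\otimes_{\OO_X(U)}B=0$, and faithful flatness of $\OO_X(U)\to B$ forces $H^n(U,\OO_U)=0$. Combined with (i) and the characterization of affine finite spaces, this shows $U$ is affine.

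The step I expect to be the main obstacle is (ii). One must be sure that the positive cohomology of $\OO_U$ is genuinely controlled by a quasi-coherent sheaf on $U_x$, i.e. that $\mathbb R j_*\OO_U\in D_{qc}(U_x)$, which rests on the nontrivial fact that open immersions into schematic spaces are schematic; and one must keep track of the fact that the faithful flatness is only available over $\OO_X(U)$ and not over $\OO_{X,x}$, which is exactly why the (harmless) reduction from $\otimes_{\OO_{X,x}}$ to $\otimes_{\OO_X(U)}$ is needed. A more computational alternative for (ii) works with the standard resolution $C^\bullet\OO_U$: its terms $\prod_{x_n>\dots>x_0}\OO_{X,x_n}$ are flat over $\OO_X(U)$, the augmentation $\OO_X(U)\to(C^0\OO_U)(U)$ is precisely the given faithfully flat map, and one verifies that the augmented complex becomes exact after $-\otimes_{\OO_X(U)}B$; there the analogous difficulty is identifying $\OO_{X,x_n}\otimes_{\OO_X(U)}\OO_{X,x'}$, for which one uses that $\OO_X(U)\to\OO_{X,x'}$ is a flat epimorphism (an epimorphism because it factors the flat epimorphism $r_{xx'}$, and flat because $\OO_{X,x'}$ is a direct summand of the flat $\OO_X(U)$-module $B$).
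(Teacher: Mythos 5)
Your proposal is correct and follows essentially the same route as the paper: the forward direction is Proposition \ref{proposition characterization of affines}, generation by global sections comes from the Extension Theorem applied to $U\hookrightarrow U_x$ with $U_x$ affine, and acyclicity is extracted from the faithful flatness hypothesis. Your primary argument for acyclicity (quasi-coherence of $\mathbb{R}^n j_*\OO_U$ on the affine $U_x$, followed by the reduction from $\otimes_{\OO_{X,x}}$ to $\otimes_{\OO_X(U)}$ so that faithful flatness applies) is a clean and valid way of carrying out what the paper only sketches as ``an easy computation using the standard resolution,'' and your alternative via the standard resolution is precisely that sketched computation.
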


We will denote by $\mathbf{SchFin}$ the category whose objects are schematic finite spaces and whose morphisms are schematic morphisms. From now on we will always assume that we are working in this category, unless explicitly mentioned otherwise.

\begin{theorem}
$\mathbf{SchFin}$ has finite fibered products. 
\end{theorem}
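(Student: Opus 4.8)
The plan is to construct the fibered product inside the category $\mathbf{RS}$ of ringed spaces and then to verify that it, together with the relevant structure maps, remains inside $\mathbf{SchFin}$; iterating the binary case then yields all finite fibered products. Given schematic morphisms $f\colon X\to S$ and $g\colon Y\to S$, let $X\times_S Y$ be the ringed space whose underlying space is $\{(x,y):f(x)=g(y)\}$ with the induced order (the fibered product of the underlying finite posets, computed as in $\mathbf{Top}$, the full subcategory of finite $T_0$ spaces being closed under finite limits), and whose structure sheaf is the sheafification of $p_X^{-1}\OO_X\otimes_{q^{-1}\OO_S}p_Y^{-1}\OO_Y$, with $p_X,p_Y,q$ the evident continuous maps to $X,Y,S$. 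Since stalks commute with inverse images, tensor products and sheafification, the stalk at $(x,y)$ is $\OO_{X,x}\otimes_{\OO_{S,s}}\OO_{Y,y}$ with $s:=f(x)=g(y)$, and the restriction maps are induced by those of $\OO_X,\OO_Y,\OO_S$; equivalently, viewed as a functor on the poset $X\times_S Y$, $\OO_{X\times_S Y}$ is $(x,y)\mapsto\OO_{X,x}\otimes_{\OO_{S,s}}\OO_{Y,y}$. This is the fibered product in $\mathbf{RS}$, so the universal property among ringed spaces is free, and it remains to prove: (i) $X\times_S Y\in\mathbf{SchFin}$; (ii) the projections $p_X,p_Y$ are schematic morphisms; (iii) for any $Z\in\mathbf{SchFin}$ with schematic $u\colon Z\to X$, $v\colon Z\to Y$ and $fu=gv$, the mediating $\mathbf{RS}$-morphism $w=(u,v)\colon Z\to X\times_S Y$ is schematic.

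First I would check that $X\times_S Y$ is a finite space, i.e.\ that its restriction maps are flat. Fix $(x,y)\le(x',y')$ and put $s=f(x)=g(y)$, $s'=f(x')=g(y')$, so $s\le s'$. Base-changing $\OO_{X,x}\otimes_{\OO_{S,s}}\OO_{Y,y}$ first along $r^X_{xx'}$ and then along $r^Y_{yy'}$ gives $\OO_{X,x'}\otimes_{\OO_{S,s}}\OO_{Y,y'}$; since $X,Y,S$ are schematic, Proposition \ref{prop restrictions are flat epic} says $r^S_{ss'}\colon\OO_{S,s}\to\OO_{S,s'}$ is a flat ring epimorphism, so the canonical surjection $\OO_{X,x'}\otimes_{\OO_{S,s}}\OO_{Y,y'}\twoheadrightarrow\OO_{X,x'}\otimes_{\OO_{S,s'}}\OO_{Y,y'}$ is an isomorphism (over the target of a ring epimorphism the two tensor products agree). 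Hence the restriction $\OO_{X\times_S Y,(x,y)}\to\OO_{X\times_S Y,(x',y')}$ is the composite of two base changes of the flat ring epimorphisms $r^X_{xx'}$ and $r^Y_{yy'}$; in particular it is a flat ring epimorphism, so $X\times_S Y$ is a finite space meeting the necessary condition of Proposition \ref{prop restrictions are flat epic}.

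The crux is (i), the schematicity of $X\times_S Y$. As this is a local condition (a finite space is schematic iff every minimal open $U_z$ is) and $U_{(x,y)}=U_x\times_{U_s}U_y$ — where $U_x,U_y,U_s$ are minimal opens of $X,Y,S$, which are affine and schematic, $f$ sends $U_x$ into $U_s$, $g$ sends $U_y$ into $U_s$, and $f|_{U_x}\colon U_x\to U_s$, $g|_{U_y}\colon U_y\to U_s$ are schematic (restriction of a schematic morphism along an open immersion, which is schematic by the Extension Theorem \ref{theorem extension of quasicoherent sheaves}, followed by corestriction to an open subset of the target) — one reduces to the case in which $X,Y,S$ are affine schematic and $f,g$ schematic. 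In this setting, writing $R=\OO_S(S)$, $A=\OO_X(X)$, $B=\OO_Y(Y)$, I would first show that $X\times_S Y$ is affine with ring $A\otimes_R B$: using Proposition \ref{proposition characterization of affines} for $X$ and $Y$ and collapsing tensor products along the ring epimorphisms $R\to\OO_{S,s}$ as above, the map $A\otimes_R B\to\prod_{(x,y)}\OO_{X,x}\otimes_{\OO_{S,s}}\OO_{Y,y}$ is faithfully flat, so affineness follows from Corollary \ref{corollary caracterizacion afines}. One then verifies the cohomological isomorphisms of Definition \ref{definition schematic}; this reduces to a cohomological base-change identity $\mathbb{R}\Gamma(V\times_S W,\OO)\simeq\mathbb{R}\Gamma(V,\OO_V)\otimes^{\mathbb{L}}_R\mathbb{R}\Gamma(W,\OO_W)$ for opens $V\subseteq X$, $W\subseteq Y$, obtained by tensoring the standard resolutions \eqref{equation standard resolution} of $\OO_V$ and $\OO_W$ over $R$ — the flatness of the previous step ensuring this complex really computes $\mathbb{R}\Gamma(V\times_S W,\OO)$ — and feeding in $V=U_x\cap U_{x'}$, $W=U_y\cap U_{y'}$ and invoking the schematicity of $X,Y,S$ produces exactly the required isomorphisms.

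Finally, (ii) and (iii) are further applications of the same locality principle together with the affine analysis of (i). For (ii), the schematicity of $p_X$ (and symmetrically $p_Y$) reduces, by locality on the source and the now-established schematicity of the target $X$, to the maps $U_{(x,y)}\to U_x$, settled by the base-change identity (note $p_X^{-1}(U_x)=U_x\times_S Y$, which is affine in the reduced setting). For (iii), one has $w^{-1}(U_{(x,y)})=u^{-1}(U_x)\cap v^{-1}(U_y)$; since the target is schematic, schematicity of $w$ can be tested on the minimal opens of $Z$ against the cover $\{U_{(x,y)}\}$ of $X\times_S Y$, where it follows from the schematicity of the composites $p_Xw=u$, $p_Yw=v$ and of the projections. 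Uniqueness of the mediating morphism is inherited from $\mathbf{RS}$. The one genuine obstacle is (i): the rest is bookkeeping with the locality of schematicity, whereas (i) rests on the affine-case base-change/Künneth computation, which is precisely what the flat-epimorphism structure of the restriction maps of schematic finite spaces—established in the second step—makes possible.
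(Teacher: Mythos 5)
Your construction of the candidate object agrees with the one the paper records (the paper itself only describes the construction and defers the proof to \cite{Fernando schemes}): the topological fibered product with stalks $\OO_{X,x}\otimes_{\OO_{S,s}}\OO_{Y,y}$, and $A\otimes_RB$ as global sections in the affine case. Your second step is also correct: since $r^S_{ss'}$ is a ring epimorphism one has $\OO_{X,x'}\otimes_{\OO_{S,s}}\OO_{Y,y'}\simeq\OO_{X,x'}\otimes_{\OO_{S,s'}}\OO_{Y,y'}$, so the restriction maps of $X\times_SY$ are composites of base changes of the flat epimorphisms $r^X_{xx'}$, $r^Y_{yy'}$, hence flat epimorphisms; and the reduction of (i) to the affine case via $U_{(x,y)}=U_x\times_{U_s}U_y$ and the locality of schematicity is sound.

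The affine case, however --- which you correctly identify as the crux --- is where the argument breaks. The identity $\mathbb{R}\Gamma(V\times_SW,\OO)\simeq\mathbb{R}\Gamma(V,\OO_V)\otimes^{\mathbb{L}}_R\mathbb{R}\Gamma(W,\OO_W)$ is false: already for one-point spaces $V=(\star,A)$, $W=(\star,B)$, $S=(\star,R)$ the left-hand side is $A\otimes_RB$ concentrated in degree $0$, while the right-hand side carries $\mathrm{Tor}^R_i(A,B)$ in degree $-i$; the comorphisms $f^\#_x\colon\OO_{S,s}\to\OO_{X,x}$ of a schematic morphism are not flat in general (e.g.\ the finite model of a closed immersion of affine schemes), so these Tor modules do not vanish. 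What is actually needed is the underived statement $H^{>0}(U_x\times_{U_s}U_y,\OO)=0$ and $H^0=\OO_{X,x}\otimes_{\OO_{S,s}}\OO_{Y,y}$, and ``tensoring the standard resolutions'' does not deliver it: the terms of $C^\bullet\OO_V$ are not flat over $R$ (the flatness you established concerns the restriction maps of $X\times_SY$, not the $R$-module structure of the $\OO_{X,x_i}$), and the tensor product of the two standard resolutions is not the standard resolution of the product poset, since chains in $V\times_SW$ do not decompose into pairs of chains (an Eilenberg--Zilber issue you would have to resolve). In addition, the appeal to Corollary \ref{corollary caracterizacion afines} to obtain affineness is circular: that corollary is stated for an open subset of a space already known to be schematic, and its proof invokes the Extension Theorem on the ambient space --- which here is $X\times_SY$ itself, whose schematicity is precisely what you are trying to establish. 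So the acyclicity of the fibered product of affine schematic spaces, which is exactly the non-trivial content that the paper's remark after the theorem warns about, is not actually proved by the sketch.
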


Given $f:X\longrightarrow Z$, $g:Y\longrightarrow Z$, their fibered product $X\times_ZY$ is the topological fibered product equipped with the sheaf of rings such that for every $(x, y)\in X\times_ZY$ with image $z$, $\OO_{X\times_ZY, (x, y)}=\OO_{X, x}\otimes_{\OO_{Z, z}}\OO_{Y, y}$. 
Additionally, if $X, Y, Z$ are affine, then the fibered product is also affine and $\OO_{X\times_ZY}(X\times_ZY)\simeq \OO_X(X)\otimes_{\OO_Z(Z)}\OO_Y(Y)$.

\begin{remark}
This result is non-trivial and strongly depends on the conditions of schematicity. The fibered product of more general finite spaces (flat restrictions), is not necessarily finite.
\end{remark}

\textit{A priori}, the notions of affine spaces and schematic morphisms seem unrelated, but there is synergy between them. We will refer to \cite{Fernando schemes} when additional results are needed. The most relevant family of affine schematics morphisms are \textit{qc-isomorphisms}.
\begin{definition}
A morphism $f:X\longrightarrow Y$ in $\mathbf{SchFin}$ is said to be a qc-isomorphism (where <<qc>> stands for quasi-coherent) if it is affine and $f_*\OO_X\simeq \OO_Y$.
\end{definition}
Clearly, if $S$ is a scheme any two different finite models of it, $X$ and $Y$, are qc-isomorphic (there is a natural morphism determined by which covering is \textit{thinner} in the sense of the intersections $U^s$, i.e. by the inclusion between the sublattices they generate).
\begin{proposition}\label{prop cohomological characterization qciso}
If $f:X\longrightarrow Y$ is schematic, then it is a qc-isomorphism if and only if it induces an equivalence of categories $(f_*, f^*):\mathbf{Qcoh}(X)\simeq \mathbf{Qcoh}(Y)$ and $\mathrm{R}^if_*\M=0$ for $\M\in\mathbf{Qcoh}(X)$ and $i>0$ (i.e. direct images preserve cohomology of quasi-coherent sheaves).
\begin{proof}
The <<only if>> part is proven in \cite{Fernando schemes}. For the converse: from the equivalence of categories we have $f_*f^*\OO_Y\simeq \OO_Y$, so $f_*\OO_X\simeq \OO_Y$. To prove that $f^{-1}(U_y)$ is affine for every $y\in Y$, we have to check that it is acyclic and that taking global sections induces an equivalence of categories. The first part follows immediately from the condition $\mathrm{R}^if_*\OO_X=0$ for $i>0$. The second part follows from the fact that $U_y$ is affine, that we have $\OO_{Y, y}\simeq \OO_X(f^{-1}(U_y))$ by the previous argument and that the equivalence of the hypothesis restricts to $\mathbf{Qcoh}(f^{-1}(U_y))\simeq \mathbf{Qcoh}(U_y)$ after applying  the extension theorem for quasi-coherent sheaves (Theorem \ref{theorem extension of quasicoherent sheaves}). 
\end{proof}
\end{proposition}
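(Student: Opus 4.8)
Since the ``only if'' direction is established in \cite{Fernando schemes}, the plan is to prove the converse. So assume $f$ is schematic, that $(f_*,f^*)\colon\mathbf{Qcoh}(X)\simeq\mathbf{Qcoh}(Y)$ is an equivalence of categories, and that $\mathrm{R}^if_*\M=0$ for every $\M\in\mathbf{Qcoh}(X)$ and $i>0$. I must then verify the two defining properties of a qc-isomorphism: that $f_*\OO_X\simeq\OO_Y$, and that $f$ is affine, i.e. that $f^{-1}(U_y)$ is affine for every $y\in Y$.

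The first property is the formal part. For a morphism of ringed spaces $f^*$ is left adjoint to $f_*$, and since this adjoint pair happens to be an equivalence it is automatically an adjoint equivalence; hence the unit $\OO_Y\to f_*f^*\OO_Y$ is an isomorphism, and as $f^*\OO_Y=\OO_X$ this reads $\OO_Y\simeq f_*\OO_X$. Taking sections over the minimal open $U_y$ then gives, for every $y\in Y$, the identification
\begin{equation*}
\OO_X(f^{-1}(U_y))=(f_*\OO_X)(U_y)=\OO_Y(U_y)=\OO_{Y,y},
\end{equation*}
which will be used below.

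Now fix $y\in Y$ and consider the open subspace $f^{-1}(U_y)\subseteq X$, which is schematic, being open in the schematic space $X$. To show it is affine I check the two conditions in the definition of affine finite space: acyclicity, and that $\Gamma$ induces an equivalence $\mathbf{Qcoh}(f^{-1}(U_y))\simeq\OO_X(f^{-1}(U_y))\text{-}\mathbf{mod}$. Acyclicity is immediate: on a finite space the stalk of $\mathrm{R}^if_*\OO_X$ at $y$ equals $\varinjlim_{V\ni y}H^i(f^{-1}(V),\OO_X)=H^i(f^{-1}(U_y),\OO_X)$ because $U_y$ is the smallest open containing $y$, so the hypothesis forces $H^i(f^{-1}(U_y),\OO_X)=0$ for $i>0$. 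For the equivalence, write $f_y\colon f^{-1}(U_y)\to U_y$ for the restriction of $f$ and $j\colon f^{-1}(U_y)\hookrightarrow X$, $j'\colon U_y\hookrightarrow Y$ for the open immersions, so that $f\circ j=j'\circ f_y$ over a Cartesian square. By the Extension Theorem (Theorem \ref{theorem extension of quasicoherent sheaves}), $j_*$ and $j'_*$ are fully faithful on quasi-coherent sheaves, with essential images the quasi-coherent $\mathcal G$ satisfying $\mathcal G\simeq j_*j^*\mathcal G$ (resp. for $j'$). Functoriality of pushforward gives $f_*j_*=j'_*f_{y*}$, so $f_*$ carries the essential image of $j_*$ into that of $j'_*$; for the reverse inclusion, if $f_*\mathcal N\simeq j'_*\mathcal G$ then invertibility of the counit yields $\mathcal N\simeq f^*j'_*\mathcal G$, and the base-change isomorphism $f^*j'_*\simeq j_*f_y^*$ for the square above (available here because $j'$ is flat and all sheaves involved are computed stalkwise by tensor products) places $\mathcal N$ in the essential image of $j_*$. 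Hence $(f_{y*},f_y^*)$ restricts to an equivalence $\mathbf{Qcoh}(f^{-1}(U_y))\simeq\mathbf{Qcoh}(U_y)$; composing it with the equivalence $\mathbf{Qcoh}(U_y)\simeq\OO_{Y,y}\text{-}\mathbf{mod}$ coming from affineness of $U_y$ (which holds since $Y$ is schematic) and with the identification $\OO_{Y,y}=\OO_X(f^{-1}(U_y))$ of the previous paragraph, together with the acyclicity just proved, shows that $f^{-1}(U_y)$ is affine. Thus $f$ is affine and $f_*\OO_X\simeq\OO_Y$, so $f$ is a qc-isomorphism.

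The step I expect to be the main obstacle is exactly the claim that the global equivalence descends to the open pieces, i.e. that under $f_*$ the essential image of $j_*$ matches that of $j'_*$; concretely, establishing (or citing from \cite{Fernando schemes}) the base-change isomorphism $f^*j'_*\simeq j_*f_y^*$ for the Cartesian square $f\circ j=j'\circ f_y$. Everything else --- the identity $f_*\OO_X\simeq\OO_Y$, the acyclicity of $f^{-1}(U_y)$, and the ring and module identifications that conclude affineness --- is formal once that compatibility is in hand.
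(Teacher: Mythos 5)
Your proof is correct and follows essentially the same route as the paper: deduce $f_*\OO_X\simeq\OO_Y$ from the adjoint equivalence, get acyclicity of $f^{-1}(U_y)$ from $\mathrm{R}^if_*\OO_X=0$, and use the Extension Theorem to restrict the global equivalence to $\mathbf{Qcoh}(f^{-1}(U_y))\simeq\mathbf{Qcoh}(U_y)$. The only difference is that you spell out the base-change compatibility $f^*j'_*\simeq j_*f_y^*$ that the paper leaves implicit in the phrase ``after applying the extension theorem.''
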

\begin{proposition}[Stein Factorization] \label{stein factorization}
Let $f:X\longrightarrow Y$ be a morphism in $\mathbf{SchFin}$, then $f=\rho\circ f'$ with $\rho:Y':=(Y, f_*\OO_X)\longrightarrow Y$ an affine morphism (the topological identity) and $f'$ such that $f'_*\OO_X=\OO_{Y'}$. Additionally, if $f$ is affine, then $f'$ is a qc-isomorphism.
\end{proposition}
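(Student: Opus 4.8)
The plan is to manufacture $Y'$, $\rho$ and $f'$ directly from the structural data of $f$ and then verify that the objects and arrows involved really lie in $\mathbf{SchFin}$. Write $f=(f_{\mathrm{top}},f^{\#})$, with comorphism $f^{\#}\colon\OO_Y\to f_*\OO_X$. Set $Y':=(Y,f_*\OO_X)$, let $\rho\colon Y'\to Y$ be the topological identity equipped with $f^{\#}$, and let $f'\colon X\to Y'$ be $f_{\mathrm{top}}$ equipped with the identity comorphism $\OO_{Y'}=f_*\OO_X\to f'_*\OO_X=f_*\OO_X$. Then $f=\rho\circ f'$ is automatic: on spaces it reads $f_{\mathrm{top}}=\mathrm{id}\circ f_{\mathrm{top}}$, and on structure sheaves $(\rho\circ f')^{\#}=\rho_*(f'^{\#})\circ\rho^{\#}=\mathrm{id}\circ f^{\#}=f^{\#}$. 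Likewise $f'_*\OO_X=f_*\OO_X=\OO_{Y'}$ holds tautologically since $f'$ and $f$ have the same underlying map. Thus all three asserted identities are formal, and the real content is that $Y'\in\mathbf{SchFin}$ and that $\rho$ and $f'$ are schematic morphisms.

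First I would check that $Y'$ is a finite space. Since $f$ is schematic and $\OO_X\in\mathbf{Qcoh}(X)$, one has $\mathbb{R}f_*\OO_X\in D_{qc}(Y)$, so its zeroth cohomology $f_*\OO_X=\OO_{Y'}$ is quasi-coherent on $Y$; hence for $y\le y'$ the restriction $(\OO_{Y'})_y\to(\OO_{Y'})_{y'}$ is, up to the canonical isomorphism, the base change $(\OO_{Y'})_y\to(\OO_{Y'})_y\otimes_{\OO_{Y,y}}\OO_{Y,y'}$ of the flat morphism $\OO_{Y,y}\to\OO_{Y,y'}$, and is therefore flat. To see $Y'$ is schematic it then suffices, schematicity being local, to check that each $U_y^{Y'}$ is schematic; and $U_y^{Y'}$ has a minimum element $y$, so its global sections functor is exact (it is evaluation at $y$), which makes $U_y^{Y'}$ affine, hence schematic. (Equivalently, quasi-coherence of $\OO_{Y'}$ on $Y$ identifies $U_y^{Y'}$ with the fibered product $U_y\times_{(\star,\OO_{Y,y})}(\star,(f_*\OO_X)_y)$ of schematic spaces, and one invokes the existence of fibered products in $\mathbf{SchFin}$.)

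Next I would verify that $\rho$ and $f'$ are schematic. For $\rho$: $\rho^{-1}(U_y)=U_y^{Y'}$ is affine by the previous step, and $\rho$ preserves quasi-coherence, since for $\M\in\mathbf{Qcoh}(Y')$ and $y\le y'$ we get $\M_y\otimes_{\OO_{Y,y}}\OO_{Y,y'}=\M_y\otimes_{\OO_{Y',y}}\bigl(\OO_{Y',y}\otimes_{\OO_{Y,y}}\OO_{Y,y'}\bigr)=\M_y\otimes_{\OO_{Y',y}}\OO_{Y',y'}=\M_{y'}$, using quasi-coherence of $\M$ on $Y'$ and of $\OO_{Y'}$ on $Y$; so $\rho$ is an affine morphism, hence schematic. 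For $f'$: for all $x\in X$ and $y\in Y$ one has $\Gamma_{f'}^{-1}(U_x\times U_y^{Y'})=U_x\cap f^{-1}(U_y)=\Gamma_{f}^{-1}(U_x\times U_y)$, so the cohomology groups entering the schematicity conditions for $f'$ are exactly those entering the conditions for $f$; the conditions ``in the $X$-direction'' are literally those of $f$, and those ``in the $Y'$-direction'' reduce to the ``$Y$-direction'' conditions of $f$ after substituting $\OO_{Y',y'}=\OO_{Y',y}\otimes_{\OO_{Y,y}}\OO_{Y,y'}$. Hence $f$ schematic forces $f'$ schematic, and $f=\rho\circ f'$ is a factorization in $\mathbf{SchFin}$ with $\rho$ affine and $f'_*\OO_X=\OO_{Y'}$.

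Finally, if $f$ is affine then $f'^{-1}(U_y^{Y'})=f^{-1}(U_y)$ is affine and $f'$ preserves quasi-coherence (being schematic), so $f'$ is an affine morphism; together with $f'_*\OO_X=\OO_{Y'}$ this says precisely that $f'$ is a qc-isomorphism. The step I expect to be the main obstacle is the schematicity of $Y'$ and of $f'$: no cohomology groups actually change, but one must track carefully that the restriction morphisms of $f_*\OO_X$ are the claimed base changes and that the ``$Y'$-direction'' schematicity conditions genuinely collapse to the ``$Y$-direction'' ones of $f$ under these identifications; everything else is formal manipulation of the adjunction $f^{-1}\dashv f_*$.
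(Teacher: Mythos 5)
Your overall strategy is the right one, and most of the verification is sound: the formal factorization, the flatness of the restriction maps of $f_*\OO_X$ via quasi-coherence, the reduction of the schematicity of $f'$ in the $Y'$-direction to that of $f$ via the base-change identity $M\otimes_{\OO_{Y',y}}\bigl(\OO_{Y',y}\otimes_{\OO_{Y,y}}\OO_{Y,y'}\bigr)\simeq M\otimes_{\OO_{Y,y}}\OO_{Y,y'}$, and the final step identifying $f'$ as a qc-isomorphism when $f$ is affine are all correct. Note also that the first half of what you are proving by hand (that $(Y,\A)$ is schematic and $(Y,\A)\to(Y,\OO_Y)$ is an affine schematic morphism for any quasi-coherent algebra $\A$) is exactly Proposition \ref{proposition relative spectrum} applied to $\A=f_*\OO_X$, which is quasi-coherent because $f$ is schematic; citing it would shorten your argument considerably.

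There is, however, one genuinely false step: ``$U_y^{Y'}$ has a minimum element, hence is affine, hence schematic.'' For finite spaces, affine does \emph{not} imply schematic. Affineness (acyclicity plus $\mathbf{Qcoh}\simeq$ modules over global sections) is automatic for any finite space with flat restrictions and a minimum element, but schematicity is a condition on the intersections $U_{y_1}\cap U_{y_2}$ of \emph{incomparable} points lying above the minimum, and it can fail: take the poset $\{y,a,b,c,d\}$ with $y<a,b$ and $a,b<c,d$, all stalks equal to a field $k$ and all restrictions the identity; this space is affine (it has a minimum) but $H^0(U_a\cap U_b,\OO)\otimes_{\OO_a}\OO_c=k\times k\not\simeq k=H^0(U_c\cap U_b,\OO)$, so it is not schematic. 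Your parenthetical alternative --- identifying $U_y^{Y'}$ with the fibered product $U_y\times_{(\star,\OO_{Y,y})}(\star,(f_*\OO_X)_y)$ in $\mathbf{SchFin}$, using quasi-coherence of $f_*\OO_X$ --- is the correct repair (or again, invoke Proposition \ref{proposition relative spectrum} directly), but it should be promoted from an aside to the actual argument, since the ``affine hence schematic'' route does not go through.
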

\begin{theorem}
The set of qc-isomorphisms is a multiplicative system of arrows in $\mathbf{SchFin}$, hence there is a localization or homotopy category defined by them, which we denote $\mathbf{SchFin}_{qc}$.
\end{theorem}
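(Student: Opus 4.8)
The statement asserts that the family of qc-isomorphisms admits a calculus of fractions (a multiplicative system) in $\mathbf{SchFin}$, so that the localization $\mathbf{SchFin}_{qc}$ exists. The plan is to verify the three standard axioms (identities are qc-isomorphisms; qc-isomorphisms are stable under composition; and the Ore/pushout conditions), with particular care for the one genuinely substantive point, which is stability under base change. That qc-isomorphisms contain identities and are stable under composition is essentially bookkeeping: affineness composes (a composite of affine morphisms is affine, since the preimage of a minimal open set under the composite is affine and quasi-coherence is preserved by composing direct images), and if $f_*\OO_X\simeq\OO_Y$ and $g_*\OO_Y\simeq\OO_Z$ then $(g\circ f)_*\OO_X\simeq g_*\OO_Y\simeq\OO_Z$. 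The identity is trivially affine with $\mathrm{Id}_*\OO_X=\OO_X$.

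The heart of the argument is the Ore condition: given a qc-isomorphism $s\colon X'\to X$ and an arbitrary schematic morphism $f\colon Y\to X$, one must produce a qc-isomorphism $t\colon Y'\to Y$ and a morphism $g\colon Y'\to X'$ with $s\circ g = f\circ t$, and dually for the left fraction condition. The natural candidate is the fibered product $Y' := Y\times_X X'$, which exists in $\mathbf{SchFin}$ by the theorem on fibered products quoted above, with $t$ the projection to $Y$. So the key step is: \emph{the pullback of a qc-isomorphism along any schematic morphism is a qc-isomorphism}. Using Proposition \ref{prop cohomological characterization qciso}, it suffices to show that $t$ is affine and that $t_*$ induces an equivalence $\mathbf{Qcoh}(Y')\simeq\mathbf{Qcoh}(Y)$ with $\mathrm{R}^i t_*=0$ on quasi-coherent sheaves. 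Affineness is checked on minimal open sets: for $y\in Y$ with image $x=f(y)$, one has $t^{-1}(U_y)=U_y\times_{U_x}s^{-1}(U_x)$, and since $s$ is affine $s^{-1}(U_x)$ is affine, $U_x$ and $U_y$ are affine ($X,Y$ schematic), so the fibered product of affines is affine by the theorem quoted above. The equivalence of quasi-coherent categories and the vanishing of higher direct images should follow from base-change: at the stalk level, $\OO_{Y',(y,x')}=\OO_{Y,y}\otimes_{\OO_{X,x}}\OO_{X',x'}$, and the condition $s_*\OO_{X'}\simeq\OO_X$ together with $\mathrm{R}^i s_*=0$ propagates through the flat base change $\OO_{X,x}\to\OO_{Y,y}$ (flatness of restrictions, Proposition \ref{prop restrictions are flat epic}, and the standard resolution computation for cohomology) to give $t_*\OO_{Y'}\simeq\OO_Y$ and $\mathrm{R}^i t_*\OO_{Y'}=0$; the equivalence on all of $\mathbf{Qcoh}$ then follows since every quasi-coherent sheaf is, locally on affines, extended from global sections.

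Finally one checks the cancellation axiom: if $g\circ s = g\circ s'$ (or $s\circ g = s'\circ g$) for a qc-isomorphism $s$, then $g$ factors through a qc-isomorphism appropriately; this follows formally once the pullback statement is in hand, using the Stein factorization (Proposition \ref{stein factorization}) to replace $g$ by its qc-isomorphism part and absorbing the difference. Granting these, the general nonsense of Gabriel--Zisman localization produces $\mathbf{SchFin}_{qc}$ together with the localization functor inverting exactly the qc-isomorphisms. The main obstacle I anticipate is precisely the stability under base change — specifically, controlling the higher cohomology $\mathrm{R}^i t_*$ after pulling back, since the equivalence of quasi-coherent categories alone does not automatically carry cohomological information; this is where one must genuinely use that qc-isomorphisms are \emph{affine} schematic morphisms (so that $\mathrm{R}^i s_*\OO_{X'}=0$) and combine it with flatness of the structure restriction maps and the explicit standard resolution (\ref{equation standard resolution}) to commute the relevant derived functors with the flat base change.
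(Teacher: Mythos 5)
The paper does not actually prove this theorem; it is imported from \cite{Fernando schemes} (the section explicitly defers omitted proofs there), so there is no in-text argument to compare against. Your architecture is the standard one and, I believe, the one used in the cited source: check identities and composition, reduce the Ore condition to stability of qc-isomorphisms under base change along arbitrary schematic morphisms, and verify that base change via the fibered product $Y\times_X X'$. Your treatment of that key step is essentially right, and in fact slightly easier than you suggest: once $t^{-1}(U_y)=U_y\times_{U_x}s^{-1}(U_x)$ is known to be affine, the vanishing $\mathrm R^it_*\mathcal M=0$ for quasi-coherent $\mathcal M$ is automatic (quasi-coherent sheaves on affine finite spaces are acyclic), and $t_*\OO_{Y'}\simeq\OO_Y$ follows directly from the global-sections formula $\OO_{Y'}(t^{-1}(U_y))\simeq\OO_{Y,y}\otimes_{\OO_{X,x}}\OO_{X'}(s^{-1}(U_x))\simeq\OO_{Y,y}\otimes_{\OO_{X,x}}\OO_{X,x}$ for the fibered product of affines; one then concludes $t$ is a qc-isomorphism directly from the definition rather than via Proposition \ref{prop cohomological characterization qciso}.

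The one genuine soft spot is the cancellation axiom. As written, your justification (``use the Stein factorization to replace $g$ by its qc-isomorphism part and absorb the difference'') is not an argument: in the cancellation axiom $g$ is an arbitrary morphism, Stein factorization produces an affine morphism and a qc-isomorphism only when $g$ is itself affine, and it is not clear what ``absorbing the difference'' means. The standard way to deduce cancellation from base-change stability is the equalizer trick: given $f,g\colon X\to Y$ with $s\circ f=s\circ g$ for a qc-isomorphism $s\colon Y\to Z$, form $(f,g)\colon X\to Y\times_ZY$ and pull back the relative diagonal $\Delta_s\colon Y\to Y\times_ZY$ along it; the resulting $t\colon X'\to X$ equalizes $f$ and $g$, and it is a qc-isomorphism provided $\Delta_s$ is. So you must additionally check that the relative diagonal of a qc-isomorphism is a qc-isomorphism, which amounts to showing that for $y,y'$ in the affine open $s^{-1}(U_z)$ one has $U_y\cap U_{y'}$ affine with $\OO_Y(U_y\cap U_{y'})\simeq\OO_{Y,y}\otimes_{\OO_{Z,z}}\OO_{Y,y'}$; this is exactly the kind of statement schematicity is designed to provide (compare the argument in Lemma \ref{lemma binary relation schematic points}), but it is a real verification, not a formality. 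With that supplied, your proof plan is complete.
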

 If $\textbf{Schemes}^{qc-qs}$ denotes the category of quasi-compact and quasi-separated schemes, this localization allows us to define the following diagram:
\begin{align*}
\xymatrix{ 
\textbf{Schemes}^{qc-qs}\ar[rr]^{\Psi}\ar[rd]_{Forget} & & \mathbf{SchFin}_{qc}\ar[ld]^{\mathfrak{Spec}}\\
& \textbf{LRS} &
}
\end{align*}
where all the functors involved are {fully faithful}. Cohomology, categories of quasi-coherent sheaves (and other objects we shall not be concerned with here, like derived categories) are stable under qc-isomorphisms, and thus can be studied picking just one representative from each equivalence class. We will show later in this paper that the \'etale fundamental group verifies this property.

The following proposition completes Proposition \ref{stein factorization}.

\begin{proposition}[Relative spectrum]\label{proposition relative spectrum}
Let $X$ be a schematic finite space and $\A$ a quasi-coherent sheaf of algebras. Then $(X, \A)$ is schematic and the natural morphism $(X, \A)\to (X, \OO_X)$ is schematic. This stablishes an equivalence between the category of quasi-coherent algebras on $X$ and the category of affine schematic morphisms to $X$ modulo qc-isomorphisms.
\end{proposition}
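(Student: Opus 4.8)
The plan is to verify the three assertions in order: $(X,\A)$ is schematic, the natural morphism $\pi_\A\colon(X,\A)\to(X,\OO_X)$ (the topological identity together with the structure map $\OO_X\to\A$) is schematic and affine, and $\A\mapsto\pi_\A$ is an equivalence. First I would establish schematicity. Since $\A$ is quasi-coherent, $\A_{x'}\simeq\A_x\otimes_{\OO_{X,x}}\OO_{X,x'}$ for $x\le x'$, so each restriction $\A_x\to\A_{x'}$ is flat, being a base change of the flat map $r_{xx'}$; hence $(X,\A)$ is a finite space (stalkwise Noetherian under our standing hypotheses, which one may ensure by taking $\A$ of finite type). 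The key input is the Extension Theorem (Proposition~\ref{theorem extension of quasicoherent sheaves}): for every open $V\subseteq X$ the inclusion $j_V\colon V\hookrightarrow X$ is schematic, so $\mathbb{R}j_{V*}(\A|_V)\in D_{qc}(X)$; taking stalks, this says
\[
H^i(V\cap U_x,\A)\otimes_{\OO_{X,x}}\OO_{X,x'}\;\overset{\sim}{\longrightarrow}\;H^i(V\cap U_{x'},\A)\qquad(x\le x').
\]
Now $H^i(U_x\cap U_y,\A)$ carries a natural $\A_x$-module (resp.\ $\A_y$-module) structure via the restriction $\A_x=\A(U_x)\to\A(U_x\cap U_y)$ (resp.\ $\A_y=\A(U_y)\to\A(U_x\cap U_y)$), compatible with its $\OO_{X,x}$- and $\OO_{X,y}$-module structures; using $\A_{x'}=\A_x\otimes_{\OO_{X,x}}\OO_{X,x'}$ one gets $H^i(U_x\cap U_y,\A)\otimes_{\A_x}\A_{x'}=H^i(U_x\cap U_y,\A)\otimes_{\OO_{X,x}}\OO_{X,x'}$. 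Feeding this into the displayed isomorphism with $V=U_y$ and with $V=U_x$ yields precisely the two isomorphisms that Definition~\ref{definition schematic} demands of $\pi_\A$; running the same argument with both variables in $X$ shows $\mathrm{Id}_{(X,\A)}$ is schematic, i.e.\ $(X,\A)\in\mathbf{SchFin}$.

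Next, $\pi_\A$ is affine. Its fibre over $y$ is $(U_y,\A|_{U_y})$, and $U_y$ is affine (a minimal open set), so $H^i(U_y,\A)=0$ for $i>0$ while $\OO_X(U_y)=\OO_{X,y}$ and $\A(U_y)=\A_y$. A quasi-coherent $\A|_{U_y}$-module $\M$ is quasi-coherent over $\OO_{U_y}$ as well, since $\M_z\otimes_{\OO_{X,z}}\OO_{X,z'}=\M_z\otimes_{\A_z}\A_{z'}=\M_{z'}$; hence global sections identify $\mathbf{Qcoh}(U_y,\A|_{U_y})$ with the $\A_y$-modules inside $\mathbf{Qcoh}(U_y,\OO_{U_y})\simeq\OO_{X,y}\textbf{-mod}$, i.e.\ with $\A_y\textbf{-mod}$, so $(U_y,\A|_{U_y})$ is affine. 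Finally $\pi_{\A*}$ is the identity functor on sheaves and has no higher direct images (topological identity), and the same stalkwise computation shows every quasi-coherent $\A$-module is quasi-coherent over $\OO_X$; thus $\mathbb{R}\pi_{\A*}\M\in\mathbf{Qcoh}(X,\OO_X)\subseteq D_{qc}(X)$ for all $\M\in\mathbf{Qcoh}(X,\A)$, so $\pi_\A$ preserves quasi-coherence and is affine.

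For the equivalence, I would let $F$ be the functor from the opposite of the category of quasi-coherent $\OO_X$-algebras to the category $\mathcal{C}$ of affine schematic morphisms to $X$ modulo qc-isomorphisms, given by $\A\mapsto\pi_\A$ and sending an algebra map $\A\to\B$ to the topological-identity $X$-morphism $(X,\B)\to(X,\A)$; it is well defined by the first two steps. In the other direction, send $g\colon Z\to X$ to $g_*\OO_Z$: this is a quasi-coherent $\OO_X$-algebra because $g$ is affine ($\mathbb{R}g_*\OO_Z=g_*\OO_Z$, as $g^{-1}(U_y)$ is affine, hence acyclic), and it is unchanged if $g$ is precomposed with a qc-isomorphism $h$, since $h_*\OO_Z=\OO_{Z'}$; call this $G$. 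Then $G\circ F=\mathrm{Id}$ literally, as $\pi_{\A*}\OO_{(X,\A)}=\A$. Conversely, for affine $g\colon Z\to X$, the Stein factorization (Proposition~\ref{stein factorization}) writes $g=\rho\circ g'$ with $\rho\colon(X,g_*\OO_Z)\to X$ the topological identity---so $\rho=\pi_{g_*\OO_Z}=F(G(g))$---and $g'\colon Z\to(X,g_*\OO_Z)$ a qc-isomorphism, naturally in $g$; hence $g'$ is an isomorphism in $\mathcal{C}$ over $X$, giving a natural isomorphism $\mathrm{Id}\cong F\circ G$, so $F$ is an equivalence.

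The hard part will be the first step: checking Definition~\ref{definition schematic} for $\pi_\A$ and $(X,\A)$ requires controlling the groups $H^i(U_x\cap U_y,\A)$ as the points vary and reconciling the $\A$-linear with the $\OO_X$-linear tensor products---which is exactly what the Extension Theorem supplies---whereas the affineness of $\pi_\A$ is a routine stalkwise check and the equivalence is then formal, its content coming entirely from the Stein factorization already at our disposal. The only subtlety in that last step is that $\mathcal{C}$ is localized at qc-isomorphisms, so one must note that $G$ descends to the localization and that the isomorphism $g'$ from Stein factorization is natural in $g$; both are immediate.
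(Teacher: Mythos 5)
Your proposal is correct, and it supplies a complete argument where the paper itself gives none: this proposition is one of those whose proofs are deferred to \cite{Fernando schemes}, so there is no in-paper proof to compare against. Your route is the natural one and uses only tools already available in the text. The decisive observation --- that for any $\A_x$-module $M$ one has $M\otimes_{\A_x}\A_{x'}=M\otimes_{\A_x}(\A_x\otimes_{\OO_{X,x}}\OO_{X,x'})=M\otimes_{\OO_{X,x}}\OO_{X,x'}$, so the $\A$-linear base-change conditions in Definition \ref{definition schematic} for $(X,\A)$ and for $\pi_\A$ reduce to the $\OO_X$-linear quasi-coherence of $\mathbb{R}j_{V*}(\A|_V)$ for $V=U_y$ and $V=U_x$, which the Extension Theorem (Proposition \ref{theorem extension of quasicoherent sheaves}) guarantees --- is exactly the right reduction, and the affineness check and the equivalence via Stein factorization (Proposition \ref{stein factorization}) are handled correctly. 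Two small points worth making explicit if you write this up in full: first, the standing stalkwise-Noetherian hypothesis does not follow for free from $\A$ being quasi-coherent (you flag this; it holds e.g.\ for finite-type algebras, which covers every use in the paper); second, the naturality of the Stein factorization in $g$, which you assert, should be spelled out --- given $h\colon Z_1\to Z_2$ over $X$ one must check that the square formed by $g_i'\colon Z_i\to(X,g_{i*}\OO_{Z_i})$ and the topological-identity map $(X,g_{1*}\OO_{Z_1})\to(X,g_{2*}\OO_{Z_2})$ induced by $g_{2*}\OO_{Z_2}\to g_{1*}\OO_{Z_1}$ commutes, which is immediate from adjunction but is the step that makes $F$ fully faithful on the localized category rather than merely essentially surjective.
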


In particular, the subcategory of affine schematic finite spaces localized by qc-isomorphisms is \textit{equivalent} to the category of affine schemes. 

\begin{definition}
We say that a schematic finite space $X$ has \textit{open restrictions} if $r_{xy}\colon\OO_{X, x}\to \OO_{X, y}$ is of finite presentation for every $x\leq y$. This condition is equivalent to $r_{xy}^*=\Spec(r_{xy})$ being an open immersion of affine schemes by \cite[IV, 17.9.1]{EGAIV}. We denote by $\mathbf{SchemesFin}$ the subcategory of schematic finite spaces with open restrictions.
\end{definition}
\begin{remark}
Although any finite space $X$ verifying the property of <<having open restrictions>> gives rise to a scheme $\mathfrak{Spec}(X)$, \textit{schematic} finite spaces have a richer structure that forces them to have <<scheme-like>> properties. In this sense, schematic finite spaces are \textit{more} than just ordered descent data or simplicial schemes.
\end{remark}

By construction, $\mathbf{SchemesFin}_{qc}$ is the essential image of $\Psi$ and $\mathfrak{Spec}$ is an inverse of $\Psi$ when restricted to this subcategory, giving an isomorphism of categories. Finally, the essential image of $\mathfrak{Spec}\colon \mathbf{SchFin}_{qc}\to \textbf{LRS}$ is the subcategory of locally ringed spaces that are gluings of affine schemes by flat monomorphisms of schemes, which may be tentatively called <<the category of \textit{protoschemes}>>.
\smallskip

We conclude with an example of a finite schematic space that is not a finite model of a scheme.

\begin{example}\label{example:schematic-not-scheme}
Consider the poset $X=\{x, y, z\}$ such that $z>x$, $z>y$; $\OO_{X, x}=A[x]$, $\OO_{X,y}=A[x]$, $\OO_{X, z}=K(x)$, where $A$ is a domain and $K$ its fraction field. This models the gluing of two affine lines $\mathbb A=\Spec(A[x])$ along the generic point. It is a schematic finite space but not the finite model of any scheme.
\end{example}

\section{Connectedness and Geometric Points}

In this section we start to lay the groundwork for the study of finite \'etale covers on a schematic finite space. Since the topological and algebraic information contained in schematic finite spaces are not as tightly linked as in schemes (or locally ringed spaces), we need to be careful when defining notions of connectedness to be sure that they reflect the true geometry behind the descent data that our space contains.

Additionally and for obvious reasons, we need to examine the notion of <<geometric point>>, defined in a way that is analogue to the notion for schemes, \textit{a priori} different from a topos theoretic approach.

\subsection{Connectedness. Geometric schematic finite spaces.}

The order topology of a schematic finite space is linked to the combinatorics of the idea of <<choosing a covering>> rather than to the algebraic structure. We start with a list of definitions:

\begin{definition}
Let $X$ be a schematic finite space. 
\begin{itemize}

\item  We say that $X$ is \textit{top-connected} (topologically connected) if its underlying poset is connected.

\item  We say that $X$ is \textit{connected} if $\Spec(\OO_X(X))$ is connected.

\item  We say that $X$ is \textit{pw-connected} (pointwise-connected) if $\Spec(\OO_{X, x})$ is connected for every $x\in X$.

\item  We say that $X$ is \textit{well-connected} if it is connected and pointwise-connected.

\end{itemize}
\end{definition}

Recall that the prime spectrum of a non-zero ring is connected if and only if the ring has no non-trivial idempotent elements. If a schematic finite space $X$ is connected, $\mathfrak{Spec}(X)$ is connected in the usual sense, since $X$ and $\mathfrak{Spec}(X)$ have the same global sections. The empty set $\emptyset$ is not considered connected. In particular, a pw-connected space has non-zero stalks.

\begin{proposition}\label{proposition well connected is top + pw}
Let $X$ be a schematic finite space. $X$ is well-connected if and only if it is top-connected and pw-connected.
\begin{proof}
The <<only if>> part is clear noticing that connected implies top-connected. The converse amounts to proving that a connected colimit of non-empty connected topological spaces $\{X_i\}$, is connected. This is well known and follows from the fact that $X$ is endowed with the quotient topology of $\coprod_i X_i\longrightarrow X$, so if $X=C_1\amalg C_2$ for some non-empty closed and open sets $C_1$ and $C_2$, either some $X_i$ is disconnected or the underlying diagram is disconnected, which is a contradiction.
\end{proof}
\end{proposition}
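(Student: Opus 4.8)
The plan is to notice that, by definition, $X$ is well-connected exactly when it is connected and pw-connected, so the entire content of the statement is the equivalence, \emph{under the standing assumption of pw-connectedness}, between connectedness of $\Spec(\OO_X(X))$ and connectedness of the underlying poset. I would prove this through the idempotent description of connectedness: a nonzero ring $A$ has connected spectrum if and only if its only idempotents are $0$ and $1$. The one structural fact I need is the sheaf-on-a-poset description recalled in Remark~\ref{remark covariant sheaves}, namely that $\OO_X(X)$ is the inverse limit of the stalks, i.e. the subring of $\prod_{x\in X}\OO_{X,x}$ of families $(s_x)_x$ with $r_{xx'}(s_x)=s_{x'}$ whenever $x\le x'$; schematicity itself plays no role here. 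Alternatively one can pass to $\mathfrak{Spec}(X)$, which has the same ring of global sections, and invoke the standard fact that a connected colimit of nonempty connected spaces is connected; I would mention this but carry out the idempotent version since it is self-contained.

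For the implication ``top-connected and pw-connected $\Rightarrow$ connected'': given an idempotent $e=(e_x)_x\in\OO_X(X)$, each $e_x$ is an idempotent of $\OO_{X,x}$, and pw-connectedness says $\OO_{X,x}$ is a nonzero ring with connected spectrum, so $e_x\in\{0,1\}$. Since each restriction map $r_{xx'}$ is a unital homomorphism of nonzero rings it fixes $0$ and $1$ and keeps them distinct, so $x\le x'$ forces $e_x=e_{x'}$; hence $x\mapsto e_x$ is constant on each connected component of the poset, and top-connectedness makes it globally constant, giving $e\in\{0,1\}$. Also $\OO_X(X)\neq 0$, since $1=0$ there would force $1=0$ in some nonzero $\OO_{X,x}$. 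Thus $\OO_X(X)$ is nonzero with only trivial idempotents, so $\Spec(\OO_X(X))$ is connected, and $X$ is well-connected.

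For the converse: assume $X$ is well-connected, hence pw-connected, and suppose the poset decomposes as $X=X_1\amalg X_2$ with both $X_i$ nonempty. Then $\OO_X$ splits over this clopen decomposition, so $\OO_X(X)\cong\OO_X(X_1)\times\OO_X(X_2)$; both factors are nonzero because each $X_i$ has a point with nonzero stalk by pw-connectedness, so $(1,0)$ is a nontrivial idempotent and $\Spec(\OO_X(X))$ is disconnected, contradicting connectedness. Hence $X$ is top-connected, and it is pw-connected by assumption.

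The argument is essentially bookkeeping; the only places needing care are the degenerate cases (handled by the convention that the zero ring, equivalently the empty scheme, is not connected) and the identification of $\OO_X(X)$ with the limit of the stalks along the poset, which is precisely what lets idempotents propagate along the order relations and is where the whole equivalence is anchored.
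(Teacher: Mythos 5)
Your proof is correct. It is the algebraic dual of the paper's argument rather than a verbatim match: the paper passes to $\mathfrak{Spec}(X)$ and invokes the topological fact that a connected colimit of non-empty connected spaces (with the quotient topology from $\coprod_x\Spec(\OO_{X,x})$) is connected, whereas you stay with $\OO_X(X)=\varprojlim_{x\in X}\OO_{X,x}$ and chase idempotents through the limit. The two routes are equivalent under the dictionary ``idempotents of $A$ $\leftrightarrow$ clopen decompositions of $\Spec(A)$'', but yours is more self-contained: it avoids both the quotient-topology description of $\mathfrak{Spec}(X)$ and the (locally ringed space) fact that a global idempotent induces a clopen decomposition. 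You also make explicit a point the paper dismisses as ``clear'' in the only-if direction, namely that deducing top-connectedness from connectedness of $\Spec(\OO_X(X))$ genuinely needs pw-connectedness (all stalks non-zero) so that both factors of $\OO_X(X)\cong\OO_X(X_1)\times\OO_X(X_2)$ are non-zero; without that hypothesis the implication fails (e.g.\ a disjoint union with one component carrying the zero ring). That is a worthwhile clarification.
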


We are mainly interested in well-connected spaces. Next, we see that for any schematic finite space $X$, we can construct a pw-connected space $X'$ and a qc-isomorphism $X'\longrightarrow X$. If we can ensure that $X'$ is top-connected, then $X$ is equivalent to a well connected space. The idea is replacing each point of $X$ by as many points as connected components $\Spec(\OO_{X, x})$ has.
\medskip

In general, let $\pi_0\colon \mathbf{Top}\to \mathbf{Set}$ denote the connected components functor. For any ringed finite topological space $X$ and $x\leq x'\in X$, the restriction $r_{xx'}$ induces a map
\begin{align*}
\pi_0(r_{xx'}^*)\colon\pi_0(\Spec(\OO_{X, x'}))\to\pi_0(\Spec(\OO_{X, x})).
\end{align*}
\begin{definition}
For any such $X$, we define $\mathbf{pw}(X)$ as follows:
\begin{itemize}
\item As a set, $\mathbf{pw}(X)=\coprod_{x\in X}\pi_0(\Spec(\OO_{X, x}))$, which is finite because $X$ is finite.
\item $\mathbf{pw}(X)$ is endowed with the topology defined by the partial order such that, given a pair $y\in\pi_0(\Spec(\OO_{X, x}))$ and $y'\in\pi_0(\Spec(\OO_{X, x'}))$,
\begin{align*}
y\leq y' \Longleftrightarrow x\leq x' \text{ and }\pi_0(r_{xx'}^*)(y')=y.
\end{align*}
\end{itemize}

Since, for every $x\in X$, $\Spec(\OO_{X, x})=\Spec(A_1^x)\amalg...\amalg\Spec(A_{n_x}^x)$ for certain non-zero rings $A_1^x, ..., A_{n_x}^x$ with connected spectrum (note that connected components of locally Noetherian topological spaces are open), we have a bijections $$\phi_x\colon \pi_0(\Spec(\OO_{X, x}))\overset{\sim}{\to} I_x:=\{1, ..., n_x\}.$$
In other words, for every $x\leq x'$ we have maps $I_{x'}\to I_x$. For every $i\in I_{x'}$ with image $j\in I_x$, since the continuous image of a connected set is connected (so it has to be contained in a connected component), the map $r_{xx'}^*\colon\Spec(\OO_{X, x'})\to\Spec(\OO_{X, x'})$ induces a unique morphism of affine schemes $\Spec(A_i^{x'})\to \Spec(A_j^x)$, and thus a unique morphism of rings $A_j^x\to A_i^{x'}$.
\begin{itemize}
\item We endow $\mathbf{pw}(X)$ with the sheaf of rings such that, for every $y\in\pi_0(\Spec(\OO_{X, x}))$ as above,
\begin{align*}
\OO_{\mathbf{pw}(X), y}=A_{\phi_x(y)}^x,
\end{align*}
and, for every $y\leq y'$ with $y'\in\pi_0(\Spec(\OO_{X, x'}))$, 
\begin{align*}
r_{yy'}\colon A_{\phi_x(y)}^x\to A_{\phi_{x'}(y')}^{x'}
\end{align*}
is the unique ring homomorphism constructed in the previous 
discussion.
\item Finally, there is a natural surjective morphism of ringed spaces
\begin{align*}
\pi\colon\mathbf{pw}(X)=\coprod_{x\in X}\pi_0(\Spec(\OO_{X, x}))\to X
\end{align*}
sending every $y\in\pi_0(\Spec(\OO_{X, x}))$ to $x$ and such that $\pi^{\#}_y\colon \OO_{X, x}\to A^x_{\phi_x(y)}$ is the natural projection of the direct product. If $X$ is pw-connected, $\pi$ is clearly the identity. 
\end{itemize}
\end{definition}

This construction is functorial: given a morphism of ringed spaces $f\colon X\to X'$, the ring homomorphisms $f_x^\#\colon \OO_{X', f(x)}\to\OO_{X, x}$ induce natural morphisms
\begin{align*}
\pi_0((f^{\#}_x)^*)\colon\pi_0(\Spec(\OO_{X, x}))\to \pi_0(\Spec(\OO_{X', f(x)}))
\end{align*}
for every $x\in X$. We define a continuous map
\begin{align*}
\mathbf{pw}(f)\colon \mathbf{pw}(X)&\to \mathbf{pw}(X')\\
y&\mapsto \pi_0((f^{\#}_{\pi(y)})^*)(y),
\end{align*}
and endow it with the morphisms of sheaves of rings such that, for every $y\in\mathbf{pw}(X)$,
\begin{align*}
\mathbf{pw}(f)_y^{\#}\colon A^{\pi(y)}_{\phi_{\pi(y)}(y)}\to A^{\pi(\mathbf{pw}(f)(y))}_{\phi_{\pi(\mathbf{pw}(f)(y))}(\pi(\mathbf{pw}(f)(y)))};
\end{align*}
which is unique due to the same argument that made $r_{yy'}$ unique (the continuous image of a connected space is connected). Finally, there is a commutative diagram
\begin{align*}
\xymatrix{ 
\mathbf{pw}(X)\ar[r]^{\mathbf{pw}(f)}\ar[d]_{\pi_X}& \mathbf{pw}(Y)\ar[d]^{\pi_Y}\\
X\ar[r]^f& Y.}
\end{align*}

\begin{remark}\label{remark minimal open pw-conn}
Note that we have an explicit description of the minimal open sets of $\mathbf{pw}(X)$. Namely, for $y\in \mathbf{pw}(X)$ with $\pi(y)=x$, we have the natural morphism $r_x\colon\OO_{X, x}\to \prod_{x'\geq x}\OO_{X, x'}$ inducing $\pi_0(r_x^*)\colon\coprod_{x'\geq x}\pi_0(\Spec(\OO_{X, x'}))\to \pi_0(\Spec(\OO_{X, x}))$, where both spaces are topologized as subspaces of $\mathbf{pw}(X)$ and thus the target is discrete. Now
\begin{align*}
U_y=\pi_0(r_x^*)^{-1}(y).
\end{align*}
In particular, one has that for $y, y'\in\mathbf{pw}(X)$ with $\pi(y)=\pi(y')=x$, $U_y\cap U_{y'}=\emptyset$. In other words, we get a partition \begin{align*}
\pi^{-1}(U_x)=U_{y_1}\amalg...\amalg U_{y_n}.
\end{align*}
This will be key to prove the following result.
\end{remark}
\begin{proposition}\label{proposition pw is schematic}
If $X$ is a schematic finite space, $\mathbf{pw}(X)$ is a schematic finite space and the projection $\pi\colon \mathbf{pw}(X)\to X$ is a schematic qc-isomorphism.
\begin{proof}
$\pi$ is affine because, by Remark \ref{remark minimal open pw-conn}, for every $x$ such that $\pi^{-1}(x)=\{y_1, ..., y_n\}$ we have $\pi^{-1}(U_x)=U_{y_1}\amalg...\amalg U_{y_n}$. Finally, the definition of the structure sheaf of $\mathbf{pw}(X)$ yields $$(\pi_*\OO_{\mathbf{pw}(X)})_x=\OO_{\mathbf{pw}(X)}(U_{y_1}\amalg...\amalg U_{y_n})=\OO_{\mathbf{pw}(X), y_1}\times ...\times \OO_{\mathbf{pw}(X), y_n}\simeq \OO_{X, x}.$$

Assume for now that $\mathbf{pw}(X)$ schematic. If this were the case, to prove that $\pi$ is schematic it would suffice to see that $\mathbb{R}\pi_*\M$ is quasi-coherent for any quasicoherent module $\M$. In other words, that for every $x\leq x'$ in $X$ and every $i$, the natural morphism
\begin{align*}
H^i(\pi^{-1}(U_x), \M)\otimes_{\OO_{X, x}} \OO_{X, x'}\simeq H^i(\pi^{-1}(U_x), \M)
\end{align*}
is an isomorphism. Since all quasi-coherent modules on $\pi^{-1}(U_x)=U_{y_1}\amalg...\amalg U_{y_n}$ are acyclic, the only non-trivial case is $i=0$. We have (since all direct products are finite, hence isomorphic to direct sums of modules!)
\begin{align*}
&\M(U_{y_1}\amalg...\amalg U_{y_n})\otimes_{\OO_{X, x}} \OO_{X, x'}\simeq \bigg(\prod_{j=1}^n\M(U_{y_j})\bigg)\otimes_{\OO_{X, x}} \OO_{X, x'}\simeq \\
&\simeq \prod_{j=1}^n\M(U_{y_j})\otimes_{\OO_{X, x}} \OO_{X, x'}\simeq \prod_{j=1}^n\prod_{y^{'}\in \pi^{-1}(x')\cap U_{y_j}}\M(U_{y^{'}})\simeq \M(\amalg_{y'\in\pi^{-1}(x')}U_{y'});
\end{align*}
where we use that $\M(U_{y_j})\otimes_{\OO_{X, x}}\OO_{X, x'}\simeq \M(U_{y_j})\otimes_{\OO_{\mathbf{pw}(X), y_j}}\big(\prod_{y'\in\pi^{-1}(x')}\OO_{\mathbf{pw}(X), y'}\big)$ and that these products are non-zero only when $y'\geq y_j$.

Finally, we see that $\mathbf{pw}(X)$ is indeed schematic. If $y, y'\in\mathbf{pw}(X)$ verify $\pi(y)=\pi(y')$, then $U_y\cap U_{y'}=\emptyset$ by Remark \ref{remark minimal open pw-conn}. Additionally, $U_{y}\cap U_{y'}=\emptyset$ whenever $U_{\pi(y)}\cap U_{\pi(y')}=\emptyset$, so we only need to check that, for all $y, y'$ with $\pi(y)\neq \pi(y')$, $U_{\pi(y)}\cap U_{\pi(y')}\neq\emptyset$ and any $y''\geq y'$, 
\begin{align*}
H^i(U_y\cap U_{y'}, \OO_{\mathbf{pw}(X)})\otimes_{\OO_{\mathbf{pw}(X), y'}}\OO_{\mathbf{pw}(X), y''}\overset{\sim}{\to} H^i(U_y\cap U_{y''}, \OO_{\mathbf{pw}(X)}).
\end{align*}
To see this, we will compare the standard resolutions of $\OO_{\mathbf{pw}(X)}$ and $\OO_X$ (Equation \ref{equation standard resolution}). Denote all intersections of minimal sets by $U_{a}\cap U_{b}=U_{ab}$ ($a, b$ points), $x=\pi(y)$ and $x'=\pi(y')$. We have $$\pi^{-1}(U_{xx'})=\coprod_{\substack{y_j\in \pi^{-1}(x)\\ y'_k\in \pi^{-1}(x')}}U_{y_jy_k'}.$$ For every $i$, it is straightforward to see that we have the following decomposition compatible with the differentials:
\begin{align*}
(C^i\OO_X)(U_{xx'})=&\prod_{t_i>...>t_0\in U_{xx'}}\OO_{X, t_i}\simeq\prod_{\substack{t_i>...>t_0\in U_{xx'}\\z_i\in\pi^{-1}(t_i)}}\OO_{\mathbf{pw}(X), z_i}=\\
&=\prod_{\substack{y_j\in \pi^{-1}(x)\\ y'_k\in \pi^{-1}(x')}}\prod_{ \substack{ t_i>...>t_0\in U_{xx'}\\z_i\in\pi^{-1}(t_i) \\z_i\in U_{y_jy_k'} }}\OO_{\mathbf{pw}(X), z_i}=\prod_{\substack{y_j\in \pi^{-1}(x)\\ y'_k\in \pi^{-1}(x')}}\prod_{z_i>...>z_0\in U_{y_jy_k'}}\OO_{\mathbf{pw}(X), z_i}=\\&=\prod_{\substack{y_j\in \pi^{-1}(x)\\ y'_k\in \pi^{-1}(x')}}(C^i\OO_{\mathbf{pw}(X)})(U_{y_jy_k'})
\end{align*}
where we have used that, by definition of $\pi\colon\mathbf{pw}(X)\to X$, there is a bijection of sets
\begin{align*}
\{z_i>...>z_0\in U_{y_jy_k'}\}=\{t_i>...>t_0\in U_{xx'}:\pi(z_i)=t_i\text{ and }z_i\in U_{y_jy_k'}\}.
\end{align*}
Indeed, each $z_i>...>z_0$ clearly produces a chain $\pi(z_i)>...>\pi(z_0)$ verifying the conditions. Conversely, given $\pi(z_i)>t_{i-1}>...>t_0$ with $z_i\in U_{y_jy'_k}$, we define $z_j=\pi_0(r_{t_jt_i}^*)(z_i)$ for all $j<i$. These correspondences are mutually inverse.

Now, by hypothesis of $X$ being schematic we have, for all $x''>x'$,
\begin{align*}
 H^i(U_{xx'}, \OO_X)\otimes_{\OO_{X, x'}}\OO_{X, x''}\overset{\sim}{\to}H^i(U_{xx''}, \OO_X)
\end{align*}
and the previous discussion implies that our cohomology modules decompose as
\begin{align*}
& H^i(U_{xx'}, \OO_X)\simeq \prod_{\substack{y_j\in \pi^{-1}(x)\\ y'_k\in \pi^{-1}(x')}}H^i( U_{y_jy_k'}, \OO_{\mathbf{pw}(X)}),\\
 & H^i(U_{xx''}, \OO_X)\simeq\prod_{\substack{y_j\in \pi^{-1}(x)\\ y'_k\in \pi^{-1}(x'')}}H^i( U_{y_jy_k''}, \OO_{\mathbf{pw}(X)});
\end{align*}
and thus, for all $y, y'<y''$ with $\pi(y)=x, \pi(y')=x', \pi(y'')=x''$, isomorphisms
\begin{align*}
H^i( U_{yy'}, \OO_{\mathbf{pw}(X)})\otimes_{\OO_{X, x'}}\OO_{X, x''}\simeq H^i( U_{yy''}, \OO_{\mathbf{pw}(X)}).
\end{align*}
Note that this is exactly the isomorphism we are looking for, because  $\OO_{X, x''}\simeq \prod_{y''\in\pi^{-1}(x')}\OO_{\mathbf{pw}(X), y'}$ and all the non-relevant tensor products are zero as before.
\end{proof}
\end{proposition}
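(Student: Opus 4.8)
The plan is to verify directly the three assertions bundled in the statement: that $\mathbf{pw}(X)$ is a schematic finite space, that $\pi$ is affine, and that $\pi_*\OO_{\mathbf{pw}(X)}\simeq\OO_X$; once $\pi$ is known to be an affine schematic morphism between schematic finite spaces, this last identity is exactly the condition making it a qc-isomorphism. The one structural fact driving every step is the \emph{fibered decomposition} of $\pi$: since $\Spec(\OO_{X,x})$ is the \emph{finite} disjoint union $\coprod_{y\in\pi^{-1}(x)}\Spec(\OO_{\mathbf{pw}(X),y})$ of its connected components, the ring $\OO_{X,x}$ is the finite product $\prod_{y\in\pi^{-1}(x)}\OO_{\mathbf{pw}(X),y}$, and Remark \ref{remark minimal open pw-conn} gives $\pi^{-1}(U_x)=\coprod_{y\in\pi^{-1}(x)}U_y$ and hence $\pi^{-1}(U_x\cap U_{x'})=\coprod_{y\in\pi^{-1}(x),\,y'\in\pi^{-1}(x')}(U_y\cap U_{y'})$. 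Finiteness of the fibers $\pi^{-1}(x)$ is crucial here: it is what allows all these products to commute with the tensor products that appear below, since a finite product of modules is a direct sum.

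First I would check that $\mathbf{pw}(X)$ is a finite space, i.e. that its restrictions are flat. For $y\leq y'$ over $x\leq x'$, the map $r_{yy'}$ factors as the base change of the flat map $r_{xx'}$ along the idempotent localization $\OO_{X,x}\twoheadrightarrow\OO_{\mathbf{pw}(X),y}$ followed by the projection onto the direct factor indexed by $y'$; both steps preserve flatness (indeed, since $X$ is schematic, they preserve being a flat epimorphism), so $r_{yy'}$ is flat. Next, $\pi$ is affine: $\pi^{-1}(U_y)$ is a finite disjoint union of minimal open sets of $\mathbf{pw}(X)$, each affine, hence affine; and $\pi$ preserves quasi-coherence because, for $\M\in\mathbf{Qcoh}(\mathbf{pw}(X))$ and $x\leq x'$ in $X$, we have $H^i(\pi^{-1}(U_x),\M)=0$ for $i>0$ (a disjoint union of affines), while in degree $0$ the comparison map $\bigl(\prod_{y\in\pi^{-1}(x)}\M_y\bigr)\otimes_{\OO_{X,x}}\OO_{X,x'}\to\prod_{y'\in\pi^{-1}(x')}\M_{y'}$ becomes, after distributing the finite products over the tensor product, a product over the pairs $(y,y')$ of the maps $\M_y\otimes_{\OO_{\mathbf{pw}(X),y}}\OO_{\mathbf{pw}(X),y'}\to\M_{y'}$, which vanish unless $y\leq y'$ and are isomorphisms when $y\leq y'$ by quasi-coherence of $\M$ on $\mathbf{pw}(X)$; so the comparison map is an isomorphism. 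Taking $\M=\OO_{\mathbf{pw}(X)}$ yields $\pi_*\OO_{\mathbf{pw}(X)}\simeq\OO_X$, and the vanishing above shows that $\mathbb{R}\pi_*\M\in D_{qc}(X)$ for every quasi-coherent $\M$.

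The core of the argument is to show $\mathbf{pw}(X)$ is schematic, for which I would compare the standard resolutions (Equation \ref{equation standard resolution}) of $\OO_X$ and $\OO_{\mathbf{pw}(X)}$. Fixing $x,x'\in X$ and writing $U_{ab}:=U_a\cap U_b$, I would substitute $\OO_{X,t}=\prod_{z\in\pi^{-1}(t)}\OO_{\mathbf{pw}(X),z}$ into each factor of $(C^i\OO_X)(U_{xx'})=\prod_{t_i>\cdots>t_0\in U_{xx'}}\OO_{X,t_i}$ and use the bijection between chains $z_i>\cdots>z_0$ contained in a fixed piece $U_{y_jy'_k}$ and pairs consisting of a chain $t_i>\cdots>t_0$ in $U_{xx'}$ together with a lift $z_i\in\pi^{-1}(t_i)$ lying in $U_{y_jy'_k}$ (the remaining $z_m$ being forced by $z_m=\pi_0(r^*_{t_mt_i})(z_i)$, uniqueness of downward lifts coming from Remark \ref{remark minimal open pw-conn}), a bijection compatible with the face maps. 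This produces an isomorphism of complexes $(C^\bullet\OO_X)(U_{xx'})\simeq\prod_{y_j\in\pi^{-1}(x),\,y'_k\in\pi^{-1}(x')}(C^\bullet\OO_{\mathbf{pw}(X)})(U_{y_jy'_k})$ and hence $H^i(U_{xx'},\OO_X)\simeq\prod_{y_j,y'_k}H^i(U_{y_jy'_k},\OO_{\mathbf{pw}(X)})$, compatibly with restriction maps. Plugging this into the schematic isomorphisms of $X$, $H^i(U_{xx'},\OO_X)\otimes_{\OO_{X,x'}}\OO_{X,x''}\overset{\sim}{\to}H^i(U_{xx''},\OO_X)$, and once more distributing the finite products over $\OO_{X,x''}=\prod_{y''}\OO_{\mathbf{pw}(X),y''}$ (only the factor with $y''\geq y'$ surviving), I would read off exactly the isomorphisms $H^i(U_y\cap U_{y'},\OO_{\mathbf{pw}(X)})\otimes_{\OO_{\mathbf{pw}(X),y'}}\OO_{\mathbf{pw}(X),y''}\overset{\sim}{\to}H^i(U_y\cap U_{y''},\OO_{\mathbf{pw}(X)})$ demanded by Definition \ref{definition schematic}; the degenerate cases (two of the points lying over the same point of $X$, or the relevant intersections downstairs empty) are immediate, since then the intersections upstairs are empty. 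Finally, since $\mathbf{pw}(X)$ is now schematic and $\mathbb{R}\pi_*\M\in D_{qc}(X)$ for every $\M\in\mathbf{Qcoh}(\mathbf{pw}(X))$, the morphism $\pi$ is schematic, which finishes the proof.

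The hard part will be the chain bijection and the resulting identification of standard resolutions: it is conceptually transparent but must be set up carefully, in particular checking its compatibility with the alternating-sum differentials, and it is the one place where the finiteness of the fibers $\pi^{-1}(x)$ — tacitly used to interchange products with tensor products throughout — is genuinely indispensable rather than merely convenient.
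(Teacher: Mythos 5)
Your proposal is correct and follows essentially the same route as the paper's proof: the fibered decomposition $\pi^{-1}(U_x)=\coprod_{y\in\pi^{-1}(x)}U_y$, the distribution of the finite products $\OO_{X,x}=\prod_{y\in\pi^{-1}(x)}\OO_{\mathbf{pw}(X),y}$ over tensor products, and the comparison of standard resolutions via the chain-lifting bijection are exactly the paper's three ingredients, assembled in the same logical order (affineness and $\pi_*\OO_{\mathbf{pw}(X)}\simeq\OO_X$, then quasi-coherence of $\mathbb{R}\pi_*$, then schematicity of $\mathbf{pw}(X)$ from that of $X$). The only addition is your explicit preliminary check that the restrictions of $\mathbf{pw}(X)$ are flat, which the paper leaves implicit.
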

\begin{proposition}
If $f\colon X\to Y$ is a schematic morphism between schematic finite spaces, the induced morphism $\mathbf{pw}(f)\colon\mathbf{pw}(X)\to \mathbf{pw}(Y)$ is schematic.
\begin{proof}
The proof is routinary and similar to the last part of the previous one. If $\pi_X$ and $\pi_Y$ are the corresponding natural projections, we have to see that for all $z<z'\in\mathbf{pw}(X)$, $t<t'\in \mathbf{pw}(Y)$, denoting $g=\mathbf{pw}(f)$, there are isomorphisms
\begin{align*}
&H^i(U_z\cap g^{-1}(U_t), \OO_{\mathbf{pw}(X)})\otimes_{\OO_{\mathbf{pw}(Y), t}}\OO_{\mathbf{pw}(Y), t'}\simeq H^i(U_z\cap g^{-1}(U_{t'}), \OO_{\mathbf{pw}(X)}),\\
&H^i(U_z\cap g^{-1}(U_t), \OO_{\mathbf{pw}(X)})\otimes_{\OO_{\mathbf{pw}(X), z}}\OO_{\mathbf{pw}(X), z'}\simeq H^i(U_{z'}\cap g^{-1}(U_{t}), \OO_{\mathbf{pw}(X)})
\end{align*}
for all $i$. Since $\pi_Y\circ g=f\circ \pi_X$, we have that 
\begin{align*}
\pi_X^{-1}(f^{-1}(U_y))=g^{-1}(\coprod_{t\in\pi_Y^{-1}(U_y)}U_t)=\coprod_{t\in\pi_Y^{-1}(U_y)}g^{-1}(U_t).
\end{align*}
Now, as in Proposition \ref{proposition pw is schematic}, one relates the corresponding standard resolutions using the hypothesis of $f$ being schematic. We leave the details to the reader.
\end{proof}
\end{proposition}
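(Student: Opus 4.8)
The plan is to follow, almost verbatim, the last part of the proof of Proposition \ref{proposition pw is schematic}, the only novelty being the extra bookkeeping carried by the preimages $g^{-1}(U_t)$. Write $\pi_X\colon\mathbf{pw}(X)\to X$ and $\pi_Y\colon\mathbf{pw}(Y)\to Y$ for the natural projections and $g=\mathbf{pw}(f)$, so that $\pi_Y\circ g=f\circ\pi_X$ by the commutative square preceding Remark \ref{remark minimal open pw-conn}. Since $\mathbf{pw}(X)$ and $\mathbf{pw}(Y)$ are schematic by Proposition \ref{proposition pw is schematic}, the cohomological criterion of Definition \ref{definition schematic} applies, and one must produce, for all $z<z'$ in $\mathbf{pw}(X)$, all $t<t'$ in $\mathbf{pw}(Y)$ and all $i$, the two families of isomorphisms displayed in the statement. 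As observed in Remark \ref{remark minimal open pw-conn}, two points of $\mathbf{pw}(X)$ lying over the same point of $X$ and comparable in the order must coincide, and likewise for $\mathbf{pw}(Y)$; hence the assertion is only non-trivial when $\pi_X(z)<\pi_X(z')$ and $\pi_Y(t)<\pi_Y(t')$, and we set $x=\pi_X(z)$, $x'=\pi_X(z')$, $y=\pi_Y(t)$, $y'=\pi_Y(t')$.

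The first step is the geometric decomposition of the relevant open sets. Since $\pi_Y^{-1}(U_y)=\coprod_{s\in\pi_Y^{-1}(y)}U_s$ (Remark \ref{remark minimal open pw-conn}) and $f\circ\pi_X=\pi_Y\circ g$, one gets
\begin{align*}
\pi_X^{-1}\left(U_x\cap f^{-1}(U_y)\right)=\coprod_{\substack{a\in\pi_X^{-1}(x)\\ s\in\pi_Y^{-1}(y)}}\left(U_a\cap g^{-1}(U_s)\right),
\end{align*}
a disjoint union of open subsets of $\mathbf{pw}(X)$ (the pieces are disjoint because the $U_a$ with $\pi_X(a)=x$ already are). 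Next, exactly as in Proposition \ref{proposition pw is schematic}, one compares standard resolutions: the map sending a chain $w_i>\dots>w_0$ in $\pi_X^{-1}(U_x\cap f^{-1}(U_y))$ to the pair consisting of its projection $\pi_X(w_i)>\dots>\pi_X(w_0)$ and the lift $w_i$ of its top element—the lower $w_j$ being recovered from $w_i$ by the same recipe as there—is a bijection compatible with the face maps, so grouping chains by the piece $U_a\cap g^{-1}(U_s)$ containing them yields an isomorphism of complexes and hence
\begin{align*}
H^i\left(U_x\cap f^{-1}(U_y),\OO_X\right)\simeq\prod_{\substack{a\in\pi_X^{-1}(x)\\ s\in\pi_Y^{-1}(y)}}H^i\left(U_a\cap g^{-1}(U_s),\OO_{\mathbf{pw}(X)}\right),
\end{align*}
and likewise with $y'$ in place of $y$, or with $x'$ in place of $x$.

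Finally, one feeds in the schematicity of $f$. In the $Y$-direction it gives $H^i(U_x\cap f^{-1}(U_y),\OO_X)\otimes_{\OO_{Y,y}}\OO_{Y,y'}\simeq H^i(U_x\cap f^{-1}(U_{y'}),\OO_X)$; substituting the two product decompositions above, writing $\OO_{Y,y}\simeq\prod_{s\in\pi_Y^{-1}(y)}\OO_{\mathbf{pw}(Y),s}$ and $\OO_{Y,y'}\simeq\prod_{s'\in\pi_Y^{-1}(y')}\OO_{\mathbf{pw}(Y),s'}$, and distributing (all the products in sight are finite), the cross terms $\OO_{\mathbf{pw}(Y),s}\otimes_{\OO_{Y,y}}\OO_{\mathbf{pw}(Y),s'}$ vanish unless $s\leq s'$, in which case they equal $\OO_{\mathbf{pw}(Y),s'}$ and the $\OO_{Y,y}$-action factors through $\OO_{\mathbf{pw}(Y),s}$—precisely the vanishing used at the end of Proposition \ref{proposition pw is schematic}. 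Matching the surviving factors gives the isomorphisms $H^i(U_z\cap g^{-1}(U_t),\OO_{\mathbf{pw}(X)})\otimes_{\OO_{\mathbf{pw}(Y),t}}\OO_{\mathbf{pw}(Y),t'}\simeq H^i(U_z\cap g^{-1}(U_{t'}),\OO_{\mathbf{pw}(X)})$, i.e. the first required family; the second one follows identically from the $X$-direction isomorphism for $f$, using $\OO_{X,x'}\simeq\prod_{a'\in\pi_X^{-1}(x')}\OO_{\mathbf{pw}(X),a'}$. The hard part is purely organisational: keeping the double indexing by $\pi_X^{-1}(x)$ and $\pi_Y^{-1}(y)$ straight, and checking that the standard-resolution decomposition respects the differentials and that every irrelevant tensor product indeed vanishes; no idea beyond Proposition \ref{proposition pw is schematic} is required, which is why these verifications can safely be left to the reader.
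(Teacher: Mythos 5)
Your proposal is correct and follows exactly the route the paper takes: reduce to the cohomological criterion, decompose $\pi_X^{-1}(U_x\cap f^{-1}(U_y))$ into the disjoint pieces $U_a\cap g^{-1}(U_s)$ using $\pi_Y\circ g=f\circ\pi_X$, compare standard resolutions via the chain bijection of Proposition \ref{proposition pw is schematic}, and conclude from the schematicity of $f$ together with the vanishing of the irrelevant tensor factors. The paper explicitly leaves these details to the reader, and your write-up supplies them faithfully.
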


Let $\mathbf{SchFin}^{pw}$ denote the subcategory of pw-connected schematic finite spaces. We have seen that there is a functor
\begin{align*}
\mathbf{pw}\colon \mathbf{SchFin}\to \mathbf{SchFin}^{pw}.
\end{align*}
We compile its properties in the next theorem.

\begin{theorem}\label{theorem pw connectification}
The natural fully faithful inclusion $\mathbf{SchFin}^{pw}\hookrightarrow \mathbf{SchFin}$ has
\begin{align*}
\mathbf{pw}\colon \mathbf{SchFin}\to \mathbf{SchFin}^{pw}
\end{align*}
as a right adjoint. Furthermore, the natural morphism $\mathbf{pw}(X)\to X$ is a qc-isomoprhism for every $X$, and the identity if and only if $X$ is pw-connected.
\begin{proof}
The only remaining part of the proof is the adjunction. Let $Y$ be a pw-connected schematic finite space. Let us see that for every schematic finite space $X$ there is a bijection
\begin{align*}
\Hom_{\mathbf{SchFin}}(Y, X)=\Hom_{\mathbf{SchFin}^{pw}}(Y, \mathbf{pw}(Y)).
\end{align*}
Indeed, given $f\colon Y\to X$ we apply $\mathbf{pw}$ and obtain $Y=\mathbf{pw}(Y)\to X$. Conversely, $g\colon Y\to\mathbf{pw}(X)$ induces $\pi_X\circ g\colon Y\to X$, where $\pi_X\colon \mathbf{pw}(X)\to X$ is the natural projection. 
\end{proof}
\end{theorem}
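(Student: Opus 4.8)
The plan is to dispatch the easy assertions first and then concentrate on the adjunction. Full faithfulness of $\mathbf{SchFin}^{pw}\hookrightarrow\mathbf{SchFin}$ needs no argument, since $\mathbf{SchFin}^{pw}$ is the full subcategory on the pw-connected objects. That $\pi_X\colon\mathbf{pw}(X)\to X$ is a schematic qc-isomorphism is exactly Proposition \ref{proposition pw is schematic}. For the last clause I would simply unwind the construction: if every $\Spec(\OO_{X,x})$ is connected then each $\pi_0(\Spec(\OO_{X,x}))$ is a singleton, so $\mathbf{pw}(X)$ has the same underlying poset as $X$ and the same stalks, whence $\pi_X=\mathrm{Id}$; conversely, if $\pi_X=\mathrm{Id}$ then every fiber $\pi_X^{-1}(x)=\pi_0(\Spec(\OO_{X,x}))$ is a single point, i.e. $X$ is pw-connected. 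So the substance is the adjunction, which I would establish by exhibiting $\pi_X$ as the universal arrow from the inclusion $\iota$ to $X$; equivalently, by constructing the natural hom-set bijection directly.

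Concretely, fix a pw-connected schematic finite space $Y$ and a schematic finite space $X$, and build mutually inverse maps between $\Hom_{\mathbf{SchFin}}(Y,X)$ and $\Hom_{\mathbf{SchFin}^{pw}}(Y,\mathbf{pw}(X))$. In one direction, given schematic $f\colon Y\to X$ I apply the functor $\mathbf{pw}$ and use $\mathbf{pw}(Y)=Y$ (valid because $Y$ is pw-connected) to obtain $\mathbf{pw}(f)\colon Y\to\mathbf{pw}(X)$; this is schematic by the proposition immediately preceding the theorem, and it lives in $\mathbf{SchFin}^{pw}$ since $\mathbf{pw}(X)$ is always pw-connected. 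In the other direction, given $g\colon Y\to\mathbf{pw}(X)$ I compose with the counit to get $\pi_X\circ g\colon Y\to X$, a composite of schematic morphisms. It then remains to check these are inverse to each other and natural.

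For the round trip $f\mapsto\mathbf{pw}(f)\mapsto\pi_X\circ\mathbf{pw}(f)$, the naturality square built into the definition of $\mathbf{pw}$ gives $\pi_X\circ\mathbf{pw}(f)=f\circ\pi_Y=f$, since $\pi_Y=\mathrm{Id}$. For the round trip $g\mapsto\pi_X\circ g\mapsto\mathbf{pw}(\pi_X\circ g)$, I need $\mathbf{pw}(\pi_X\circ g)=g$; by functoriality this is $\mathbf{pw}(\pi_X)\circ\mathbf{pw}(g)$, so the two ingredients are (i) $\mathbf{pw}(\pi_X)=\mathrm{Id}_{\mathbf{pw}(X)}$ and (ii) $\mathbf{pw}(g)=g$ for every morphism $g$ of $\mathbf{SchFin}^{pw}$. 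Both follow by unwinding $\mathbf{pw}$ on morphisms: the comorphism $(\pi_X)^\#_y\colon\OO_{X,x}\to\OO_{\mathbf{pw}(X),y}$ is the projection of $\OO_{X,x}$ onto one of its connected direct factors, so on spectra it is the inclusion of a connected component, and on $\pi_0$ it carries the unique point of $\pi_0(\Spec(\OO_{\mathbf{pw}(X),y}))$ back to $y$; hence $\mathbf{pw}(\pi_X)$ fixes points and stalks, giving (i). For (ii), if $g\colon Y\to Z$ with $Y,Z$ pw-connected, then $\pi_0(\Spec(\OO_{Y,y}))$ and $\pi_0(\Spec(\OO_{Z,g(y)}))$ are singletons, forcing $\mathbf{pw}(g)(y)=g(y)$ and $\mathbf{pw}(g)^\#_y=g^\#_y$. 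Naturality of the bijection in $X$ and in $Y$ then follows formally from functoriality of $\mathbf{pw}$, pre-/post-composition, and the naturality square; equivalently, one may package the whole thing as the unit/counit pair $(\eta_Y=\mathrm{Id}_Y,\ \varepsilon_X=\pi_X)$ and verify the two triangle identities, which reduce to the same two computations.

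I do not expect a genuine conceptual obstacle: the analytic content — $\mathbf{pw}(X)$ schematic, $\pi_X$ a schematic qc-isomorphism, $\mathbf{pw}(f)$ schematic — is already carried by the preceding propositions. The only point requiring care is the bookkeeping around the canonical identifications $\mathbf{pw}(Y)=Y$ and $\mathbf{pw}(\mathbf{pw}(X))=\mathbf{pw}(X)$, and in particular the verification that $\mathbf{pw}$ restricts to the identity on $\mathbf{SchFin}^{pw}$ at the level of points, orders and structure sheaves simultaneously — most sharply, the identity $\mathbf{pw}(\pi_X)=\mathrm{Id}$, which is exactly what makes $\mathbf{SchFin}^{pw}$ a coreflective subcategory.
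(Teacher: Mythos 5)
Your proposal is correct and follows essentially the same route as the paper: both directions of the hom-set bijection are constructed identically (apply $\mathbf{pw}$ and use $\mathbf{pw}(Y)=Y$ one way, post-compose with $\pi_X$ the other way), with the analytic content delegated to the preceding propositions. You are merely more explicit than the paper in verifying that the two assignments are mutually inverse (via the naturality square and $\mathbf{pw}(\pi_X)=\mathrm{Id}$), details the paper leaves to the reader.
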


\subsubsection{Well-connected components}

Let $X$ be a schematic finite space. The number of topological connected components of $\mathbf{pw}(X)$ coincides with the number of connected components of $\Spec(\OO_{X}(X))$ and also with the number of connected components of $\mathfrak{Spec}(X)$. Each one of these connected components is well-connected. Indeed, this is the content of the following Lemma, which follows directly from the definition of $\mathbf{pw}$, Proposition \ref{proposition well connected is top + pw} and the fact that global sections are preserved under qc-isomorphisms.
\begin{lemma}\label{lemma relation connected and geo}
A schematic finite space $X$ is connected if and only if $\mathbf{pw}(X)$ is top-connected (and thus well-connected).  In particular, for any pw-connected schematic finite space, the notions of connectedness, well-connectedness and top-connectedness are equivalent. 
\end{lemma}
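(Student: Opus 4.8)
The plan is to unwind Lemma \ref{lemma relation connected and geo} into its two constituent equivalences and feed each into results already established. First I would prove the main biconditional: $X$ is connected (i.e. $\Spec(\OO_X(X))$ is connected) if and only if $\mathbf{pw}(X)$ is top-connected. The key observation is that, by Theorem \ref{theorem pw connectification}, the projection $\pi\colon\mathbf{pw}(X)\to X$ is a qc-isomorphism, hence induces an isomorphism on global sections $\OO_{\mathbf{pw}(X)}(\mathbf{pw}(X))\simeq\OO_X(X)$; so $X$ is connected if and only if $\mathbf{pw}(X)$ is connected. Thus it suffices to show that for a \emph{pw-connected} schematic finite space $Y$ (here $Y=\mathbf{pw}(X)$, which is pw-connected by construction), connectedness and top-connectedness coincide.

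For that reduced statement I would invoke Proposition \ref{proposition well connected is top + pw}: a schematic finite space is well-connected (connected and pw-connected) if and only if it is top-connected and pw-connected. When $Y$ is already pw-connected, the left side collapses to ``$Y$ connected'' and the right side to ``$Y$ top-connected'', giving the equivalence immediately. In particular $\mathbf{pw}(X)$, being pw-connected, is top-connected iff it is connected iff (by the global-sections remark) $X$ is connected; and in that case it is well-connected by definition. This establishes the displayed biconditional and the parenthetical ``and thus well-connected''.

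Finally, for the ``in particular'' clause, let $X$ itself be pw-connected. Then the counit $\pi\colon\mathbf{pw}(X)\to X$ is the identity by the last assertion of Theorem \ref{theorem pw connectification}, so $\mathbf{pw}(X)=X$; substituting into the equivalence just proved, $X$ is connected iff $X$ is top-connected, and either condition forces pw-connectedness (which we assumed), hence well-connectedness. So all three notions agree on $\mathbf{SchFin}^{pw}$. The only step requiring genuine care is the reduction to the pw-connected case, and even that is really just a bookkeeping application of Theorem \ref{theorem pw connectification} (invariance of global sections under qc-isomorphisms) together with Proposition \ref{proposition well connected is top + pw}; there is no serious obstacle, which is why the authors describe the result as following ``directly'' from those inputs. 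The proof is therefore a short assembly of prior results rather than a new argument.
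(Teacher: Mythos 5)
Your proof is correct and follows exactly the route the paper indicates: the paper gives no detailed argument, merely stating that the lemma "follows directly from the definition of $\mathbf{pw}$, Proposition \ref{proposition well connected is top + pw} and the fact that global sections are preserved under qc-isomorphisms," and your assembly (qc-isomorphism $\Rightarrow$ same global sections $\Rightarrow$ $X$ connected iff $\mathbf{pw}(X)$ connected, then Proposition \ref{proposition well connected is top + pw} applied to the pw-connected space $\mathbf{pw}(X)$) is precisely that argument spelled out.
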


\begin{definition}[Well-connected components]\label{definition well connected components} 
The connected components of $\mathbf{pw}(X)$ are called the \textit{well-connected} components of $X$. We denote by $\pi_0^{wc}(X):=\pi_0(\mathbf{pw}(X))$ the set of well-connected components of $X$. 
\end{definition}
\begin{remark}
If $X$ is pw-connected, $\pi_0^{wc}(X)=\pi_0(X)$.
\end{remark}

If $i_k\colon X_k\hookrightarrow \mathbf{pw}(X)\to X$ denotes the natural inclusion of each connected component composed with the projection, there is a decomposition 
\begin{align*}
\OO_X\simeq \prod_{X_k\in\pi_0^{wc}(X)}i_{k*}\OO_{X_k}.
\end{align*}
The sheaf $i_{k*}\OO_{X_k}$ contains all the algebro-geometric information of the component $X_k$; and actually $X_k$ is the topological support of $i_{k*}\OO_{X_k}$ for all $k$. We can generalize this idea to quasi-coherent sheaves of algebras.

Let $X$ be a well-connected schematic finite space and $\A$ a quasi-coherent sheaf of $\OO_X$-algebras. Consider the natural morphism $g\colon \mathbf{pw}(X, \A)\to (X, \A)\to X$ and $X_\A^1\amalg...\amalg X_A^k$ the decomposition of $\mathbf{pw}(X, \A)$ into connected components. Let $g_j\colon X_\A^1\to X$ denote the composition of $g$ with the natural inclusion for $1\leq j\leq k$.  Since $g_j$ is a composition of affine morphisms, it is affine, and thus $\A_j:=g_{j*}\OO_{X^j_\A}$ is a quasi-coherent algebra on $X$. We have obtained:
\begin{definition}[Well-connected components of a sheaf of algebras]\label{definition well connected faithfully flat}
Let $X$ be a well-connected schematic finite space and $\A$ a quasi-coherent sheaf of $\OO_X$-algebras. There is a decomposition $\A\simeq \A_1\times...\times\A_k$ of quasi-coherent $\OO_X$-algebras. The $\A_j$ are called the \textit{well-connected components of $\A$}. By construction $(X, \A_i)$ is connected for every $i$ and thus $\mathbf{pw}(X, \A_i)$ is well-connected. We say that $\A\neq 0$ is connected if for every decomposition $\A\simeq \A_1\times\A_2$, either $\A_1=0$ or $\A_2=0$.
\end{definition}

In conclusion, connectedness is the algebro-geometric property and it is reflected in the topology of pw-connected spaces. The final statement is:
\begin{theorem}\label{theorem connectedness}
A schematic finite space $X$ is well-connected if and only if for every decomposition $X=X_1\amalg X_2$ in $\mathbf{SchFin}$, either $X_1$ or $X_2$ is qc-isomorphic to $\emptyset$.
\begin{proof}
If $X=X_1\amalg X_2$ and $X$ is well-connected, we have $\mathbf{pw}(X)=\mathbf{pw}(X_1)\amalg \mathbf{pw}(X_2)$ with $\mathbf{pw}(X)$ top-connected by Lemma \ref{lemma relation connected and geo}. We conclude that $\mathbf{pw}(X_i)=\emptyset$ for $i=1$ or $i=2$; in other words, that $X_i$ is qc-isomorphic to $\emptyset$.

Conversely, $X$ admits a decomposition $X=(X, \OO_1)\amalg...\amalg(X, \OO_k)$ with $\OO_1, ..., \OO_k$ the well-connected components of $\OO_X$. By the hypothesis, only one of them is non-zero, so we conclude.
\end{proof}
\end{theorem}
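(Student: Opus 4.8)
The plan is to move the whole question across the qc-isomorphism $\pi_X\colon\mathbf{pw}(X)\to X$ of Theorem \ref{theorem pw connectification} and argue on the pw-connected model $\mathbf{pw}(X)$, where Lemma \ref{lemma relation connected and geo} identifies connectedness, well-connectedness and topological connectedness. I would first record two elementary properties of the functor $\mathbf{pw}$. Since $\mathbf{pw}(Z)$ is assembled stalk by stalk from the finite sets $\pi_0(\Spec(\OO_{Z,z}))$, it preserves finite coproducts, $\mathbf{pw}(Z_1\amalg Z_2)=\mathbf{pw}(Z_1)\amalg\mathbf{pw}(Z_2)$. And $\mathbf{pw}(Z)=\emptyset$ precisely when every $\Spec(\OO_{Z,z})$ is empty, i.e. when $\OO_Z=0$; combined with the counit qc-isomorphism $\mathbf{pw}(Z)\to Z$ and Proposition \ref{prop cohomological characterization qciso}, this says $\mathbf{pw}(Z)=\emptyset$ if and only if $Z$ is qc-isomorphic to $\emptyset$.

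For the direct implication, let $X$ be well-connected and $X=X_1\amalg X_2$ in $\mathbf{SchFin}$. Applying $\mathbf{pw}$ gives $\mathbf{pw}(X)=\mathbf{pw}(X_1)\amalg\mathbf{pw}(X_2)$, and by Lemma \ref{lemma relation connected and geo} the space $\mathbf{pw}(X)$ is topologically connected. Since a connected topological space is not the coproduct of two non-empty spaces, $\mathbf{pw}(X_1)=\emptyset$ or $\mathbf{pw}(X_2)=\emptyset$, which by the second remark above means that $X_1$ or $X_2$ is qc-isomorphic to $\emptyset$.

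For the converse I would pass to the well-connected components. Write $X_1,\dots,X_k$ for the connected components of $\mathbf{pw}(X)$; each is clopen (the space is finite), hence a schematic finite space (schematicity passes to open subspaces), it is pw-connected since every stalk of $\mathbf{pw}(X)$ has connected spectrum by construction, and it is topologically connected, so by Proposition \ref{proposition well connected is top + pw} it is well-connected and in particular has non-zero stalks. The counit $\mathbf{pw}(X)\to X$ then presents $X$, up to qc-isomorphism, as $X_1\amalg\cdots\amalg X_k$ --- equivalently $\OO_X\simeq\prod_{j=1}^{k}i_{j*}\OO_{X_j}$, the decomposition into well-connected components of Definition \ref{definition well connected components}. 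If $k\geq 2$ then $X\simeq X_1\amalg(X_2\amalg\cdots\amalg X_k)$ is a decomposition neither of whose summands is qc-isomorphic to $\emptyset$ (both have non-zero stalks), contradicting the hypothesis; hence $k=1$. So $\mathbf{pw}(X)$ is topologically connected, and Lemma \ref{lemma relation connected and geo} returns that $X$ is well-connected.

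The step I expect to be the delicate one is this last reduction: one must be sure that the well-connected-components decomposition counts as a decomposition ``in $\mathbf{SchFin}$'' in the sense of the statement --- that is, that it is legitimate to argue modulo qc-isomorphisms, which is the paper's standing convention --- and that it is coarse enough that testing the hypothesis on it alone forces $k=1$ (this is because each $X_j$, being pw-connected, is indecomposable once empty summands are discarded). Granting that, the proof is entirely powered by the two structural properties of $\mathbf{pw}$ isolated at the outset together with Lemma \ref{lemma relation connected and geo}, which is exactly what lets topological connectedness of $\mathbf{pw}(X)$ substitute for well-connectedness of $X$.
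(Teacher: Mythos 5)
Your proof is correct and follows essentially the same route as the paper's: the forward direction is the paper's argument verbatim (apply $\mathbf{pw}$, invoke Lemma \ref{lemma relation connected and geo}), and your converse via the connected components of $\mathbf{pw}(X)$ is exactly the paper's decomposition of $X$ into its well-connected components, just spelled out in more detail. The delicate point you flag --- that the components decomposition is only a decomposition up to qc-isomorphism --- is indeed present in the paper's own proof as well, where the statement is tacitly read modulo qc-isomorphism.
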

\subsection{Geometric and schematic points}

Schematic finite spaces are not \textit{locally} ringed spaces in general (and those are not cases of interest for us). We could say that their points are <<fat>>. 

For the rest of the paper, our spaces will be endowed with a structure morphism $X\longrightarrow (\star, k)$ with $k$ a (base) field and where $\star$ denotes the singleton space. Morphisms will be considered as morphisms over $(\star, k)$. Regardless, many of the ideas that follow generalize to the relative situation.
\begin{definition}
Given a schematic finite space $(X, \OO_X)$, a \textit{geometric point with values in $\Omega$} or an $\Omega$-geometric point is a \textit{schematic} morphism $(\star, \Omega)\longrightarrow (X, \OO_X)$, where $\Omega$ is an algebraically closed field. i.e. a point $\overline{x}\in X^\bullet(\Omega)$ in $\mathbf{SchFin}$.
\end{definition}
\begin{proposition}[Characterization of geometric points]\label{characterization of schematic points}
An arbitrary morphism of ringed spaces $\overline{x}:(\star, \Omega)\longrightarrow X\text{ }\text{ }(\star\mapsto x)$ is a geometric point if and only if the prime ideal $\p:=\ker(\OO_{X, x}\rightarrow \Omega)$ does not <<lift>> to any $x'>x$; i.e. for every $x'>x$ there is no prime $\p'\subseteq \OO_{X, x'}$ such that $r_{xx'}^{-1}(\p')=\p$.
\begin{proof}
It is a simply consequence of the definition of schematic morphism for this particular case. The only non-trivial condition is that $\Omega\otimes_{\OO_{X, x}}\OO_{X, x'}=0$ for every $x<x'$. In other words, the fiber of the morphism of schemes $\Spec(\OO_{X, x'})\longrightarrow\Spec(\OO_{X, x})$ at the point $\Spec(\Omega)\hookrightarrow \Spec(\OO_{X, x})$ is empty. In terms of rings, this is exactly the <<lifting>> condition of the proposition.
\end{proof}
\end{proposition}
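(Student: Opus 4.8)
The plan is to apply Definition~\ref{definition schematic} directly to the very special morphism $\overline{x}$ and then translate the one surviving algebraic condition into geometry. First note that a morphism of ringed spaces $\overline{x}\colon(\star,\Omega)\to X$ with $\star\mapsto x$ is nothing but a ring homomorphism $\OO_{X,x}\to\Omega$ (compatibility with the remaining stalks of $\OO_X$ is automatic, since for $z\le x$ the component $\OO_{X,z}\to\Omega$ is forced to be $r_{zx}$ followed by this map); as $\Omega$ is a field, such a thing amounts to the prime $\p=\ker(\OO_{X,x}\to\Omega)$ together with an embedding $\kappa(\p)\hookrightarrow\Omega$ of the residue field. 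Moreover $\overline{x}$ factors through the open immersion $j\colon U_x\hookrightarrow X$, which is schematic by Proposition~\ref{theorem extension of quasicoherent sheaves}; since the isomorphisms demanded of $\overline{x}\colon(\star,\Omega)\to U_x$ are precisely the subfamily of those demanded of $\overline{x}\colon(\star,\Omega)\to X$ indexed by pairs $y\le y'$ inside $U_x$, and composition with the schematic $j$ preserves schematicity, we may and do assume from now on that $x$ is the minimum of $X$.

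Now I would write out the conditions of Definition~\ref{definition schematic} for $\overline{x}$ with target the schematic space $X$. Those indexed by a relation in the one-point source $(\star,\Omega)$ are vacuous. For $y\le y'$ in $X$ one has $\overline{x}^{-1}(U_y)=\{\star\}$ when $y=x$ and $\overline{x}^{-1}(U_y)=\emptyset$ otherwise (because $x$ is minimal), and since $H^i$ of a point or of the empty space vanishes for $i>0$, the only potentially nontrivial requirement is the one obtained from $y=x$, $y'=x'>x$ and $i=0$:
\begin{align*}
H^0(\star,\Omega)\otimes_{\OO_{X,x}}\OO_{X,x'}\;=\;\Omega\otimes_{\OO_{X,x}}\OO_{X,x'}\;\overset{\sim}{\longrightarrow}\;H^0(\emptyset,\OO_X)\;=\;0 .
\end{align*}
Hence $\overline{x}$ is a geometric point if and only if $\Omega\otimes_{\OO_{X,x}}\OO_{X,x'}=0$ for every $x'>x$.

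It remains to translate this vanishing. Factoring $\OO_{X,x}\to\kappa(\p)\hookrightarrow\Omega$ and using that the field extension $\kappa(\p)\hookrightarrow\Omega$ is faithfully flat, $\Omega\otimes_{\OO_{X,x}}\OO_{X,x'}\simeq\Omega\otimes_{\kappa(\p)}\bigl(\kappa(\p)\otimes_{\OO_{X,x}}\OO_{X,x'}\bigr)$ is zero if and only if $\kappa(\p)\otimes_{\OO_{X,x}}\OO_{X,x'}=0$, i.e.\ if and only if the fiber of $r_{xx'}^{*}\colon\Spec(\OO_{X,x'})\to\Spec(\OO_{X,x})$ over $\p$ is empty, equivalently $\p$ does not lie in the image of $r_{xx'}^{*}$, equivalently there is no prime $\p'\subseteq\OO_{X,x'}$ with $r_{xx'}^{-1}(\p')=\p$. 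Together with the previous paragraph this is exactly the asserted equivalence.

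I do not expect a genuine obstacle: the statement is in essence a careful unwinding of Definition~\ref{definition schematic}. The only point worth pausing on is the reduction to $U_x$; if one does not perform it, the condition ``$\overline{x}$ schematic'' also involves the vanishing of $\Omega\otimes_{\OO_{X,y}}\OO_{X,y'}$ for $y<x$ with $y'\not\le x$, which is true --- identify $\Omega\otimes_{\OO_{X,y}}\OO_{X,y'}$ with $\Omega\otimes_{\OO_{X,x}}\OO_X(U_x\times_{U_y}U_{y'})$ using that the fibered product of the affines $U_x,U_{y'}$ over $U_y$ is affine, observe that every point of $U_x\cap U_{y'}$ is strictly above $x$, and apply the faithful flatness of Proposition~\ref{proposition characterization of affines} --- but less immediate than the one-line argument available after the reduction.
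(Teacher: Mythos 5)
Your proof is correct and follows the same route as the paper's: unwind Definition~\ref{definition schematic} for the morphism $(\star,\Omega)\to X$, observe that the only surviving condition is $\Omega\otimes_{\OO_{X,x}}\OO_{X,x'}=0$ for $x'>x$, and translate this into the emptiness of the fiber of $\Spec(\OO_{X,x'})\to\Spec(\OO_{X,x})$ over $\p$. The only difference is that you carefully justify (via the reduction to $U_x$, and again directly in your last paragraph) why the conditions indexed by $y<x$ are automatic, a point the paper's proof passes over in silence.
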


We now consider pairs $(x, \p)$, where $x\in X$ and $\p\subseteq \OO_{X, x}$ is a prime ideal; i.e. elements of $\coprod_{x\in X}\Spec(\OO_{X, x})$. If we have two pairs $(x, \p)$, $(x', \p')$, such that $x\leq x'$ and $\p=r^{-1}_{xx'}(\p')$ the natural restriction morphism $\OO_{X, x}\longrightarrow\OO_{X, x'}$ induces a map of residue fields
\begin{align}
(\OO_{X, x})_\p/\p(\OO_{X, x})_\p=:\kappa(x, \p)\longrightarrow\kappa(x', \p'):=(\OO_{X, x'})_{\p'}/\p'(\OO_{X, x'})_{\p'}.
\end{align}
\begin{proposition}\label{proposition residue field well defined}
If $X$ is a schematic finite space, the map $\kappa(x, \p)\longrightarrow\kappa(x', \p')$ induced by any two pairs as described above is an isomorphism.
\begin{proof}
It follows from Propositions \ref{prop restrictions are flat epic} and \ref{proposition flat epic are local iso}.
\end{proof} 
\end{proposition}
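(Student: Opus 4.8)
The plan is to deduce the statement directly from the two results invoked in the one-line proof, leaving only the passage from an isomorphism of local rings to an isomorphism of residue fields to be spelled out. Write $A:=\OO_{X,x}$, $B:=\OO_{X,x'}$ and $\phi:=r_{xx'}\colon A\to B$. Since $X$ is schematic, Proposition \ref{prop restrictions are flat epic} guarantees that $\phi$ is a flat epimorphism of rings, so the whole machinery of Proposition \ref{proposition flat epic are local iso} (Lazard) becomes available.

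First I would apply Proposition \ref{proposition flat epic are local iso} to the prime $\p'\in\Spec(B)$: it tells us that the map on localizations $A_{\phi^{-1}(\p')}\to B_{\p'}$ induced by $\phi$ is an isomorphism. By hypothesis $\phi^{-1}(\p')=\p$, so this is an isomorphism $A_\p\overset{\sim}{\to}B_{\p'}$, and it is exactly the localization of $\phi$ that appears implicitly in the definition of the map $\kappa(x,\p)\to\kappa(x',\p')$ preceding the statement. Next, since a ring homomorphism between local rings that happens to be an isomorphism is automatically local, this isomorphism carries the maximal ideal $\p A_\p$ of the source isomorphically onto the maximal ideal $\p'B_{\p'}$ of the target. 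Passing to quotients therefore yields an isomorphism $\kappa(x,\p)=A_\p/\p A_\p\overset{\sim}{\to}B_{\p'}/\p'B_{\p'}=\kappa(x',\p')$, which by construction is the map of residue fields in the statement, and we are done.

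There is no genuine obstacle here: the proposition is essentially bookkeeping on top of Propositions \ref{prop restrictions are flat epic} and \ref{proposition flat epic are local iso}. The only two points that deserve an explicit (one-line) mention are that the abstract isomorphism produced by Lazard's lemma is literally the map that $r_{xx'}$ induces between the relevant localizations, and that an isomorphism of local rings respects maximal ideals — after which the induced map on residue fields is an isomorphism for purely formal reasons.
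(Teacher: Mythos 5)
Your argument is correct and is precisely the intended content of the paper's one-line proof, which simply cites Propositions \ref{prop restrictions are flat epic} and \ref{proposition flat epic are local iso}; you have spelled out the same route (Lazard's criterion applied at $\p'$, then passing to the quotient by the maximal ideals). No differences to report.
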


We define the following binary relation in the set of these pairs:
\begin{equation}\label{equation equivalence schematic points}
(x, \p)\sim (y, \q)\Longleftrightarrow \exists (z, \mathfrak{r}) \text{ such that }z\geq x, y\text{ and }r_{xz}^{-1}(\mathfrak{r})=\p\text{, }r_{yz}^{-1}(\mathfrak{r})=\q.
\end{equation}

Our next step is proving that this binary relation is in fact an equivalence relation and that it is actually the equivalence relation realizing $
\mathrm{colim}\,\Spec(\OO_{X})$ as a quotient set of $\coprod_{x\in X}\Spec(\OO_{X, x})$. The fact that this description turns out to be so simple strongly relies on $X$ being schematic.
\begin{lemma}\label{lemma binary relation schematic points}
For any $(x, \p)$ and $(y, \q)$ such that $x, y\geq s$ for some $s\in X$ and $r_{sx}^{-1}(\p)=r_{sx}^{-1}(\q)$, we have $(x, \p)\sim (y, \q)$.
\begin{proof}
If there were not any $z\geq x, y$ we would have that $U_x\cap U_y=\emptyset$ and, since $X$ is schematic, $\OO_{X, x}\otimes_{\OO_{X, s}}\OO_{X, y}=0$, which happens if and only if $\Spec(\OO_{X, x})\times_{\Spec(\OO_{X, s})}\Spec(\OO_{X, y})\neq \emptyset$. Recall that, as a set, a fibered product of schemes $S\times_Z T$ is in bijection with the set of quadruplets $(s, t, z, \p)$ with $z$ the image of $s$ and $t$ and $\p\subseteq \kappa(s)\otimes_{\kappa(z)}\kappa(y)$. In our case, this tensor product of residue fields is a field by Proposition \ref{proposition residue field well defined}, thus $(\p, \q, r_{sx}^{-1}(\p), 0)\in \Spec(\OO_{X, x})\times_{\Spec(\OO_{X, s})}\Spec(\OO_{X, y})$ and we reach a contradiction. 

Now, $U_x\cap U_y$ is affine, because it is an open subset of the affine schematic space $U_s$ (and $U_x, U_y$), and it is acyclic, because $H^i(U_x\cap U_y, \OO_X)\simeq H^i(U_x,\OO_X)\otimes_{\OO_{X, z}}\OO_{X, y}\simeq 0$ for $i>0$; so the natural morphism (Proposition \ref{proposition characterization of affines})
\begin{align*}
R^*\colon\coprod_{t\geq x, y}\Spec(\OO_{X, t})\longrightarrow\Spec(\OO_X(U_x\cap U_y))
\end{align*}
is surjective and thus there is a point $z\geq x, y$ and a prime $\mathfrak{r}\in\Spec(\OO_{X, z})$ such that $R^*(\mathfrak{r})=(\p, \q)$. The pair $(z, \mathfrak{r})$ verifies the condition of equation \ref{equation equivalence schematic points} as desired.
\end{proof}
\end{lemma}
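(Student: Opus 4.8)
The plan is to unwind the definition of $\sim$: one must exhibit a single point $z\geq x,y$ together with a prime $\mathfrak{r}\subseteq\OO_{X,z}$ such that $r_{xz}^{-1}(\mathfrak{r})=\p$ and $r_{yz}^{-1}(\mathfrak{r})=\q$. Accordingly the argument splits into two steps: (i) produce \emph{some} common upper bound of $x$ and $y$, i.e.\ show $U_x\cap U_y\neq\emptyset$; and (ii) working inside the open subset $U_x\cap U_y$ of the affine space $U_s$, locate a pair $(z,\mathfrak{r})$ restricting correctly. Throughout, set $\p_0:=r_{sx}^{-1}(\p)=r_{sy}^{-1}(\q)$, a prime of $\OO_{X,s}$.

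For step (i) I would argue by contradiction. If $x$ and $y$ had no common upper bound, then $U_x\cap U_y=\emptyset$; applying the schematic isomorphism for $\mathrm{Id}_X$ (case $i=0$) to $s\leq y$ in the second variable, and using $U_x\cap U_s=U_x$, yields $\OO_{X,x}\otimes_{\OO_{X,s}}\OO_{X,y}\cong\OO_X(U_x\cap U_y)=0$. Equivalently, the scheme fibre product $\Spec(\OO_{X,x})\times_{\Spec(\OO_{X,s})}\Spec(\OO_{X,y})$ is empty. But it is not: since the restrictions are flat epimorphisms (Proposition~\ref{prop restrictions are flat epic}), Proposition~\ref{proposition residue field well defined} gives that both $\kappa(s,\p_0)\to\kappa(x,\p)$ and $\kappa(s,\p_0)\to\kappa(y,\q)$ are isomorphisms, so $\kappa(x,\p)\otimes_{\kappa(s,\p_0)}\kappa(y,\q)$ is a field; its zero ideal determines a point of the fibre product lying over $(\p,\q)$, a contradiction. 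Hence $U_x\cap U_y\neq\emptyset$.

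For step (ii), since $X$ is schematic the open sets $U_x,U_y,U_s$ are affine, so the topological fibre product $U_x\times_{U_s}U_y$ — which, as the inclusions $U_x,U_y\hookrightarrow U_s$ are topological embeddings, is just the diagonal copy of $U_x\cap U_y$ with its induced structure sheaf — is an affine schematic space with $\OO_X(U_x\cap U_y)\cong\OO_{X,x}\otimes_{\OO_{X,s}}\OO_{X,y}$. By Proposition~\ref{proposition characterization of affines} the map $\OO_X(U_x\cap U_y)\to\prod_{t\geq x,y}\OO_{X,t}$ is faithfully flat, so the induced map $R^{*}\colon\coprod_{t\geq x,y}\Spec(\OO_{X,t})\to\Spec(\OO_X(U_x\cap U_y))$ is surjective. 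The point of $\Spec(\OO_X(U_x\cap U_y))=\Spec(\OO_{X,x})\times_{\Spec(\OO_{X,s})}\Spec(\OO_{X,y})$ found in step (i) therefore has a preimage $z\geq x,y$, $\mathfrak{r}\in\Spec(\OO_{X,z})$; since the composites $\OO_{X,x}\to\OO_X(U_x\cap U_y)\to\OO_{X,z}$ and $\OO_{X,y}\to\OO_X(U_x\cap U_y)\to\OO_{X,z}$ are $r_{xz}$ and $r_{yz}$, this gives $r_{xz}^{-1}(\mathfrak{r})=\p$ and $r_{yz}^{-1}(\mathfrak{r})=\q$, so $(z,\mathfrak{r})$ witnesses $(x,\p)\sim(y,\q)$.

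The hard part is step (i): it is the only place where schematicity is used in an essential, non-formal way, and it relies on the residue-field rigidity of Proposition~\ref{proposition residue field well defined} to see that the primes $\p$ and $\q$ genuinely glue over $\p_0$. Step (ii) is then a mechanical application of the characterization of affine finite spaces and faithful flatness; the one thing worth double-checking there is the identification $\OO_X(U_x\cap U_y)\cong\OO_{X,x}\otimes_{\OO_{X,s}}\OO_{X,y}$, which can alternatively be obtained from Corollary~\ref{corollary caracterizacion afines} together with the acyclicity of $U_x\cap U_y$ (itself a consequence of schematicity).
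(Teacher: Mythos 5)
Your proposal is correct and follows essentially the same route as the paper: the contradiction via the residue-field point of $\Spec(\OO_{X,x})\times_{\Spec(\OO_{X,s})}\Spec(\OO_{X,y})$ using Proposition~\ref{proposition residue field well defined}, then surjectivity of $R^*$ from the faithful flatness statement of Proposition~\ref{proposition characterization of affines} applied to the affine open $U_x\cap U_y$. The only cosmetic difference is that you justify affineness of $U_x\cap U_y$ via the fibered product of affine schematic spaces, whereas the paper invokes its being an acyclic open subset of the affine space $U_s$; both are available and lead to the same conclusion.
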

\begin{remark} 
The same proof works if we only ask $x, y\in U$ for some affine open $U$. If $X$ itself is affine, this puts tight constraints on its topology.
\end{remark}
\begin{lemma}
The binary relation defined in equation \ref{equation equivalence schematic points} is an equivalence relation.
\begin{proof}
The non-obvious property is transitivity. Assume we have points $(x, \p)\sim (y, \q)\sim (z, \mathfrak{m})$. In particular there exist pairs $(t, \mathfrak{r})$, $(s, \mathfrak{r}')$ such that $t\geq x, y$ and $s\geq y, z$ and verifying the condition on the ideals.

If $t=s$ then it also follows that $\mathfrak{r}=\mathfrak{r}'$, since restriction maps of schematic spaces are flat epimorphisms and in particular induce injections between spectra. If $t\neq s$ it suffices to show that $(t, \mathfrak{r})\sim (s, \mathfrak{r}')$, which holds by Lemma \ref{lemma binary relation schematic points}.
\end{proof}
\end{lemma}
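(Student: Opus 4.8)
The plan is to dispose of reflexivity and symmetry in a line each and to concentrate all the effort on transitivity, where I expect to lean entirely on Lemma \ref{lemma binary relation schematic points}.

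Reflexivity is immediate: for a pair $(x,\p)$ the triple $(x,\p)$ itself (that is, $z=x$, $\mathfrak r=\p$) satisfies the defining condition of equation \ref{equation equivalence schematic points}. Symmetry is equally immediate, since that condition is visibly symmetric in $(x,\p)$ and $(y,\q)$.

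For transitivity I would start from $(x,\p)\sim(y,\q)\sim(z,\m)$, unwind the definition to obtain a pair $(t,\mathfrak r)$ with $t\geq x,y$ and $r_{xt}^{-1}(\mathfrak r)=\p$, $r_{yt}^{-1}(\mathfrak r)=\q$, together with a pair $(s,\mathfrak r')$ with $s\geq y,z$ and $r_{ys}^{-1}(\mathfrak r')=\q$, $r_{zs}^{-1}(\mathfrak r')=\m$, and then observe that $(t,\mathfrak r)$ and $(s,\mathfrak r')$ both lie in $U_y$ and pull back to the same prime $\q$ along the restrictions from $y$. That is exactly the hypothesis of Lemma \ref{lemma binary relation schematic points} with $y$ as the common lower bound, so it furnishes a pair $(w,\mathfrak n)$ with $w\geq t,s$, $r_{tw}^{-1}(\mathfrak n)=\mathfrak r$ and $r_{sw}^{-1}(\mathfrak n)=\mathfrak r'$. (When $t=s$ this step is unnecessary: $r_{yt}$ is a flat epimorphism, hence injective on spectra, which forces $\mathfrak r=\mathfrak r'$, and one takes $w=t$.) Then $w\geq x$ through $t$ and $w\geq z$ through $s$, and composing restriction maps,
$$r_{xw}^{-1}(\mathfrak n)=r_{xt}^{-1}\big(r_{tw}^{-1}(\mathfrak n)\big)=r_{xt}^{-1}(\mathfrak r)=\p,\qquad r_{zw}^{-1}(\mathfrak n)=r_{zs}^{-1}\big(r_{sw}^{-1}(\mathfrak n)\big)=r_{zs}^{-1}(\mathfrak r')=\m,$$
so $(w,\mathfrak n)$ witnesses $(x,\p)\sim(z,\m)$.

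The only genuine difficulty is transitivity, and even that has already been outsourced to Lemma \ref{lemma binary relation schematic points}: its content is precisely that two pairs sitting over a common point with equal pulled-back prime admit a common refinement, which ultimately rests on $U_t\cap U_s$ being affine and acyclic inside the schematic space $U_y$ together with the surjectivity of the spectral map of Proposition \ref{proposition characterization of affines}. Once that lemma is in hand, transitivity is pure bookkeeping with preimages of prime ideals under composites of the restriction morphisms, so I do not anticipate any real obstacle beyond it.
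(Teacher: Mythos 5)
Your proof is correct and follows essentially the same route as the paper: the same case split on $t=s$ versus $t\neq s$, the same appeal to flat epimorphisms inducing injections on spectra in the first case, and the same application of Lemma \ref{lemma binary relation schematic points} (with $y$ as the common lower bound) in the second. The only difference is that you write out explicitly the final bookkeeping with composed preimages showing that the common refinement $(w,\mathfrak n)$ witnesses $(x,\p)\sim(z,\m)$, which the paper leaves implicit.
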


\begin{corollary}
If $(x, \p)\sim (x, \p')$, then $\p=\p'$. In particular, each equivalence class of pairs $(x, \p)$ has a unique maximal representative, in the sense that $x$ is maximal among all the elements of its class. Furthermore, the set of pairs in the equivalence class of $(x, \p)$ defines a closed subset of $X$ (the topological closure of $x$).
\begin{proof}
The first part is proved in part of the proof of the previous Lemma. The last statement is proved by contradiction with Lemma \ref{lemma binary relation schematic points}.
\end{proof}
\end{corollary}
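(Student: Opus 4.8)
The plan is to take the three assertions in order, getting the first one essentially for free from the definition of $\sim$ and then bootstrapping to the structural claims. For the first assertion, if $(x,\p)\sim(x,\p')$ then the defining condition of $\sim$ produces a pair $(z,\mathfrak r)$ with $z\geq x$ and $\p=r_{xz}^{-1}(\mathfrak r)=\p'$, whence $\p=\p'$; equivalently, one may note that $r_{xz}\colon\OO_{X,x}\to\OO_{X,z}$ is a flat ring epimorphism (Proposition~\ref{prop restrictions are flat epic}), so by Proposition~\ref{proposition flat epic are local iso} the induced map $\Spec(\OO_{X,z})\hookrightarrow\Spec(\OO_{X,x})$ is injective on primes, which is exactly the form of the argument already used for transitivity. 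Fix now a class $[(x,\p)]$ and let $D\subseteq X$ be the set of first coordinates of its elements; the first assertion says that the projection $[(x,\p)]\to D$ is a bijection, and for $x'\in D$ we write $\p_{x'}$ for the unique prime with $(x',\p_{x'})\in[(x,\p)]$.

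The next step is to establish two order-theoretic properties of $D$. First, $D$ is downward closed, with $\p_{x''}=r_{x''x'}^{-1}(\p_{x'})$ whenever $x''\leq x'$ lie in $D$: using $(x',\p_{x'})$ itself as witness --- the instance $s=x''$ of Lemma~\ref{lemma binary relation schematic points} --- gives $(x'',r_{x''x'}^{-1}(\p_{x'}))\sim(x',\p_{x'})$, and uniqueness of the prime is the first assertion. Second, $D$ is upward directed within itself: for $x',x''\in D$ the relation $(x',\p_{x'})\sim(x'',\p_{x''})$ supplies a pair $(z,\mathfrak r)$ with $z\geq x',x''$, and $(z,\mathfrak r)\sim(x',\p_{x'})$ with witness $(z,\mathfrak r)$, so $z\in D$. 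Since $X$ is finite, a finite upward-directed subset has a unique maximal element (two distinct maximal elements of $D$ would have a common upper bound in $D$ and hence coincide), and that element $x_0$ is then the maximum of $D$.

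It remains only to assemble these facts. The pair $(x_0,\p_{x_0})$ is the unique element of the class whose first coordinate is maximal among those occurring in the class --- maximality of $x_0$ in $D$ plus the first assertion, the other $x'\in D$ satisfying $x'<x_0$. And $D=C_{x_0}$: the inclusion $C_{x_0}\subseteq D$ is Lemma~\ref{lemma binary relation schematic points} applied with common lower bound $s=x'$ for each $x'\leq x_0$, so that $(x',r_{x'x_0}^{-1}(\p_{x_0}))\sim(x_0,\p_{x_0})$; while $D\subseteq C_{x_0}$ holds because $x_0=\max D$, or, if one prefers the contradiction route, an $x'\in D$ with $x'\not\leq x_0$ would yield, via the common upper bound $z\geq x',x_0$ which lies in $D$, an element $z>x_0$ of $D$, against maximality. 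As $C_{x_0}=\overline{\{x_0\}}$ is closed, we are done. I do not foresee a genuine obstacle here: the schematic hypothesis enters only in the first assertion (injectivity of $\Spec$ on flat epimorphisms), and everything after that is formal manipulation of $\sim$ together with finiteness of $X$ and the standard fact that a finite poset with a unique maximal element has a maximum.
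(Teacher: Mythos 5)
Your proof is correct and follows essentially the same route as the paper's (terse) argument: the first assertion comes straight from the definition of $\sim$ (or, as in the transitivity proof, from injectivity of spectra under flat epimorphisms), and the closedness claim is the maximality-versus-common-upper-bound argument via Lemma \ref{lemma binary relation schematic points}. Your explicit packaging of the class as a downward-closed, upward-directed subset with a maximum is just a cleanly written expansion of what the paper leaves implicit.
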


\begin{definition}
A \textit{schematic point} in a schematic finite space $X$ is an \textit{equivalence class} of pairs $(x, \mathfrak{p})$, where $x\in X$ is a topological point and $\mathfrak{p}\subseteq \OO_{X, x}$ is a prime ideal.
\end{definition}
\begin{remark}
To simplify the notation, we will usually denote a schematic point by one of its representative pairs, which we will assume to be the maximal one unless stated otherwise.
\end{remark}

Given a schematic point $((x, \p)$ on $X$ it now makes sense to define its  \textit{residue field} $\kappa(x, \p)$ as
\begin{equation}
\kappa(x, \p):=(\OO_{X, x})_{\p}/\p(\OO_{X, x})_{\p},
\end{equation}
which is well defined modulo isomorphisms  by Proposition \ref{proposition residue field well defined}.

\begin{proposition}\label{puntos geometricos y esquematicos}
Let $X$ be a schematic finite space and let $\Omega$ be an algebraically closed field. There is a correspondence 
\begin{equation*}
\{\text{Morphisms of ringed spaces }(\star, \Omega)\rightarrow X\}\overset{1:1}{\longleftrightarrow} \{\text{Pairs }(x, \p)\text{ and }\kappa(x, \p)\rightarrow\Omega\}
\end{equation*}
that restricts to the schematic category as
\begin{equation*}
X^\bullet(\Omega)\overset{1:1}{\longleftrightarrow} \{\text{Schematic points }(x, \p)\text{ and }\kappa(x, \p)\rightarrow\Omega\}
\end{equation*}
\begin{proof}
For every $\overline{x}:(\star, \Omega)\rightarrow X$ we define $x:=\overline{x}(\star)$, $\p:=\ker(\OO_{X, x}\rightarrow\Omega)$ and the extension is obtained by factorizing the map $\OO_{x, x}\rightarrow\Omega$ through $\kappa(x, \p)$. Conversely, for every schematic point $(x, \p)$ we define $(\star, \kappa((s, \p)))\rightarrow X$ as $\star\mapsto x$ on the level of sets and the natural localization and quotient maps that define $\kappa(x, \p)$ on the level of rings; composing with $(\star, \Omega)\longrightarrow (\star, \kappa(x, \p))$ we obtain the desired morphism.

The second part follows from the fact that every schematic point $(x, \p)$ has a unique maximal representative, so the morphism of ringed spaces constructed using this representative is a geometric point according to Proposition \ref{characterization of schematic points}.
\end{proof}
\end{proposition}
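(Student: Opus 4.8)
The plan is to strip a morphism of ringed spaces $\overline x\colon(\star,\Omega)\to X$ down to its underlying algebra. Since $\star$ is a point, the continuous part of $\overline x$ is merely a choice of $x\in X$ (every map $\star\to X$ is continuous), and the accompanying morphism of structure sheaves amounts, after the sheaf-on-a-point identification and adjunction, to a single ring homomorphism $\varphi\colon\OO_{X,x}\to\Omega$ (using $(\overline x^{-1}\OO_X)_\star=\OO_{X,x}$). Thus the datum of $\overline x$ is exactly a pair $(x,\varphi)$, and the whole proposition becomes a statement about such pairs.

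First I would establish the first bijection. Given $\varphi$, put $\p:=\ker\varphi$; as $\Omega$ is a field this is a prime ideal. Since every element of $\OO_{X,x}\setminus\p$ maps to a unit of $\Omega$, $\varphi$ factors uniquely through the localization $(\OO_{X,x})_\p$ and then, as $\p$ dies, through $\kappa(x,\p)=(\OO_{X,x})_\p/\p(\OO_{X,x})_\p$, producing a field embedding $\kappa(x,\p)\hookrightarrow\Omega$. Conversely, any embedding $\kappa(x,\p)\to\Omega$ precomposed with the canonical map $\OO_{X,x}\to\kappa(x,\p)$ gives back a $\varphi$ with kernel $\p$ inducing that embedding. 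The uniqueness of the factorization makes these assignments mutually inverse, which is the first displayed correspondence.

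For the second correspondence I would invoke Proposition \ref{characterization of schematic points}: $\overline x=(x,\varphi)$ is schematic precisely when $\p=\ker\varphi$ does not lift to any $x'>x$. The crux is then the order-theoretic equivalence ``$\p$ lifts to no $x'>x$'' $\Longleftrightarrow$ ``$(x,\p)$ is the maximal representative of its schematic point''. If $\p$ lifts, say $r_{xx'}^{-1}(\p')=\p$ with $x'>x$, then taking $z=x'$, $\mathfrak r=\p'$ in \eqref{equation equivalence schematic points} gives $(x,\p)\sim(x',\p')$, so $x$ is not maximal. Conversely, if $x$ is not maximal, the maximal representative $(z,\mathfrak r)$ of its class satisfies $(x,\p)\sim(z,\mathfrak r)$, so there is a witness $(w,\mathfrak s)$ with $w\geq x,z$ and $r_{xw}^{-1}(\mathfrak s)=\p$, $r_{zw}^{-1}(\mathfrak s)=\mathfrak r$; since $(w,\mathfrak s)$ lies in the same class and $z$ is the maximal element, $w=z$, whence $r_{xz}^{-1}(\mathfrak r)=\p$ with $z>x$, i.e.\ $\p$ lifts. (The uniqueness of maximal representatives and the fact that the restriction maps are flat epimorphisms, hence injective on spectra, by Propositions \ref{prop restrictions are flat epic} and \ref{proposition flat epic are local iso}, are what keep this bookkeeping consistent.) Combining this with the first bijection and the well-definedness of $\kappa(x,\p)$ on a schematic point (Proposition \ref{proposition residue field well defined}) yields the restriction to $X^\bullet(\Omega)$.

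I expect the main obstacle to be exactly this order-theoretic step: making ``maximal representative'' precise and checking that the witness $(w,\mathfrak s)$ can be pushed onto $z$ itself, so that the lifting of $\p$ is literal. The unwinding of the definition of a morphism of ringed spaces and the factorization through $\kappa(x,\p)$ are routine; the only other thing to be mildly careful about is that we work with ringed, not locally ringed, spaces, so $\varphi$ need not be a local homomorphism — which is fine, and is in fact necessary for the correspondence to reach all pairs $(x,\p)$.
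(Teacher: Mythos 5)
Your proposal is correct and follows essentially the same route as the paper: reduce a ringed-space morphism to a pair $(x,\varphi\colon\OO_{X,x}\to\Omega)$, factor $\varphi$ through $\kappa(x,\ker\varphi)$, and then use the characterization of geometric points together with the existence of a unique maximal representative in each equivalence class to handle the schematic case. Your explicit verification that ``$\p$ lifts to no $x'>x$'' is equivalent to ``$(x,\p)$ is the maximal representative'' (pushing the witness $(w,\mathfrak s)$ onto $z$ via maximality and uniqueness) is precisely the step the paper leaves implicit, and it is carried out correctly.
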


\begin{remark}
This proposition allows us to use the notation $\kappa(\overline{x})$ for the residue field of a geometric point $\overline{x}\in X^\bullet(\Omega)$. It also allows us to give notions of \textit{closed} and \textit{rational} schematic points, analogue to those of schemes.
\end{remark}

Finally, we will see that schematic points are indeed the analogous notion to points in the sense of schemes. Let $S$ be a quasi-compact and quasi-separated scheme, $\mathcal{U}$ a finite and locally affine finite covering of $S$ and $\pi:S\longrightarrow X$ the quotient map to the corresponding finite model.

\begin{proposition}\label{proposition points scheme are schematic points of the model}
There is a 1:1 correspondence between topological points of $S$ and schematic points of its finite model $X$. 
\begin{align*}
|S|\overset{1:1}{\longleftrightarrow}\{\text{Schematic points }(x, \p)\}
\end{align*}

This correspondence extends to geometric points, i.e. for any algebraically closed field $\Omega$, the quotient map induces a bijection
\begin{align*}
S^\bullet(\Omega)\overset{\sim}{\longrightarrow}X^\bullet(\Omega)
\end{align*}
such that the corresponding residue fields of points on both sides coincide.
\begin{proof}
Let $s\in S$ be a point and $\p_s\in \Spec(\OO_S(U^s))$ the ideal it defines as an element of the affine open $U^s$. Then $(\pi(s), \p_s)$ is a schematic point of $X$.

Conversely, given a schematic point $(x, \p_x)$ of $X$ we have  that $\p_x\subseteq \OO_X(U_x)=\OO_S(U^s)$ for any $s\in \pi^{-1}(x)$. This ideal clearly determines a point of $S$, which is independent of the chosen representative of $(x, \p_x)$. These correspondences are mutually inverse.

From the definitions it also follows that $\kappa(s)=\kappa(\pi(s), \p_s)$.

The second part of the statement follows immediately from the first part and the correspondence between geometric and schematic points with a field extension. The map is given explicitly by simply taking finite models (since $\Omega$ is a field, we can only take the total covering). The inverse map is given by the $\mathfrak{Spec}$ functor.
\end{proof}
\end{proposition}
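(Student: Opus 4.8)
The plan is to prove the proposition in two stages, first establishing the bijection on topological points and then upgrading it to geometric points by combining with the earlier correspondence between geometric points and schematic points equipped with a field extension (Proposition \ref{puntos geometricos y esquematicos}). For the first stage, recall that $X$ is the finite model of $(S,\mathcal U)$, so for $x\in X$ the stalk is $\OO_{X,x}=\OO_S(U^s)$ for any $s$ with $\pi(s)=x$, and $U^s$ is affine by the choice of a locally affine covering. Given $s\in S$, I would send it to the pair $(\pi(s),\p_s)$, where $\p_s$ is the prime of $\OO_S(U^s)$ corresponding to $s$ viewed as a point of the affine scheme $U^s$. Conversely, a schematic point represented maximally by $(x,\p)$ gives, via $\p\subseteq\OO_X(U_x)=\OO_S(U^s)=\OO_S(U^x)$, a point of the affine open $U^x\subseteq S$. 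The two things to check are: (i) this assignment is independent of the chosen representative of the schematic point, and (ii) the two maps are mutually inverse. For (i), if $(x,\p)\sim(x',\p')$ with $x\le x'$, the schematic-space relation says $\p=r_{xx'}^{-1}(\p')$; on the scheme side $U^{x'}\subseteq U^x$ (inclusion of affine opens) and the prime $\p'\subseteq\OO_S(U^{x'})$ restricts to $\p\subseteq\OO_S(U^x)$ precisely when they define the same point of $S$ (a point lying in the smaller open $U^{x'}$ determines the same point of $S$ whether viewed in $U^x$ or $U^{x'}$). That this is compatible with the \emph{maximal} representative is exactly the characterization in Proposition \ref{characterization of schematic points}: $\p$ does not lift along any $r_{xx'}$ iff the point $s$ it defines does not lie in the strictly smaller open $U^{x'}$, iff $x$ is maximal with $s\in U^x$.

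For the residue-field claim I would note that, with notation as above, $\OO_{S,s}$ is the localization of $\OO_S(U^s)$ at $\p_s$, so $\kappa(s)=\OO_{S,s}/\m_s=(\OO_S(U^s))_{\p_s}/\p_s(\OO_S(U^s))_{\p_s}=(\OO_{X,x})_\p/\p(\OO_{X,x})_\p=\kappa(x,\p)$, directly from the definitions; Proposition \ref{proposition residue field well defined} guarantees this is independent of the representative. This immediately feeds the second stage: by Proposition \ref{puntos geometricos y esquematicos}, an $\Omega$-geometric point of $X$ is the same datum as a schematic point $(x,\p)$ together with a field homomorphism $\kappa(x,\p)\to\Omega$, and classically an $\Omega$-point of $S$ is a topological point $s$ together with $\kappa(s)\to\Omega$. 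Since the first stage matches $s\leftrightarrow(x,\p)$ and the previous paragraph matches $\kappa(s)=\kappa(x,\p)$, the two descriptions agree and we get the bijection $S^\bullet(\Omega)\xrightarrow{\sim}X^\bullet(\Omega)$, which is visibly induced by $\pi$ (a morphism $(\star,\Omega)\to S$ composed with $\pi$ gives $(\star,\Omega)\to X$, and this realizes the correspondence). For the inverse being $\mathfrak{Spec}$, I would invoke $\mathfrak{Spec}(X)\simeq S$ for finite models of schemes, recorded in Section 1.

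The main obstacle I anticipate is not any single hard computation but rather being careful about the bookkeeping in (i): one must verify that the relation $\sim$ on pairs (Equation \ref{equation equivalence schematic points}) translates exactly into the relation ``defines the same point of $S$'' on the scheme side, in both directions, and that picking the maximal representative on the $X$ side corresponds to no choice at all on the $S$ side. The key input making this painless is that $\mathcal U$ is a locally affine covering, so every $U^s$ is affine and $\OO_{X,x}=\OO_S(U^s)$ genuinely is a ring of functions on an affine open of $S$; once that is in hand, restriction of primes along $r_{xx'}$ is literally restriction of a point along the affine open immersion $U^{x'}\hookrightarrow U^x$, and everything reduces to the elementary fact that a point of a scheme is determined by (any) affine open neighbourhood together with its corresponding prime. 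No deep use of schematicity beyond what is already packaged in Propositions \ref{characterization of schematic points}--\ref{proposition residue field well defined} is needed, since $X$ being a finite model automatically gives all the flatness and lifting properties.
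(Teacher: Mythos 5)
Your proposal follows essentially the same route as the paper's proof: the same forward map $s\mapsto(\pi(s),\p_s)$, the same inverse via $\p\subseteq\OO_X(U_x)=\OO_S(U^s)$, the same identification of residue fields, and the same reduction of the geometric-point statement to Proposition \ref{puntos geometricos y esquematicos}. The extra bookkeeping you supply (matching the equivalence relation on pairs with ``defines the same point of $S$'' and checking compatibility with maximal representatives via Proposition \ref{characterization of schematic points}) is correct and simply makes explicit what the paper leaves implicit.
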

Although it might not be completely clear what the <<geometric points>> of the ringed space $\mathfrak{Spec}(X)=
\mathrm{colim}\,\Spec(\OO_{X})$ should be for a general schematic finite space $X$ (and to treat it properly, we should talk about sites and topoi); it is clear that as a set, they consist exactly of the collection of schematic points of $X$:
\begin{proposition}\label{proposition bijection schematic points}
If $X$ is an arbitrary schematic finite space, there is a bijection:
\begin{align*}
|\mathfrak{Spec}(X)|\overset{1:1}{\longleftrightarrow}\{\text{Schematic points }(x, \p)\}.
\end{align*}
\end{proposition}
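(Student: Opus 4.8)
The plan is to unwind the definition of $\mathfrak{Spec}(X)$ as a colimit and to identify the resulting quotient with the equivalence relation $\sim$ of \eqref{equation equivalence schematic points} defining schematic points. First I would check that the underlying \emph{set} of $\mathfrak{Spec}(X)=\mathrm{colim}\,\Spec(\OO_X)$ is the colimit in $\mathbf{Set}$ of the diagram $x\mapsto|\Spec(\OO_{X,x})|$ indexed by $X^{\mathrm{op}}$, with transition maps the $r_{xx'}^{*}$: indeed the forgetful functor $\mathbf{LRS}\to\mathbf{RS}$ preserves colimits (as recorded in the text), $\mathbf{RS}\to\mathbf{Top}$ preserves colimits (it has the right adjoint $T\mapsto(T,\Z_T)$, since $\Z_T$ is the initial sheaf of rings on $T$), and $\mathbf{Top}\to\mathbf{Set}$ preserves colimits (it is left adjoint to the indiscrete-topology functor). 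Hence $|\mathfrak{Spec}(X)|$ is the quotient of $\coprod_{x\in X}\Spec(\OO_{X,x})$ by the equivalence relation $\approx$ generated by the identifications $(x',\p')\approx\bigl(x,r_{xx'}^{-1}(\p')\bigr)$ for all $x\leq x'$ and $\p'\in\Spec(\OO_{X,x'})$; this is exactly the ``suitable equivalence relation determined by the poset'' alluded to just after the definition of $\mathfrak{Spec}$.

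The heart of the proof is then to show $\approx\,=\,\sim$. One inclusion is immediate: if $(x,\p)\sim(y,\q)$ is witnessed by a pair $(z,\mathfrak r)$ with $z\geq x,y$, $r_{xz}^{-1}(\mathfrak r)=\p$ and $r_{yz}^{-1}(\mathfrak r)=\q$, then $(x,\p)\approx(z,\mathfrak r)\approx(y,\q)$. For the converse it is enough to check that every generating identification of $\approx$ already lies in $\sim$, because $\sim$ has been shown to be an equivalence relation and $\approx$ is, by definition, the smallest equivalence relation containing its generators. But for $x\leq x'$ and $\p'\in\Spec(\OO_{X,x'})$, the choice $z:=x'$, $\mathfrak r:=\p'$ in \eqref{equation equivalence schematic points} (note $r_{x'x'}=\mathrm{id}$) directly exhibits $\bigl(x,r_{xx'}^{-1}(\p')\bigr)\sim(x',\p')$. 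Thus $\approx\,=\,\sim$, so $|\mathfrak{Spec}(X)|=\bigl(\coprod_{x}\Spec(\OO_{X,x})\bigr)/\sim$ is precisely the set of schematic points of $X$.

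I do not expect a serious obstacle. The actual content --- that the a priori longer ``zigzag'' identifications produced by the colimit can always be collapsed to a single common upper bound, i.e.\ that $\sim$ is transitive --- has already been carried out in the lemmas preceding the definition of schematic point, and that is exactly where the schematicity of $X$ is used in an essential way (for a general finite space the intersections $U_x\cap U_y$ need not be affine or acyclic, and Lemma \ref{lemma binary relation schematic points} fails). What remains deserves only minor care: the bookkeeping in the first step (the underlying set of a colimit in $\mathbf{LRS}$ being computed set-theoretically along $X^{\mathrm{op}}$, with the arrows $r_{xx'}^{*}$ in the correct direction), and the observation that the bijection so obtained is compatible with residue fields, hence refines Proposition \ref{proposition points scheme are schematic points of the model} when $X$ is the finite model of a scheme.
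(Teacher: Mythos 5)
Your proof is correct and follows exactly the route the paper intends: the paper omits an explicit proof, but the discussion preceding the definition of schematic points states that $\sim$ is "the equivalence relation realizing $\mathrm{colim}\,\Spec(\OO_X)$ as a quotient set of $\coprod_{x\in X}\Spec(\OO_{X,x})$", and your argument (underlying set of the $\mathbf{LRS}$-colimit computed in $\mathbf{Set}$, then $\approx\,=\,\sim$ via the generators and the already-established transitivity of $\sim$) is precisely the verification of that claim. The only value you add is spelling out the adjunctions showing the forgetful functors preserve colimits, which is harmless bookkeeping.
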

An easy corollary is that different finite models of the same scheme have the same geometric points but, actually, the following general fact holds:
\begin{theorem}\label{theorem functorial qc-isomorphisms}
Let $X, Y$ be two schematic finite spaces over $k$ and let $f:X\longrightarrow Y$ be a schematic morphism. If $f$ is a qc-isomorphism then, for every algebraically closed field extension $k\hookrightarrow \Omega$,  the natural morphism $X^\bullet(\Omega)\longrightarrow Y^\bullet(\Omega)$ is an isomorphism.
\begin{proof}
It suffices to prove that there is a bijection:
\begin{align*}
\Psi:\{\text{Schematic points of }X\}&\longrightarrow\{\text{Schematic points of }Y\}\\
(x, \p)&\longrightarrow (f(x), (f_x^\#)^{-1}(\p))
\end{align*}
By Proposition \ref{proposition bijection schematic points}, this is just the map $|\mathfrak{Spec}(f)|\colon |\mathfrak{Spec}(X)|\to|\mathfrak{Spec}(Y)|$, which is known to be a bijection when $f$ is a qc-isomorphism (see Proposition 6.6 of \cite{Fernando schemes}, where qc-isomorphisms are called \textit{weak equivalences}).
\end{proof}
\end{theorem}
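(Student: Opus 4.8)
\section*{Proof proposal}

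The plan is to transport the statement, via Proposition \ref{puntos geometricos y esquematicos}, into a combinatorial statement about schematic points, and then to recognize it as the (already available) fact that the recollement functor $\mathfrak{Spec}$ carries qc-isomorphisms to isomorphisms of locally ringed spaces. First I would use Proposition \ref{puntos geometricos y esquematicos} to identify $X^\bullet(\Omega)$ with the set of pairs $((x,\p),\sigma)$, where $(x,\p)$ is a schematic point of $X$ and $\sigma\colon\kappa(x,\p)\hookrightarrow\Omega$ is a field embedding, and similarly for $Y$. Chasing through that correspondence, the natural map $X^\bullet(\Omega)\to Y^\bullet(\Omega)$ induced by $f$ sends $((x,\p),\sigma)$ to $((f(x),(f^\#_x)^{-1}(\p)),\ \sigma\circ\overline{f^\#_x})$, where $\overline{f^\#_x}\colon\kappa(f(x),(f^\#_x)^{-1}(\p))\to\kappa(x,\p)$ is the map of residue fields induced by $f^\#_x\colon\OO_{Y,f(x)}\to\OO_{X,x}$. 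Hence it suffices to prove: (i) the induced assignment $\Psi\colon(x,\p)\mapsto(f(x),(f^\#_x)^{-1}(\p))$ is a bijection on schematic points; and (ii) each $\overline{f^\#_x}$ is an isomorphism of fields. Granting (ii), over a pair of corresponding schematic points precomposition with $\overline{f^\#_x}$ identifies the two sets of embeddings into $\Omega$, so (i) and (ii) together give the theorem.

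For (i): by Proposition \ref{proposition bijection schematic points} the underlying set of $\mathfrak{Spec}(X)$ is precisely the set of schematic points of $X$, and under this identification $\Psi$ is the map $|\mathfrak{Spec}(f)|$ on underlying sets. Since $f$ is a qc-isomorphism it is an isomorphism in the localized category $\mathbf{SchFin}_{qc}$, and since $\mathfrak{Spec}$ descends to a functor $\mathbf{SchFin}_{qc}\to\mathbf{LRS}$ (the diagram displayed after the localization theorem), $\mathfrak{Spec}(f)$ is an isomorphism of locally ringed spaces; in particular $|\mathfrak{Spec}(f)|$ is a bijection. For (ii): for a qc-isomorphism, $f^{-1}(U_{f(x)})$ is affine with $\OO_X(f^{-1}(U_{f(x)}))\simeq\OO_{Y,f(x)}$, so by Proposition \ref{prop restrictions are flat epic} the map $f^\#_x\colon\OO_{Y,f(x)}\to\OO_{X,x}$ — which, up to this canonical identification, is the restriction $\OO_X(f^{-1}(U_{f(x)}))\to\OO_{X,x}$ in the affine schematic space $f^{-1}(U_{f(x)})$ — is a flat ring epimorphism; by Proposition \ref{proposition flat epic are local iso} it induces an isomorphism $(\OO_{Y,f(x)})_{\q}\overset{\sim}{\to}(\OO_{X,x})_{\p}$ with $\q=(f^\#_x)^{-1}(\p)$, and hence $\overline{f^\#_x}$ is an isomorphism of residue fields. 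This completes the argument.

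The only step carrying real content is the reduction together with the input that $|\mathfrak{Spec}(f)|$ is bijective for a qc-isomorphism, and this is exactly where the hypothesis is used, since $\Psi$ is well defined and functorial for every schematic morphism. If one prefers to avoid invoking $\mathfrak{Spec}$, surjectivity of $\Psi$ can be done by hand: the faithful flatness of $\OO_{Y,y}\simeq\OO_X(f^{-1}(U_y))\to\prod_{x\in f^{-1}(U_y)}\OO_{X,x}$ (Proposition \ref{proposition characterization of affines}) lifts a prime over a given $\q\subseteq\OO_{Y,y}$ to some $(x,\p)$ with $x\in f^{-1}(U_y)$, and the compatibility of $f^\#$ with restrictions shows $\Psi(x,\p)\sim(y,\q)$. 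The subtle point on this route is injectivity of $\Psi$ — one must lift the pair $(w,\mathfrak{s})$ witnessing an equivalence $\Psi(x_1,\p_1)\sim\Psi(x_2,\p_2)$ back to $X$ — which again is cleanest via the isomorphism $\mathfrak{Spec}(f)$ (equivalently, via Proposition 6.6 of \cite{Fernando schemes}). I therefore expect injectivity of $\Psi$ to be the only genuine obstacle, and the $\mathfrak{Spec}$-argument above to be the cleanest way around it.
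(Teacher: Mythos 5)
Your proposal is correct and follows essentially the same route as the paper: reduce via Proposition \ref{puntos geometricos y esquematicos} to a bijection on schematic points, identify that map with $|\mathfrak{Spec}(f)|$ using Proposition \ref{proposition bijection schematic points}, and invoke the known bijectivity of $|\mathfrak{Spec}(f)|$ for qc-isomorphisms (Proposition 6.6 of \cite{Fernando schemes}). You additionally make explicit the check that the residue-field maps $\overline{f^\#_x}$ are isomorphisms (so the sets of embeddings into $\Omega$ match up), a point the paper's proof leaves implicit; your argument for it via Propositions \ref{prop restrictions are flat epic} and \ref{proposition flat epic are local iso} is correct.
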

\begin{remark}\label{remark flat immersions}
For any map of ringed finite spaces $f\colon X\to Y$ we define its \textit{cylinder}, as a set, as $\mathrm{Cyl}(f):=X\amalg Y$. Each subspace $X$ and $Y$ inherits the order of the original spaces and we say that $y\leq x$ (with $x\in X$, $y\in Y$) if $f(x)=y$; which turns it into a finite poset.  The ringed structure is inherited from $X$ and $Y$, defining $r_{yx}:=f^\#_x$ for $y\leq x$ with $x\in X$, $y\in Y$. See \cite[Section 4.2]{fernandoypedro}.

Let us define a \textit{flat immersion} of schematic finite spaces $f\colon X\to Y$ as a flat schematic morphism such that its relative diagonal is a qc-isomorphism. This is the analogue of a flat monomorphism of schemes in the schematic context. It is possible to prove that $f$ is a flat immersion if and only if $\text{Cyl}(f)$ is schematic. This fact along with Lemma \ref{lemma binary relation schematic points} can be used to give an elementary proof of the previous theorem, and also to prove that if $f$ is a flat immersion, then $X^\bullet(\Omega)\hookrightarrow Y^\bullet(\Omega)$ is injective for every algebraically closed field $\Omega$, as one would expect from <<monomorphisms>>.
\end{remark}

\begin{remark}
The definition of schematic points, and by extension $\mathfrak{Spec}(X)$, bears some resemblance with the <<$\Spec$>> functor from \textit{primed} ringed spaces to locally ringed spaces constructed in \cite{Gillam} (aiming to compare fibered products of schemes and locally ringed spaces with fibered products of general ringed spaces) and, for ringed topoi and the \textit{terminal prime system}, in \cite[IV.1]{Hakim}. These authors topologize $\coprod_{x\in X}\Spec(\OO_{X, x})$ in a different manner and endow it with a sheaf of rings whose sections are essentially localizations of the stalks $\OO_{X, x}$ satisfying certain compatibility condition. Their construction clearly coincides with ours when $X=\star$ (in that case, it is just the ordinary $\Spec$ functor). Comparing their functor with $\mathfrak{Spec}$ in the finite case might be insightful, specially because we know that for $X_1, X_2\to Y$ schematic,
\begin{align*}
\mathfrak{Spec}(X_1)\times_{\mathfrak{Spec}(Y)}\mathfrak{Spec}(X_2)\overset{\sim}{\to}\mathfrak{Spec}(X_1\times_Y X_2)
\end{align*}
where the first fibered product coincides with the product of locally ringed spaces (or schemes) and the second one is the fibered product ringed spaces.
\end{remark}

\subsubsection{Fibers of a morphism.}
Consider  $f:X\longrightarrow Y$, a schematic morphism between schematic finite spaces. Given a schematic point $(y, \p)$ on $Y$ we can define the \textit{schematic fiber} over it, as a set, as
\begin{equation}
f^{-1}((y, \p))=\{(x, \mathfrak{q}):(f(x), (f_x^\#)^{-1}(\q))\sim (y, \p)\}\subseteq |\mathfrak{Spec}(X)|,
\end{equation}
where $f_x^\#:\OO_{Y, f(x)}\longrightarrow\OO_{X, x}$. By transitivity of the equivalence relation it is easy to see that this does not depend on the representative $(y, \p)$ that we choose. In other words:
\begin{align*}
f^{-1}((y, \p))=|\mathfrak{Spec}(f)|^{-1}((y, \p)).
\end{align*}

\begin{remark}Since each schematic point has a unique maximal representative, the fiber $f^{-1}((y, \p))$ is determined by a union of closures $C_{x_1}\cup...\cup C_{x_n}$ and a set of prime ideals on each $\OO_{X, x_i}$: the fibers of $\p$ via each $\Spec(\OO_{X, x_i})\rightarrow\Spec(\OO_{S, s})$. 
\end{remark}

This definition is compatible with the obvious one for geometric points:
\begin{definition}\label{definition geometric fiber}
Let $f\colon X\to Y$ be a schematic morphism and $(\star, \Omega)\overset{\overline{y}}{\longrightarrow}Y$ a geometric point. \textit{The geometric fiber of $f$ at $\overline{y}$} is $f^{-1}(\overline{y}):=\textbf{pw}((\star, \Omega)\times_YX)$.
\end{definition}

\begin{remark}
If $X$ and $Y$ are pw-connected, the appearance of the functor $\mathbf{pw}$ in the definition simply means that we are considering fibered products in $\mathbf{SchFin}^{pw}$ rather than in $\mathbf{SchFin}$. The technical relevance of this will be apparent in Proposition \ref{proposition fibers of etale}.
\end{remark}

\begin{lemma}
Let $f\colon X\to Y$ be a schematic morphism, $\overline{y}\in Y^\bullet(\Omega)$ a geometric point and $(y, \p)$ its corresponding schematic point. There is a 1:1 correspondence
\begin{align*}
\{\text{Pairs }(x, \q):f(x)=y\text{ and }(f^\#_x)^{-1}(\q)=\p\}\overset{1:1}{\longleftrightarrow}\{\text{Pairs }(x, \q)\text{ of }f^{-1}(\overline{y})\}
\end{align*}
that descends to the quotient as
$$f^{-1}((y, \p))\overset{1:1}{\longleftrightarrow}|\mathfrak{Spec}(f^{-1}(\overline{y}))|$$
\begin{proof}
First, notice that given $x\in X$ with $f(x)=y$, $\OO_{X, x}\otimes_{\OO_{Y, y}}\kappa(y, \p)\neq 0$ if and only if there exists an ideal $\q\in\Spec(\OO_{X, x})$ such that $(f_x^\#)^{-1}(\q)=\p$.

The local ring of $(\star, \kappa(y, \p))\times_YX$ at a point $x$ is $(\OO_{X, x})_\p/\p(\OO_{X, x})_\p$, whose spectrum is the set of primes $\q\subseteq \OO_{X, x}$ lying over $\p$. Since the functor $\textbf{pw}$ removes all points with whose stalk is zero and $f^{-1}(\overline{y})\to (\star, \kappa(y, \p))\times_YX$ is qc-isomorphism, we conclude by Theorem \ref{theorem functorial qc-isomorphisms}.
\end{proof}
\end{lemma}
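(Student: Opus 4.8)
The plan is to describe the geometric fibre $f^{-1}(\overline{y})=\mathbf{pw}\big((\star,\Omega)\times_Y X\big)$ of Definition \ref{definition geometric fiber} explicitly enough to read off both correspondences. First I would unwind the fibered product: since $\overline{y}\colon(\star,\Omega)\to Y$ has topological image the point $y$, the underlying poset of $(\star,\Omega)\times_Y X$ is the topological fibre $\{x\in X:f(x)=y\}$, and the stalk formula for fibered products in $\mathbf{SchFin}$ gives the stalk $\OO_{X,x}\otimes_{\OO_{Y,y}}\Omega$ at such an $x$. Applying $\mathbf{pw}$ only replaces each of these stalks by the collection of its connected-component rings (discarding the $x$ with vanishing stalk), so --- using that each stalk is Noetherian by the standing hypothesis, hence that its spectrum is a finite disjoint union of the spectra of its component rings --- the set of pairs $(x,\q)$ of $f^{-1}(\overline{y})$ is canonically identified with $\coprod_{x\,:\,f(x)=y}\Spec\big(\OO_{X,x}\otimes_{\OO_{Y,y}}\Omega\big)$.

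The second ingredient is the standard commutative-algebra description of a fibre. I would factor $\OO_{Y,y}\to\Omega$ through the residue field $\OO_{Y,y}\to\kappa(y,\p)$, which is legitimate since $\kappa(y,\p)$ is genuinely a field (Proposition \ref{proposition residue field well defined}), so that $\OO_{X,x}\otimes_{\OO_{Y,y}}\kappa(y,\p)$ is the localization--quotient $(\OO_{X,x})_\p/\p(\OO_{X,x})_\p$ whose prime spectrum is exactly $\{\q\in\Spec(\OO_{X,x}):(f^\#_x)^{-1}(\q)=\p\}$; in particular it is nonzero precisely when such a $\q$ exists, which pins down which topological points survive in $f^{-1}(\overline{y})$. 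Combining this with the previous paragraph, the pairs $(x,\q)$ with $f(x)=y$ and $(f^\#_x)^{-1}(\q)=\p$ are identified with the pairs of $\mathbf{pw}\big((\star,\kappa(y,\p))\times_Y X\big)$, and the remaining point for the first correspondence is to check that applying $\mathbf{pw}$ to $(\star,\Omega)\times_Y X$ records nothing beyond this, i.e. that the comparison morphism $\mathbf{pw}\big((\star,\Omega)\times_Y X\big)\to\mathbf{pw}\big((\star,\kappa(y,\p))\times_Y X\big)$ induced by $\kappa(y,\p)\hookrightarrow\Omega$ leaves the relevant codatum unchanged.

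For the second statement I would pass to the quotient by the equivalence relation of equation \ref{equation equivalence schematic points}. On the right, Proposition \ref{proposition bijection schematic points} identifies the pairs of $f^{-1}(\overline{y})$ modulo that relation with $|\mathfrak{Spec}(f^{-1}(\overline{y}))|$. On the left, since $(y,\p)$ is the maximal representative of its schematic point and every equivalence class of pairs is the closure of a single topological point, the condition $(f(x),(f^\#_x)^{-1}(\q))\sim(y,\p)$ is governed by the pairs with $f(x)=y$ and $(f^\#_x)^{-1}(\q)=\p$, so the left-hand set modulo the relation is $f^{-1}((y,\p))=|\mathfrak{Spec}(f)|^{-1}((y,\p))$. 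The bijection of the first part visibly respects the relation, hence descends; and Theorem \ref{theorem functorial qc-isomorphisms}, which guarantees that qc-isomorphisms induce bijections on schematic points, is exactly what makes passing through $\mathbf{pw}$ harmless.

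The step I expect to be the main obstacle is precisely the comparison between the fibre over $\Omega$ and the fibre over the residue field $\kappa(y,\p)$: one has to argue carefully that $\mathbf{pw}$ applied to $(\star,\Omega)\times_Y X$ detects no more than the connected components of the $\kappa(y,\p)$-fibre --- that the comparison morphism above is a genuine qc-isomorphism rather than merely faithfully flat and surjective on points --- since it is only here that the choice of the algebraically closed field $\Omega$, as opposed to $\kappa(y,\p)$, enters in an essential way. Everything else is the bookkeeping of the stalk formula for fibered products in $\mathbf{SchFin}$, the standard description of scheme-theoretic fibres, and the already-established invariance of schematic points under qc-isomorphisms.
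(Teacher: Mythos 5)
Your proposal retraces the paper's own argument step by step: unwind the fibered product, describe the $\kappa(y,\p)$-fibre as $(\OO_{X,x})_\p/\p(\OO_{X,x})_\p$, and use $\mathbf{pw}$ together with Theorem \ref{theorem functorial qc-isomorphisms} to descend to schematic points. The one step you leave open --- whether the comparison morphism $f^{-1}(\overline{y})=\mathbf{pw}\big((\star,\Omega)\times_Y X\big)\to(\star,\kappa(y,\p))\times_Y X$ is a qc-isomorphism --- is precisely the step the paper's proof disposes of by bare assertion, so your instinct to single it out was right. Unfortunately that assertion fails in general, so the gap is real and cannot be closed as stated. Take $Y=(\star,\mathbb{Q})$, $X=(\star,\mathbb{Q}(i))$, $f$ the evident (schematic, indeed finite \'etale) morphism, $\Omega=\overline{\mathbb{Q}}$ and $\p=0$, so that $\kappa(y,\p)=\mathbb{Q}$. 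The left-hand set of the first correspondence is $\Spec(\mathbb{Q}(i))$, a single pair, whereas $f^{-1}(\overline{y})=\mathbf{pw}\big(\star,\mathbb{Q}(i)\otimes_{\mathbb{Q}}\Omega\big)=(\star,\Omega)\amalg(\star,\Omega)$ has two pairs; the would-be qc-isomorphism pushes the structure sheaf forward to $\Omega\times\Omega\not\simeq\mathbb{Q}(i)$. The same example breaks the second correspondence, since $f^{-1}((y,\p))$ has one element while $|\mathfrak{Spec}(f^{-1}(\overline{y}))|$ has two.

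What your bookkeeping actually establishes is a canonical surjection from the pairs of $f^{-1}(\overline{y})$ onto the pairs $(x,\q)$ with $f(x)=y$ and $(f_x^\#)^{-1}(\q)=\p$, whose fibre over $(x,\q)$ is $\Spec\big(\kappa(x,\q)\otimes_{\kappa(y,\p)}\Omega\big)$; this is a bijection exactly when every residue extension $\kappa(y,\p)\to\kappa(x,\q)$ occurring in the fibre is purely inseparable (e.g.\ trivial). The discrepancy is the familiar one between geometric points of a fibre and primes lying over $\p$, and it is the very reason the fibre functor of the Galois category must be taken over $\Omega$ rather than over $\kappa(y,\p)$. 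So your proposal is faithful to the paper's route, but the obstacle you flagged is not a verification to be supplied: it is an error in the statement, and a corrected version must either assume all residue extensions in the fibre are trivial or replace the left-hand set by $\coprod_{x:f(x)=y}\Spec\big(\OO_{X,x}\otimes_{\OO_{Y,y}}\Omega\big)$.
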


\begin{corollary}
Given two pairs $(y, \p)$, $(y', \p')$ belonging to the same equivalence class (and assuming $y\leq y'$), there is a bijection
$$f^{-1}((y, \p))\longrightarrow f^{-1}((y', \p')).$$
\end{corollary}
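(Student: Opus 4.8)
The plan is to observe that, once the machinery of the preceding pages is in place, there is essentially nothing left to prove: the schematic fiber $f^{-1}((y,\p))$ was \emph{defined} so as to depend only on the equivalence class of the pair $(y,\p)$. Concretely, by the identity recorded just after the definition of the schematic fiber one has $f^{-1}((y,\p))=|\mathfrak{Spec}(f)|^{-1}((y,\p))\subseteq|\mathfrak{Spec}(X)|$, and the right-hand side manifestly depends on $(y,\p)$ only through the schematic point it represents. So the first step is simply to note that $(y,\p)$ and $(y',\p')$, lying in the same equivalence class, represent one and the same schematic point of $Y$; the hypothesis $y\le y'$ moreover forces $r_{yy'}^{-1}(\p')=\p$, since for a common upper bound $(z,\mathfrak r)$ realising the equivalence one has $r_{yz}=r_{y'z}\circ r_{yy'}$, so that $(y',\p')$ is the representative closer to the (unique) maximal one.

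From this the statement follows at once: we obtain the equality of subsets of $|\mathfrak{Spec}(X)|$
\[
f^{-1}((y,\p))=|\mathfrak{Spec}(f)|^{-1}((y,\p))=|\mathfrak{Spec}(f)|^{-1}((y',\p'))=f^{-1}((y',\p')),
\]
and the asserted bijection may be taken to be the identity. If one prefers a description compatible with the way the result will be used later, the same conclusion is reached through the geometric fiber: writing $\overline{y}\in Y^\bullet(\Omega)$ for the geometric point attached via Proposition \ref{puntos geometricos y esquematicos} to the common maximal representative of $(y,\p)$ and $(y',\p')$, the Lemma immediately preceding this corollary furnishes bijections $f^{-1}((y,\p))\overset{1:1}{\longleftrightarrow}|\mathfrak{Spec}(f^{-1}(\overline{y}))|\overset{1:1}{\longleftrightarrow}f^{-1}((y',\p'))$, whose composite realises the bijection and agrees with the identity map above.

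I do not expect a genuine obstacle here. The entire content has been absorbed into earlier results: transitivity of the equivalence relation of schematic points, the fact that $f^{-1}((y,\p))$ is well defined on equivalence classes (equivalently $f^{-1}((y,\p))=|\mathfrak{Spec}(f)|^{-1}$ of the associated schematic point), and the identification of the schematic fiber with $|\mathfrak{Spec}(f^{-1}(\overline{y}))|$ in the Lemma just proved. The only point deserving a line of verification is that $(y,\p)$ and $(y',\p')$ determine the same maximal representative, equivalently the same geometric point $\overline{y}$, which is immediate from the uniqueness of maximal representatives established above together with the compatibility $r_{yy'}^{-1}(\p')=\p$.
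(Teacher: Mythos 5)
Your proposal is correct and follows exactly the route the paper intends: the corollary is immediate because the schematic fiber was already shown (via $f^{-1}((y,\p))=|\mathfrak{Spec}(f)|^{-1}((y,\p))$ and transitivity) to depend only on the equivalence class, so the bijection is the identity. Your verification that $y\le y'$ forces $r_{yy'}^{-1}(\p')=\p$ is a correct and worthwhile detail to make explicit.
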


\subsubsection{Categorical behavior:}\label{subsection fiber functors}
Consider a morphism $f:X\longrightarrow Y$ of schematic finite spaces and an algebraically closed field $\Omega$. We have a well defined <<geometric fiber functor>> (naively understanding $Y^\bullet(\Omega)$ as the category whose objects are the set of geometric points and has no non-trivial arrows):
\begin{align}\label{definition geometric fiber functor}
\mathrm{Fib}^f:Y^\bullet(\Omega) &\longrightarrow \mathbf{SchFin}^{pw}\\
[\overline{y}:(\star, \Omega)\rightarrow Y]&\longmapsto f^{-1}(\overline{y})\nonumber
\end{align}

Additionally, if $X\longrightarrow Y$ is the finite model of a morphism of schemes $T\longrightarrow S$ for some suitable coverings, the previous discussion about geometric points identifies $Y^\bullet(\Omega)\simeq S^\bullet(\Omega)$ and we have a commutative diagram
\begin{align*}
\xymatrixcolsep{1in}\xymatrix{ 
S^\bullet(\Omega)\ar[r]^{\mathbf{Fib}^f}\ar@{=}[d]_{\mathfrak{Spec}}& \textbf{Schemes}\\
Y^\bullet(\Omega)\ar[r]^{\mathrm{Fib}^f} & \mathbf{SchFin}^{pw}\ar[u]_{\mathfrak{Spec}}
}
\end{align*}
where ${\mathbf{Fib}}^f:S^\bullet(\Omega)\longrightarrow \textbf{Schemes}$ ($(\overline{s}:\Spec(\Omega)\rightarrow Y)\mapsto \Spec(\Omega)\times_ST$). 

Note that this functor is the restriction of a bifunctor
\begin{align}
\mathrm{Fib}:\mathbf{SchFin}_{/Y}\times Y^\bullet(\Omega)&\longrightarrow \mathbf{SchFin}^{pw}\\
\nonumber \big((f:X\rightarrow Y), \overline{y}\big)&\longmapsto f^{-1}(\overline{y})
\end{align}
where $\mathbf{SchFin}_{/Y}$ denotes the category of schematic finite spaces over $Y$. Now, let us denote by $\text{F}_{\overline{y}}\colon \mathbf{SchFin}_{/Y}\to \mathbf{SchFin}^{pw}$ the restriction of $\mathrm{Fib}$ to a fixed $\overline{y}\in Y^\bullet(\Omega)$. 

If for $f:X\longrightarrow Y$, $f':X'\longrightarrow Y$ there is a qc-isomorphism $\phi:X\longrightarrow X'$ such that $f'\circ \phi=f$, we obtain a qc-isomorphism $f^{-1}(\overline{y})\to f'^{-1}(\overline{y})$ (by stability under base change). In particular this functor factors through localization by qc-isomorphisms as follows (abusing notation):
\begin{align*}
\mathrm{Fib}:(\mathbf{SchFin}_{/Y})_{qc}\times Y^\bullet(\Omega)&\longrightarrow \mathbf{SchFin}_{qc}^{pw}\hookrightarrow \textbf{Schemes}^{qc-qs}\\
\nonumber \big((f:X\rightarrow Y), \overline{y}\big)&\longmapsto f^{-1}(\overline{y}).
\end{align*}

Fixing a geometric point $\overline{y}\in Y^\bullet(\Omega)\simeq S^\bullet(\Omega)$, we obtain a commutative diagram:
\begin{align}\label{diagrama functores fibra}
\xymatrixcolsep{1in}\xymatrix{
\textbf{Schemes}_{/S} \ar[r]^{\textbf{\text{F}}_{\overline{s}}}\ar[d]& \textbf{Schemes}\\
(\mathbf{SchFin}_{/Y})_{qc}\ar[ru]_{\mathfrak{Spec}\text{ }\circ\text{ }\text{F}_{\overline{y}}} &
},
\end{align}
where $\textbf{F}_{\overline{s}}$ is the functor such that $(T\to S)\mapsto \Spec(\Omega)\times_ST$. This construction is also functorial on $S$ due to the standard properties of the fiber product. 

\begin{remark}
In future work, we will consider the fundamental groupoid of $X$ as a category whose objects are the geometric points of $X$. This section will have an analogue in that situation.
\end{remark}

\section{The Category of Finite \'Etale Covers}

There are several equivalent definitions of \'etale morphisms of rings. 

\begin{definition}
A morphism of rings $A\to B$ is said to be \textit{weakly \'etale} (or \textit{absolutely flat}) if it is flat and its multiplication map $B\otimes_AB\to B$ is flat. It is said to be \textit{\'etale} (or that $B$ is an \'etale $A$-algebra) if, furthermore, it is of finite presentation. 
\end{definition}

This definition is equivalent to more classical ones by \cite[Proposition 2.3.3]{scholze}, namely:
\begin{proposition}\label{proposition differential characterization etale}
A morphism of rings $A\to B$ is \'etale if and only if it is of finite presentation and its naive cotangent complex is quasi-isomorphic to zero (equivalently, it is smooth and $\Omega_{A|B}=0$).
\end{proposition}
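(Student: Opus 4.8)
The plan is to prove that, for a ring map $A\to B$, the following three conditions are equivalent: (a) $A\to B$ is \'etale in the sense of the definition above, i.e.\ flat, of finite presentation, and with flat multiplication map $\mu\colon B\otimes_A B\to B$; (b) $A\to B$ is of finite presentation and its naive cotangent complex $NL_{B/A}$ is acyclic; (c) $A\to B$ is smooth and $\Omega_{A|B}=0$. The equivalence (b)$\Leftrightarrow$(c) is the formal part: for a finitely presented morphism one has $H_0(NL_{B/A})=\Omega_{A|B}$, and $NL_{B/A}$ is concentrated in degree $0$ as a finite projective module precisely when $A\to B$ is smooth; hence $NL_{B/A}\simeq 0$ if and only if $A\to B$ is smooth and $\Omega_{A|B}=0$. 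Here I would invoke the classical facts (see \cite{EGAIV}) that ``flat $+$ finite presentation $+$ $\Omega_{A|B}=0$'', ``smooth of relative dimension $0$'', and classical \'etaleness all coincide.

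The content lies in comparing these with (a). For (a)$\Rightarrow$(c): flatness of $\mu$ alone forces $\Omega_{A|B}=0$. Indeed, set $I=\ker\mu$ and tensor the exact sequence $0\to I\to B\otimes_A B\to B\to 0$ with $B$ over $B\otimes_A B$. One checks that $B\otimes_{B\otimes_A B}B=B$ and that the induced endomorphism of $B$ is the identity; since $\mu$ is flat, the sequence stays exact and yields $0\to I/I^2\to B\overset{\sim}{\to}B\to 0$, so $I/I^2=0$. Combined with the canonical identification $\Omega_{A|B}\cong I/I^2$ this gives $\Omega_{A|B}=0$. As $A\to B$ is also flat and finitely presented, it is classically \'etale, hence smooth of relative dimension $0$.

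For (c)$\Rightarrow$(a): under (c) the map $A\to B$ is classically \'etale, so in particular flat and of finite presentation, and it remains to show that $\mu$ is flat. Pick a presentation $B=A[x_1,\dots,x_n]/(f_1,\dots,f_m)$. Then $B\otimes_A B=B[x_1,\dots,x_n]/(f_1,\dots,f_m)$ and $I=\ker\mu$ is the ideal $(x_1-b_1,\dots,x_n-b_n)$, where $b_i$ is the image of $x_i$ in $B$; in particular $I$ is finitely generated. From $\Omega_{A|B}\cong I/I^2=0$ we get $I=I^2$, and a finitely generated idempotent ideal is generated by an idempotent $e$. Hence $B\cong(B\otimes_A B)/(e)\cong(1-e)(B\otimes_A B)$ is a ring direct factor of $B\otimes_A B$, so it is projective, in particular flat, over it. Thus $\mu$ is flat and $A\to B$ is \'etale in the sense of (a).

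The step requiring the most care is the differential computation in (a)$\Rightarrow$(c): one must set up the $\mathrm{Tor}$-vanishing correctly (identifying $B\otimes_{B\otimes_A B}B$ with $B$ via $\mu$ and checking that the relevant map is the identity, so that $I/I^2=0$ genuinely follows), and then apply the classical theorem ``flat $+$ locally of finite presentation $+$ $\Omega_{A|B}=0$ $\Rightarrow$ \'etale'' in the correct (formally unramified) form, noting that finite presentation is automatic here. The remaining arguments are standard bookkeeping; in the final write-up it suffices to record this skeleton and refer to \cite[Proposition 2.3.3]{scholze} for the complete details.
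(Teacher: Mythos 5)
Your argument is correct. Both implications are sound: in (a)$\Rightarrow$(c) the Tor computation is set up properly (flatness of $\mu$ gives $\mathrm{Tor}_1^{B\otimes_AB}(B,B)=0$, the tensored sequence $0\to I/I^2\to B\to B\to 0$ has identity as its right-hand map, so $I/I^2=\Omega_{A|B}=0$, and then ``flat $+$ finite presentation $+$ unramified $\Rightarrow$ \'etale'' finishes); in (c)$\Rightarrow$(a) the identification of $\ker\mu$ as the finitely generated ideal $(x_1-b_1,\dots,x_n-b_n)$, the passage from $I=I^2$ to an idempotent generator, and the conclusion that $B$ is a projective $B\otimes_AB$-module are all standard and correctly executed. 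The only difference from the paper is that the paper offers no argument at all: it simply cites \cite[Proposition 2.3.3]{scholze} for the equivalence of the weakly-\'etale-plus-finite-presentation definition with the classical one. What you have written is essentially the proof one finds in that reference (and in the Stacks Project), so your proposal supplies a self-contained justification where the paper defers to the literature; nothing in your skeleton conflicts with the paper's intended meaning, and the facts you leave as ``classical'' (the characterization of smoothness via the naive cotangent complex, and ``flat $+$ locally of finite presentation $+$ $\Omega_{A|B}=0\Rightarrow$ \'etale'') are exactly the EGA-level inputs the citation is meant to cover.
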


\begin{proposition}\cite[17.6.2]{EGAIV}
\label{proposition etale algebras over fields}
Let $k$ be a field. A $k$-algebra $A$ is \'etale (or separable) if and only if $A\simeq \prod_{i\in I}K_i$ for some finite set $I$ and each $k\hookrightarrow K_i$ is a finite separable field extension. Furthermore, $A$ is \'etale if and only if $A\otimes_k\bar k\simeq \prod_I\bar k$ for a finite set $I$, if and only if $A\otimes_k\bar k$ is reduced.
\end{proposition}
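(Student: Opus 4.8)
The plan is to reduce the whole chain of equivalences to the local structure of \'etale algebras over a field, the essential inputs being the cotangent--complex characterization of Proposition~\ref{proposition differential characterization etale} (finite presentation together with $\Omega_{A|k}=0$ and smoothness) and the observation that an \'etale $k$-algebra is automatically a finite-dimensional $k$-vector space. First I would record that last point: an \'etale $k$-algebra $A$ is flat and of finite presentation, and flat over a field means free, so $A$ is free of finite rank; hence $A$ is Artinian, $A\simeq\prod_i A_i$ with each $A_i$ local Artinian and finite-dimensional over $k$, and each $A_i$ is again \'etale over $k$, \'etaleness being local on $\Spec A$ and stable under finite products. The geometric core is then the case of an algebraically closed ground field $\Omega$: if $A$ is \'etale over $\Omega$ then $A\simeq\prod_I\Omega$. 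Indeed, the residue field of each $A_i$ is a finite extension of $\Omega$, hence equals $\Omega$; since $A_i/\mathfrak{m}_i$ equals the base field, the canonical map $\mathfrak{m}_i/\mathfrak{m}_i^{\,2}\to\Omega_{A_i|\Omega}\otimes_{A_i}\Omega$ is an isomorphism (a $k$-derivation of $A_i$ into an $A_i/\mathfrak{m}_i$-module kills $\mathfrak{m}_i^{\,2}$), and $\Omega_{A_i|\Omega}=0$ by Proposition~\ref{proposition differential characterization etale}, so $\mathfrak{m}_i/\mathfrak{m}_i^{\,2}=0$, whence $\mathfrak{m}_i=0$ by Nakayama and $A_i=\Omega$.

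Next I would deduce the equivalences over $k$ from two formal facts, both immediate from Proposition~\ref{proposition differential characterization etale}: \'etaleness is stable under arbitrary base change, and a finite product $\prod_i K_i$ is \'etale over $k$ if and only if each factor is. If $A$ is \'etale over $k$, then $A\otimes_k\bar k$ is \'etale over $\bar k$, hence $\simeq\prod_I\bar k$ by the geometric core, in particular reduced; and since $k\to\bar k$ is faithfully flat the inclusion $A\hookrightarrow A\otimes_k\bar k$ forces $A$ to be reduced, so the Artinian ring $A$ is a finite product $\prod_i K_i$ of fields, and each $K_i\otimes_k\bar k$, being a direct factor of $\prod_I\bar k$, is reduced, i.e. each $K_i|k$ is finite separable. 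This shows that \'etaleness implies both the decomposition into finite separable extensions and the conditions $A\otimes_k\bar k\simeq\prod_I\bar k$ and $A\otimes_k\bar k$ reduced. For the converses: if $A\simeq\prod_i K_i$ with $K_i|k$ finite separable, the primitive element theorem gives $K_i\simeq k[T]/(f_i)$ with $f_i$ separable, so $f_i'$ is a unit modulo $f_i$ and $k[T]/(f_i)$ is the standard \'etale $k$-algebra ($\Omega_{K_i|k}=K_i\,dT/(f_i'\,dT)=0$, smoothness by the Jacobian criterion), whence $A$, a finite product of \'etale algebras, is \'etale. And if $A$ is a finite $k$-algebra with $A\otimes_k\bar k$ reduced, then $A\otimes_k\bar k$ is Artinian and reduced, hence $\simeq\prod_I\bar k$; as before $A\hookrightarrow A\otimes_k\bar k$ is reduced, so $A\simeq\prod_i K_i$ with each $K_i\otimes_k\bar k$ reduced, i.e. the $K_i|k$ finite separable, so $A$ is \'etale by the previous case. (As in EGA one assumes $A$ of finite type over $k$; without that the reducedness condition carries no content.) This closes the circle of equivalences.

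The only step that is not formal is the geometric core --- converting $\Omega_{A|k}=0$ into $A$ being a product of copies of the ground field. The mechanism is the identification $\mathfrak{m}/\mathfrak{m}^2\simeq\Omega_{A|k}\otimes_A(A/\mathfrak{m})$, which is an isomorphism precisely when the residue field equals the base field; in general one has only the right-exact cotangent sequence $\mathfrak{m}/\mathfrak{m}^2\to\Omega_{A|k}\otimes_A(A/\mathfrak{m})\to\Omega_{(A/\mathfrak{m})|k}\to0$, and this is exactly the reason one first base-changes to $\bar k$ before running the local argument, descending reducedness and separability afterwards along the faithfully flat map $k\to\bar k$. The single genuinely field-theoretic input used above, namely that a finite extension $K|k$ is separable if and only if $K\otimes_k\bar k$ is reduced, I would either cite or recover from the primitive element description; everything else (Nakayama, descent of reducedness along faithfully flat maps, stability of \'etaleness under base change and finite products) is routine.
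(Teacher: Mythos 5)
The paper gives no proof of this proposition---it is quoted from EGA IV, 17.6.2---so there is no argument of the authors' to compare yours against. Your proof is the standard one (reduce to local Artinian factors, base-change to $\bar k$, kill the maximal ideal via the conormal isomorphism $\mathfrak{m}/\mathfrak{m}^2\simeq\Omega_{A|k}\otimes_A(A/\mathfrak{m})$ and Nakayama, then descend reducedness and separability along the faithfully flat map $k\to\bar k$), and everything from the second sentence onward is correct; your parenthetical remark that the reducedness criterion is vacuous without a finiteness hypothesis on $A$ is also a legitimate and worthwhile caveat.

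There is, however, one genuine gap, right at the start: the inference ``$A$ is flat and of finite presentation, and flat over a field means free, so $A$ is free of finite rank'' is invalid as written. In Definition \ref{definition galois category}'s neighbourhood the paper defines \'etale as weakly \'etale plus \emph{finite presentation as an algebra}, not as a module, and finite presentation as an algebra does not bound the rank: $k[x]$ is flat and finitely presented over $k$ but of infinite rank (it fails to be \'etale only because the multiplication map $k[x]\otimes_k k[x]\to k[x]$ is not flat). The module-finiteness of an \'etale $k$-algebra is a consequence of unramifiedness, which you have available but do not use at this point: by Proposition \ref{proposition differential characterization etale}, $\Omega_{A|k}=0$, so the finite-type $k$-scheme $\Spec(A)$ is quasi-finite over $\Spec(k)$, i.e.\ $\dim A=0$; a zero-dimensional finite-type $k$-algebra is Artinian and hence finite-dimensional over $k$. (One could instead lean on Remark \ref{remark local freeness in the notherian case}, but that remark is itself only correct when read as a statement about algebras already known to be module-finite, so it cannot carry this step either.) Once finiteness is established by such an argument, the remainder of your proof---the decomposition into local Artinian \'etale factors, the geometric core over $\Omega=\bar k$, the descent of reducedness, and the converse via the primitive element theorem and standard \'etale algebras $k[T]/(f)$ with $f'$ invertible---goes through as written.
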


We may also define:
\begin{definition}
We say that a ring homomorphism $A\to B$ is \textit{pointwise-\'etale} if, for every prime $\p\subset A$, $B\otimes_A\kappa(\p)$ is an \'etale $\kappa(\p)$-algebra.
\end{definition}
\begin{proposition}\label{proposition punctually etale}\cite[I, 5.9]{SGAI}
A ring homomorphism $A\to B$ is \'etale if and only if it is flat, of finite presentation and pointwise-\'etale.
\end{proposition}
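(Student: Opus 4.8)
The plan is to deduce the equivalence from the base-change behavior of K\"ahler differentials together with the characterizations of \'etale morphisms recalled above; morally, this is the fibral criterion for \'etaleness. The ``only if'' direction is immediate: an \'etale $A\to B$ is weakly \'etale (hence flat) and of finite presentation by definition, and since flatness of $A\to B$, flatness of the multiplication map $B\otimes_AB\to B$, and finite presentation are all stable under arbitrary base change---for the multiplication map one uses the canonical isomorphism $(B\otimes_AA')\otimes_{A'}(B\otimes_AA')\simeq(B\otimes_AB)\otimes_AA'$---the algebra $B\otimes_A\kappa(\mathfrak{p})$ is \'etale over $\kappa(\mathfrak{p})$ for every prime $\mathfrak{p}\subset A$; that is, $A\to B$ is pointwise-\'etale. (Alternatively one may invoke Proposition \ref{proposition differential characterization etale}, the naive cotangent complex being quasi-isomorphic to zero is preserved under flat base change.)

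For the substantial converse, suppose $A\to B$ is flat, of finite presentation and pointwise-\'etale. The first step is to show that $\Omega_{B|A}=0$. By the base-change formula for differentials, $\Omega_{B|A}\otimes_A\kappa(\mathfrak{p})\simeq\Omega_{(B\otimes_A\kappa(\mathfrak{p}))|\kappa(\mathfrak{p})}$ for each prime $\mathfrak{p}\subset A$, and this vanishes since, by Proposition \ref{proposition etale algebras over fields}, $B\otimes_A\kappa(\mathfrak{p})$ is a finite product of finite separable field extensions of $\kappa(\mathfrak{p})$, each with trivial module of differentials. Now for any prime $\mathfrak{q}\subset B$ with contraction $\mathfrak{p}=\mathfrak{q}\cap A$, the residue field $\kappa(\mathfrak{q})$ is an algebra over the fiber ring $B\otimes_A\kappa(\mathfrak{p})$, and $\Omega_{B|A}\otimes_B\kappa(\mathfrak{q})$ is obtained from $\Omega_{B|A}\otimes_A\kappa(\mathfrak{p})=0$ by extension of scalars, hence also vanishes. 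Since $B$ is of finite type over $A$, the $B$-module $\Omega_{B|A}$ is finitely generated, so Nakayama's lemma at every prime of $B$ forces $\Omega_{B|A}=0$.

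The second step converts this vanishing into an idempotent. Writing $B=A[x_1,\dots,x_n]/J$, the kernel $I$ of the multiplication $\mu\colon B\otimes_AB\to B$ is generated by the classes of $x_i\otimes 1-1\otimes x_i$, hence is a finitely generated ideal, and $I/I^2\simeq\Omega_{B|A}=0$. A finitely generated idempotent ideal is generated by an idempotent, so $I=(e)$ with $e^2=e$; therefore $B\otimes_AB\simeq(B\otimes_AB)/(e)\times(B\otimes_AB)/(1-e)$, under which $\mu$ becomes the projection onto the first factor and is in particular a flat ring homomorphism. Combined with the hypothesis that $A\to B$ is itself flat, this shows that $A\to B$ is weakly \'etale; being of finite presentation, it is \'etale, as desired.

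The hard part is the first step of the converse, and the delicate point there is that it must be carried out without any Noetherian hypothesis on $A$ (bearing in mind that $\Omega_{B|A}$ need not be finitely generated over $A$, only over $B$); this is exactly why the vanishing is extracted through finite generation of $\Omega_{B|A}$ as a $B$-module and a fiberwise Nakayama argument over $B$ rather than over $A$. One must also take care not to conclude ``\'etale'' straight from ``$\Omega_{B|A}=0$ and finite presentation'', which yields only unramifiedness: the flatness of $A\to B$ is genuinely needed, and it enters solely at the end of the second step, when $\mu$ is identified with the projection onto a direct factor of $B\otimes_AB$.
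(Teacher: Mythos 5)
Your argument is correct and complete. Note first that the paper itself offers no proof of this proposition---it is quoted from \cite[I, 5.9]{SGAI}---so there is no in-text argument to compare against; your proof stands on its own and is consistent with the paper's conventions, in particular with its definition of \'etale as ``weakly \'etale and of finite presentation,'' which is exactly what you verify at the end. Both halves check out: the ``only if'' direction by stability of flatness, flatness of the multiplication map, and finite presentation under base change; and the converse via the base-change formula for K\"ahler differentials, Proposition \ref{proposition etale algebras over fields} to kill the fiberwise differentials, finite generation of $\Omega_{B|A}$ over $B$ plus Nakayama to get $\Omega_{B|A}=0$, and then the splitting $B\otimes_AB\simeq B\times D$ coming from the finitely generated idempotent ideal $I=\ker(\mu)$. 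It is worth pointing out that this last step is precisely the content of the paper's own Lemma \ref{lemma kernel multiplication differentials}, which the authors prove later (for the local triviality of \'etale cover sheaves); so your proof in effect shows that the cited black box from SGA~1 can be recovered from ingredients already present in the paper. Your closing caveats---that flatness of $A\to B$ cannot be dispensed with, and that the Nakayama argument must run over primes of $B$ rather than of $A$ in the absence of Noetherian hypotheses---are both apt.
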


\begin{remark}\label{remark local freeness in the notherian case}
If $A$ is Noetherian, an $A$-algebra is flat and of finite presentation if and only if it is finite and locally free. This will be assumed to be the case throughout this paper.
\end{remark}
Now we recall the definition of \'etale morphism in terms of \'etale ring homomorphisms:
\begin{definition}
A morphism of schemes  $f\colon T\to S$ is said to be \'etale if for all affine open subsets $U\subseteq S$ and $V\subseteq T$ such that $U\subseteq f^{-1}(V)$, the natural ring homomorphism $\OO_S(U)\to \OO_T(V)$ is \'etale.
\end{definition}

It is well known that the fibers of an \'etale morphism $f$ are disjoint unions of spectra of finite separable field extension of the residue field (which actually characterizes \'etale morphisms among all flat morphisms that are of finite presentation). Clearly, if $f$ has compact fibers, these disjoint unions are finite. In particular, finite morphisms of schemes have compact fibers. 
\medskip

Let $S$ be a scheme and denote by $\textbf{Fet}_S$ the category of finite and \'etale morphisms to $S$, also known as the category of \textit{finite \'etale covers} of $S$, which is a full and faithful subcategory of $\textbf{Schemes}_{/S}$; and let $\mathbf{Qcoh}^{alg}(S)$ denote the category of quasi-coherent $\OO_S$-algebras (we shall use this notation for \textit{any} ringed space $S$ with its natural topology).  The classical relative spectrum functor $$\underline{\Spec}_S^{\mathrm{op}}\colon\mathbf{Qcoh}^{alg}(S)\to \textbf{Schemes}_{/S}^{\text{Affine}}$$ induces an isomorphism of categories between $\mathbf{Qcoh}^{alg}(S)$ and the category of affine morphisms to $S$, whose inverse is given by $(f\colon T\to S)\mapsto f_*\OO_T$. Since finite morphisms are affine, this allows us to consider a fully faithful functor
\begin{align}
\textbf{Fet}_S^{\mathrm{op}}\to \mathbf{Qcoh}^{alg}(S).
\end{align}
Let $\mathbf{Qcoh}^{fet}(S)$ denote the essential image of this functor. From the definition of \'etale morphisms of schemes and the local characterization of \'etale algebras we obtain: 
\begin{lemma}\label{lemma characterization etale algebras}
Let $S$ be a scheme, a quasi-coherent algebra $\A$ lies in $\mathbf{Qcoh}^{fet}(S)$ if and only if it is finite, flat and for every $s\in S$, $\A_s\otimes_{\OO_{S, s}}\kappa(s)$ is a finite \'etale $\kappa(s)$-algebra. If this condition holds, then for every affine open $U\subseteq S$, $\OO_S(U)\to\A(U)$ is \'etale.
\end{lemma}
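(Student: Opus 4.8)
The plan is to unwind the definition of $\mathbf{Qcoh}^{fet}(S)$, translate the three conditions into statements over affine opens, and then invoke Proposition \ref{proposition punctually etale}. Recall that $\A\in\mathbf{Qcoh}^{fet}(S)$ means precisely that $\A\simeq f_*\OO_T$ for some finite \'etale $f\colon T\to S$; since $\underline{\Spec}_S^{\mathrm{op}}$ is an isomorphism onto the category of affine morphisms with inverse $f\mapsto f_*\OO_T$, this is equivalent to saying that the affine morphism $f\colon T:=\underline{\Spec}_S(\A)\to S$ is finite and \'etale. So the content to prove is: $f$ is finite \'etale $\iff$ $\A$ is finite, flat, and $\A_s\otimes_{\OO_{S,s}}\kappa(s)$ is a finite \'etale $\kappa(s)$-algebra for all $s\in S$; and the final assertion of the Lemma will then come for free.

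First I would fix an affine open $U=\Spec R\subseteq S$, write $B:=\A(U)$, so that $f$ restricts to $\Spec B\to\Spec R$. By construction $f$ is affine, and it is finite over $U$ exactly when $B$ is a finite $R$-module; under our standing Noetherian hypotheses, Remark \ref{remark local freeness in the notherian case} identifies ``$B$ finite and flat over $R$'' with ``$B$ flat and of finite presentation over $R$''. Next I would record the one computation that makes everything work: for $s\in U$ corresponding to $\p\in\Spec R$ one has $\OO_{S,s}=R_\p$, $\kappa(s)=\kappa(\p)$ and $\A_s=B_\p$, hence
\[
\A_s\otimes_{\OO_{S,s}}\kappa(s)\;=\;B_\p\otimes_{R_\p}\kappa(\p)\;=\;B\otimes_R\kappa(\p).
\]
Thus the stalkwise hypothesis on $\A$, read over $U$, is exactly the statement that $R\to B$ is pointwise-\'etale.

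With these translations the two implications are immediate. If $\A\in\mathbf{Qcoh}^{fet}(S)$ then locally $R\to B$ is \'etale, hence flat, of finite presentation, and pointwise-\'etale by Proposition \ref{proposition punctually etale}; flatness and finiteness of $B$ over $R$ say that $\A$ is flat and finite, while pointwise-\'etaleness together with the displayed identity says each $\A_s\otimes_{\OO_{S,s}}\kappa(s)$ is \'etale over $\kappa(s)$ (a finite product of finite separable extensions, by Proposition \ref{proposition etale algebras over fields}). Conversely, if $\A$ is finite, flat, and stalkwise fibre-\'etale, then over every affine open $U=\Spec R$ the algebra $B=\A(U)$ is finite and flat, hence of finite presentation by Remark \ref{remark local freeness in the notherian case}, and pointwise-\'etale by the displayed identity; Proposition \ref{proposition punctually etale} gives that $R\to B$ is \'etale. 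Since such $U$ cover $S$, the morphism $f\colon T\to S$ is \'etale, and finite because each $B$ is a finite $R$-module; therefore $\A\simeq f_*\OO_T$ with $f$ finite \'etale, i.e. $\A\in\mathbf{Qcoh}^{fet}(S)$. The last assertion of the Lemma is then a by-product, since we have shown in passing that $\OO_S(U)\to\A(U)$ is \'etale for every affine open $U$.

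I do not anticipate a real obstacle here: the only steps needing care are the commutation of ``fibre at $s$'' with restriction to an affine open and with localization (the displayed identity), and the use of the Noetherian hypothesis, via Remark \ref{remark local freeness in the notherian case}, to pass between ``finite $+$ flat'' and ``of finite presentation'' so that Proposition \ref{proposition punctually etale} applies. Everything else is just the local-on-the-base character of finiteness and \'etaleness of morphisms of schemes.
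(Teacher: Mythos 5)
Your proposal is correct and follows essentially the route the paper intends: the paper omits the proof, asserting the Lemma follows ``from the definition of \'etale morphisms of schemes and the local characterization of \'etale algebras'', i.e.\ precisely the reduction to affine opens, the identification $\A_s\otimes_{\OO_{S,s}}\kappa(s)=\A(U)\otimes_{\OO_S(U)}\kappa(\p)$, and Proposition \ref{proposition punctually etale} combined with Remark \ref{remark local freeness in the notherian case} that you spell out. No gaps.
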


\begin{remark}
If $S$ is a scheme, it is well known that $\mathbf{Qcoh}^{fet}(S)\simeq \textbf{Fet}_S\simeq \textbf{Loc}(S_{et})$, where the latter is the category of finite, locally constant sheaves of sets on the small \'etale site of $S$ (which may also be considered as sheaves of $\Z$-modules). The second equivalence is given by the functor of points (on $S_{et}$) and descent. In this paper, we will not be concerned with the study of Grothendieck topologies \textit{directly} on schematic finite spaces, but we will partially bring the last point of view back in Subsection \ref{subsection locally trivial}.
\end{remark}

\begin{definition}
Let $X$ be a schematic finite space. A quasi-coherent $\OO_X$-algebra $\A\in\mathbf{Qcoh}^{alg}(X)$ is an \textit{\'etale cover sheaf} (or simply an \textit{\'etale cover}) if it is finite, flat, and for each schematic point $(x, \p)$, $\A_x\otimes_{\OO_{X, x}}\kappa(x, \p)$ is a finite \'etale $\kappa(x, \p)$-algebra. Equivalently (by Proposition \ref{proposition punctually etale}), if $\OO_{X, x}\to\A_x$ is \'etale for every $x\in X$.
\end{definition}

Propositions \ref{prop restrictions are flat epic} and \ref{proposition flat epic are local iso} guarantee that this notion is well defined. Note that, since we are assuming quasi-coherence, \ref{proposition: characterization qcoh, ftype, coh} says that finiteness is a condition at stalks.

\smallskip

Given a schematic finite space $X$, let $\mathbf{Qcoh}^{fet}(X)$ denote the subcategory of $\mathbf{Qcoh}^{alg}(X)$ whose objects are \'etale cover sheaves. 

\begin{remark}
Perhaps, one might want to define a schematic morphism $f\colon X\to Y$ to be \'etale if $f_x^\#\colon \mathcal O_{Y,f(x)}\to\mathcal O_{X,x}$ is an \'etale ring map for all $x$. If $f$ is affine (automatically finite), $f_*\OO_X$ should be an \'etale cover sheaf. The category of (finite) affine \'etale schematic morphisms over $X$ localized by qc-isomorphisms would then be equivalent to $\mathbf{Qcoh}^{fet}(X)$.

However, there is an issue with this potential definition---that will not affect our discussion here, but is one of the reasons why we prefer to simply state everything in sheaf-theoretic terms---: since the restriction maps of schematic finite spaces are not of finite presentation (in particular, they are not \'etale), the adjoint $f_{\#}\colon \OO_Y\to f_*\OO_X$ may fail to preserve \'etaleness at stalks. On the other hand, the fact that said restriction morphisms are \textit{weakly \'etale} suggests that <<weakly \'etale schematic morphisms>> will be the natural class of maps one would like to work with. In particular, it should be possible to generalize the pro-\'etale fundamental group of schemes (\cite{scholze}) to schematic finite spaces. Moreover, this will be a more natural and general version: the hypothetic (small) <<schematic pro-\'etale site>> of a finite model of schemes would potentially be larger than the pro-\'etale site of the scheme itself. In order to study this site and topos, one needs more sophisticated technology that is beyond our aim in this paper.
\end{remark}

\begin{theorem}
Let $S$ be a quasi-compact and quasi-separated scheme, $\mathcal{U}$ a locally affine finite covering and let $\pi:S\longrightarrow X$ be the projection to the corresponding finite model $X$. The equivalence $(\pi^*, \pi_*)\colon\mathbf{Qcoh}(S)\simeq \mathbf{Qcoh}(X)$ restricts to an equivalence
\begin{align*}
\mathbf{Qcoh}^{fet}(S)\simeq \mathbf{Qcoh}^{fet}(X).
\end{align*}
\begin{proof}
Consider $\B\in\mathbf{Qcoh}^{fet}(S)$. Since $\OO_{X, x}\to (\pi_*\B)_x$ is just $\OO_S(U^s)\to \B(U^s)$ for some $s$ with $\pi(s)=x$, with $U^s$ affine; $\pi_*\B$ is an \'etale cover sheaf by Lemma \ref{lemma characterization etale algebras}.

Conversely, take $\A\in \mathbf{Qcoh}^{fet}(X)$. Then $\pi^*\A$ is flat because flatness is a local condition and for every $s\in S$,  $(\pi^*\mathcal{A})_y=\mathcal{A}_{\pi(s)}\otimes_{\OO_{S}(U^s)}\OO_{S, s}$, so (since $\OO_S(U^s)\rightarrow \OO_{S, s}$ is a localization and thus flat) it is sufficient for $\A_{\pi(s)}$ to be a flat $\OO_{X, \pi(s)}$-algebra, which is true by hypothesis. Finiteness follows from the fact that it can be checked for the open cover $\{U^s\}_{s\in S}$, and there, since $\pi(U^s)=U_{\pi(s)}$ and $U^s$ is an affine open, we have that $(\pi^{*}\mathcal{A})(U^s)=\mathcal{A}_{\pi(s)}\otimes_{\OO_{X, \pi(s)}}\OO_{S}(U^s)\simeq \mathcal{A}_{\pi(s)}$, which is a finite $\OO_{X, \pi(s)}$-algebra by hypothesis.

Finally, let $s\in S$ be any point, $\p_s$ be the prime ideal it defines on $U^s$ and $x=\pi(s)$.
\begin{align*}
(\mathcal{\pi^*\mathcal{A}})_s\otimes_{\OO_{S, s}}\kappa(s)=\mathcal{A}_x\otimes_{\OO_S(U^s)}\OO_{S, s}\otimes_{\OO_{S, s}}\kappa(x, \p_s)=\mathcal{A}_x\otimes_{\OO_{X, x}}\kappa(x, \p_s),
\end{align*}
concludes the proof in virtue of Lemma \ref{lemma characterization etale algebras}.
\end{proof}
\end{theorem}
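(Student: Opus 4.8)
The plan is to reduce everything to the stalkwise characterization of finite étale cover sheaves (Lemma \ref{lemma characterization etale algebras} and the definition of \'etale cover sheaf on a schematic finite space, together with Proposition \ref{proposition punctually etale}) and to the explicit description of the finite model, namely $\OO_{X,x}=\OO_S(U^s)$ with $U^s=\pi^{-1}(U_x)$ affine for any $s$ with $\pi(s)=x$. Since $\pi^*$ is strong monoidal and $\pi_*$ is lax monoidal, both carry quasi-coherent algebras to quasi-coherent algebras; and since being an \'etale cover sheaf is a property stable under isomorphism of algebras, it suffices to prove that $\pi_*$ sends $\mathbf{Qcoh}^{fet}(S)$ into $\mathbf{Qcoh}^{fet}(X)$ and that $\pi^*$ sends $\mathbf{Qcoh}^{fet}(X)$ into $\mathbf{Qcoh}^{fet}(S)$. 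Once these two inclusions are established, the restricted functors are automatically mutually quasi-inverse, because $(\pi^*,\pi_*)$ already is an equivalence on the ambient categories $\mathbf{Qcoh}(S)\simeq\mathbf{Qcoh}(X)$.

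For the direct image, let $\B\in\mathbf{Qcoh}^{fet}(S)$. Then $\pi_*\B$ is quasi-coherent, and for $x\in X$ and any $s$ with $\pi(s)=x$ the stalk map $\OO_{X,x}\to(\pi_*\B)_x$ is precisely $\OO_S(U^s)\to\B(U^s)$ with $U^s$ affine; by the last clause of Lemma \ref{lemma characterization etale algebras} this ring map is \'etale. Hence $\OO_{X,x}\to(\pi_*\B)_x$ is \'etale for every $x\in X$, which by Proposition \ref{proposition punctually etale} is exactly the statement that $\pi_*\B$ is an \'etale cover sheaf.

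For the inverse image, let $\A\in\mathbf{Qcoh}^{fet}(X)$; I would verify the three conditions of Lemma \ref{lemma characterization etale algebras} for $\pi^*\A$. Flatness is local on $S$, and at $s\in S$ one has $(\pi^*\A)_s=\A_{\pi(s)}\otimes_{\OO_S(U^s)}\OO_{S,s}$; since $\A_{\pi(s)}$ is flat over $\OO_{X,\pi(s)}=\OO_S(U^s)$ and $\OO_S(U^s)\to\OO_{S,s}$ is a localization, $(\pi^*\A)_s$ is $\OO_{S,s}$-flat. Finiteness can be tested on the affine open cover $\{U^s\}_{s\in S}$, where $(\pi^*\A)(U^s)=\A_{\pi(s)}\otimes_{\OO_{X,\pi(s)}}\OO_S(U^s)\simeq\A_{\pi(s)}$ is a finite module. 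For the fiber condition, write $x=\pi(s)$ and let $\p_s$ be the prime of $\OO_S(U^s)$ determined by $s$, so that $(x,\p_s)$ is the schematic point corresponding to $s$ and $\kappa(s)=\kappa(x,\p_s)$ by Proposition \ref{proposition points scheme are schematic points of the model}; then
\[
(\pi^*\A)_s\otimes_{\OO_{S,s}}\kappa(s)=\A_x\otimes_{\OO_S(U^s)}\OO_{S,s}\otimes_{\OO_{S,s}}\kappa(x,\p_s)=\A_x\otimes_{\OO_{X,x}}\kappa(x,\p_s),
\]
which is a finite \'etale $\kappa(s)$-algebra by hypothesis on $\A$. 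Therefore $\pi^*\A\in\mathbf{Qcoh}^{fet}(S)$.

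I do not expect a genuine obstacle: the argument is essentially bookkeeping. The one point that must be handled with care is the fiberwise condition for $\pi^*$, where the identification of the scheme-theoretic residue field $\kappa(s)$ with the schematic residue field $\kappa(x,\p_s)$ (Proposition \ref{proposition points scheme are schematic points of the model}, resting on Propositions \ref{prop restrictions are flat epic}, \ref{proposition flat epic are local iso} and \ref{proposition residue field well defined}) is exactly what makes ``\'etale at every schematic point of $X$'' literally match ``\'etale at every point of $S$''; with that identification in hand, the displayed chain of base changes closes the argument.
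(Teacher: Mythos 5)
Your proposal is correct and follows essentially the same route as the paper: both directions are verified by the identification $\OO_{X,x}=\OO_S(U^s)$ with $U^s$ affine, reducing the direct image to the last clause of Lemma \ref{lemma characterization etale algebras} and checking flatness, finiteness and the fiber condition for $\pi^*\A$ on the cover $\{U^s\}$ via the same chain of base changes. Your added remarks---that the two inclusions suffice because $(\pi^*,\pi_*)$ is already an equivalence on the ambient categories, and that $\kappa(s)=\kappa(x,\p_s)$ rests on Proposition \ref{proposition points scheme are schematic points of the model}---are correct and merely make explicit what the paper leaves implicit.
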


\subsection{Finite locally free sheaves on schematic finite spaces}

Let $A\longrightarrow B$ be a morphism of rings such that $B$ is finitely generated and locally free over $A$. Once again, recall that if $A$ is Noetherian, this is equivalent to $B$ being finite and flat. The following definition is classical.

\begin{definition}
The degree (or rank) of $A\longrightarrow B$ (or simply $B$ if the $A$-algebra structure is understood) is the map defined as
\begin{align*}
\mathrm{deg}(B):\Spec(A)&\longrightarrow\Z^+\\
\p&\longmapsto \text{rank}_{A_{\p}}B_{\p},
\end{align*}
which is continuous and locally constant, hence constant if $\Spec(A)$ is connected. If $\Spec(A)$ is connected and non-empty, we identify $\mathrm{deg}(B)\in\Z^+$.
\end{definition}
\begin{remark}
We always assume that the base ring $A$ is non-zero (otherwise $B=0$ as well). In this case it is easy to see that $\mathrm{deg}(B)=0$ if and only if $B=0$, $\mathrm{deg}(B)\geq 1$ if and only if $A\longrightarrow B$ is injective, and $\mathrm{deg}(B)=1$ if and only if $A\simeq B$.
\end{remark}

Now, let $X$ be a schematic finite space. If $\A\in\mathbf{Qcoh}^{alg}(X)$, we know that $(X, \A)$ is schematic (by Proposition \ref{proposition relative spectrum}), hence the restriction maps $\A_x\to \A_y$ ($x\leq y$) are local isomorphisms of rings. It turns out that they are also local isomorphisms of modules with respect to localizations at ideals of the base rings:
\begin{lemma}\label{lemma restrictions between quasi-coherent are local iso}
Let $X$ be a schematic finite space and $\A$ a quasi-coherent algebra on $X$. For every $x\leq y$ and each prime ideal $\p\in\Spec(\OO_y)$,  one has
\begin{align*}
(\A_x)_{r_{xy}^{-1}(\q)}\simeq (\A_y)_\p,
\end{align*}
where $r_{xy}\colon \OO_{X, x}\to\OO_{X, y}$ is the restriction map.
\begin{proof}
Consider $i:=r_{xy}^*:\Spec(\OO_{X, y})\hookrightarrow\Spec(\OO_{X,x})$. Since $X$ is schematic, we have that $i^{-1}\widetilde{\OO_{X, x}}\simeq \widetilde{\OO_{X, y}}$ (Proposition \ref{proposition flat epic are local iso}). In particular, considering the quasi-coherent modules associated to $\A_x$ and $\A_y$, we obtain that 
\begin{align*}
\widetilde{\A_y}=\widetilde{\A_x\otimes_{\OO_x}\OO_y}=i^*\widetilde{\A_x}=i^{-1}\widetilde{\A_x},
\end{align*}
which proves the claim. 
\end{proof}
\end{lemma}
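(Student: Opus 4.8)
The plan is to deduce the statement from the behaviour of the restriction maps of a schematic finite space, combined with the characterization of quasi-coherence. First I would recall that, $X$ being schematic, Proposition \ref{prop restrictions are flat epic} tells us that $r_{xy}\colon\OO_{X,x}\to\OO_{X,y}$ is a flat epimorphism of rings; by Proposition \ref{proposition flat epic are local iso} this is equivalent to saying that $i:=r_{xy}^*=\Spec(r_{xy})\colon\Spec(\OO_{X,y})\hookrightarrow\Spec(\OO_{X,x})$ is a flat monomorphism of affine schemes, and in particular that for every prime $\p\in\Spec(\OO_{X,y})$ the localized homomorphism
\[
(\OO_{X,x})_{r_{xy}^{-1}(\p)}\longrightarrow(\OO_{X,y})_{\p}
\]
is an isomorphism. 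Equivalently, $i^{-1}\widetilde{\OO_{X,x}}\simeq\widetilde{\OO_{X,y}}$ as sheaves of rings on $\Spec(\OO_{X,y})$.

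Next I would bring in quasi-coherence of $\A$. By Proposition \ref{proposition: characterization qcoh, ftype, coh}(1) one has $\A_y\simeq\A_x\otimes_{\OO_{X,x}}\OO_{X,y}$, so on the level of the associated quasi-coherent modules on affine schemes this reads $\widetilde{\A_y}=i^*\widetilde{\A_x}$, where $i^*(-)=i^{-1}(-)\otimes_{i^{-1}\widetilde{\OO_{X,x}}}\widetilde{\OO_{X,y}}$ is the scheme-theoretic pullback. Using the identification $i^{-1}\widetilde{\OO_{X,x}}\simeq\widetilde{\OO_{X,y}}$ from the previous step, the tensor factor is superfluous and $i^*\widetilde{\A_x}=i^{-1}\widetilde{\A_x}$ coincides with the topological inverse image. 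Taking stalks at $\p$ then gives
\[
(\A_y)_{\p}=(\widetilde{\A_y})_{\p}=(i^{-1}\widetilde{\A_x})_{\p}=(\widetilde{\A_x})_{i(\p)}=(\A_x)_{r_{xy}^{-1}(\p)},
\]
which is exactly the asserted isomorphism; naturality in $\A$ and compatibility with the algebra structure are immediate since all the maps involved are localizations and base changes.

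I do not expect any genuine obstacle: the lemma is essentially an unwinding of Propositions \ref{prop restrictions are flat epic}, \ref{proposition flat epic are local iso} and the quasi-coherence criterion. The only point that deserves a line of care is the identification of the scheme-theoretic pullback $i^*$ with the sheaf-theoretic pullback $i^{-1}$ along the flat monomorphism $i$, which is precisely what Lazard's result provides. As an alternative, purely ring-theoretic route that bypasses sheaves altogether, one may simply compute, for $\p\in\Spec(\OO_{X,y})$ and $\q:=r_{xy}^{-1}(\p)$,
\[
(\A_y)_{\p}=\bigl(\A_x\otimes_{\OO_{X,x}}\OO_{X,y}\bigr)_{\p}\simeq\A_x\otimes_{\OO_{X,x}}(\OO_{X,y})_{\p}\simeq\A_x\otimes_{\OO_{X,x}}(\OO_{X,x})_{\q}\simeq(\A_x)_{\q},
\]
where the middle isomorphism $(\OO_{X,y})_{\p}\simeq(\OO_{X,x})_{\q}$ is the content of Proposition \ref{proposition flat epic are local iso}.
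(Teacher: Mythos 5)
Your proposal is correct and follows essentially the same route as the paper: identify $i=r_{xy}^*$ as a flat monomorphism via Propositions \ref{prop restrictions are flat epic} and \ref{proposition flat epic are local iso}, so that $i^{-1}\widetilde{\OO_{X,x}}\simeq\widetilde{\OO_{X,y}}$, and then use quasi-coherence to conclude $\widetilde{\A_y}=i^*\widetilde{\A_x}=i^{-1}\widetilde{\A_x}$ and take stalks. Your closing ring-theoretic computation is a clean, equivalent rephrasing of the same argument.
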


\begin{remark}If we define the localization of $\A$ at a schematic point $(x, \p)$ to be $(\A_x)_\p$, this lemma guarantees that the notion is well defined within the equivalence class.
\end{remark}
\medskip

Now let $\mathcal{A}$ be a finite and locally free (quasi-coherent) sheaf of $\OO_X$-algebras.

\begin{proposition}
If $X$ is pw-connected, there is a well defined notion of degree of the finite and locally free sheaf $\A$ as a locally constant function.
\begin{proof}
We want to see that there is a well defined map
\begin{align*}
\mathrm{deg}(\A):X&\longrightarrow\Z^+\\
x&\longmapsto \mathrm{deg}(\A_x).
\end{align*}
Indeed, we have to prove $\mathrm{deg}(\A_x)=\mathrm{deg}(\A_y)$ for any two points $x, y$ in the same connected component. By transitivity it is enough to see it for $x\leq y$, and in this case the result follows from Lemma \ref{lemma restrictions between quasi-coherent are local iso} and the hypothesis on $X$.
\end{proof}
\end{proposition}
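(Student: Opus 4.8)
The plan is to exhibit the candidate map
\[
\mathrm{deg}(\A)\colon X\longrightarrow\Z^+,\qquad x\longmapsto\mathrm{deg}(\A_x),
\]
and check that it is locally constant. First I would note that the right-hand side makes sense as a single element of $\Z^+$: since $X$ is pw-connected, $\Spec(\OO_{X,x})$ is connected and non-empty (pw-connected spaces have non-zero stalks), so by the definition of degree the locally constant function $\mathrm{deg}(\A_x)\colon\Spec(\OO_{X,x})\to\Z^+$ is constant, and we read it off as a positive integer. Next, since the minimal open sets $U_x=\{x'\geq x\}$ form a basis of the topology of $X$, a function $X\to\Z^+$ (with $\Z^+$ discrete) is continuous, i.e. locally constant, exactly when it takes the same value on $x$ and on every $x'\geq x$. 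Thus the whole statement reduces to showing $\mathrm{deg}(\A_x)=\mathrm{deg}(\A_y)$ for every $x\leq y$ in $X$.

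To prove this equality I would combine Lemma \ref{lemma restrictions between quasi-coherent are local iso} with Proposition \ref{proposition flat epic are local iso}. Fix $x\leq y$ and any prime $\p\in\Spec(\OO_{X,y})$ (one exists because $\OO_{X,y}\neq 0$). Since $X$ is schematic, $r_{xy}$ is a flat epimorphism of rings (Proposition \ref{prop restrictions are flat epic}), so by Proposition \ref{proposition flat epic are local iso} the induced map $(\OO_{X,x})_{r_{xy}^{-1}(\p)}\to(\OO_{X,y})_\p$ is an isomorphism; and Lemma \ref{lemma restrictions between quasi-coherent are local iso} provides an isomorphism $(\A_x)_{r_{xy}^{-1}(\p)}\simeq(\A_y)_\p$ compatible with it, because that isomorphism is obtained by pulling back the quasi-coherent sheaf $\widetilde{\A_x}$ along $i=r_{xy}^*$ and its stalk at $\p$ is precisely the preceding local ring isomorphism. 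Hence the free modules $(\A_x)_{r_{xy}^{-1}(\p)}$ and $(\A_y)_\p$ have equal rank, i.e. $\mathrm{deg}(\A_x)\big(r_{xy}^{-1}(\p)\big)=\mathrm{deg}(\A_y)(\p)$. As both sides are constant on their (connected) spectra, this gives $\mathrm{deg}(\A_x)=\mathrm{deg}(\A_y)$.

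Finally I would observe that local constancy in this sense automatically upgrades to constancy on connected components: in a finite poset any two points of the same connected component are joined by a zig-zag $x=x_0,\dots,x_n=y$ with consecutive terms comparable, along which $\mathrm{deg}(\A)$ is constant by the previous step. The only point needing a little care is the compatibility of the module isomorphism of Lemma \ref{lemma restrictions between quasi-coherent are local iso} with the ring isomorphism of Proposition \ref{proposition flat epic are local iso}, so that ``rank'' is literally preserved; but this is immediate from the construction of that isomorphism as $i^{-1}\widetilde{\A_x}\simeq\widetilde{\A_y}$, so I do not expect a genuine obstacle here.
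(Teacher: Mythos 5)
Your proposal is correct and follows essentially the same route as the paper: reduce to comparable points $x\leq y$, then use Lemma \ref{lemma restrictions between quasi-coherent are local iso} (whose proof already encodes the compatibility with the local ring isomorphism of Proposition \ref{proposition flat epic are local iso}) together with pw-connectedness to equate the constant values of the two degree functions. The extra details you supply --- existence of a prime in $\Spec(\OO_{X,y})$, the zig-zag argument for connected components --- are exactly what the paper leaves implicit.
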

In particular, if $X$ is connected, the degree of $\A$ is constant and identified with a non-negative integer $\mathrm{deg}(\A)\in\Z^+$. 

\subsection{\'Etale cover sheaves are locally trivial}\label{subsection locally trivial}

Our \'etale sheaves do verify the property of local triviality equivalent to that of \'etale covers of schemes. In order to prove it we use as a guide the ideas of Lenstra,  see \cite[
Proposition 5.2.9, p. 159]{Szamuely}. First, let us introduce the necessary terminology to properly describe this phenomena. 

\begin{definition}
Let $X$ be a schematic finite space and let $\mathbf{Qcoh}^{alg}(X)$ be its category of quasi-coherent algebras. We say that a quasi-coherent algebra $\B$ is a <<\textit{covering}>> if it is finite and faithfully flat ($\OO_{X, x}\to\B_x$ faithfully flat for all $x\in X)$. Let $\mathbf{Cov}(\mathbf{Qcoh}^{alg}(X))$ the family of all such coverings.  Equipped with this family of coverings, $\mathbf{Qcoh}^{alg}(X)$ defines a \textit{site}, which we denote by $X_{\mathbf{Qcoh}}^{fppf}$.
\end{definition}
\begin{remark}
The reader might have noticed that the notion of site we are tacitly using is a <<contravariant>> version of the one that appears in most standard literature on the subject, sometimes called \textit{cosite} we are defining the coverings of $\A$ to be arrows \textit{from} $\A$ instead of \textit{to} $\A$; but have chosen to avoid that less standard term, because if one wants to stick with the ordinary conventions it is sufficient to consider the opposite category of sheaves. Of course, fibered products in this opposite category are tensor products of algebras, coproducts are products of algebras and so on. Regardless of these specific considerations, it is clear that $X_{\mathbf{Qcoh}}^{fppf}$ is the analogue of the fppf site of (Noetherian) schemes. Replacing <<finite and faithfully flat>> by <<\'etale>> would also work for our discussion.  
\end{remark}
Given a schematic morphism $f\colon X\to Y$, we have adjoint functors $$(f_*, f^*)\colon \mathbf{Qcoh}^{alg}(X)\overset{\sim}{\to}\mathbf{Qcoh}^{alg}(Y).$$ In particular, given a quasi-coherent $\OO_X$-algebra $\A$, the natural schematic morphism $f\colon (X, \A)\to X$ induces a functor:
\begin{align*}
f^*\colon X_{\mathbf{Qcoh}}^{fppf}&\to \A_{\mathbf{Qcoh}}^{fppf}:=\mathbf{pw}(X, \A)_{\mathbf{Qcoh}}^{fppf}\\
\B&\mapsto \B\otimes_{\OO_{X}}\A.
\end{align*}
\begin{remark}For simplicity we omit the pullback of $\B$ by $\mathbf{pw}(X, \A)\to (X, \A)$ from the notation, since it induces an equivalence of categories of quasi-coherent algebras and, as we will see in the next section, of sites.
\end{remark}

Let us elaborate on how the usual notions of constant and locally constant objects on a site translate to our case.

Let $X$ be a schematic finite space and let $\OO_1, ..., \OO_k$ be the well-connected components of $\OO_X$ (see Definitions \ref{definition well connected components}, \ref{definition well connected faithfully flat}). Given a finite set $F$ with cardinality $n$, consider the constant functor:
\begin{align*}
\underline{F}\colon X_{\textbf{Qcoh}}&\to \mathbf{Set}\\
\A&\mapsto F
\end{align*}
and its sheafification, denoted $\underline{F}^\#$, which sends $\A$ to $\coprod_{\pi_0^{wc}(X, \A)}F$, where $\pi_0^{wc}(X, \A)$ is the set of well-connected components (Definition \ref{definition well connected components}) of $(X, \A)$. Just as in the case of constant covers in ordinary theories, it follows that $\underline{F}^\#$ is representable (in the sense of the covariant functor of points) by $\prod_{1\leq j\leq k}\prod_F\OO_j=\OO_1^{\times n}\times...\times\OO_k^{\times n}\simeq \OO_X^{\times n}$. This representative is called \textit{constant object} of cardinality $F$ (or degree $n=\#F$).

\begin{remark}\label{remark remove localization}
If $X$ is pw-connected, $\mathbf{pw}(X, \OO_X^{\times n})\to X$ is just $X\amalg\overset{n)}{...}\amalg X\to X$, as expected from an ordinary constant cover. In fact, pw-connected schematic finite spaces behave well enough to replicate most constructions of the theory of finite \'etale covers of schemes, but the category (without localizing by qc-isomorphisms) is still too large to fit into the framework of Galois categories (in particular, no natural fiber functors are \textit{conservative}, see Definition \ref{definition galois category}). We think that, at least in this particular case, further topological reductions may be performed to solve this issue. 

An alternative and completely different approach, inspired by the fact that a finite schematic space can be seen to be equivalent to a certain locally ringed space and a finite sublattice of its topology, would be constructing some sort of hybrid Galois category that classifies both the algebraic and combinatorial parts of the problem.
\end{remark}

\begin{definition}\label{definition locally constant}
We say that an object $\A$ in $X_{\mathbf{Qcoh}}^{fppf}$ is \textit{locally constant }if there exists a covering $\A\to \B$ such that, if $\B\simeq \B_1\times...\times \B_n$ denotes the decomposition into well-connected components fo $\B$ (Definition \ref{definition well connected faithfully flat}), $\A\otimes_{\OO_X}\B_j$ is a constant object of degree $n_j$ in $(\B_j)_{\mathbf{Qcoh}}^{fppf}$.
\end{definition}

\begin{lemma}\label{lemma fibers are constant}
If $X$ is well-connected, $\A$ is locally constant if and only if there exists a covering $\B$ such that $f^*\A\simeq \A\otimes_{\OO_X}\B\simeq \B^n$ for some $n\geq 0$.
\begin{proof}
The <<if>> part is trivial, so let us prove the direct implication. Recall that well-connectedness implies to top-connectedness and pw-connectedness (Proposition \ref{proposition well connected is top + pw}). Using the notation of Definition \ref{definition locally constant}, assume that $k=2$, so $\mathbf{pw}(X, \B)$ has two connected components (for $k>2$ the idea is identical but the argument is a bit longer). We denote by $\B_1$ and $\B_2$ the corresponding connected components of $\B$ (Definition \ref{definition well connected faithfully flat}). We only have to prove that $n_1=n_2$. Notice that since $\B$ is faithfully flat and $X$ is pw-connected, $(X, \B)$ has non-zero stalks.

\begin{claim}
There exists some $x\in X$ such that both $\B_{1x}$ and $\B_{2x}$ are non-zero.
\begin{proof}[Proof of Claim 1]
Consider the natural morphism
\begin{align*}
\pi\colon X_\B^1\amalg X_\B^2=\mathbf{pw}(X, \B)\to (X, \B)\to X,
\end{align*}
with $X_\B^i$ the connected components of $\mathbf{pw}(X, \B)$. By definition $\B_i=\pi_{i*}\OO_{X_\B^1}$ with $\pi_i\colon X_\B^i\to X$ for $i=1, 2$.

$g$ is surjective because $\B$ has non-zero stalks. Let us prove that it is injective. Assume that either $\B_{1x}=0$ or $\B_{2x}=0$ for all $x\in X$. Note that both cannot be zero at the same time because $\B_{x}\neq 0$. In this case $\pi^{-1}(x)$ has exactly one element for all $x\in X$ by construction, so $\pi$ is injective and thus a homeomorphism. In particular $X$ would be homeomorphic to a disjoint union of non-empty topological spaces, contradicting the top-connectedness of $X$. We conclude that there must be some $x\in X$ such that both $\B_{1x}$ and $\B_{2x}$ are non-zero, which proves the claim.
\end{proof}
\end{claim}

\begin{claim}
$\B_1\otimes_{\OO_{X}}\B_2\neq 0$.
\begin{proof}[Proof of Claim 2]
Let $f_x\colon \OO_{X, x}\hookrightarrow \B_{1x}\times \B_{2x}$ denote the natural faithfully flat structure morphism and let $f_{ix}\colon \OO_{X, x}\to \B_{ix}$ (for $i=1, 2$) denote its composition with the natural projections. We have a surjective morphism
\begin{align*}
f_x^*\colon \Spec(\B_{1x})\amalg \Spec(\B_{2x})\to \Spec(\OO_{X, x})
\end{align*}
and morphisms $f_{ix}^*\colon\Spec(\B_{ix})\to \Spec(\OO_{X, x})$ with closed and open image for $i=1, 2$.

By Claim 1, there exists some $x\in X$ such that both $\B_{1x}$ and $\B_{2x}$ are non-zero. If for such an $x$, $\B_{1x}\otimes_{\OO_{X, x}}\B_{2x}=0$, taking spectra we obtain that for any $\p_i\in\Spec(\B_{ix})$ ($i=1, 2$) $f_{1x}^*(\p_1)\neq f_{2x}^*(\p_2)$, which in particular implies that
\begin{align*}
\Spec(\OO_{X, x})=\mathrm{Im}(f_x^*)\simeq \mathrm{Im}(f_{1x}^*)\amalg \mathrm{Im}(f_{2x}^*),
\end{align*}
which contradicts the hypothesis of pw-connectedness of $X$.
\end{proof}
\end{claim}
Finally, since $\B_1\otimes_{\OO_X}\B_2\neq 0$, we have isomorphisms of $\OO_X$-modules
\begin{align*}
(\B_1\otimes_{\OO_X}\B_2)^{\times n_1}\simeq \B_1^{\times n_1}\otimes_{\OO_X}\B_2\simeq  \A\otimes_{\OO_X}\B_1\otimes_{\OO_X}\B_2\simeq (\B_1\otimes_{\OO_X}\B_2)^{\times n_2},
\end{align*}
from where it follows that $n_1=n_2$, which completes the proof of the lemma.
\end{proof}
\end{lemma}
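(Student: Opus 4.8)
The plan is to dispose of the ``if'' direction immediately and then reduce the ``only if'' direction to an equality of ranks of finite locally free modules, the key input being the interplay between the topological and the algebraic halves of well-connectedness recorded in Proposition~\ref{proposition well connected is top + pw}.

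For the ``if'' direction: if $f^*\A=\A\otimes_{\OO_X}\B\simeq\B^n$ for a covering $\B$, I would write $\B\simeq\B_1\times\cdots\times\B_k$ for its decomposition into well-connected components (Definition~\ref{definition well connected faithfully flat}) and tensor, obtaining $\A\otimes_{\OO_X}\B_j\simeq\B_j^{\,n}$, which is a constant object of degree $n$ in $(\B_j)^{fppf}_{\mathbf{Qcoh}}$; so the condition of Definition~\ref{definition locally constant} holds with all $n_j=n$. Conversely, suppose $\A$ is locally constant, witnessed by a covering $\B$ with $\A\otimes_{\OO_X}\B_j\simeq\B_j^{\,n_j}$ for each well-connected component $\B_j$ of $\B$. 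Since $\B\simeq\prod_j\B_j$ and hence $\A\otimes_{\OO_X}\B\simeq\prod_j\B_j^{\,n_j}$, it suffices to prove $n_i=n_j$ for all $i,j$; running over consecutive components this reduces to the case $k=2$, with components $\B_1,\B_2$.

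The heart of the argument is then two small claims. First I would show there is a point $x\in X$ at which both stalks $\B_{1x}$ and $\B_{2x}$ are nonzero: since $\B$ is faithfully flat over the pw-connected space $X$, every $\B_x=\B_{1x}\times\B_{2x}$ is nonzero, so if no such $x$ existed every fibre of $\pi\colon\mathbf{pw}(X,\B)=X^1_\B\amalg X^2_\B\to X$ would be a singleton, making $\pi$ a homeomorphism and exhibiting $X$ as a disjoint union of two nonempty spaces, against top-connectedness. Second, I would show $\B_1\otimes_{\OO_X}\B_2\neq0$: at the point $x$ just produced the faithfully flat map $\OO_{X,x}\hookrightarrow\B_{1x}\times\B_{2x}$ gives a surjection $\Spec(\B_{1x})\amalg\Spec(\B_{2x})\to\Spec(\OO_{X,x})$ with each image open and closed, and if $\B_{1x}\otimes_{\OO_{X,x}}\B_{2x}=0$ those images would be disjoint, decomposing $\Spec(\OO_{X,x})$ nontrivially and contradicting pw-connectedness. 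With both claims in hand, the chain of $\OO_X$-module isomorphisms
\begin{align*}
(\B_1\otimes_{\OO_X}\B_2)^{\times n_1}&\simeq\B_1^{\times n_1}\otimes_{\OO_X}\B_2\simeq\A\otimes_{\OO_X}\B_1\otimes_{\OO_X}\B_2\\
&\simeq\B_1\otimes_{\OO_X}\B_2^{\times n_2}\simeq(\B_1\otimes_{\OO_X}\B_2)^{\times n_2},
\end{align*}
together with the fact that $\B_1\otimes_{\OO_X}\B_2$ is a nonzero finite locally free $\OO_X$-module whose rank is constant by pw-connectedness, forces $n_1=n_2$; hence $f^*\A\simeq\B^n$ with $n=n_1=\cdots=n_k$.

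The main obstacle is precisely this pair of connectedness claims: the nontrivial content is that a ``locally constant'' object becomes genuinely constant of one single degree, and this uses crucially that well-connectedness bundles top-connectedness (to forbid $\pi$ being a homeomorphism onto a disconnection of $X$) with pw-connectedness (to forbid a disconnection of $\Spec(\OO_{X,x})$ and to give a well-defined rank). The rank bookkeeping and the passage from two components to arbitrary $k$ are routine.
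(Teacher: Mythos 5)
Your proposal is correct and follows essentially the same route as the paper's proof: the same two connectedness claims (a point where both stalks are nonzero, via top-connectedness; nonvanishing of $\B_1\otimes_{\OO_X}\B_2$, via pw-connectedness of $\Spec(\OO_{X,x})$) and the same concluding chain of module isomorphisms forcing $n_1=n_2$. The only differences are cosmetic: you spell out the ``if'' direction and the reduction from $k$ components to $k=2$ slightly more explicitly than the paper does.
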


\begin{lemma}\label{lemma kernel multiplication differentials}
Let $f\colon A\to B$ be a morphism of rings with $\Omega_{A|B}=0$ and $I=\ker(B\otimes_AB\to B)$ finitely generated. Then there is a decomposition $B\otimes_AB=C\times D$ (of rings) with $I\otimes_{B\otimes_AB}C\simeq 0$, where $C$ is a $B\otimes_AB$-algebra via the natural projection. Furthermore, $C\simeq B$.
\begin{proof}
Since $\Omega_{A|B}=I/I^2=0$, by Nakayama's lemma there exists a non-zero idempotent $e\in I$ such that $I=(e)$. $C=(B\otimes_AB)/I$ and $D=(B\otimes_AB)/(1-e)$ satisfy the required conditions.
\end{proof}

\end{lemma}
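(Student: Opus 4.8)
The plan is to translate the vanishing of the differentials into the idempotency of the ideal $I$, and then to invoke the idempotent form of Nakayama's lemma, which is exactly where the finite generation hypothesis enters.

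First I would recall the fundamental identification $\Omega_{A|B}\simeq I/I^2$ as $B$-modules, where $B$ acts on $I/I^2$ through the isomorphism $B\simeq(B\otimes_AB)/I$ induced by the multiplication map $\mu\colon B\otimes_AB\to B$ (explicitly, $db$ corresponds to the class of $b\otimes 1-1\otimes b$). Hence the hypothesis $\Omega_{A|B}=0$ is equivalent to $I=I^2$, i.e. $I\cdot I=I$ inside the commutative ring $R:=B\otimes_AB$. Since $I$ is a finitely generated $R$-module with $I\cdot I=I$, the determinant-trick form of Nakayama's lemma, applied to the module $I$ with respect to the ideal $I$ itself, produces an element $e\in I$ with $(1-e)I=0$; because $e\in I$ this forces $e=e^2$, and for every $x\in I$ one gets $x=ex\in eR$, so in fact $I=eR$ with $e$ idempotent. (In the degenerate case $I=0$ one takes $e=0$, and the conclusion below reduces to $C=R\simeq B$, $D=0$.)

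Next I would use $e$ to split $R$. The ideals $eR$ and $(1-e)R$ are comaximal with zero intersection, so the canonical projections assemble into a ring isomorphism $R\overset{\sim}{\to}R/eR\times R/(1-e)R$; putting $C:=R/eR=R/I$ and $D:=R/(1-e)R$ yields the asserted decomposition $B\otimes_AB\simeq C\times D$, with $C$ viewed as a $B\otimes_AB$-algebra via the projection onto the first factor. Since $\mu$ is surjective with kernel $I$, the first isomorphism theorem gives $C=R/I\simeq B$, which is the last claim. Finally, since $C=R/I$ we have $I\otimes_{B\otimes_AB}C=I\otimes_RR/I=I/I^2$, which vanishes by the first step; equivalently, $I=eR$ and $e$ maps to $0$ in $C$, so $I\otimes_RC=eC=0$.

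I do not expect a genuine obstacle: the argument is a clean combination of two standard facts, the identification $\Omega_{A|B}\simeq I/I^2$ and the ``idempotent ideal'' consequence of Nakayama's lemma. The only point requiring care is the Nakayama step---one must apply it to $I$ regarded as a module over $R$ and with respect to the ideal $I$ itself, and it is precisely the finite generation of $I$ (part of the hypothesis) that both legitimizes this and upgrades ``$I=I^2$'' to ``$I$ is generated by a single idempotent'', from which the ring-theoretic splitting is automatic.
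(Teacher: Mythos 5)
Your proof is correct and follows essentially the same route as the paper: identify $\Omega_{A|B}\simeq I/I^2$, apply the idempotent form of Nakayama's lemma to the finitely generated ideal $I$ with $I=I^2$ to get $I=(e)$ with $e$ idempotent, and split $B\otimes_AB\simeq (B\otimes_AB)/I\times (B\otimes_AB)/(1-e)$. Your treatment of the degenerate case $I=0$ is a small but welcome addition to the paper's terser argument.
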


Motivated by a classical result for \'etale algebras or affine schemes, but now proceeding fiberwise and checking some additional quasi-coherence conditions, we get the following result.

\begin{proposition}[Local triviality of \'etale cover sheaves]\label{proposition local triviality for \'etale sheaves}
Let $X$ be a well-connected schematic finite space. A quasi-coherent algebra $\A$ on $X$ is an \'etale cover sheaf if and only if it is finite locally constant as an object of $X_{\mathbf{Qcoh}}^{fppf}$.
\end{proposition}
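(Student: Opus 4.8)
The plan is to route both implications through Lemma \ref{lemma fibers are constant}; the substance lies in the ``only if'' direction, where we run Lenstra's inductive trivialization argument (following \cite[Proposition 5.2.9]{Szamuely}) fiberwise, while keeping track of quasi-coherence.

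\emph{Sufficiency.} Assume $\A$ is finite locally constant. Since $X$ is well-connected, Lemma \ref{lemma fibers are constant} yields a covering $\B$ with $\A\otimes_{\OO_X}\B\simeq\B^{\times n}$ in $\B_{\mathbf{Qcoh}}^{fppf}$. Fix $x\in X$: then $\OO_{X,x}\to\B_x$ is finite and faithfully flat and $\A_x\otimes_{\OO_{X,x}}\B_x\simeq\B_x^{\times n}$ is finite étale over $\B_x$. As finiteness, flatness and étaleness are fppf-local on the base, faithfully flat descent along $\OO_{X,x}\to\B_x$ shows $\OO_{X,x}\to\A_x$ is étale (and finite, hence finite locally free by the Noetherian hypothesis); since $x$ is arbitrary and $\A$ is quasi-coherent, $\A$ is an étale cover sheaf.

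\emph{Necessity, the splitting step.} Let $\A$ be an étale cover sheaf. By Lemma \ref{lemma restrictions between quasi-coherent are local iso} and well-connectedness, the degree $n=\deg(\A)\in\Z^{+}$ is a well-defined constant; we induct on $n$. If $n=0$ then $\A=0$, and if $n=1$ then each $\OO_{X,x}\to\A_x$ is a rank-one finite locally free algebra, hence an isomorphism, so $\A\simeq\OO_X$; in both cases $\A$ is already constant, trivialized by $\B=\OO_X$. Suppose $n\geq 2$ and put $\mathcal I:=\ker\!\big(m\colon\A\otimes_{\OO_X}\A\to\A\big)$, a quasi-coherent sheaf of ideals (Corollary \ref{corollary qcoh is abelian}) with $\mathcal I_x=\ker(\A_x\otimes_{\OO_{X,x}}\A_x\to\A_x)$, finitely generated since $\A_x\otimes_{\OO_{X,x}}\A_x$ is Noetherian. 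Because $\A_x$ is étale over $\OO_{X,x}$, Lemma \ref{lemma kernel multiplication differentials} applies at each stalk: $\mathcal I_x=(e_x)$ for a unique idempotent $e_x$, and $\A_x\otimes_{\OO_{X,x}}\A_x\simeq\A_x\times D_x$ with $D_x$ étale over $\A_x$ of rank $n-1$. Quasi-coherence of $\mathcal I$ (and flatness of the $r_{xy}$) gives $\mathcal I_y\simeq\mathcal I_x\otimes_{(\A\otimes_{\OO_X}\A)_x}(\A\otimes_{\OO_X}\A)_y$ for $x\leq y$, so $\mathcal I_y$ is generated by the idempotent $r_{xy}(e_x)$; uniqueness of an idempotent generator forces $r_{xy}(e_x)=e_y$, and the $e_x$ assemble into a global idempotent $e\in(\A\otimes_{\OO_X}\A)(X)$. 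This produces a decomposition $\A\otimes_{\OO_X}\A\simeq\A\times\A_1$ of quasi-coherent $\OO_X$-algebras (indeed of $\A$-algebras) with $(\A_1)_x=D_x$, so that $\A_1$ is a quasi-coherent algebra on the schematic finite space $(X,\A)$ (Proposition \ref{proposition relative spectrum}) which is an étale cover sheaf of stalkwise degree $n-1$.

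\emph{Necessity, the induction step.} To invoke the inductive hypothesis we pass to a well-connected base: replace $(X,\A)$ by $\mathbf{pw}(X,\A)$ (Proposition \ref{proposition pw is schematic}; this qc-isomorphism preserves étale cover sheaves and their degrees, being stalkwise and base-change stable) and write $\mathbf{pw}(X,\A)=\coprod_j Y_j$ for its connected, hence well-connected (Proposition \ref{proposition well connected is top + pw}), components. On each $Y_j$ the pullback $\A_1|_{Y_j}$ is an étale cover sheaf of constant degree $n-1<n$, so by induction there is a covering $\mathcal B_j$ of $Y_j$ with $\A_1|_{Y_j}\otimes_{\OO_{Y_j}}\mathcal B_j\simeq\mathcal B_j^{\times(n-1)}$. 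Let $W=\coprod_j(Y_j,\mathcal B_j)$, let $g\colon W\to X$ be the resulting affine morphism, and set $\B:=g_*\OO_W$; a stalkwise check (each stalk of $\B$ is a finite product of the $(\mathcal B_j)_y$, finite faithfully flat over factors of $\A_x$, in turn finite faithfully flat over $\OO_{X,x}$) shows $\B$ is finite and faithfully flat, hence a covering of $X$. Pulling $\A$ back along $(Y_j,\mathcal B_j)\to(X,\A)\to X$ and feeding in the splitting $\A\otimes_{\OO_X}\A\simeq\A\times\A_1$ together with the inductive isomorphisms yields $g^{*}\A|_{(Y_j,\mathcal B_j)}\simeq\mathcal B_j\times\mathcal B_j^{\times(n-1)}=\mathcal B_j^{\times n}$, whence $\A\otimes_{\OO_X}\B\simeq\B^{\times n}$ as $\B$-algebras. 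By Lemma \ref{lemma fibers are constant}, $\A$ is finite locally constant, closing the induction.

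The hard part is not an estimate but the bookkeeping in the necessity direction: (i) upgrading the pointwise idempotents supplied by Lemma \ref{lemma kernel multiplication differentials} to a \emph{global} one, which rests on the uniqueness of an idempotent ideal generator together with the flat-base-change compatibility coming from quasi-coherence of $\mathcal I$; and (ii) running the induction over the components of $\mathbf{pw}(X,\A)$ and then reassembling the partial trivializations $\mathcal B_j$ into a single covering of $X$, verifying stalkwise that finiteness and faithful flatness are inherited through the composite affine morphisms. Stability of fibered products under base change (the fibered-product theorem in $\mathbf{SchFin}$) and transitivity of coverings in $X_{\mathbf{Qcoh}}^{fppf}$ are the formal facts that make (ii) go through.
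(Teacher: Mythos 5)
Your proof is correct and follows essentially the same route as the paper's: both directions go through Lemma \ref{lemma fibers are constant}, the necessity is an induction on $\deg(\A)$ using the idempotent splitting $\A\otimes_{\OO_X}\A\simeq\A\times\A_1$ supplied by Lemma \ref{lemma kernel multiplication differentials}, and the inductive step passes to $\mathbf{pw}(X,\A)$ before pushing the trivializing covering back down to $X$. If anything, you are slightly more careful than the paper in two places: you glue the stalkwise idempotents into a global one via uniqueness of idempotent generators (the paper instead builds the complementary ideal sheaf $\mathcal J$ by hand), and you explicitly run the induction component by component on $\mathbf{pw}(X,\A)$, which need not be connected.
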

\begin{proof}
Assume that there exists a quasi-coherent, finite and faithfully flat $\OO_X$-algebra $\B$ such that $\A\otimes_{\OO_X}\B\simeq \B^{\times n}$ for some $n$ (Lemma \ref{lemma fibers are constant}). We can assume that $n>0$, because otherwise $\A=0$ by faithful flatness and we would be done. Note that, since $X$ has non-zero stalks and the tensor product is defined stalkwise, this also implies that $\A_x\neq 0$ for all $x\in X$.

First, we prove that $\A$ is finite. Since, for every $x\in X$, $\OO_{X, x}\to \B_x$ is finite locally free (categories of modules are abelian, hence finite products and coproducts are isomorphic) and finiteness is a local condition, we can assume without loss of generality that it is finite and free, $\B_x\simeq \OO_{X, x}^{\oplus m}$. Since $\A_x\otimes_{\OO_{X, x}}\B_x$ is a finite $\OO_{X, x}$ module by the assumption and it is also a finite $\A_x$-module, the only possibility is that $\A_x$ is a finite $\OO_{X, x}$-module for all $x\in X$.

Now, we have to check that for any schematic point $(x, \p)$, there is an isomorphism $\A_x\otimes_{\OO_{X, x}}\overline{\kappa(x, \p)}\simeq \overline{\kappa(x, \p)}^{\times n}$ for some $n$. Indeed, we have isomorphisms:
\begin{align*}
 \overline{\kappa(x, \p)}^{\times n}\simeq \B_x^{\times n}\otimes_{\B_x} \overline{\kappa(x, \p)}\simeq (\A_x\otimes_{\OO_{X, x}}\B_x)\otimes_{\B_x} \overline{\kappa(x, \p)}\simeq \A_x\otimes_{\OO_{X, x}} \overline{\kappa(x, \p)}
\end{align*}
so $\A\in\mathbf{Qcoh}^{fet}(X)$.
\medskip

Conversely, assume that $\A$ is finite \'etale. In particular, it is locally trivial of constant degree, which we can assume positive (otherwise $\A=0$ and the statement would hold trivially). Let us proceed by induction over $n=\mathrm{deg}(\A)$. If $n=1$, then $\A\simeq \OO_X$ and there is nothing to say. In general, consider $\mathcal{I}=\ker(\A\otimes_{\OO_X}\A\to \A)$, which is a quasi-coherent sheaf of ideals (see Corollary \ref{corollary qcoh is abelian}) both as a sheaf of $\OO_X$-modules and of $\A$-modules. 

$\mathcal{I}/\mathcal{I}^2$ is the sheaf of relative differentials of $\OO_X\to \A$, which is trivial by Proposition \ref{proposition differential characterization etale} (at stalks). Since  $\A_x$ is finite and finitely presented for each $x\in X$, $\mathcal{I}_x$ is finitely generated, hence Lemma \ref{lemma kernel multiplication differentials} applies and $\mathcal{I}_x=(e_x)$ for some non-trivial idempotent $e_x\in \OO_{X, x}$ (whose image generates $\mathcal{I}_x$ as an $\A_x\otimes_{\OO_{X, x}}\A_x$-module) and
\begin{align*}
\A_x\otimes_{\OO_{X, x}}\A_x\simeq \A_x\times C_x
\end{align*}
for some ring $C_x$. Furthermore, since the natural morphism $\A_x\to \A_x\otimes_{\OO_{X, x}}\A_x$ is finite \'etale and the projection $\A_x\times C_x\to C_x$ is finite \'etale, we obtain that $C_x$ is a finite \'etale $A_x$-algebra for every $x\in X$.

Allow us to define another sheaf of ideals $\mathcal{J}$ such that $\mathcal{J}_x:=(1-e_x)$ and, for every $x\leq y$, $J_x\to J_y$ is the obvious map induced by $\mathcal{I}_x\to \mathcal{I}_y$ (note that $\A=\mathcal{I}\oplus \mathcal{J}$ as $\OO_X$-modules). Since $\mathcal{I}$ is quasi-coherent, i.e. $\mathcal{I}_y=\mathcal{I}_x\OO_{X, y}$ for all $x\leq y$, $\mathcal{J}$ is readily seen to be quasi-coherent.

Now we consider the quotient $\mathcal{C}:=(\A\otimes_{\OO_X}\A)/\mathcal{J}$, which is quasi-coherent and verifies $\mathcal{C}_x=C_x$, so it is an \'etale cover sheaf of $\OO_X$-modules by the previous discussion.

By construction, $\mathcal{C}$ (since $X$ is well-connected) has constant degree $n-1$ as a finite locally free $\A$-algebra (and $n^2-n$ as an $\OO_X$-algebra). Consider the natural qc-isomorphism $\mathbf{pw}(X, \A)\to (X, \A)$ and denote by $\mathcal{C}'$ the pullback of $\mathcal{C}$, which still has constant degree $n-1$. By the induction hypothesis, there is a faithfully flat finite algebra $\B'$ in $\mathbf{Qcoh}(\mathbf{pw}(X, \A))$ such that $\mathcal{C}'\otimes_{\OO_{\mathbf{pw}(X, \A)}}\B'\simeq \B'^{\times n-1}$. We consider its push-forward to $(X, \A)$, which is an $\A$-algebra $\B$ such that $\mathcal{C}\otimes_{\A}\B\simeq \B^{\times n-1}$.
Finally,
\begin{align*}
\A\otimes_{\OO_{X}}\B\simeq (\A\otimes_{\OO_{X}}\A)\otimes_\A\B\simeq (\A\times \mathcal{C})\otimes_\A\B\simeq \B\times \B^{\times n-1}\simeq \B^{\times n},
\end{align*}
which concludes the proof.
\end{proof}

\begin{remark} Looking at the previous proof, it is clear that $n$ is the degree of $\A$. It is possible to weaken the hypothesis of well-connectedness from Lemma \ref{lemma fibers are constant} and Proposition \ref{proposition local triviality for \'etale sheaves}, but the next section will prove that there is no loss of generality in working under this additional assumption. For a general base space $X$: if $\OO_1, ..., \OO_k$ are the well-connected components of the structure sheaf of $X$, we have morphisms $g_j\colon(X, \OO_j)\to X$ and define $\B_j:=g_{j*}g_j^*\B$ on $X$ for all $j$. A cover sheaf $\A$, after being trivialized by $\B$, is of the form $\B_1^{n_1}\times...\times \B_k^{n_k}$ for certain $n_1, ..., n_k\in\Z^+$.
\end{remark}

\begin{lemma}[Triviality of morphisms]\label{lemma triviality of morphisms}
Let $X$ be well-connected and let $f:\A\longrightarrow\B$ be a morphism in $\mathbf{Qcoh}^{fet}(X)$ such that both $\A$ and $\B$ are trivial; i.e. $\A\simeq \OO_X^{\times F}$, $\B\simeq \OO_X^{ \times E}$ for some finite discrete sets  $F, E$. Then $f$ is induced by composition with some map $\phi:F\longrightarrow E$.
\begin{proof}
We prove this at every point. Pick $x\in X$ and consider $f_x:\A_x\longrightarrow\B_x$. Since $\OO_{X, x}$ is connected (it has no non-trivial idempotents) it is an exercise of basic algebra to check that $f_x$ is indeed induced by a morphism $\phi:F\longrightarrow E$ (\cite{Lenstra}, ex. 5.11(d)). This is compatible with the restriction maps.
\end{proof}
\end{lemma}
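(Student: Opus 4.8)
The plan is to verify the statement stalkwise and then glue. Since $\A\simeq\OO_X^{\times F}$ and $\B\simeq\OO_X^{\times E}$ are quasi-coherent, giving $f$ is the same as giving a family of $\OO_{X,x}$-algebra homomorphisms $f_x\colon\OO_{X,x}^{\times F}\to\OO_{X,x}^{\times E}$, one for each $x\in X$, compatible with the restriction maps (the description of sheaf morphisms on a finite poset recalled in Section~1). So I would first pin down a single $f_x$, then show the combinatorial datum it produces does not depend on $x$, and finally recover $f$ from it.

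The analysis of a fixed $f_x$ is elementary commutative algebra, and it is here that pw-connectedness enters: well-connectedness implies pw-connectedness (Proposition~\ref{proposition well connected is top + pw}), so $\OO_{X,x}$ has no non-trivial idempotents. An $\OO_{X,x}$-algebra map $\OO_{X,x}^{\times F}\to\OO_{X,x}^{\times E}$ must send the primitive orthogonal idempotents $e_f$ ($f\in F$) of $\OO_{X,x}^{\times F}$ to orthogonal idempotents of $\OO_{X,x}^{\times E}$ summing to $1$, and over a connected ring the idempotents of $\OO_{X,x}^{\times E}$ are exactly the indicator elements of subsets of $E$; hence the images of the $e_f$ amount to a partition of $E$ indexed by $F$, equivalently a set map which (with the conventions used for constant objects) is recorded as $\phi_x\colon F\to E$, and $f_x$ is then the homomorphism induced by $\phi_x$ by composition. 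This is precisely Exercise~5.11(d) of \cite{Lenstra}, so I would simply invoke it.

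It remains to make the $\phi_x$ independent of $x$, and this is where top-connectedness---the other half of well-connectedness, again via Proposition~\ref{proposition well connected is top + pw}---is used. For $x\le y$ the restriction maps on the trivial algebras $\OO_X^{\times F}$ and $\OO_X^{\times E}$ are the ``diagonal'' ones, namely $r_{xy}$ applied in each coordinate, so they carry $e_f$ to $e_f$; feeding the idempotents $e_f$ into the compatibility identity $f_y\circ r_{xy}^{\times F}=r_{xy}^{\times E}\circ f_x$ and using that the images of the $e_f$ determine $\phi_x$ and $\phi_y$, one obtains $\phi_x=\phi_y$ whenever $x$ and $y$ are comparable. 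Hence $x\mapsto\phi_x$ is locally constant on the poset $X$; since $X$ is top-connected it is constant, so there is a single $\phi\colon F\to E$ with $\phi_x=\phi$ for all $x$, and the morphism of trivial algebras induced by $\phi$ has the same stalks as $f$, so it equals $f$.

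I do not expect a genuine obstacle: the local step is an off-the-shelf fact and the global step is bookkeeping with idempotents. The two points I would be careful about are that well-connectedness is really being used twice and for different reasons---pw-connectedness to kill idempotents in each stalk, top-connectedness to pass from a locally constant family of set maps to a single one---and that the direction convention relating a partition of $E$ indexed by $F$ to the map $\phi\colon F\to E$ must be fixed consistently with the conventions chosen for the constant objects $\OO_X^{\times n}$, so that ``induced by composition'' means the right thing.
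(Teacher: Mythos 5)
Your proof is correct and follows essentially the same route as the paper's: analyze each stalk map $f_x\colon\OO_{X,x}^{\times F}\to\OO_{X,x}^{\times E}$ using that $\OO_{X,x}$ has no non-trivial idempotents (Lenstra's exercise 5.11(d)), then use compatibility with restrictions to glue. You are in fact more explicit than the paper about the second step, spelling out that top-connectedness is what upgrades the locally constant family $x\mapsto\phi_x$ to a single $\phi$, which the paper compresses into ``this is compatible with the restriction maps.''
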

\begin{remark}
This lemma is a formulation of the following very desirable and general property: <<morphisms between trivial covers over a connected base are determined by maps between their fibers>>. This is arguably the main property that an object worthy of being called <<connected>> in a Galois category has to verify. 
\end{remark}
\section{Stability by qc-isomorphisms}\label{section stability}

In this section we see that $\mathbf{Qcoh}^{fet}(X)$ is stable under base change by qc-isomorphisms. In combination with earlier results, this will allow us to reduce our study to spaces with sufficiently good local behavior: pw-connected schematic spaces, in which we have notions such as the degree of a finite locally free sheaf.

\medskip
Let $X$ be a schematic finite space.
\begin{lemma}\label{lemma properties qcoh sheaves on schematic spaces}
Let $\mathcal{A}\in \mathbf{Qcoh}^{alg}(X)$ be a quasi-coherent algebra, then for every affine open $U\subseteq X$ and $x\in U$ the restriction $\mathcal{A}(U)\longrightarrow\mathcal{A}_x$ is flat epimorphism of rings and $\A(U)\longrightarrow\prod_{x\in U} \A_x$ is faithfully flat. 
\begin{proof}
This follows from Propositions \ref{proposition relative spectrum}, \ref{prop restrictions are flat epic} and Corollary \ref{corollary caracterizacion afines}. 
\end{proof}
\end{lemma}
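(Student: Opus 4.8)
The plan is to deduce both assertions from base change along the algebra structure map $\OO_X(U)\to\A(U)$, using that $U$ is an affine schematic finite space in its own right. Since $\A\in\mathbf{Qcoh}^{alg}(X)$, the restriction $\A|_U$ is a quasi-coherent $\OO_U$-module (quasi-coherence is the stalkwise condition of Proposition \ref{proposition: characterization qcoh, ftype, coh}, which is visibly inherited by open subspaces), and $U$ is affine by hypothesis; hence the localization isomorphism \eqref{equation fiber of qcoh in affine spaces} gives, for every $x\in U$,
\[
\A_x\;\simeq\;\A(U)\otimes_{\OO_X(U)}\OO_{X,x}.
\]
As $U$ is finite, the product over $x\in U$ is finite and therefore commutes with this tensor product, so
\[
\prod_{x\in U}\A_x\;\simeq\;\A(U)\otimes_{\OO_X(U)}\Bigl(\prod_{x\in U}\OO_{X,x}\Bigr),
\]
and both maps appearing in the statement are precisely the base changes along $\OO_X(U)\to\A(U)$ of the corresponding maps for the structure sheaf of $U$.

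Next I would feed in the known structure of the affine schematic finite space $U$. An open subspace of a schematic finite space is schematic (Proposition \ref{theorem extension of quasicoherent sheaves}), so $U$ is schematic as well as affine; Proposition \ref{prop restrictions are flat epic} then says that $\OO_X(U)=\OO_U(U)\to\OO_{U,x}=\OO_{X,x}$ is a flat ring epimorphism for each $x\in U$, and Proposition \ref{proposition characterization of affines} says that $\OO_X(U)\to\prod_{x\in U}\OO_{X,x}$ is faithfully flat. By Corollary \ref{corollary caracterizacion afines} the latter is exactly the condition recording that, after the base change above, $(U,\A|_U)$ is again an affine schematic finite space; this conceptual reading explains the statement, although logically only the two displayed ring-theoretic facts are needed.

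To finish I would invoke that both relevant properties are stable under arbitrary base change: flatness obviously, and ``epimorphism of rings'' because epimorphisms in $\mathbf{CRing}$ are stable under cobase change. Base changing $\OO_X(U)\to\OO_{X,x}$ along $\OO_X(U)\to\A(U)$ thus yields that $\A(U)\to\A(U)\otimes_{\OO_X(U)}\OO_{X,x}\simeq\A_x$ is a flat ring epimorphism, and base changing $\OO_X(U)\to\prod_{x\in U}\OO_{X,x}$ yields that $\A(U)\to\prod_{x\in U}\A_x$ is faithfully flat. Proposition \ref{proposition relative spectrum} is what guarantees throughout that $(X,\A)$, hence $(U,\A|_U)$, is a bona fide schematic finite space, so that $\A_x$ is genuinely the stalk of a structure sheaf and the stalkwise manipulations are legitimate. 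I do not expect a genuine obstacle: the only points meriting a line of care are the quasi-coherence of $\A|_U$ and the interchange of a finite product with a tensor product, both of which are immediate; the substance of the lemma is simply the combination of the affineness of $U$ with base-change stability.
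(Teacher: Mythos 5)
Your argument is correct and is essentially the paper's own proof spelled out: both rest on the localization isomorphism $\A_x\simeq\A(U)\otimes_{\OO_X(U)}\OO_{X,x}$ for a quasi-coherent sheaf over the affine open $U$, together with the fact that $\OO_X(U)\to\OO_{X,x}$ is a flat epimorphism and $\OO_X(U)\to\prod_{x\in U}\OO_{X,x}$ is faithfully flat, transported along the base change $\OO_X(U)\to\A(U)$. The only cosmetic difference is that you obtain the epimorphism claim directly from cobase-change stability of flat ring epimorphisms, whereas the paper's citation of Corollary \ref{corollary caracterizacion afines} suggests first concluding that $(U,\A|_U)$ is an affine schematic space and then quoting Proposition \ref{prop restrictions are flat epic}; both are valid and interchangeable.
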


\begin{lemma}\label{lemma flat is affine local}
A quasicoherent algebra $\mathcal{A}\in \mathbf{Qcoh}^{alg}(X)$ is flat if and only if for every affine open $U\subseteq X$, the natural morphism $\OO_X(U)\longrightarrow \A(U)$ is flat. The same is true for faithful flatness. These properties can be checked on an affine open cover.
\begin{proof}
For every such $U$ we have a commutative diagram:
\begin{align*}
\xymatrix{ 
\OO_X(U)\ar[r]\ar[d]_{\varphi} & \A(U)\ar[d]^{\psi}\\
\prod_{x\in U} \OO_{X, x}\ar[r] &\prod_{x\in U} \A_x
}
\end{align*}
where $\varphi$ and $\psi$ are faithfully flat by the previous Lemma, thus the result follows, since surjectivity and exactness, hence flatness, are compatible with direct products. 

For final statement, note that if $x\in U$ with $U$ affine, we have $\A_x\simeq \A(U)\otimes_{\OO_X(U)}\OO_{X, x}$ (Equation \ref{equation fiber of qcoh in affine spaces}) and conclude by stability under base change of flat and faithfully flat morphisms. 
\end{proof}
\end{lemma}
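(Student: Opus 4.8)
The plan is to compare, over each affine open $U\subseteq X$, the natural map $\OO_X(U)\to\A(U)$ with its ``pointwise shadow'' $\prod_{x\in U}\OO_{X,x}\to\prod_{x\in U}\A_x$, fitting both into a commutative square whose vertical legs $\varphi\colon\OO_X(U)\to\prod_{x\in U}\OO_{X,x}$ and $\psi\colon\A(U)\to\prod_{x\in U}\A_x$ are faithfully flat by Lemma \ref{lemma properties qcoh sheaves on schematic spaces}. Since $U$ is a finite poset, $\prod_{x\in U}$ is an exact functor, so the bottom horizontal arrow $\prod_{x\in U}(\OO_{X,x}\to\A_x)$ is flat (resp.\ faithfully flat) exactly when each factor $\OO_{X,x}\to\A_x$ is.

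First I would prove the implication ``$\A$ flat $\Rightarrow$ $\OO_X(U)\to\A(U)$ flat for every affine $U$''. Assuming $\OO_{X,x}\to\A_x$ flat for all $x$, the composite $\OO_X(U)\xrightarrow{\varphi}\prod_{x\in U}\OO_{X,x}\to\prod_{x\in U}\A_x$ is flat; as it factors through $\OO_X(U)\to\A(U)$ followed by the faithfully flat $\psi$, a standard $\mathrm{Tor}$ argument finishes the job: for every $\OO_X(U)$-module $M$ one has $\mathrm{Tor}_i^{\OO_X(U)}(M,\A(U))\otimes_{\A(U)}\prod_{x}\A_x\simeq\mathrm{Tor}_i^{\OO_X(U)}(M,\prod_{x}\A_x)=0$ for $i>0$ (using flatness of $\psi$ to pull the product through $\mathrm{Tor}$, and flatness of the composite to see the right-hand side vanishes), and faithful flatness of $\psi$ then forces $\mathrm{Tor}_i^{\OO_X(U)}(M,\A(U))=0$. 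The faithfully flat variant runs on the same square: each factor of the bottom map, hence the bottom map and the composite $\OO_X(U)\to\prod_x\A_x$, is faithfully flat; flatness of $\OO_X(U)\to\A(U)$ follows as above, and its faithfulness because $M\otimes_{\OO_X(U)}\A(U)=0$ forces $M\otimes_{\OO_X(U)}\prod_x\A_x=0$ by associativity of the tensor product, hence $M=0$.

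For the converse---and simultaneously for the clause that these properties may be checked on an affine cover---I would argue at stalks. Suppose $\OO_X(U)\to\A(U)$ is flat (resp.\ faithfully flat) for every $U$ in some affine open cover of $X$. Given $x\in X$, choose such a $U$ with $x\in U$; by Equation \ref{equation fiber of qcoh in affine spaces} one has $\A_x\simeq\A(U)\otimes_{\OO_X(U)}\OO_{X,x}$ as $\OO_{X,x}$-algebras, so $\OO_{X,x}\to\A_x$ is the base change of $\OO_X(U)\to\A(U)$ along $\OO_X(U)\to\OO_{X,x}$. Since flatness and faithful flatness are stable under base change, $\OO_{X,x}\to\A_x$ is flat (resp.\ faithfully flat) for every $x$, i.e.\ $\A$ is flat (resp.\ faithfully flat). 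Together with the first part this shows that the three conditions---at all stalks, on all affine opens, and on the opens of any single affine cover---are equivalent.

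The only genuinely delicate point is the descent step in the first direction: one must invoke faithful flatness of the two columns (mere flatness would not let one conclude), and finiteness of $U$ is used so that $\prod_{x\in U}$ is exact and passes through $\mathrm{Tor}$, since over an infinite index set a product of flat modules can fail to be flat (absent, say, Noetherianity). Everything else is routine manipulation with base change and with the structural isomorphisms for quasi-coherent sheaves on affine schematic finite spaces already recorded in Section~1.
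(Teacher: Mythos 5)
Your proof is correct and follows essentially the same route as the paper: the same commutative square with faithfully flat vertical maps $\varphi,\psi$ from Lemma \ref{lemma properties qcoh sheaves on schematic spaces}, plus the base-change isomorphism $\A_x\simeq\A(U)\otimes_{\OO_X(U)}\OO_{X,x}$ for the converse and the cover statement. You merely spell out the descent step (via flat base change for $\mathrm{Tor}$ and faithful flatness of $\psi$) that the paper leaves implicit.
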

\begin{lemma}\label{lemma finite is affine local}
A quasicoherent algebra $\mathcal{A}\in \mathbf{Qcoh}^{alg}(X)$ is finite if and only if for every affine open $U\subseteq X$, the natural morphism $\OO_X(U)\longrightarrow \A(U)$ is finite. Furthermore, it is enough to check the property for an affine open cover.
\begin{proof}
If $U$ is affine, then for any $x\in U$ we have that, $\A_x\simeq \A(U)\otimes_{\OO_X(U)}\OO_{X, x}$; so if we have a surjection $\OO_X(U)^{\oplus n}\longrightarrow \A(U)\longrightarrow 0$, every $\A_x$ is finitely generated.  Conversely, the same argument works using $\prod_{x\in U}\A_x$ and the full faithfulness of the restriction morphisms, since one has $\A(U)\otimes_{\OO_X(U)}\prod_{x\in U}\OO_{X, x}\simeq \prod_{x\in U}\A_x$.
\end{proof}
\end{lemma}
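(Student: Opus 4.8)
The plan is to reduce the statement to the stalkwise characterization of finiteness and then to pass between stalks and sections over affine opens by faithfully flat descent. First recall that, by Proposition \ref{proposition: characterization qcoh, ftype, coh}(2), a quasi-coherent algebra $\A$ is finite exactly when $\A_x$ is a finite $\OO_{X,x}$-module for every $x\in X$; the surjectivity of $\A_x\otimes_{\OO_{X,x}}\OO_{X,x'}\to\A_{x'}$ for $x\leq x'$ is automatic from quasi-coherence. So it suffices to relate the two conditions ``$\A_x$ finite for every $x$'' and ``$\OO_X(U)\to\A(U)$ finite for every affine open $U$ (resp. for every $U$ in a fixed affine cover)''.

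For the easy implication I would take an affine open $U$ with $\OO_X(U)\to\A(U)$ finite, note that $\A|_U$ is quasi-coherent on the affine space $U$, and invoke the localization isomorphism \eqref{equation fiber of qcoh in affine spaces} in the form $\A_x\simeq\A(U)\otimes_{\OO_X(U)}\OO_{X,x}$ for $x\in U$. Since finiteness of a module is stable under base change, $\A_x$ is finite over $\OO_{X,x}$. Because every point of $X$ lies in some member of any affine cover, this already shows that the property holding on an affine cover forces $\A$ to be finite, and hence---once the converse direction is available---forces $\OO_X(U)\to\A(U)$ to be finite for \emph{every} affine open $U$; so the ``affine cover'' refinement is formal.

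For the converse, which I expect to be the only substantive point, I would fix an affine open $U$ and use that $\OO_X(U)\to\prod_{x\in U}\OO_{X,x}$ is faithfully flat (Corollary \ref{corollary caracterizacion afines}, or Proposition \ref{proposition characterization of affines}). Base changing $\A(U)$ along this map, and using \eqref{equation fiber of qcoh in affine spaces} together with the finiteness of $U$ (so the relevant products are finite direct sums and commute with $\otimes$), one obtains
\begin{align*}
\A(U)\otimes_{\OO_X(U)}\prod_{x\in U}\OO_{X,x}\simeq\prod_{x\in U}\A_x .
\end{align*}
Since $U$ is finite and each $\A_x$ is finite over $\OO_{X,x}$ by hypothesis, $\prod_{x\in U}\A_x$ is a finite module over $\prod_{x\in U}\OO_{X,x}=\OO_X(U)\otimes_{\OO_X(U)}\prod_{x\in U}\OO_{X,x}$. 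Finite generation descends along faithfully flat ring maps, so $\A(U)$ is finite over $\OO_X(U)$, as desired.

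The main obstacle is precisely this last appeal to faithfully flat descent of finite generation (the statement: if $A\to A'$ is faithfully flat and $M\otimes_A A'$ is a finite $A'$-module, then $M$ is a finite $A$-module), which should be cited as a standard fact rather than reproved; everything else is stability of finiteness under base change together with the already established affine localization isomorphism and the faithful flatness of $\OO_X(U)\to\prod_{x\in U}\OO_{X,x}$. I would also remark that the same argument, with ``finite'' replaced by ``flat'' (resp. ``faithfully flat''), reproves and is compatible with Lemma \ref{lemma flat is affine local}.
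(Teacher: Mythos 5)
Your proof is correct and follows essentially the same route as the paper: the forward direction via the affine localization isomorphism $\A_x\simeq\A(U)\otimes_{\OO_X(U)}\OO_{X,x}$ and stability of finiteness under base change, and the converse via the isomorphism $\A(U)\otimes_{\OO_X(U)}\prod_{x\in U}\OO_{X,x}\simeq\prod_{x\in U}\A_x$ together with faithfully flat descent of finite generation along $\OO_X(U)\to\prod_{x\in U}\OO_{X,x}$. You merely make explicit the descent step and the (formal) affine-cover refinement that the paper leaves implicit.
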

Finally we see how the \'etale property behaves under this procedure:

\begin{lemma}\label{lemma etale is affine local}
A quasicoherent algebra $\mathcal{A}\in \mathbf{Qcoh}(X)$ is \'etale if and only if for every affine open $U\subseteq X$, the natural morphism $\OO_X(U)\longrightarrow \A(U)$ is \'etale. Moreover, it is enough to check this property for an affine open cover.
\begin{proof}

Assume that $\A(U)$ is \'etale for some affine open $U$. Since $U$ is affine, by Lemma \ref{lemma properties qcoh sheaves on schematic spaces} and Proposition \ref{proposition flat epic are local iso} we have that, for $r_x\colon \A(U)\to \A_x$ and every prime $\p\subset\A_x$,
\begin{align*}
\A(U)_{r_x^*\p}\simeq (\A_x)_\p.
\end{align*}
 The result follows straightforwardly from Lemmas \ref{lemma characterization etale algebras}, \ref{lemma flat is affine local} and \ref{lemma finite is affine local}.

For the converse, since $\OO(U)\longrightarrow\prod_{x\in U}\OO_{X, x}$ is faithfully flat, it induces a surjective morphism between spectra. In particular any $\q\in\OO(U)$ is the image of a prime ideal $\p\subseteq \OO_{X, x}$ for some $x\in U$. Applying the previous argument and using the hypothesis we conclude.
\end{proof}
\end{lemma}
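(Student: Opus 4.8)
The plan is to reduce the statement to the behaviour of its three constituent conditions --- finite, flat, and pointwise-\'etale --- using that, in the stalkwise Noetherian setting, Proposition \ref{proposition punctually etale} characterizes \'etale ring maps as exactly those that are finite, flat and pointwise-\'etale, while ``$\A$ is \'etale'' means precisely that $\OO_{X,x}\to\A_x$ is \'etale for every $x\in X$. The finite and flat parts of the equivalence are already Lemmas \ref{lemma finite is affine local} and \ref{lemma flat is affine local}, so the only real work is passing the pointwise-\'etale condition back and forth between the stalks $\OO_{X,x}\to\A_x$ and the sections $\OO_X(U)\to\A(U)$ over affine opens.

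For the ``if'' direction (working with just an affine open cover) I would argue pointwise. Fix $x\in X$ and choose a member $U$ of the cover with $x\in U$. Since $\A$ is quasi-coherent and $U$ is affine, the localization isomorphism \eqref{equation fiber of qcoh in affine spaces} applied to $U$ gives $\A_x\simeq\A(U)\otimes_{\OO_X(U)}\OO_{X,x}$; as being \'etale is stable under arbitrary base change, $\OO_{X,x}\to\A_x$ is \'etale, and since $x$ was arbitrary, $\A$ is \'etale.

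For the ``only if'' direction, assume $\OO_{X,x}\to\A_x$ is \'etale --- hence finite and flat --- for every $x$; then Lemmas \ref{lemma finite is affine local} and \ref{lemma flat is affine local} make $\OO_X(U)\to\A(U)$ finite and flat over every affine open $U$, and by Proposition \ref{proposition punctually etale} it remains to check that it is pointwise-\'etale. I would use that $\OO_X(U)\to\prod_{x\in U}\OO_{X,x}$ is faithfully flat (Proposition \ref{proposition characterization of affines}), hence surjective on spectra, so that an arbitrary $\q\in\Spec(\OO_X(U))$ is the image of some $\p\in\Spec(\OO_{X,x})$ with $x\in U$. Since $\OO_X(U)\to\OO_{X,x}$ is a flat epimorphism of rings (Lemma \ref{lemma properties qcoh sheaves on schematic spaces}), Proposition \ref{proposition flat epic are local iso} gives an isomorphism $\OO_X(U)_\q\simeq(\OO_{X,x})_\p$ and hence an isomorphism of residue fields $\kappa(\q)\simeq\kappa(\p)$; tensoring $\A_x\simeq\A(U)\otimes_{\OO_X(U)}\OO_{X,x}$ with this residue field identifies $\A(U)\otimes_{\OO_X(U)}\kappa(\q)$ with $\A_x\otimes_{\OO_{X,x}}\kappa(\p)$, which is a finite \'etale $\kappa(\p)$-algebra by hypothesis (Proposition \ref{proposition punctually etale} at $x$). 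Hence $\OO_X(U)\to\A(U)$ is pointwise-\'etale, and therefore \'etale.

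Finally, for the ``enough to check on an affine open cover'' clause I would compose the two directions: if $\OO_X(U_i)\to\A(U_i)$ is \'etale for all members $U_i$ of an affine cover, the ``if'' argument shows every stalk $\OO_{X,x}\to\A_x$ is \'etale, and then the ``only if'' argument shows $\OO_X(V)\to\A(V)$ is \'etale for every affine open $V$. The one delicate point in the whole proof is the residue-field bookkeeping in the ``only if'' direction --- one must be sure that the scheme-theoretic fibre of $\A(U)$ over the prime $\q$ really coincides with the stalkwise fibre $\A_x\otimes_{\OO_{X,x}}\kappa(\p)$ --- and this is precisely the step where the fact that restriction maps of schematic finite spaces are flat epimorphisms (Proposition \ref{proposition flat epic are local iso}) is used; everything else is base-change stability of finiteness, flatness and the \'etale property together with the affine-locality lemmas already established.
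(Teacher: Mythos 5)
Your proof is correct and follows essentially the same route as the paper: both directions rest on the affine-local lemmas for finiteness and flatness, the localization isomorphism $\A_x\simeq\A(U)\otimes_{\OO_X(U)}\OO_{X,x}$ over affine opens, and the faithful flatness of $\OO_X(U)\to\prod_{x\in U}\OO_{X,x}$ to lift primes and transfer the pointwise-\'etale condition via the flat-epimorphism residue-field identification. Your ``if'' direction is phrased as a one-step base-change argument where the paper instead invokes the local isomorphisms $\A(U)_{r_x^*\p}\simeq(\A_x)_\p$ together with the finite/flat/fiber characterization, but the underlying content is the same.
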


\begin{theorem}\label{theorem site stability by qc-iso}
Let $f\colon X\to Y$ is a qc-isomorphism of schematic finite spaces. The natural pair $(f^*, f_*)\colon \mathbf{Qcoh}^{alg}(X)\overset{\sim}{\to}\mathbf{Qcoh}^{alg}(Y)$ induces an adjoint equivalence of sites given by the functor
$$f_*\colon X_{\mathbf{Qcoh}}^{fppf}\overset{\sim}{\to}Y_{\mathbf{Qcoh}}^{fppf}$$
with adjoint $f^*$.
\begin{proof}
We already know that $(f^*, f_*)$ is an adjoint equivalence of categories. Furthermore, these functors preserve coverings: if $\A\in X_{\mathbf{Qcoh}}^{fppf}$ and $\A\to \B$ is a covering (finite and faithfully flat), since qc-isomorphisms are affine, $f_*\A\to f_*\B$ is a covering of $f_*\A$ on $Y_{\mathbf{Qcoh}}^{fppf}$ by Lemmas \ref{lemma flat is affine local} and \ref{lemma finite is affine local}. Conversely, let $\A\to \B$ be a covering in $Y_{\mathbf{Qcoh}}^{fppf}$. Since $f^*\A\simeq f^{-1}\A$ and $f^*\B\simeq f^{-1}\B$ due to the qc-isomorphism assumption, $f^*\A\to f^*\B$ is a covering in $X_{\mathbf{Qcoh}}^{fppf}$. This implies the claim.
\end{proof}
\end{theorem}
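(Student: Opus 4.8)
The plan is to reduce everything to two facts already in hand: that $(f^{*},f_{*})$ is an adjoint equivalence of the underlying categories of quasi-coherent algebras, and that for a map of quasi-coherent algebras the properties ``finite'' and ``faithfully flat'' can be tested on any affine open cover.

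\emph{First}, by Proposition \ref{prop cohomological characterization qciso} a qc-isomorphism induces an adjoint equivalence $(f^{*},f_{*})\colon\mathbf{Qcoh}(X)\simeq\mathbf{Qcoh}(Y)$ with vanishing higher direct images on quasi-coherent sheaves. Since $f^{*}$ is symmetric monoidal for $\otimes_{\OO}$, so is its quasi-inverse $f_{*}$, and hence the equivalence restricts to one between the categories of commutative quasi-coherent algebras, $\mathbf{Qcoh}^{alg}(X)\simeq\mathbf{Qcoh}^{alg}(Y)$. This is the functor underlying the claimed equivalence of sites; it only remains to check that both $f_{*}$ and $f^{*}$ carry coverings to coverings (from which, being quasi-inverse, each also reflects coverings) and are compatible with the base changes along them.

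\emph{Next}, $f_{*}$ preserves coverings. Let $\A\to\B$ be a covering in $X_{\mathbf{Qcoh}}^{fppf}$, i.e. $\B$ is a finite, faithfully flat $\A$-algebra; note $(X,\A)$ is schematic by Proposition \ref{proposition relative spectrum}. As $f$ is affine, $f^{-1}(U_{y})$ is affine for each $y\in Y$, the family $\{f^{-1}(U_{y})\}_{y\in Y}$ is an affine open cover of $X$, and $(f_{*}\A)_{y}=\A(f^{-1}(U_{y}))$, $(f_{*}\B)_{y}=\B(f^{-1}(U_{y}))$, $(f_{*}\OO_{X})_{y}=\OO_{Y,y}$. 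Running the affine-local criteria of Lemmas \ref{lemma flat is affine local} and \ref{lemma finite is affine local} first on the cover $\{f^{-1}(U_{y})\}$ of $X$ and then on the cover $\{U_{y}\}$ of $Y$ shows that each stalk map $(f_{*}\A)_{y}\to(f_{*}\B)_{y}$ is finite and faithfully flat, so $f_{*}\A\to f_{*}\B$ is a covering in $Y_{\mathbf{Qcoh}}^{fppf}$ (it is quasi-coherent because $f$ is affine).

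\emph{Then}, $f^{*}$ preserves coverings. On a quasi-coherent algebra the pullback has stalks $(f^{*}\A)_{x}=\A_{f(x)}\otimes_{\OO_{Y,f(x)}}\OO_{X,x}$, the base change of $\A_{f(x)}$ along the restriction map of structure sheaves. Finiteness and flatness are preserved by base change, and so is faithful flatness: if $R\to S$ is faithfully flat and $R\to R'$ is arbitrary, then for every nonzero $R'$-module $M$ one has $M\otimes_{R'}(S\otimes_{R}R')\simeq M\otimes_{R}S\neq0$, so $R'\to S\otimes_{R}R'$ is faithfully flat. Hence a covering $\A\to\B$ in $Y_{\mathbf{Qcoh}}^{fppf}$ pulls back to a covering $f^{*}\A\to f^{*}\B$ in $X_{\mathbf{Qcoh}}^{fppf}$. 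Since coverings in either site are stable under base change (base change in the cosite is tensor product of algebras) and $(f^{*},f_{*})$ intertwines these base changes, the two functors are mutually inverse morphisms of sites, which is the assertion. I expect the only delicate point to be the behaviour of $f^{*}$: one must be sure that pulling back along a qc-isomorphism, whose restriction maps $r_{xx'}$ are flat epimorphisms but \emph{not} of finite presentation, cannot disturb finiteness or faithful flatness of an algebra map, and one must keep the variance straight in the ``cosite'' convention, where coverings are arrows \emph{out of} an object so that $f_{*}$ and $f^{*}$ act opposite to what the notation might suggest. Everything else is formal once the affine-local Lemmas \ref{lemma flat is affine local}, \ref{lemma finite is affine local}, \ref{lemma etale is affine local} and the equivalence of categories of quasi-coherent algebras are available.
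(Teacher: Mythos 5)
Your proposal is correct, and for the direction $f_*$ it is essentially the paper's argument: use that qc-isomorphisms are affine, so $(f_*\A)_y=\A(f^{-1}(U_y))$ with $f^{-1}(U_y)$ affine, and invoke the affine-local criteria of Lemmas \ref{lemma flat is affine local} and \ref{lemma finite is affine local} (applied over the schematic space $(X,\A)$, as you correctly note via Proposition \ref{proposition relative spectrum}). Where you diverge is the direction $f^*$: the paper disposes of it by asserting that for a qc-isomorphism $f^*\A\simeq f^{-1}\A$, so the stalk maps of $f^*\A\to f^*\B$ are literally those of $\A\to\B$ and nothing needs to be checked; you instead compute $(f^*\B)_x\simeq\B_{f(x)}\otimes_{\A_{f(x)}}(f^*\A)_x$ and observe that finiteness and faithful flatness are stable under arbitrary base change (your one-line verification that faithful flatness survives base change is correct). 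Your route is more general---it shows that $f^*$ preserves fppf coverings for \emph{any} schematic morphism, not just qc-isomorphisms, which is in the spirit of how pullback functors between fppf sites of schemes behave---and it is also more robust, since it does not lean on the identification $f^*\simeq f^{-1}$, whose stalkwise meaning requires some care (e.g.\ for $\mathbf{pw}(X)\to X$ the stalks of the pullback are direct factors of, rather than equal to, the original stalks). The paper's route, when the identification is available, is shorter. Your preliminary monoidal argument for restricting the equivalence $\mathbf{Qcoh}(X)\simeq\mathbf{Qcoh}(Y)$ to algebras, and your closing remark on variance in the cosite convention, fill in steps the paper leaves implicit; neither introduces a gap.
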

\begin{remark}
We are regarding $(f^*, f_*)$ as an equivalence between two sites whose underlying categories are made of quasi-coherent sheaves. Note that this equivalence of sites refers to a proper equivalence of categories and coverages, which is far stronger than them having equivalent sheaf topoi.
\end{remark}

\begin{corollary}\label{theorem stability by qc-iso}
Let $f:X\longrightarrow Y$ be a qc-isomorphism of schematic finite spaces. The pair
\begin{align*}
(f_*, f^*)\colon \mathbf{Qcoh}^{fet}(X)&\longrightarrow \mathbf{Qcoh}^{fet}(Y)\\
\A&\longrightarrow f_*\A
\end{align*}
is an equivalence of categories.
\begin{proof}
It follows from Theorem \ref{theorem site stability by qc-iso} and Proposition \ref{proposition local triviality for \'etale sheaves}. A direct proof follows from Lemma \ref{lemma etale is affine local}.
\end{proof}
\end{corollary}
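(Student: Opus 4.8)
The plan is to give the direct argument based on Lemma \ref{lemma etale is affine local}, which is shorter than the route through sites and, unlike it, needs no well\hyp connectedness hypothesis. We already know from the definition of a qc\hyp isomorphism that $(f_*,f^*)\colon\mathbf{Qcoh}^{alg}(X)\to\mathbf{Qcoh}^{alg}(Y)$ is an adjoint equivalence of categories, so it is enough to show that $f_*$ maps $\mathbf{Qcoh}^{fet}(X)$ into $\mathbf{Qcoh}^{fet}(Y)$ and that $f^*$ maps $\mathbf{Qcoh}^{fet}(Y)$ into $\mathbf{Qcoh}^{fet}(X)$; since these are full subcategories and the unit and counit of the big equivalence are already isomorphisms, one then obtains an adjoint equivalence between them.

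First I would record the two consequences of $f$ being affine with $f_*\OO_X\simeq\OO_Y$: for each $y\in Y$ the open set $f^{-1}(U_y)\subseteq X$ is affine, and on it $\OO_X(f^{-1}(U_y))\simeq(f_*\OO_X)_y\simeq\OO_{Y,y}$ and $\A(f^{-1}(U_y))\simeq(f_*\A)_y$ for every $\A\in\mathbf{Qcoh}^{alg}(X)$. Now if $\A$ is an \'etale cover sheaf, Lemma \ref{lemma etale is affine local} guarantees that $\OO_X(V)\to\A(V)$ is \'etale for \emph{every} affine open $V\subseteq X$; taking $V=f^{-1}(U_y)$ and using the above identifications shows that $\OO_{Y,y}\to(f_*\A)_y$ is \'etale for all $y$, that is, $f_*\A\in\mathbf{Qcoh}^{fet}(Y)$.

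For the other direction I would argue fiberwise: if $\B\in\mathbf{Qcoh}^{fet}(Y)$, then $\OO_{Y,y}\to\B_y$ is \'etale for every $y$, and since $(f^*\B)_x\simeq\B_{f(x)}\otimes_{\OO_{Y,f(x)}}\OO_{X,x}$ and \'etale ring maps are stable under base change, $\OO_{X,x}\to(f^*\B)_x$ is \'etale for every $x\in X$; hence $f^*\B\in\mathbf{Qcoh}^{fet}(X)$. Together with $f^*f_*\simeq\mathrm{id}$ this also yields the converse of the previous paragraph, completing the proof that $\A\mapsto f_*\A$ and $\B\mapsto f^*\B$ are mutually quasi\hyp inverse equivalences $\mathbf{Qcoh}^{fet}(X)\simeq\mathbf{Qcoh}^{fet}(Y)$.

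There is no real obstacle: the only points needing attention are the two identifications $(f_*\A)_y\simeq\A(f^{-1}(U_y))$ and $\OO_{Y,y}\simeq\OO_X(f^{-1}(U_y))$---precisely the content of $f$ being an affine morphism with $f_*\OO_X\simeq\OO_Y$---together with the fact, built into Lemma \ref{lemma etale is affine local}, that \'etaleness of $\OO_X(U)\to\A(U)$ for an \'etale $\A$ holds over \emph{all} affine opens and not merely over a cover. Alternatively, one may deduce the corollary conceptually from Theorem \ref{theorem site stability by qc-iso}, since an equivalence of sites preserves finite locally constant objects and these are exactly the \'etale cover sheaves by Proposition \ref{proposition local triviality for \'etale sheaves}; the mild cost there is having to reduce to the well\hyp connected case first, using the decomposition $\OO_X\simeq\prod_k i_{k*}\OO_{X_k}$ into well\hyp connected components.
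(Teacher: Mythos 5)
Your proof is correct and takes essentially the route the paper itself indicates: the paper's proof merely cites Theorem \ref{theorem site stability by qc-iso} together with Proposition \ref{proposition local triviality for \'etale sheaves} and remarks that ``a direct proof follows from Lemma \ref{lemma etale is affine local}'', and your argument is precisely that direct proof, spelled out via the identifications $(f_*\A)_y\simeq\A(f^{-1}(U_y))$ and $\OO_{Y,y}\simeq\OO_X(f^{-1}(U_y))$ coming from $f$ being affine with $f_*\OO_X\simeq\OO_Y$, plus stability of \'etaleness under base change for $f^*$. Nothing is missing.
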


This corollary together with the fact that qc-isomorphisms induce isomorphisms between geometric points (Theorem \ref{theorem functorial qc-isomorphisms}) implies that our Galois categories are stable under qc-isomorphisms. In particular we can use the results on connectedness to slightly improve our main result.

\section{Finite \'Etale Covers as a Galois Category}

Let $S$ be a scheme and let $X$ be a certain finite model. Fix a geometric point $\overline{s}\in S^\bullet(\Omega)$ with image $\overline{x}\in X^\bullet(\Omega)$. The category of finite \'etale covers of $S$ is equipped with a natural functor called <<fiber functor>> (see subsection \ref{subsection fiber functors}):
\begin{align*}
\textbf{Fib}_{\overline{s}}:\textbf{Fet}_S&\longrightarrow \mathbf{Set}\\
(T\rightarrow S)&\longrightarrow |\Spec(\Omega)\times_ST|,
\end{align*}
which composed with the equivalence $\mathbf{Qcoh}^{fet}(X)^{\mathrm{op}}\simeq \textbf{Fet}_S$ gives us an analogous <<fiber functor>> for $X$ and the fixed geometric point $\overline{x}$. If $X$ is now a completely general schematic finite space and $\overline{x}\in X^\bullet(\Omega)$, we can define
\begin{align}
\mathrm{Fib}_{\overline{x}}:\mathbf{Qcoh}^{fet}(X)^{\mathrm{op}}&\longrightarrow \mathbf{Set}\\
\nonumber \A&\longrightarrow |\Spec(\Omega\otimes_{\OO_{X, x}}\A_x)|.
\end{align}

\begin{remark}
\textit{A priori}, this functor is the following composition (see equation \ref{diagrama functores fibra})
\begin{align}
\mathbf{Qcoh}^{fet}(X)^{\mathrm{op}}\overset{\Phi}{\longrightarrow}({\mathbf{SchFin}_{/X}})_{qc}\overset{\text{F}_{\overline{x}}}{\longrightarrow}\mathbf{SchFin}_{qc}\overset{\mathfrak{Spec}}{\longrightarrow}\textbf{Schemes}\overset{|-|}{\longrightarrow}\mathbf{Set}
\end{align}
where $\Phi$ is the analogue of the relative spectrum for schematic spaces and $|-|$ is taking the underlying set. Thus $\mathrm{Fib}_{\overline{x}}$ sends $\A\in\mathbf{Qcoh}^{fet}(X)^{\mathrm{op}}$ to
\begin{align*}
\mathcal{A}\overset{\Phi}{\mapsto}\big((X, \A)\rightarrow (X, \OO_X)\big)\overset{\text{F}_{\overline{x}}}{\mapsto}(\star, \Omega\otimes_{\OO_{X, x}}\A_x)\overset{|\mathfrak{Spec}|}{\mapsto}|\Spec(\Omega\otimes_{\OO_{X, x}}\A_x)|
\end{align*}

Note that since $X$ is not necessarily the finite model of a scheme, $\mathfrak{Spec}$ takes values in the larger category of (locally) ringed spaces, but since the composition $\text{F}_{\overline{x}}\circ \Phi$ takes values in the category of affine schematic finite spaces, their composition with $\mathfrak{Spec}$ does take values in $\textbf{Schemes}$.
\end{remark}

The following proposition proves that $\mathrm{Fib}_{\overline{x}}$ coincides exactly with the definition one would expect: the underlying set of the fibered product along the geometric point $\overline{x}$.

\begin{proposition}\label{proposition fibers of etale}
The functor $\mathrm{Fib}_{\overline{x}}$ defined above coincides with the composition $|-|\circ \mathrm{F}_{\overline{x}}\circ\Phi$, where $\Phi\colon \mathbf{Qcoh}^{fet}(X)^{\mathrm{op}}\to \mathbf{SchFin}$ is the relative spectrum functor of Proposition \ref{proposition relative spectrum}, $\mathrm{F}_{\overline{x}}$ is the geometric fiber functor of Definition \ref{definition geometric fiber} and $|-|$ is the forgetful functor to $\mathbf{Set}$. In other words, 
\begin{align*}
\mathrm{Fib}_{\overline{x}}(\A)=|\mathbf{pw}((\star, \Omega)\times_X(X, \A))|.
\end{align*}
\begin{proof}
Proposition \ref{proposition etale algebras over fields} implies that $(\star, \Omega)\times_X(X, \A)=(\star, \Omega\otimes_{\OO_{X, x}}\A_x)=(\star, \prod_I\Omega)$ for some finite set $I$. Now, it is obvious that 
\begin{align*}
|\mathbf{pw}(\star, \prod_I\Omega)|=|\coprod_I(\star, \Omega)|=I=|\Spec(\prod_I\Omega)|=|\Spec(\Omega\otimes_{\OO_{X, x}}\A_x)|,
\end{align*}
which proves the claim.
\end{proof}
\end{proposition}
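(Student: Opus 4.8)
The plan is to unwind both sides of the claimed identity using the explicit description of fibered products in $\mathbf{SchFin}$ together with the classification of étale algebras over a field. First I would compute the fibered product $(\star,\Omega)\times_X(X,\A)$ in $\mathbf{SchFin}$. The relative spectrum morphism $(X,\A)\to X$ of Proposition \ref{proposition relative spectrum} is the topological identity, and the geometric point $\overline{x}$ sends $\star$ to $x\in X$; hence the underlying topological space of the fibered product is the single point $\star$, and by the description of the structure sheaf of a fibered product of schematic finite spaces its stalk there is $\OO_{\star,\star}\otimes_{\OO_{X,x}}\A_x=\Omega\otimes_{\OO_{X,x}}\A_x$. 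Thus $(\star,\Omega)\times_X(X,\A)=(\star,\Omega\otimes_{\OO_{X,x}}\A_x)$.

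Next, since $\A\in\mathbf{Qcoh}^{fet}(X)$, the ring map $\OO_{X,x}\to\A_x$ is étale, and the geometric point induces $\OO_{X,x}\to\Omega$ factoring through $\kappa(\overline{x})\hookrightarrow\Omega$, so $\Omega\otimes_{\OO_{X,x}}\A_x\simeq\Omega\otimes_{\kappa(\overline{x})}\big(\kappa(\overline{x})\otimes_{\OO_{X,x}}\A_x\big)$ is an étale $\Omega$-algebra. Because $\Omega$ is algebraically closed, Proposition \ref{proposition etale algebras over fields} yields an isomorphism $\Omega\otimes_{\OO_{X,x}}\A_x\simeq\prod_{i\in I}\Omega$ for a finite set $I$ (whose cardinality is $\mathrm{deg}(\A_x)$). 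Therefore $(\star,\Omega)\times_X(X,\A)=(\star,\prod_I\Omega)$.

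Finally I would evaluate $\mathbf{pw}(\star,\prod_I\Omega)$. Since $\Spec(\prod_I\Omega)=\coprod_{i\in I}\Spec(\Omega)$ is a finite discrete space, $\pi_0(\Spec(\prod_I\Omega))=I$, so by the defining construction of $\mathbf{pw}$ one gets $\mathbf{pw}(\star,\prod_I\Omega)=\coprod_{i\in I}(\star,\Omega)$, whose underlying set is $I$; on the other hand $\mathrm{Fib}_{\overline{x}}(\A)=|\Spec(\Omega\otimes_{\OO_{X,x}}\A_x)|=|\Spec(\prod_I\Omega)|=I$ as well, so the two sets coincide. Naturality in $\A$ is immediate: both assignments are functorial (the first through base change in $\mathbf{SchFin}$ followed by $\mathbf{pw}$ and $|-|$, the second by definition), and a morphism $\A\to\B$ induces the same map $\prod_I\Omega\to\prod_J\Omega$ of fiber rings in either picture, hence the same map on underlying sets. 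There is no real obstacle here beyond bookkeeping; the one point worth care is that the fibered product is taken in $\mathbf{SchFin}$ rather than in $\mathbf{SchFin}^{pw}$, and the functor $\mathbf{pw}$ is exactly what splits the non-pw-connected space $(\star,\prod_I\Omega)$ into its $|I|$ connected components, recovering the expected set of geometric points of the cover.
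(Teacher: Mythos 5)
Your proof is correct and follows essentially the same route as the paper: compute the fibered product explicitly as $(\star,\Omega\otimes_{\OO_{X,x}}\A_x)$, use the classification of \'etale algebras over the algebraically closed field $\Omega$ to identify it with $(\star,\prod_I\Omega)$, and observe that $\mathbf{pw}$ splits this into $\coprod_I(\star,\Omega)$ with underlying set $I=|\Spec(\Omega\otimes_{\OO_{X,x}}\A_x)|$. The only difference is that you spell out the base-change step through $\kappa(\overline{x})$ and the naturality in $\A$, which the paper leaves implicit.
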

\begin{remark}
Notice that we do not consider the functors of the proposition in $\mathbf{SchFin}_{qc}$, since the underlying set of different representatives of the fiber may change, i.e. $|-|\colon \mathbf{SchFin}^{pw}\to\mathbf{Set}$ does not factor through $\mathbf{SchFin}_{qc}^{pw}$. Once again, we think that there exists a sufficiently small subcategory in which this is possible, at least for affine morphisms over a fixed base ($(\star, \Omega)$ in this case). The same questions of Remark \ref{remark remove localization} arise.
\end{remark}

Let us see that the pair $(\mathbf{Qcoh}^{fet}(X), \mathrm{Fib}_{\overline{x}})$ is a Galois category in the sense of Grothendieck (see \cite{SGAI}, \cite{Lenstra}). Let us recall the general definition of a Galois Category (some axioms may be weakened at the cost of some properties, but we shall not be concerned with those versions for now).

\begin{definition}[Galois Category]\label{definition galois category}
Let $\mathfrak{C}$ be a category and $F:\mathfrak{C}\longrightarrow \mathbf{Set}_f$ a covariant functor to the category of finite sets. We say that the pair $(\mathfrak{C}, F)$ is a \textit{Galois category} with fundamental functor $F$ if:
\begin{enumerate}
\item $\mathfrak{C}$ has a terminal object and finite fibered products.
\item $\mathfrak{C}$ has finite sums, in particular an initial object, and the quotients by a finite group of automorphisms exist for every object of $\mathfrak{C}$.
\item Any morphism $u$ in $\mathfrak{C}$ can be written as $u=u'\circ u''$ where $u''$ is an epimorphism and $u'$ a monomorphism. Additionally, any monomorphism $u:X\longrightarrow Y$ in $\mathfrak{C}$ is an isomorphism of $X$ with a direct summand of $Y$.
\item $F$ preserves terminal objects, epimorphisms and commutes with fibered products, finite sums and quotients by finite groups of automorphisms.
\item $F$ is conservative, i.e. if $u$ is a morphism such that $F(u)$ is an isomorphism, then $u$ is an isomorphism. 
\end{enumerate}
\end{definition}
\begin{remark}[Standard properties of Galois categories]
The fundamental group of a Galois category $(\mathfrak{C}, F)$ is $\pi_1^\mathfrak{C}:=\text{Aut}_{[\mathfrak{C}, \mathbf{Set}_f]}F$.  An object in a \textit{Galois} category $X\in \mathrm{Ob}(\mathfrak{C})$ is called Galois if it is connected and its quotient under the full group of automorphisms is isomorphic to the final object. Moreover, $F$ is a (strictly) \textit{pro-representable} functor, which means that there is an isomorphism $F\simeq\varinjlim\Hom_{\mathfrak{C}}(-, P_i)$, where the right hand side is a directed colimit running over all Galois objects, with surjective transition maps between them. More precisely, $F$ lifts to a $\mathbf{Set}$-valued functor from the category of pro-objects  of $\mathfrak{C}$, $F\colon\mathrm{pro}\text{-}\mathfrak{C}\to\mathrm{pro}\text{-}\mathbf{Set}_f\overset{\lim}{\to}\mathbf{Set}$, which is representable by $(P_i)$, the pro-object defined by all Galois objects, and a \textit{universal point} $(\alpha_i)\in\varprojlim_i F(P_i)$. The isomorphism $F\simeq \Hom_{\mathrm{pro}\text{-}\mathfrak{C}}(-, P_i)_{|\mathfrak{C}}$ is given by evaluation at such universal point. By Yoneda's Lemma, the fundamental group is now isomorphic to $\mathrm{Aut}_{\mathrm{pro}\text{-}\mathfrak{C}}(P_i)$, which is a profinite group. This profinite group can be realized as an actual limit $\varprojlim \mathrm{Aut}_{\mathfrak{C}}(P_i)$ in the category of topological groups. Finally, $F$ naturally enriches to an equivalence $F\colon\mathfrak{C}\overset{\sim}{\to} \pi_1^{\mathfrak{C}}\text{-}\mathbf{Set}_f$, where the target denotes the category of finite sets with a \textit{continuous} action of $\pi_1^{\mathfrak{C}}$. All these claims are standard properties of all Galois categories, as originally introduced in \cite[Expos\'e V]{SGAI}.
\end{remark}

We have already made all the necessary preparations to prove our main result.
\begin{theorem}\label{theorem main}
Let $X$ be a connected schematic finite space and let $\overline{x}\in X^\bullet(\Omega)$ be a geometric point. The pair $(\mathbf{Qcoh}^{fet}(X), \mathrm{Fib}_{\overline{x}})$ is a Galois Category. Furthermore, if $X$ is the finite model of a scheme $S$ and $\overline{s}\in S^\bullet(\Omega)$ is the corresponding geometric point (Proposition \ref{proposition points scheme are schematic points of the model}), then we have an isomorphism of profinite groups
\begin{equation}
\pi_1^{et}(S, \overline{s})\simeq \pi_1^{et}(X, \overline{x})
\end{equation}
where $\pi_1^{et}(X, \overline{x}):=\mathrm{Aut}_{[\mathbf{Qcoh}^{fet}(X)^{\mathrm{op}}, \mathbf{Set}_f]}(\mathrm{Fib}_{\overline{x}})$. 
\begin{proof}
We can assume that $X$ is well-connected due to Theorems \ref{theorem pw connectification},  \ref{theorem functorial qc-isomorphisms} and Corollary \ref{theorem stability by qc-iso}. For such an $X$ we have a well defined notion of degree of $\A\in\mathbf{Qcoh}^{fet}(X)$ (identified with an integer). Subsections \ref{section axioms 1, 2}, \ref{section axiom 3} and \ref{section axioms 4, 5} below prove that, indeed, the axioms of Definition \ref{definition galois category} are satisfied in this case. The arguments we employ to prove that the axioms hold consist of showing that we can reduce it to checks at stalks---where we ultimately use a contravariant version of the ideas used in \cite{Lenstra} and \cite{Szamuely}---, showcasing the techniques required to work with schematic finite spaces.
\end{proof}
\end{theorem}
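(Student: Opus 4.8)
The plan is to first reduce to the case where $X$ is well-connected, then to verify the five axioms of Definition \ref{definition galois category} for $(\mathbf{Qcoh}^{fet}(X)^{\mathrm{op}},\mathrm{Fib}_{\overline{x}})$ by translating each axiom into a statement about the finite \'etale $\OO_{X,x}$-algebras $\A_x$, and finally to deduce the comparison with $\textbf{Fet}_S$. For the reduction: the counit $\mathbf{pw}(X)\to X$ is a qc-isomorphism by Theorem \ref{theorem pw connectification}, and since $\Spec(\OO_X(X))$ is connected, Lemma \ref{lemma relation connected and geo} shows $\mathbf{pw}(X)$ is well-connected; Corollary \ref{theorem stability by qc-iso} transports the category $\mathbf{Qcoh}^{fet}$ along this qc-isomorphism, Theorem \ref{theorem functorial qc-isomorphisms} transports the geometric point, and Proposition \ref{proposition fibers of etale} together with stability of fibered products under base change shows the two fiber functors correspond. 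So we may assume $X$ well-connected; in particular every $\A\in\mathbf{Qcoh}^{fet}(X)$ has a constant $\mathrm{deg}(\A)\in\Z^+$, it equals $\#\,\mathrm{Fib}_{\overline{x}}(\A)$ (because $\Omega\otimes_{\OO_{X,x}}\A_x\simeq\Omega^{\mathrm{deg}(\A)}$, so $\mathrm{Fib}_{\overline{x}}$ indeed lands in $\mathbf{Set}_f$), and $\A=0$ if and only if $\mathrm{deg}(\A)=0$ if and only if $\mathrm{Fib}_{\overline{x}}(\A)=\emptyset$. This last equivalence is the engine of the whole argument.

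For the axioms, write $\mathfrak{C}:=\mathbf{Qcoh}^{fet}(X)^{\mathrm{op}}$. Its terminal object is $\OO_X$, its finite coproducts are finite products of algebras $\A\times\B$, its finite fibered products are tensor products $\A\otimes_{\mathcal{C}}\B$, its initial object is the zero algebra, and the quotient of $\A$ by a finite $G\leq\mathrm{Aut}(\A)$ is the invariant subalgebra $\A^G$. To see that each construction stays inside $\mathbf{Qcoh}^{fet}(X)$ I would argue at stalks --- tensor products and $G$-invariants of finite \'etale algebras over a Noetherian connected ring are again finite \'etale --- and then check quasi-coherence: since every $r_{xx'}$ is flat, forming $\A_x\otimes_{\mathcal{C}_x}\B_x$ or the equalizer $\A_x^G$ commutes with $-\otimes_{\OO_{X,x}}\OO_{X,x'}$, so Proposition \ref{proposition: characterization qcoh, ftype, coh}(1) applies; this yields Axioms 1 and 2. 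For Axiom 3, a morphism of $\mathfrak{C}$ is a map $f\colon\B\to\A$ of finite \'etale covers; I would form the stalkwise scheme-theoretic images $\mathrm{im}(f_x)\subseteq\A_x$, which are finite \'etale $\OO_{X,x}$-subalgebras, and show that flatness of the $r_{xx'}$ makes them patch to a quasi-coherent finite \'etale subsheaf $\mathcal{I}\hookrightarrow\A$ through which $f$ factors, producing the required $(\mathrm{epi})\circ(\mathrm{mono})$ factorization in $\mathfrak{C}$; and I would show that a monomorphism in $\mathfrak{C}$, i.e.\ an algebra epimorphism $\A\to\B$ of finite \'etale covers, is at every stalk the projection of a direct-factor decomposition $\A_x\simeq\B_x\times\mathcal{C}_x$ whose splitting idempotents are consistent along the Hasse quiver (as in the proof of Proposition \ref{proposition local triviality for \'etale sheaves}, using top-connectedness), so that it globalizes to $\A\simeq\B\times\mathcal{C}$; thus monomorphisms in $\mathfrak{C}$ are exactly inclusions of direct summands. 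I expect \emph{this} step --- proving that the stalkwise images and the splitting idempotents assemble into quasi-coherent subsheaves --- to be the main obstacle: it is exactly where the algebra at the stalks and the combinatorics of the poset have to be made to cooperate, whereas the rest is comparatively formal once one has the globally constant degree.

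For Axiom 4 one writes $\mathrm{Fib}_{\overline{x}}(\A)=\Hom_{\Omega\text{-}\mathrm{alg}}(\Omega\otimes_{\OO_{X,x}}\A_x,\Omega)$; since $\Omega\otimes_{\OO_{X,x}}(-)$ is exact and $\Hom_{\Omega\text{-}\mathrm{alg}}(-,\Omega)$ sends colimits of $\Omega$-algebras to limits of sets, $\mathrm{Fib}_{\overline{x}}$ sends the terminal object to a one-point set, algebra epimorphisms to surjections, products to disjoint unions, tensor products to fibered products, and $\A\mapsto\A^G$ to quotients by $G$ (using $(\Omega\otimes\A_x)^G=\Omega\otimes\A_x^G$ and the classical identity $\Hom((\Omega\otimes\A_x)^G,\Omega)=\Hom(\Omega\otimes\A_x,\Omega)/G$); moreover $\mathrm{Fib}_{\overline{x}}$ preserves monomorphisms automatically, since it commutes with fibered products. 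For Axiom 5 (conservativity), given $u$ with $\mathrm{Fib}_{\overline{x}}(u)$ bijective, factor $u=u'u''$ as in Axiom 3; then $\mathrm{Fib}_{\overline{x}}(u')$ and $\mathrm{Fib}_{\overline{x}}(u'')$ are bijective, so $\mathrm{deg}$ of source and target agree for both. A monomorphism $u'$ of equal degree is an isomorphism because its target splits as source plus a complement $R$ with $\mathrm{deg}(R)=0$, hence $R=0$; an epimorphism $u''$ of equal degree corresponds to a stalkwise injective map $\A_x\hookrightarrow\B_x$ of finite \'etale covers of the same degree, which is dominant and finite, hence surjective on spectra, hence a bijection on every geometric fibre for cardinality reasons, hence an isomorphism on geometric fibres and therefore an isomorphism of sheaves by Nakayama. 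This proves that $(\mathbf{Qcoh}^{fet}(X)^{\mathrm{op}},\mathrm{Fib}_{\overline{x}})$ is a Galois category.

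Finally, for the comparison, suppose $X$ is the finite model of $S$ and $\overline{s}\in S^\bullet(\Omega)$ corresponds to $\overline{x}$ with topological image $s\in S$. The equivalence $\pi_*\colon\mathbf{Qcoh}^{fet}(S)\overset{\sim}{\to}\mathbf{Qcoh}^{fet}(X)$ established earlier for finite models, composed with the classical relative spectrum $\underline{\Spec}_S\colon\mathbf{Qcoh}^{fet}(S)^{\mathrm{op}}\overset{\sim}{\to}\textbf{Fet}_S$, identifies $\mathbf{Qcoh}^{fet}(X)^{\mathrm{op}}$ with $\textbf{Fet}_S$ as categories. Under this identification the fiber functors agree up to natural isomorphism: using $\OO_{X,x}=\OO_S(U^s)$, $(\pi_*\B)_x=\B(U^s)$ and quasi-coherence of $\B$ on the affine $U^s$ one computes
\begin{align*}
\mathrm{Fib}_{\overline{x}}(\pi_*\B)=|\Spec(\Omega\otimes_{\OO_S(U^s)}\B(U^s))|=|\Spec(\Omega\otimes_{\OO_{S,s}}\B_s)|=\textbf{Fib}_{\overline{s}}(\underline{\Spec}_S\B),
\end{align*}
and the residue fields match by Proposition \ref{proposition points scheme are schematic points of the model}. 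Since $\pi_1^{et}$ is by definition the automorphism group of the fiber functor, an equivalence of Galois categories intertwining the fiber functors gives a canonical isomorphism of profinite groups $\pi_1^{et}(S,\overline{s})\simeq\pi_1^{et}(X,\overline{x})$, which is the assertion.
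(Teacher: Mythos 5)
Your proposal is correct and follows essentially the same route as the paper: reduction to the well-connected case via the qc-isomorphism invariance results, identification of the categorical structure of $\mathbf{Qcoh}^{fet}(X)^{\mathrm{op}}$ (tensor products as fibered products, $\A^G$ as quotients, kernel/image factorization with the clopen-support idempotent splitting for monomorphisms), stalkwise verification of the axioms, and degree-counting for conservativity. The only substantive divergences are cosmetic --- you prove the equal-degree injectivity step of Axiom 5 by a finite-dominant-surjective counting argument where the paper uses the trivialization Lemma \ref{lemma last lemma}, and the surjectivity of fibers under epimorphisms of $\mathbf{Qcoh}^{fet}(X)^{\mathrm{op}}$ (i.e.\ injective algebra maps), which Axiom 4 requires, is stated by you somewhat loosely there but is supplied correctly inside your Axiom 5 discussion.
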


\subsection{Verification of the first and second axioms:}\label{section axioms 1, 2}
\begin{proposition}
$\mathbf{Qcoh}^{fet}(X)$ has an initial object and finite direct sums.
\begin{proof}
The initial object is the structure sheaf $\OO_X$. Our candidate for direct sum over a third object is the tensor product of algebras. Let $\mathcal{A}, \mathcal{B}, \mathcal{C}\in   \mathbf{Qcoh}^{fet}(X)$ be such that $\mathcal{A}$ and $\mathcal{B}$ are also $\mathcal{C}$-algebras. We have to check that $\A\otimes_{\mathcal{C}}\B\in\mathbf{Qcoh}^{fet}(X)$. This can be checked at the level of stalks, where it holds true because all the properties of ring homomorphisms that are involved (finiteness, flatness, \'etaleness) are well known to be stable under arbitrary base changes and verify the <<cancellation property>>: if $A\to B$ verifies a property $\mathbf{P}$ and $A\to B\to C$ verifies $\mathbf{P}$, then $B\to C$ verifies $\mathbf{P}$, i.e. every morphism in
\begin{align*}
\xymatrixrowsep{0.1in}\xymatrix{ 
& & \A \ar[rd] & \\
\OO_X \ar[urr]\ar[r]\ar[drr] & \mathcal{C}\ar[ru]\ar[dr] & & \A\otimes_{\mathcal{C}}\B\\
& & \B \ar[ru] &
}
\end{align*}
is \'etale at stalks.
\end{proof}
\end{proposition}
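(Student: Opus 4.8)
The plan is to exhibit explicit candidates for the initial object and for the finite (fibered) direct sums, and then certify that they stay inside $\mathbf{Qcoh}^{fet}(X)$, with every verification ultimately taking place at the stalks. For the initial object I would take the structure sheaf $\OO_X$ itself: every object of $\mathbf{Qcoh}^{fet}(X)$ is by definition an $\OO_X$-algebra, so it receives a unique $\OO_X$-algebra morphism from $\OO_X$, and since the identity $\OO_{X,x}\to\OO_{X,x}$ is finite, flat and \'etale at each point, $\OO_X$ is an \'etale cover sheaf. As $\mathbf{Qcoh}^{fet}(X)$ is a full subcategory of $\mathbf{Qcoh}^{alg}(X)$, this exhibits $\OO_X$ as the initial object.

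For the direct sum of two objects $\A,\B$ over a common object $\mathcal{C}$, the candidate is the tensor product $\A\otimes_{\mathcal{C}}\B$, which is the pushout in $\mathbf{Qcoh}^{alg}(X)$ by the universal property of tensor products of algebras; the ordinary binary direct sum is the case $\mathcal{C}=\OO_X$, finite direct sums follow by iteration, and the empty one is $\OO_X$. Since the subcategory is full, this universal property is inherited as soon as we know the tensor product lands in it, so everything reduces to the single claim $\A\otimes_{\mathcal{C}}\B\in\mathbf{Qcoh}^{fet}(X)$. By the characterization of quasi-coherence (Proposition~\ref{proposition: characterization qcoh, ftype, coh}) and the stalkwise nature of the \'etale cover condition (Proposition~\ref{proposition punctually etale}), this is a purely local question, with $(\A\otimes_{\mathcal{C}}\B)_x=\A_x\otimes_{\mathcal{C}_x}\B_x$ at each $x\in X$.

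I would first dispatch quasi-coherence: for $x\leq x'$, base change commutes with relative tensor products, so
\[(\A_x\otimes_{\mathcal{C}_x}\B_x)\otimes_{\OO_{X,x}}\OO_{X,x'}\simeq (\A_x\otimes_{\OO_{X,x}}\OO_{X,x'})\otimes_{\mathcal{C}_x\otimes_{\OO_{X,x}}\OO_{X,x'}}(\B_x\otimes_{\OO_{X,x}}\OO_{X,x'})\simeq \A_{x'}\otimes_{\mathcal{C}_{x'}}\B_{x'},\]
the final step invoking the quasi-coherence of $\A$, $\B$ and $\mathcal{C}$. For the \'etale cover condition I would read off the stalk tower $\OO_{X,x}\to\mathcal{C}_x\to\A_x$: finiteness of $\mathcal{C}_x\to\A_x$ is immediate (generators of $\A_x$ over $\OO_{X,x}$ generate it over $\mathcal{C}_x$), and the cancellation property for \'etale maps upgrades this to \'etaleness of $\mathcal{C}_x\to\A_x$. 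Base changing along $\mathcal{C}_x\to\B_x$ then makes $\B_x\to\A_x\otimes_{\mathcal{C}_x}\B_x$ \'etale, and composing with the \'etale map $\OO_{X,x}\to\B_x$ shows $\OO_{X,x}\to(\A\otimes_{\mathcal{C}}\B)_x$ finite, flat and \'etale, as required.

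The only step carrying genuine content --- and the main (if modest) obstacle --- is the flatness half of the cancellation property: from $\OO_{X,x}\to\mathcal{C}_x$ and $\OO_{X,x}\to\A_x$ \'etale one must deduce that $\mathcal{C}_x\to\A_x$ is flat. Unramifiedness is formal, since $\Omega_{\mathcal{C}_x|\A_x}$ is a quotient of $\Omega_{\OO_{X,x}|\A_x}=0$, but flatness is not a consequence of flatness of the two maps to $\OO_{X,x}$ alone. The clean justification factors $\Spec\A_x\to\Spec\mathcal{C}_x$ through its graph inside $\Spec\A_x\times_{\Spec\OO_{X,x}}\Spec\mathcal{C}_x$: the projection to $\Spec\mathcal{C}_x$ is a base change of the \'etale composite $\Spec\A_x\to\Spec\OO_{X,x}$, while the graph is a base change of the diagonal of $\OO_{X,x}\to\mathcal{C}_x$, which is an open immersion because that map is unramified; both factors are therefore \'etale, and hence so is their composite $\mathcal{C}_x\to\A_x$. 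With this in hand the verification is entirely formal and the proposition follows.
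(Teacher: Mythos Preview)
Your proof is correct and follows essentially the same approach as the paper: both take $\OO_X$ as initial object, the tensor product $\A\otimes_{\mathcal{C}}\B$ as the pushout, and reduce the verification that this stays in $\mathbf{Qcoh}^{fet}(X)$ to stalkwise checks using base change stability and the cancellation property for \'etale maps. You simply supply more detail than the paper does, making explicit the quasi-coherence computation and spelling out the graph-factorization argument for the cancellation property that the paper leaves as ``well known.''
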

\begin{corollary}
$\mathbf{Qcoh}^{fet}(X)^{\mathrm{op}}$ verifies axiom 1 of Definition \ref{definition galois category}.
\end{corollary}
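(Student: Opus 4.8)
The plan is to obtain this corollary by dualizing the preceding Proposition. Axiom 1 of Definition \ref{definition galois category} requires two things of $\mathfrak{C}=\mathbf{Qcoh}^{fet}(X)^{\mathrm{op}}$: a terminal object and finite fibered products. Under the contravariant identification with $\mathbf{Qcoh}^{fet}(X)^{\mathrm{op}}$, a terminal object of $\mathfrak{C}$ is precisely an initial object of $\mathbf{Qcoh}^{fet}(X)$, and a fibered product $\A\times_{\C}\B$ in $\mathfrak{C}$ is precisely the pushout of $\A$ and $\B$ under $\C$ computed in $\mathbf{Qcoh}^{fet}(X)$. So I would simply read off the two assertions of the Proposition: $\OO_X$ is the initial object of $\mathbf{Qcoh}^{fet}(X)$, and $\A\otimes_{\C}\B$ --- which the Proposition shows again lies in $\mathbf{Qcoh}^{fet}(X)$ --- is the pushout.

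The one point worth spelling out is why $\A\otimes_{\C}\B$ is genuinely the pushout inside the full subcategory $\mathbf{Qcoh}^{fet}(X)$, and not merely a natural candidate. First, in $\mathbf{Qcoh}^{alg}(X)$ the tensor product of algebras is the pushout: this is checked on stalks, where it reduces to the standard fact that $A\otimes_{C}B$ is the pushout of $A\leftarrow C\rightarrow B$ in $\mathbf{CRing}$, together with the observation that the quasi-coherence condition of Proposition \ref{proposition: characterization qcoh, ftype, coh} is stable under tensoring. Second, since the Proposition places $\A\otimes_{\C}\B$ inside $\mathbf{Qcoh}^{fet}(X)$ and the latter is a full subcategory, an object of it that represents the pushout functor on the ambient category also represents it on the subcategory. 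Dualizing then yields a terminal object and finite fibered products in $\mathbf{Qcoh}^{fet}(X)^{\mathrm{op}}$, which is exactly axiom 1.

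I do not expect any real obstacle here: all the mathematical content is already carried by the Proposition, and this corollary is the bookkeeping step that translates it into the language of Definition \ref{definition galois category}. If desired, one could additionally remark that a terminal object together with binary fibered products forces $\mathfrak{C}$ to have all finite limits, but the axiom does not demand this.
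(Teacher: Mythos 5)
Your proposal is correct and matches the paper's (implicit) argument: the corollary is obtained by dualizing the preceding Proposition, with the terminal object of $\mathbf{Qcoh}^{fet}(X)^{\mathrm{op}}$ being $\OO_X$ and fibered products given by tensor products of algebras, whose membership in $\mathbf{Qcoh}^{fet}(X)$ is the content of the Proposition. Your extra remark on why representability of the pushout passes to the full subcategory is a correct and harmless elaboration of what the paper leaves tacit.
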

 
\begin{proposition}
$\mathbf{Qcoh}^{fet}(X)$ has finite direct products, in particular a terminal object.
\begin{proof}
Given  two finite \'etale cover sheaves $\mathcal{A}$ and $\mathcal{B}$, it is easy to check that their direct product $\mathcal{A}\times\mathcal{B}$ (with natural projections $\pi_\A$ and $\pi_\B$) is an \'etale cover sheaf and satisfies the requirements. Explicitly, the terminal object is the zero sheaf.
\end{proof}
\end{proposition}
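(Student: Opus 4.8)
The plan is to realize the direct product of sheaves of algebras as the categorical product in $\mathbf{Qcoh}^{fet}(X)$ and the zero sheaf as the terminal object, in each case checking that the candidate lies in $\mathbf{Qcoh}^{fet}(X)$ and satisfies the expected universal property. Since quasi-coherence (Proposition \ref{proposition: characterization qcoh, ftype, coh}) together with finiteness, flatness and the pointwise-\'etale condition can all be tested at stalks, the whole argument reduces to elementary statements about finite products of $\OO_{X,x}$-algebras.

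First I would set $\A\times\B$ to be the sheaf of $\OO_X$-algebras with stalks $(\A\times\B)_x=\A_x\times\B_x$ and componentwise restriction maps. The one fact used repeatedly is that tensor product distributes over finite products of modules, i.e. $(\A_x\times\B_x)\otimes_{\OO_{X,x}}M\simeq(\A_x\otimes_{\OO_{X,x}}M)\times(\B_x\otimes_{\OO_{X,x}}M)$ for every $\OO_{X,x}$-module $M$. Taking $M=\OO_{X,x'}$ for $x\leq x'$ and invoking quasi-coherence of $\A$ and $\B$ shows $\A\times\B$ is quasi-coherent; the stalks $\A_x\times\B_x$ are finite and flat because finite direct sums of finite (resp. flat) modules are finite (resp. flat); and taking $M=\kappa(x,\p)$ gives $(\A\times\B)_x\otimes_{\OO_{X,x}}\kappa(x,\p)\simeq(\A_x\otimes\kappa(x,\p))\times(\B_x\otimes\kappa(x,\p))$, a finite product of finite \'etale algebras, hence finite \'etale by Proposition \ref{proposition etale algebras over fields}. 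Thus $\A\times\B\in\mathbf{Qcoh}^{fet}(X)$.

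Next I would check the universal property. The componentwise projections $\pi_\A,\pi_\B$ are morphisms of quasi-coherent algebras, and for any $\C\in\mathbf{Qcoh}^{fet}(X)$ with maps $f\colon\C\to\A$, $g\colon\C\to\B$, the stalkwise universal property of products of rings produces a unique morphism $(f,g)\colon\C\to\A\times\B$ compatible with restrictions. Iterating this gives all finite products. For the terminal object I would take the zero sheaf: its stalks are the zero ring, which is the empty product $\prod_\emptyset\kappa(x,\p)$ and hence a (degenerate) finite \'etale algebra by Proposition \ref{proposition etale algebras over fields}, so it is trivially finite, flat and quasi-coherent and lies in $\mathbf{Qcoh}^{fet}(X)$; every object admits a unique morphism to it.

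No step poses a real difficulty; the only point deserving care is confirming that the zero sheaf genuinely belongs to $\mathbf{Qcoh}^{fet}(X)$ (and not merely to $\mathbf{Qcoh}^{alg}(X)$), which is exactly the reading of the zero ring as the empty-index \'etale algebra. Everything else follows formally from the stability of the defining conditions under finite products and the distributivity of tensor over finite products.
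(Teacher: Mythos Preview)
Your proposal is correct and follows exactly the same approach as the paper: take $\A\times\B$ with componentwise projections as the product and the zero sheaf as the terminal object. The paper's proof simply asserts these are ``easy to check'', whereas you have spelled out the stalkwise verifications (quasi-coherence, finiteness, flatness, pointwise-\'etaleness, and the universal property) that the paper leaves to the reader.
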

\begin{proposition}
$\mathbf{Qcoh}^{fet}(X)$ has contravariant categorical quotients by (finite) subgroups of automorphisms.
\begin{proof}
Assume without loss of generality that $X$ is pw-connected. Consider the group of automorphisms $\mathrm{Aut}_{\OO_X}(\mathcal{A})\equiv \mathrm{Aut}(\OO_X\rightarrow\mathcal{A})$ and a finite subgroup $G\subseteq \mathrm{Aut}_{\OO_X}(\mathcal{A})$. We can define:
\begin{align*}
Q(\mathcal{A}, G):\mathbf{Qcoh}^{fet}(X)^{\mathrm{op}}&\longrightarrow \mathbf{Set}\\
\mathcal{B}&\longmapsto \Hom_{\OO_X}(\mathcal{B}, \mathcal{A})^G
\end{align*}
where $\Hom_{\OO_X}(\mathcal{B}, \mathcal{A})^G:=\{f\in\Hom_{\OO_X}(\B, \A):\phi\circ f=f\text{ }\forall\phi\in G\}$ is the subset of $G$-invariant morphisms. The categorical quotient is by definition a representing object for this functor. In this case, it will be the subsheaf $\mathcal{A}^G$ of invariant elements under the action of $G$, defined at stalks as
\begin{align*}
\mathcal{A}^G_x:=\{a\in\mathcal{A}_x:\phi_x(a)=a\text{ for every }\phi\in G\}\subseteq \mathcal{A}_x
\end{align*}
for every $x\in X$; where $\phi_x\colon \A_x\to\A_x$ is the stalk of $\phi$ at $x$.

It is indeed a sub-$\OO_X$-module, because every element of $\OO_X$ is $G$-invariant by definition when thought of as an element of $\mathcal{A}$ (for a certain open set). Since automorphisms act trivially on the structure sheaf, for every $x\leq y\in X$ one has
\begin{align*}
\A_y^G\simeq (\mathcal{A}_x\otimes_{\OO_{X, x}}\OO_{X, y})^G=\mathcal{A}_x^G\otimes_{\OO_{X, x}}\OO_{X, y}\end{align*}
for the naturally induced actions, and thus $\A^G\in\mathbf{Qcoh}^{alg}(X)$. 

Now we prove that it is an \'etale cover sheaf. We can argue on each well-connected component, so we may assume that $X$ itself is well-connected. Since a finite direct product of \'etale morphisms is \'etale, we may assume that $\A$ is connected as well. By virtue of $\A$ being an \'etale cover sheaf, Proposition \ref{proposition local triviality for \'etale sheaves} implies that there is a finite and faithfully flat $\OO_X$-algebra $\CC$ s.t. $\A\otimes_{\OO_X}\CC\simeq \CC^{\times n}$ (with $n=\mathrm{deg}(\A)$). The action of $G$ on $\A$ induces a tensor product action on $\A\otimes_{\OO_X}\CC$ (trivial on $\CC$), and thus---via the trivialization---an action on $\CC^{\times n}\simeq \CC\otimes_{k}(k^{\times n})$ (we tensor stalkwise by a finite free algebra!) by permutations of the factors. We have
\begin{align*}
\A^G\otimes_{\OO_X}\CC\simeq (\A\otimes_{\OO_X}\CC)^G\simeq (\CC^{\times n})^G\simeq \CC^{\times m}
\end{align*}
for some $m<n$. We conclude by the converse of Proposition \ref{proposition local triviality for \'etale sheaves}.

Finally, $\mathcal{A}^G$ represents the functor $Q(\A, G)$ because every $G$-invariant morphism $f\colon \B\to \A$ has its image contained in $\A^G$, so it factors as $\B\to \A^G\hookrightarrow \A$. Conversely, we compose any map $g\colon \B\to \A^G$ with the natural inclusion. Both correspondences are mutually inverse.
\end{proof}
\end{proposition}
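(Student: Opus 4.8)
\subsection*{Proof proposal}

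The plan is to exhibit an explicit representing object, namely the subsheaf of invariants $\A^G$, and to verify that it lands in $\mathbf{Qcoh}^{fet}(X)$ and represents the functor $Q(\A,G)$. First I would reduce to the pw-connected case: by Theorem \ref{theorem pw connectification} the counit $\mathbf{pw}(X)\to X$ is a qc-isomorphism, and by Corollary \ref{theorem stability by qc-iso} it induces an equivalence $\mathbf{Qcoh}^{fet}(X)\simeq\mathbf{Qcoh}^{fet}(\mathbf{pw}(X))$ compatible with automorphism groups, so nothing is lost by assuming every $\Spec(\OO_{X,x})$ is connected. On such a space I would define $\A^G$ stalkwise by $\A^G_x=\{a\in\A_x:\phi_x(a)=a\ \forall\phi\in G\}$, with restriction maps induced by those of $\A$; here $\phi_x\colon\A_x\to\A_x$ denotes the stalk of $\phi\in G$.

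The first routine check is that $\A^G$ is a quasi-coherent $\OO_X$-subalgebra. Since $G\subseteq\mathrm{Aut}_{\OO_X}(\A)$ fixes every element of $\OO_{X,x}$, the invariants form a sub-$\OO_X$-algebra. For quasi-coherence I would use that $(-)^G$ is a finite limit: $\A^G_x=\ker\bigl(\A_x\to\prod_{\phi\in G}\A_x,\ a\mapsto(\phi_x(a)-a)_{\phi}\bigr)$. As each restriction $\OO_{X,x}\to\OO_{X,y}$ is flat (Definition \ref{definition finite spaces}) and flat base change commutes with finite limits, one gets $\A^G_x\otimes_{\OO_{X,x}}\OO_{X,y}\simeq(\A_x\otimes_{\OO_{X,x}}\OO_{X,y})^G\simeq\A^G_y$, which is precisely the quasi-coherence criterion of Proposition \ref{proposition: characterization qcoh, ftype, coh}.

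The crux is showing $\A^G\in\mathbf{Qcoh}^{fet}(X)$. Here I would argue on each well-connected component (Definition \ref{definition well connected components}), and since a finite product of \'etale algebras is \'etale, reduce further to $\A$ connected, so that Proposition \ref{proposition local triviality for \'etale sheaves} provides a finite faithfully flat $\CC$ with $\A\otimes_{\OO_X}\CC\simeq\CC^{\times n}$, where $n=\mathrm{deg}(\A)$. The $G$-action on $\A$, trivial on $\CC$, transports through this trivialization to an action on $\CC^{\times n}\simeq\CC\otimes_k k^{\times n}$ by permutation of the $n$ coordinates. Using once more that $(-)^G$ commutes with the flat base change $\OO_X\to\CC$, I would compute
\begin{align*}
\A^G\otimes_{\OO_X}\CC\simeq(\A\otimes_{\OO_X}\CC)^G\simeq(\CC^{\times n})^G\simeq\CC^{\times m},
\end{align*}
with $m$ the number of $G$-orbits on $\{1,\dots,n\}$. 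By the converse half of Proposition \ref{proposition local triviality for \'etale sheaves}, being trivialized by a finite faithfully flat covering forces $\A^G$ to be a finite \'etale cover sheaf. \emph{The main obstacle is exactly this step}: identifying the transported $G$-action as a permutation of the trivializing factors and commuting $(-)^G$ past the various tensor products. Both hinge on $G$ being finite together with flatness of the structure maps, which is where the schematic hypothesis enters.

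Finally I would verify representability. A morphism $f\colon\B\to\A$ in $\mathbf{Qcoh}^{fet}(X)$ satisfies $\phi\circ f=f$ for all $\phi\in G$ if and only if its image lies stalkwise in $\A^G$, hence factors uniquely as $\B\to\A^G\hookrightarrow\A$; conversely any $\B\to\A^G$ composed with the inclusion is $G$-invariant. This yields a natural bijection $\Hom_{\OO_X}(\B,\A)^G\simeq\Hom_{\OO_X}(\B,\A^G)$, so $\A^G$ represents $Q(\A,G)$ and is the desired contravariant categorical quotient.
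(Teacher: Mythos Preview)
Your proposal is correct and follows essentially the same route as the paper: construct $\A^G$ stalkwise, verify quasi-coherence by commuting $(-)^G$ with the flat base change $\OO_{X,x}\to\OO_{X,y}$, establish \'etaleness via the local triviality of Proposition \ref{proposition local triviality for \'etale sheaves} and the induced permutation action on $\CC^{\times n}$, and conclude representability by the obvious factoring argument. Your presentation is in fact slightly more explicit in two places---writing $(-)^G$ as a kernel to invoke flatness, and identifying $m$ as the number of $G$-orbits rather than merely asserting $m<n$---but the substance is identical.
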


\begin{remark}
 $\mathrm{Aut}_{\OO_{X}}(\A)\subseteq \prod_{x\in X}\mathrm{Aut}_{\OO_{X, x}}\A_x$ is automatically seen to be a finite group, since each one of the components of the product is well known to be finite.
\end{remark}

\begin{corollary}
$\mathbf{Qcoh}^{fet}(X)^{\mathrm{op}}$ verifies axiom 2 of Definition \ref{definition galois category}.
\end{corollary}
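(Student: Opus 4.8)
The plan is to read the three preceding propositions through the dualizing dictionary for $\mathfrak{C}=\mathbf{Qcoh}^{fet}(X)^{\mathrm{op}}$ and check that together they supply exactly the two ingredients of the second axiom of Definition \ref{definition galois category}: that $\mathfrak{C}$ has finite sums (in particular an initial object), and that quotients by finite groups of automorphisms exist for every object.

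First I would note that finite sums in $\mathbf{Qcoh}^{fet}(X)^{\mathrm{op}}$ are precisely finite direct products in $\mathbf{Qcoh}^{fet}(X)$, whose existence (together with the terminal object, the zero sheaf) was established above; the zero sheaf, being terminal in $\mathbf{Qcoh}^{fet}(X)$, is the initial object of $\mathfrak{C}$. (That $\mathbf{Qcoh}^{fet}(X)$ also has coproducts, given by $\otimes_{\OO_X}$, is not needed here.) So the first half of the axiom is immediate.

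For the second half, I would spell out the variance carefully. An action of a finite group $G$ on an object $\A$ of $\mathfrak{C}$ is a homomorphism $G\to \mathrm{Aut}_{\mathfrak{C}}(\A)=\mathrm{Aut}_{\mathbf{Qcoh}^{fet}(X)}(\A)$, i.e. $G$ acts by $\OO_X$-algebra automorphisms of $\A$; this automorphism group is finite, being contained in $\prod_{x\in X}\mathrm{Aut}_{\OO_{X,x}}(\A_x)$, as recorded in the Remark. The quotient $\A/G$ in $\mathfrak{C}$ is, by definition, the colimit of the $BG$-shaped diagram, i.e. a representing object for the functor $\B\mapsto\{f\in\Hom_{\mathfrak{C}}(\A,\B):f\circ\sigma=f\ \forall\sigma\in G\}$. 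Unwinding $\Hom_{\mathfrak{C}}(\A,\B)=\Hom_{\OO_X}(\B,\A)$ and reversing the order of composition, this functor is exactly $\B\mapsto\Hom_{\OO_X}(\B,\A)^{G}$ in the notation of the preceding proposition. That proposition shows this functor is representable by the invariant subsheaf $\A^{G}$ and that $\A^{G}\in\mathbf{Qcoh}^{fet}(X)$; hence the quotient exists in $\mathfrak{C}$ and is $\A^{G}$. Combining the two points yields the corollary.

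The main obstacle is not substantive: all the real work — that $\A^G$ is again a finite étale cover sheaf, via Proposition \ref{proposition local triviality for \'etale sheaves} and the identification $(\CC^{\times n})^{G}\simeq\CC^{\times m}$ — is already done. The only thing to be careful about is the bookkeeping: verifying that the ``contravariant categorical quotient'' of the previous proposition is genuinely the categorical quotient (a colimit) in the opposite category, and that a $G$-action on $\A$ in $\mathfrak{C}$ is the same datum as a finite subgroup $G\subseteq\mathrm{Aut}_{\OO_X}(\A)$. So the corollary is essentially a formal consequence of the three propositions above.
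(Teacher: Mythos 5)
Your proposal is correct and matches the paper, which derives this corollary directly from the three preceding propositions exactly as you do; your explicit dualization bookkeeping (finite products becoming sums, $\Hom_{\OO_X}(\B,\A)^G$ representing the quotient in the opposite category) is just a careful spelling-out of what the paper leaves implicit.
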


\subsection{Verification of the third axiom:}\label{section axiom 3}
The following statement is a standard consequence of Nakayama's lemma:
\begin{lemma}\label{lemma support and zeroes}
Let $M$ be a finite $A$-module, $C\subseteq \Spec(A)$ a closed subset and $I=I(C)$ the radical ideal generated by $C$. Then the $A$-module $M/IM$ verifies that:
$$\mathrm{Supp}(M/IM)=\mathrm{Supp}(M)\cap V(I)$$
where $V(I)=C$. In particular $(M/IM)_\p=0$ for every $\p\not\in C$.
\end{lemma}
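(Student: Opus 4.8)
The plan is to reduce the claimed equality of supports to a computation of stalks at an arbitrary prime $\p\in\Spec(A)$, using that $\mathrm{Supp}(N)=\{\p:N_\p\neq 0\}$ for any $A$-module $N$. Since localization is exact and commutes with quotients, $(M/IM)_\p\cong M_\p/I_\p M_\p$ with $I_\p:=IA_\p$; and, $C$ being closed with $I=I(C)$ its radical ideal, one has $V(I)=C$. So it suffices to show that $(M/IM)_\p=0$ precisely when $\p\notin V(I)$ or $\p\notin\mathrm{Supp}(M)$.

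First I would dispose of the case $\p\notin V(I)$, i.e. $I\not\subseteq\p$: then $I_\p=A_\p$, hence $M_\p/I_\p M_\p=0$. In particular this already yields the ``in particular'' clause of the statement. For the remaining case $\p\in V(I)$, the ideal $I_\p$ is contained in the maximal ideal $\p A_\p$ of the local ring $A_\p$; since $M$ is a finite $A$-module, $M_\p$ is a finitely generated $A_\p$-module, so Nakayama's lemma applies and $M_\p=I_\p M_\p$ forces $M_\p=0$. Thus $(M/IM)_\p=0$ if and only if $M_\p=0$, i.e. $\p\notin\mathrm{Supp}(M)$. Putting the two cases together gives $\mathrm{Supp}(M/IM)=\mathrm{Supp}(M)\cap V(I)$, which is the assertion.

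There is essentially no obstacle here: the statement is a routine application of Nakayama's lemma, and the only point to be mindful of is that finiteness of $M$ over $A$ is exactly what guarantees that each $M_\p$ is finitely generated over $A_\p$, which is where Nakayama enters. One could alternatively phrase the whole argument via the identification $M/IM\cong M\otimes_A A/I$ together with $\mathrm{Supp}(M\otimes_A N)=\mathrm{Supp}(M)\cap\mathrm{Supp}(N)$ for finite modules $M,N$ and $\mathrm{Supp}(A/I)=V(I)$, but this ultimately reduces to the same localization computation.
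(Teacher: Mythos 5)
Your proof is correct and matches the paper's intent exactly: the paper states this lemma without proof, describing it as ``a standard consequence of Nakayama's lemma,'' and your two-case localization argument (killing $(M/IM)_\p$ when $I\not\subseteq\p$, and invoking Nakayama on the finitely generated $A_\p$-module $M_\p$ when $I\subseteq\p$) is precisely that standard argument.
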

\begin{lemma}\label{lemma support locally free rank 1}
Let $A$ be a ring and let $C$ be and open and closed subset of $\Spec(A)$ such that $I=I(C)$. Then for every every $\p\in C$ we have
\begin{align*}
(A/I)_\p\simeq A_\p.
\end{align*}
\begin{proof}
Since $C$ is open and closed, $\Spec(A)=C\amalg D$ with $D=V(J)=\Spec(A/J)$ for some ideal $J$. In particular we have a decomposition $A\simeq A/I\times A/J$. The result follows from Lemma \ref{lemma support and zeroes}.
\end{proof}
\end{lemma}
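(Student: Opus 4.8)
The plan is to replace the topological hypothesis on $C$ by an algebraic one via the idempotent it determines, and then to finish with a single localization. First I would observe that, $C$ being open and closed in $\Spec(A)$, its complement $D:=\Spec(A)\setminus C$ is open and closed as well, and the decomposition $\Spec(A)=C\amalg D$ is exactly the one attached to a complementary pair of idempotents: there is $e\in A$ with $C=V(1-e)$ and $D=V(e)$. Setting $I:=(1-e)$ and $J:=(e)$, one has $V(I)=C$, $V(J)=D$, $I+J=A$ and $I\cap J=IJ=(0)$, so the Chinese Remainder Theorem yields a ring decomposition
\begin{align*}
A\simeq A/I\times A/J .
\end{align*}

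Next I would fix $\p\in C$ and localize this product at $\p$, obtaining $A_\p\simeq (A/I)_\p\times (A/J)_\p$. Since $\p\in C$ forces $\p\notin D=V(J)$, Lemma \ref{lemma support and zeroes} applied to the finite $A$-module $M=A$ and the closed subset $D$ gives $(A/J)_\p=(M/JM)_\p=0$ (equivalently: $e$ is a unit in $A_\p$, so $JA_\p=A_\p$). Hence $A_\p\simeq (A/I)_\p$, which is the asserted isomorphism.

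I do not expect a genuine obstacle here: the whole content is the standard dictionary between clopen subsets of a prime spectrum and idempotents of the ring, after which the claim is a one-line consequence of localizing a finite product and discarding the factor supported off $\p$. The only point worth making explicit is the bookkeeping with $I$: taking $I=(1-e)$ (so that $V(I)=C$) makes the product decomposition an honest isomorphism, which is all that is used here; if instead one keeps the radical ideal $I=I(C)$, the same computation identifies $(A/I)_\p$ with $(A_\p)_{\mathrm{red}}$, so the stated conclusion $(A/I)_\p\simeq A_\p$ holds as soon as the local rings of $A$ are reduced.
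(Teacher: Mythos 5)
Your proof is correct and follows essentially the same route as the paper's: decompose $A$ as a product of rings according to the clopen decomposition of $\Spec(A)$, localize at $\p$, and observe that the factor supported away from $\p$ dies. Your closing caveat, however, is not mere bookkeeping and is worth keeping: with the convention of Lemma \ref{lemma support and zeroes}, where $I=I(C)$ denotes the \emph{radical} ideal of $C$, the statement as written fails for non-reduced rings (take $A=k[x]/(x^2)$ and $C=\Spec(A)$, so that $I=(x)$ and $(A/I)_\p=k\not\simeq A_\p$), and the decomposition $A\simeq A/I\times A/J$ invoked in the paper then only produces $A_{\mathrm{red}}$ on the left-hand side. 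Reading $I$ as the idempotent-generated ideal $(1-e)$ attached to the clopen set $C$, as you do, is the correct interpretation, and it is also the one that matches the actual use of the lemma in Proposition \ref{proposition factorization}: there the relevant ideal is $\ker(f)=\mathrm{Ann}(\B_x)$, which for a finite locally free module is generated by an idempotent and need not equal the radical ideal of its support.
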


Furthermore, since \'etale algebras are finite and locally free, we have the following result.
\begin{lemma}\label{lemma support clopen}
If $f\colon A\to B$ is an \'etale morphism of rings, then $\mathrm{Supp}(B)$ (as an $A$-module) is open and closed.
\begin{proof}
It follows from $\Spec(A)-\mathrm{Supp}(B)=\{\p\in\Spec(A): \mathrm{deg}(B)(\p)=0\}$ and continuity of the degree map.
\end{proof}
\end{lemma}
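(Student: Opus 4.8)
The plan is to express the complement of the support directly as a fiber of the degree function and then invoke the continuity of that function. First I would invoke the standing Noetherian hypothesis: by Remark \ref{remark local freeness in the notherian case}, an \'etale (in particular flat and of finite presentation) morphism $f\colon A\to B$ makes $B$ a finite and locally free $A$-module. This is exactly the setting in which the degree map
\begin{align*}
\mathrm{deg}(B)\colon\Spec(A)\to\Z^+,\qquad \p\mapsto \mathrm{rank}_{A_\p}B_\p,
\end{align*}
is well defined and, as recorded in the definition of degree, is continuous (equivalently, locally constant) once $\Z^+$ is given the discrete topology.

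Next I would identify the complement of the support with the zero fiber of this map. By definition $\mathrm{Supp}(B)=\{\p\in\Spec(A):B_\p\neq 0\}$, and for a finite locally free module the condition $B_\p=0$ holds precisely when $\mathrm{rank}_{A_\p}B_\p=0$; hence
\begin{align*}
\Spec(A)-\mathrm{Supp}(B)=\{\p\in\Spec(A):\mathrm{deg}(B)(\p)=0\}=\mathrm{deg}(B)^{-1}(\{0\}).
\end{align*}
Since $\{0\}$ is both open and closed in the discrete space $\Z^+$, its preimage under the continuous map $\mathrm{deg}(B)$ is open and closed in $\Spec(A)$. Therefore $\mathrm{Supp}(B)$, being the complement of an open-and-closed subset, is itself open and closed, which is the assertion.

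The argument is essentially immediate once the degree invariant is in hand, so there is no genuine obstacle beyond making the reduction to finite local freeness explicit. The only place where the hypotheses are really used is the implication ``\'etale over a Noetherian base $\Rightarrow$ finite and locally free'', which is precisely what guarantees that $\mathrm{deg}(B)$ is a well-defined continuous function; no delicate computation is needed.
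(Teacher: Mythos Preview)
Your argument is correct and is exactly the paper's one-line proof, spelled out in full: you identify $\Spec(A)\setminus\mathrm{Supp}(B)$ with the fiber $\mathrm{deg}(B)^{-1}(\{0\})$ and invoke continuity (local constancy) of the degree map. The only additional ingredient you make explicit is the appeal to Remark~\ref{remark local freeness in the notherian case} to ensure that $B$ is finite locally free so that $\mathrm{deg}(B)$ is defined, which is implicit in the paper's proof.
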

Since injective (resp. surjective) morphisms of sheaves of rings (ring data) are monomorphisms (resp. epimorphisms) and $\mathbf{Qcoh}^{fet}(X)$ is a subcategory of $\mathbf{Ring}\text{-}\mathbf{data}_X$, we have:
\begin{lemma}\label{lemma injective is monic}
Let $f:\A\longrightarrow\B$ be a morphism in $\mathbf{Qcoh}^{fet}(X)$. If $f$ is injective (resp. surjective), then it is a monomorphism (resp. an epimorphism).
\end{lemma}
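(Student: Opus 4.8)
The plan is to deduce the statement from two elementary observations, one about the category $\mathbf{Ring}$ and one about functor categories, so that essentially no computation is needed.

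First I would note that, since $X$ is a finite poset, a morphism $f\colon\A\to\B$ of quasi-coherent $\OO_X$-algebras amounts to a compatible family of ring homomorphisms $f_x\colon\A_x\to\B_x$, $x\in X$ (Remark \ref{remark covariant sheaves}), and that $f$ being injective (resp. surjective) means exactly that each $f_x$ is injective (resp. surjective), kernels and cokernels of sheaves on a finite space being computed stalkwise. In $\mathbf{Ring}$ every injective homomorphism is a monomorphism and every surjective homomorphism is an epimorphism (for the latter one cancels elementwise against any two parallel maps out of the target). Hence, if $f$ is injective (resp. surjective), each component $f_x$ is a monomorphism (resp. epimorphism) in $\mathbf{Ring}$.

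Then I would invoke two formal facts: (i) in any functor category $\mathbf{Func}(X,\mathfrak C)$ a natural transformation whose components are all monomorphisms (resp. epimorphisms) is itself a monomorphism (resp. an epimorphism) --- test against an arbitrary parallel pair of natural transformations and cancel componentwise; and (ii) a faithful functor reflects monomorphisms and epimorphisms, by the same cancellation argument. Applying (i) with $\mathfrak C=\mathbf{Ring}$ gives that $f$ is a monomorphism (resp. epimorphism) in $\mathbf{Ring}\text{-}\mathbf{data}_X=\mathbf{Func}(X,\mathbf{Ring})$; applying (ii) to the faithful forgetful functor $\mathbf{Qcoh}^{fet}(X)\to\mathbf{Ring}\text{-}\mathbf{data}_X$, which sends an $\OO_X$-algebra to its underlying ring datum, transports this conclusion back to $\mathbf{Qcoh}^{fet}(X)$, which is the claim.

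There is no real obstacle here; the only point deserving a word of care is that this forgetful functor is faithful but \emph{not} full --- a morphism of ring data between two $\OO_X$-algebras need not be $\OO_X$-linear --- so the descent to the subcategory $\mathbf{Qcoh}^{fet}(X)$ should be justified via faithfulness (which already suffices to reflect monos and epis) rather than by treating $\mathbf{Qcoh}^{fet}(X)$ as a full subcategory.
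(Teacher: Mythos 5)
Your proposal is correct and is essentially the paper's own argument, which is stated in one line: injective (resp.\ surjective) morphisms of ring data are monomorphisms (resp.\ epimorphisms), and $\mathbf{Qcoh}^{fet}(X)$ is a subcategory of $\mathbf{Ring}\text{-}\mathbf{data}_X$, so the properties are reflected along the (faithful, not necessarily full) inclusion. You merely make explicit the componentwise cancellation and the remark that faithfulness alone suffices, which is a fair clarification but not a different route.
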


If $f\colon A\to B$ is a ring homomorphism, it is clear that $\ker(f)=\mathrm{Ann}(B)$ (where $\mathrm{Ann}(B)$ denotes the annihilator ideal of the $A$-module $B$. Note that $\mathrm{Ann}(B)=I(\mathrm{Supp}(B))$ because $B$ is  a finite $A$-module and $\ker(f)$ verifies that $\ker(f)_x=\mathrm{Ann}(\B_x)$ for all $x\in X$. Finally, if $\A$ and $\B$ are quasi-coherent, by Corollary \ref{corollary qcoh is abelian} we obtain that $\ker(f)$ is a quasi-coherent ideal.

\begin{proposition}\label{proposition factorization}
Any $f:\mathcal{A}\longrightarrow\mathcal{B}$ in $\mathbf{Qcoh}^{fet}(X)$ can be written as $f=h\circ g$ with $g$ an epimorphism and $h$ a monomorphism. 
\begin{proof}
We have the ordinary factorization $\A\overset{g}{\to}\A/\ker(f)\overset{h}{\to}\B$. Since $\ker(f)$ is quasi-coherent by Corollary \ref{corollary qcoh is abelian}, it follows, from the same corollary, that $\A/\ker(f)$ is a quasi-coherent algebra. $g$ is an epimorphism because it is a quotient map. $h$ is injective at stalks by the first isomorphism Theorem, thus a monomorphism by Lemma \ref{lemma injective is monic}.

We see that $\A/\ker(f)$ is an \'etale $\OO_X$-algebra at stalks, for which we use Proposition \ref{proposition punctually etale}. First, notice that by the cancellation property for \'etale morphisms, $\B_x$ is an \'etale $\A_x$-algebra for all $x\in X$, i.e. $\B\in\mathbf{Qcoh}^{fet}(X, \A)$. By the previous discussion $(\A/\ker(f))_x=\A_x/\mathrm{Ann}(\B_x)$. Finiteness holds because every quotient of a finite module is finite. Flatness can be checked after localizing at every prime, so we win by Lemmas \ref{lemma support clopen} and \ref{lemma support locally free rank 1}. Pointwise-\'etaleness follows by the same argument. We conclude that $\A/\ker(f)$ is an \'etale cover sheaf of $\OO_X$-algebras. Note that $\B_x$ is a finite locally free $(\A/\ker(f))_x$-algebra of strictly positive degree.
\end{proof}
\end{proposition}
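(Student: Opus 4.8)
The plan is to build the factorization as the ordinary ring-theoretic one and then to devote the real effort to checking that the intermediate object stays inside $\mathbf{Qcoh}^{fet}(X)$. Concretely, I would set $\mathcal{I}:=\ker(f)$, the subsheaf of $\A$ whose stalk at a point $x$ is $\ker(f_x)=\mathrm{Ann}_{\A_x}(\B_x)=I(\mathrm{Supp}_{\A_x}(\B_x))$ (the last equality using that $\B_x$ is a finite $\A_x$-module), and factor $f$ through $\A\overset{g}{\to}\A/\mathcal{I}\overset{h}{\to}\B$. That $g$ is an epimorphism and $h$ a monomorphism is then immediate from Lemma~\ref{lemma injective is monic}, since $g$ is surjective and $h$ injective at every stalk. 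The first genuine use of the theory is that $\mathcal{I}$, and hence $\A/\mathcal{I}$, is quasi-coherent: by Corollary~\ref{corollary qcoh is abelian}, $\mathbf{Qcoh}(X)$ is an abelian subcategory of all module sheaves, so kernels and cokernels computed there agree with the naive (stalkwise) ones; thus $\A/\mathcal{I}$ is a quasi-coherent $\OO_X$-module, and being a quotient algebra it is a quasi-coherent $\OO_X$-algebra.

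The substantive part is to show $\A/\mathcal{I}\in\mathbf{Qcoh}^{fet}(X)$, that is, that $\OO_{X,x}\to(\A/\mathcal{I})_x=\A_x/\mathrm{Ann}(\B_x)$ is \'etale for every $x$. I would verify this at stalks via Proposition~\ref{proposition punctually etale}, checking finiteness, flatness and pointwise-\'etaleness separately. Finiteness is clear, since $(\A/\mathcal{I})_x$ is a quotient of the finite module $\A_x$. The structural input for the other two conditions is that $\A_x\to\B_x$ is \'etale, which follows from the cancellation property of \'etale morphisms since both $\OO_{X,x}\to\A_x$ and the composite $\OO_{X,x}\to\A_x\to\B_x$ are \'etale. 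Hence, by Lemma~\ref{lemma support clopen}, $\mathrm{Supp}_{\A_x}(\B_x)=V(\mathrm{Ann}(\B_x))$ is open and closed in $\Spec(\A_x)$, so $\Spec(\A_x/\mathrm{Ann}(\B_x))$ is exactly that clopen piece, and Lemma~\ref{lemma support locally free rank 1} gives $(\A_x/\mathrm{Ann}(\B_x))_\p\simeq(\A_x)_\p$ for every prime $\p$ in it, the localization vanishing off it. Since flatness over $\OO_{X,x}$ can be tested after localizing at primes, it descends from $\OO_{X,x}\to\A_x$ (which is \'etale, in particular flat) to the quotient: on the clopen part nothing changes, off it everything is zero. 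For pointwise-\'etaleness, $(\A_x/\mathrm{Ann}(\B_x))\otimes_{\OO_{X,x}}\kappa(\q)$ is a quotient of $\A_x\otimes_{\OO_{X,x}}\kappa(\q)$, which is \'etale over $\kappa(\q)$ by hypothesis, and every quotient of an \'etale algebra over a field is again \'etale. This places $\A/\mathcal{I}$ in $\mathbf{Qcoh}^{fet}(X)$, so $g$ and $h$ are morphisms of the category in question and the factorization is complete.

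I expect the main obstacle to be exactly this last point: controlling flatness of the quotient algebra. Without the \'etale (equivalently, finite locally free) hypothesis there is no reason for $\mathrm{Supp}_{\A_x}(\B_x)$ to be clopen, and then $\A_x/\mathrm{Ann}(\B_x)$ need not be $\OO_{X,x}$-flat; so Lemmas~\ref{lemma support clopen} and~\ref{lemma support locally free rank 1} carry the essential weight, while quasi-coherence of $\mathcal{I}$ --- which is what even lets one speak of $\A/\mathcal{I}$ within our category --- is the other place where the abelian structure of Corollary~\ref{corollary qcoh is abelian} is indispensable; everything else is formal. As a byproduct one records that $(\A/\mathcal{I})_x\to\B_x$ is finite locally free of strictly positive degree, which is the natural starting point for the remaining half of the third axiom, namely exhibiting a monomorphism as the inclusion of a direct summand.
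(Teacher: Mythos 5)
Your proposal is correct and follows essentially the same route as the paper: the ordinary factorization through $\A/\ker(f)$, quasi-coherence via Corollary \ref{corollary qcoh is abelian}, the identification $\ker(f_x)=\mathrm{Ann}(\B_x)$, cancellation to get $\A_x\to\B_x$ \'etale, and Lemmas \ref{lemma support clopen} and \ref{lemma support locally free rank 1} to handle flatness of the quotient. The only (harmless) variation is your argument for pointwise-\'etaleness via quotients of \'etale algebras over a field, where the paper instead reuses the localization argument.
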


\subsubsection*{More on monomorphisms and epimorphisms.}
To complete the verification of the third axiom, we need a complete characterization of monic and epic morphisms in $\mathbf{Qcoh}^{fet}(X)$.
\begin{lemma}\label{epic morph are punctual}
A morphism $f:\A\longrightarrow\mathcal{B}$ in $\mathbf{Qcoh}^{fet}(X)$ is an epimorphism if and only if for every $x\in X$, $f_x:\A_x\longrightarrow\mathcal{B}_x$ is an epimorphism in the category of (flat, finite) \'etale algebras over $\OO_{X, x}$.
\begin{proof}
If all the local rings verify the condition, it is clear that the global morphism also verifies it. 
For the converse, it suffices to see that for any $x\in X$ and any morphism $g:\B_x\longrightarrow R$ (with $R$ finite, flat and \'etale over $\OO_{X, x}$), there exists a morphism of sheaves $f:\B\longrightarrow\mathcal{R}$ such that $\mathcal{R}_x=R$, $f_x=g$ and $\mathcal{R}\in\mathbf{Qcoh}^{fet}(X)$.

Notice that every $R$ determines a sheaf of algebras $\mathcal{R}^x$ on $U_x$ defined as $\mathcal{R}^x_y:=R\otimes_{\OO_{X, x}}\OO_{X, y}$ equipped with a natural morphism $i^*\B\longrightarrow \mathcal{R}^x$ (where $i:U_x\hookrightarrow X$ is the natural inclusion). The extension theorem for quasi-coherent sheaves on schematic spaces (Theorem \ref{theorem extension of quasicoherent sheaves}) gives us
\begin{align*}
i_*i^*\B\equiv B_{U_x}\longrightarrow \mathcal{R}:=i_*\mathcal{R}^x;
\end{align*}
which composed with the natural morphism $\B\longrightarrow i_*i^*\B$ gives us the desired map. It remains to check that $\mathcal{R}\in\mathbf{Qcoh}^{fet}(X)$, which is easy because for every $y\geq x$,
\begin{align*}
\mathcal{R}_y=R\otimes_{\OO_{X, x}}\OO_{X, y}
\end{align*}
and for every other $y$ we have 
\begin{align*}
\mathcal{R}_{y}=\mathcal{R}^x(U_x\cap U_y)=R\otimes_{\OO_{X, x}}\OO_X(U_x\cap U_y);
\end{align*}
and all the conditions are stable after a base change. 
\end{proof}
\end{lemma}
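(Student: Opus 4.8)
The plan is to detect the epimorphism property through pushouts. Recall the elementary categorical fact that, in any category admitting the pushout of a morphism $u\colon P\to Q$ along itself, $u$ is an epimorphism if and only if the two coprojections $Q\rightrightarrows Q\sqcup_P Q$ coincide, equivalently if and only if the codiagonal $Q\sqcup_P Q\to Q$ is an isomorphism. I would apply this twice: once in $\mathbf{Qcoh}^{fet}(X)$ to $f\colon\A\to\B$, and once, stalk by stalk, in the category $\mathfrak{E}_x$ of finite flat \'etale $\OO_{X,x}$-algebras to $f_x\colon\A_x\to\B_x$; the bridge between the two levels is that the relevant pushout is in both cases the tensor product of algebras, and tensor products are computed stalkwise.

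The key preliminary is therefore that the pushout of $f$ along itself exists in $\mathbf{Qcoh}^{fet}(X)$ and equals $\B\otimes_\A\B$, with stalk $\B_x\otimes_{\A_x}\B_x$ at each $x$. Indeed $\B\otimes_\A\B$ is the pushout already in the ambient category $\mathbf{Qcoh}^{alg}(X)$ (it is quasi-coherent by Proposition \ref{proposition: characterization qcoh, ftype, coh}(1)), and it lies in $\mathbf{Qcoh}^{fet}(X)$: by the cancellation property for finite, flat and \'etale ring maps, $\OO_{X,x}\to\A_x$ and $\OO_{X,x}\to\B_x$ both finite flat \'etale force $\A_x\to\B_x$ to be finite flat \'etale, hence so is the base change $\B_x\to\B_x\otimes_{\A_x}\B_x$, hence so is the composite $\OO_{X,x}\to\B_x\otimes_{\A_x}\B_x$. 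The same computation shows $\B_x\otimes_{\A_x}\B_x\in\mathfrak{E}_x$, so it is the pushout of $f_x$ along itself in $\mathfrak{E}_x$.

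Granting this, the lemma is a chain of equivalences: $f$ is an epimorphism in $\mathbf{Qcoh}^{fet}(X)$ $\iff$ the codiagonal $\B\otimes_\A\B\to\B$ is an isomorphism in $\mathbf{Qcoh}^{fet}(X)$ $\iff$ (isomorphisms in $\mathbf{Qcoh}^{fet}(X)$ being isomorphisms of the underlying sheaves, hence detected on stalks) the codiagonal $\B_x\otimes_{\A_x}\B_x\to\B_x$ is an isomorphism for every $x$ $\iff$ (the categorical fact applied in $\mathfrak{E}_x$) $f_x$ is an epimorphism in $\mathfrak{E}_x$ for every $x$. This is exactly the assertion. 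One may add the remark that, for $\B_x$ finite flat \'etale over $\OO_{X,x}$, an epimorphism in $\mathfrak{E}_x$ is the same as a surjection---reduce to residue fields and invoke the structure of \'etale algebras over a field, Proposition \ref{proposition etale algebras over fields}---so the statement also reads ``$f$ is an epimorphism $\iff$ $f_x$ is surjective for all $x$''.

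The step I expect to carry the weight is the membership $\B\otimes_\A\B\in\mathbf{Qcoh}^{fet}(X)$, i.e. that this is the pushout of $f$ with itself \emph{inside} the subcategory and not merely in a larger one; here one really uses the cancellation property together with stability of finiteness, flatness and \'etaleness under base change, plus quasi-coherence of tensor products. A natural-looking but more delicate alternative is to test the epimorphism property of $f_x$ by mapping $\B_x$ into an arbitrary object $R$ of $\mathfrak{E}_x$ and extending $R$ to a global sheaf via the Extension Theorem (Theorem \ref{theorem extension of quasicoherent sheaves}); the obstacle there is that the pushforward along $U_x\hookrightarrow X$ need not preserve finiteness at points lying outside $U_x$, so the extended sheaf can fail to be an \'etale cover---building the test object as $\B\otimes_\A\B$ from the data at hand circumvents this.
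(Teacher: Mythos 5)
Your argument is correct, and it takes a genuinely different route from the paper's. The paper proves the nontrivial direction by testing $f$ against sheaves obtained by extending an arbitrary test object $R$ of the stalk category from $U_x$ to $X$: it forms $\mathcal{R}^x$ on $U_x$ with $\mathcal{R}^x_y=R\otimes_{\OO_{X,x}}\OO_{X,y}$ and pushes it forward along $i\colon U_x\hookrightarrow X$ via the Extension Theorem, asserting that $i_*\mathcal{R}^x$ is again an \'etale cover sheaf. You instead detect epimorphisms through the codiagonal of the self-pushout $\B\otimes_{\A}\B$, which is computed stalkwise, is quasi-coherent, and lies in $\mathbf{Qcoh}^{fet}(X)$ by the same cancellation-plus-base-change-plus-composition argument the paper itself uses when verifying the first axiom; since $\mathbf{Qcoh}^{fet}(X)$ and each stalk category are \emph{full} subcategories of the ambient algebra categories, $f$ (resp.\ $f_x$) is an epimorphism there if and only if the corresponding codiagonal is an isomorphism, and isomorphisms of sheaves are detected on stalks, which closes the chain of equivalences. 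What your route buys is exactly the point you flag at the end, and it is a substantive one: at a point $y\not\geq x$ the paper's extended sheaf has stalk $R\otimes_{\OO_{X,x}}\OO_X(U_x\cap U_y)$, which is finite over $\OO_X(U_x\cap U_y)$ but in general not over $\OO_{X,y}$, because the restriction $\OO_{X,y}\to\OO_X(U_x\cap U_y)$ is a flat epimorphism rather than a finite map (already for the finite model of $\proy^1$ this fails); hence $i_*\mathcal{R}^x$ need not belong to $\mathbf{Qcoh}^{fet}(X)$ and cannot directly serve as a test object for an epimorphism in that category. Building the test object internally as $\B\otimes_{\A}\B$ sidesteps this issue entirely, so your proof is complete as written and arguably more robust than the one in the text.
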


From this Lemma and the classical theory of \'etale $\OO_{X, x}$-algebras (morphisms between \'etale algebras are \'etale and epimorphisms of rings of finite presentation are surjective), we have:
\begin{proposition}\label{prop charac epic}
A morphism $f:\mathcal{A}\longrightarrow\mathcal{B}$ in $\mathbf{Qcoh}^{fet}(X)$ is an epimorphism if and only if $f_x$ is surjective for every $x\in X$, i.e. if and only if $f$ is surjective.
\end{proposition}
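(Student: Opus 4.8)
The plan is to reduce the assertion, via the pointwise description in Lemma~\ref{epic morph are punctual}, to a purely algebraic fact about finite \'etale algebras. The ``only if'' direction is the substantive one: for ``if'', note that by definition $f$ is surjective exactly when every $f_x$ is, and a surjective morphism of ring data is an epimorphism by Lemma~\ref{lemma injective is monic}, so that direction needs nothing further.

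So assume $f$ is an epimorphism. By Lemma~\ref{epic morph are punctual}, each $f_x\colon\A_x\to\B_x$ is an epimorphism in the category of finite flat \'etale $\OO_{X,x}$-algebras, and I claim this already forces $f_x$ to be surjective. First I would use that a morphism between \'etale $\OO_{X,x}$-algebras is itself \'etale (cancellation for \'etale morphisms), so that $f_x$ is finite \'etale; in particular $\B_x\otimes_{\A_x}\B_x$, being the base change of $\A_x\to\B_x$ along itself, is finite \'etale over $\B_x$ and hence a finite flat \'etale $\OO_{X,x}$-algebra. The two coprojections $\iota_1,\iota_2\colon\B_x\rightrightarrows\B_x\otimes_{\A_x}\B_x$ are then morphisms in this subcategory with $\iota_1\circ f_x=\iota_2\circ f_x$, so the epimorphism hypothesis gives $\iota_1=\iota_2$, i.e.\ $b\otimes 1=1\otimes b$ for every $b\in\B_x$. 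Consequently $\iota_1$ is surjective and, being a section of the multiplication map, an isomorphism $\B_x\otimes_{\A_x}\B_x\DistTo\B_x$; equivalently, $f_x$ is an epimorphism of rings. (One may also extract this directly from Lemma~\ref{lemma kernel multiplication differentials}.)

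The remaining step is the standard fact that a finite ring epimorphism is surjective. Writing $Q=\mathrm{coker}(f_x)$, which is a finite $\A_x$-module since $\B_x$ is finite over $\OO_{X,x}$ hence over $\A_x$, and tensoring $\A_x\xrightarrow{f_x}\B_x\to Q\to 0$ over $\A_x$ with $\B_x$, the isomorphism of the previous paragraph turns the map $\B_x\to\B_x\otimes_{\A_x}\B_x$ into an isomorphism, so $Q\otimes_{\A_x}\B_x=0$. For finite modules the support of a tensor product is the intersection of the supports, and $\mathrm{Supp}_{\A_x}(Q)\subseteq\mathrm{Supp}_{\A_x}(\B_x)$ because for a prime $\mathfrak q$ with $(\B_x)_{\mathfrak q}=0$ the localised map $(f_x)_{\mathfrak q}$ is trivially onto; hence $\mathrm{Supp}_{\A_x}(Q)=\emptyset$, so $Q=0$ and $f_x$ is surjective. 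As $x$ was arbitrary, $f$ is surjective.

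The one point that needs care is the middle paragraph: being an epimorphism \emph{inside} the subcategory of finite \'etale $\OO_{X,x}$-algebras already yields a bona fide ring epimorphism, precisely because the natural test object $\B_x\otimes_{\A_x}\B_x$ happens to lie in that subcategory, so no strength is lost. Everything else --- \'etale cancellation and the support computation --- is routine commutative algebra, of a kind already used in Subsection~\ref{section axiom 3}.
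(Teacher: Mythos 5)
Your proof is correct and follows essentially the same route as the paper: reduce to stalks via Lemma \ref{epic morph are punctual} and then invoke the commutative algebra of finite \'etale algebras. You simply make explicit the two points the paper leaves to ``classical theory''---that an epimorphism in the subcategory of finite flat \'etale $\OO_{X,x}$-algebras is already a ring epimorphism, because the test object $\B_x\otimes_{\A_x}\B_x$ lies in that subcategory, and that a \emph{module-finite} ring epimorphism is surjective, which is the correct form of the fact needed here (the paper's parenthetical ``epimorphisms of rings of finite presentation are surjective'' is not literally true, as localizations such as $\Z\to\Z[1/2]$ show, but finiteness of the algebras saves the argument exactly as in your support computation).
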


We proceed now to characterize monomorphisms.
\begin{proposition}\label{prop charac monic}
$f:\mathcal{A}\longrightarrow\mathcal{B}$ in $\mathbf{Qcoh}^{fet}(X)$ is a monomorphism if and only if it is injective.
\begin{proof}One direction is Lemma \ref{lemma injective is monic}. For the converse: since $\mathrm{Supp}(\B_x)$ (as an $\A_x$-module) is open and closed for every $x\in X$, its complement in $\Spec(\A_x)$ defines an ideal $\ker(f_x)^C$ for all $x\in X$. This defines a quasi-coherent sheaf of ideals $\ker(f)^C$ such that
\begin{align*} 
\mathcal{A}\simeq\mathcal{\mathcal{A}}/\ker(f)\times  \mathcal{A}/\ker(f)^C.
\end{align*}

Let $f$ be a monomorphism. Denote $\A_1=\A/\ker(f)$, $\A_0=\A/\ker(f)^C$ and consider the natural projections $\pi_1, \pi_2\colon \A_0\times \A_0\to \A_0$ and
\begin{align*}
\A_0\times \A_0\times \A_1\overset{\pi_1}{\underset{\pi_2}{\rightrightarrows}} \A_0\times\A_1\simeq\mathcal{A}\overset{f}{\longrightarrow} \mathcal{B}.
\end{align*}
By definition of $\A_0$ (or simply the factorization of Proposition \ref{proposition factorization}), we have $f\circ\pi_1=f\circ \pi_2$; and since $f$ is a monomorphism, $\pi_1=\pi_2$, thus $\A_0\simeq 0$ (the ring with one element). Now it is clear that $\ker(f)=0$, hence $f$ is injective (see proof of Proposition \ref{proposition factorization} as well). 
\end{proof}
\end{proposition}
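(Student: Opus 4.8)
The plan is to prove the two implications separately; the forward one is immediate and the content lies entirely in showing that a categorical monomorphism is injective. That injectivity implies monic is already Lemma~\ref{lemma injective is monic}: since $\mathbf{Qcoh}^{fet}(X)$ sits as a full subcategory of $\mathbf{Ring}\text{-}\mathbf{data}_X$ and an injective morphism of ring data is left-cancellable, there is nothing to add. So I would concentrate on the converse.

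The first step is to pin down the structure of an arbitrary $f\colon\A\to\B$ through its kernel, exactly as in the discussion preceding Proposition~\ref{proposition factorization}. The sheaf $\ker(f)$ is a quasi-coherent ideal by Corollary~\ref{corollary qcoh is abelian}, and at each point $\ker(f)_x=\mathrm{Ann}_{\A_x}(\B_x)=I(\mathrm{Supp}(\B_x))$ because $\B_x$ is a finite $\A_x$-module. By the cancellation property for \'etale maps (used in Proposition~\ref{proposition factorization}), $\B_x$ is a finite \'etale, hence finite locally free, $\A_x$-algebra; Lemma~\ref{lemma support clopen} then makes $\mathrm{Supp}(\B_x)$ open and closed in $\Spec(\A_x)$. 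Writing $Z_x$ for its clopen complement and $\ker(f_x)^C:=I(Z_x)$, the associated idempotent splits $\A_x\simeq\A_x/\ker(f_x)\times\A_x/\ker(f_x)^C$. Since these complements are again cut out by a quasi-coherent ideal, they glue to a quasi-coherent decomposition $\A\simeq\A_0\times\A_1$ with $\A_1:=\A/\ker(f)$ and $\A_0:=\A/\ker(f)^C$; under this identification $f$ factors through the projection onto $\A_1$, i.e.\ it ignores the $\A_0$-coordinate.

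The crux is to turn the monomorphism hypothesis into the vanishing of $\A_0$. I would build the \'etale cover $\A_0\times\A_0\times\A_1$ (a finite product of \'etale cover sheaves is one, so it lies in $\mathbf{Qcoh}^{fet}(X)$) together with the two ring projections
\begin{align*}
\pi_1,\pi_2\colon\A_0\times\A_0\times\A_1\rightrightarrows\A_0\times\A_1\simeq\A,\quad \pi_1(a,a',c)=(a,c),\quad \pi_2(a,a',c)=(a',c).
\end{align*}
Because $f$ only sees the last coordinate, $f\circ\pi_1=f\circ\pi_2$, and monicity of $f$ forces $\pi_1=\pi_2$. Comparing coordinatewise gives $a=a'$ for all $a,a'\in\A_0$ at every stalk, which can hold only if $\A_0\simeq0$. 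Hence $\A\simeq\A_1=\A/\ker(f)$, so $\ker(f)=0$ and $f$ is injective.

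The step I expect to be most delicate is not the doubling argument, which is formal, but justifying the two ingredients that make it available: that $\mathrm{Supp}(\B_x)$ is genuinely clopen---this is precisely where \'etaleness is indispensable, via the locally constant degree in Lemma~\ref{lemma support clopen}, a conclusion that would fail for a merely finite or flat algebra---and that the pointwise idempotent splittings are compatible with the restriction maps, so that $\A_0$, $\A_1$, and the triple product $\A_0\times\A_0\times\A_1$ are bona fide objects of $\mathbf{Qcoh}^{fet}(X)$ rather than stalkwise gadgets. Quasi-coherence of $\ker(f)$ and of its complement (Corollary~\ref{corollary qcoh is abelian}) is what secures this compatibility.
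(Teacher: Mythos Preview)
Your proof is correct and follows essentially the same route as the paper: decompose $\A\simeq\A_0\times\A_1$ via the clopen support of $\B_x$ over $\A_x$, then use the doubling trick with two projections $\A_0\times\A_0\times\A_1\rightrightarrows\A$ to force $\A_0=0$ from the monomorphism hypothesis. Your explicit formulas for $\pi_1,\pi_2$ and your remarks on why $\A_0,\A_1\in\mathbf{Qcoh}^{fet}(X)$ make the argument slightly more transparent than the paper's version, but the strategy is identical.
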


\begin{proposition}
Any epimorphism $f:\mathcal{A}\longrightarrow\mathcal{B}$ in $\mathbf{Qcoh}^{fet}(X)$ induces $\mathcal{C}\overset{\sim}{\longrightarrow}\mathcal{B}$ such that $\mathcal{A}\simeq \mathcal{C}\times \mathcal{D}$.
\begin{proof}
The same decomposition of Proposition \ref{prop charac monic} with $\mathcal{C}=\A_1$ and $\mathcal{D}=\A_0$ satisfies the conditions. Indeed,  $f$ is surjective by Proposition \ref{prop charac epic}, thus $\mathcal{C}\to\B$ is surjective; and also a monomorphism by Proposition \ref{proposition factorization}, hence injective by Proposition \ref{prop charac monic} and therefore it is  an isomorphism.
\end{proof}
\end{proposition}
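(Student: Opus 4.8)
The plan is to observe that the required direct–product decomposition has essentially already been produced inside the proof of Proposition \ref{prop charac monic}, so that the only genuinely new ingredient is the interplay between surjectivity and injectivity forced by the epimorphism hypothesis. Concretely, I would set $\mathcal{C}:=\A/\ker(f)$ and $\mathcal{D}:=\A/\ker(f)^C$, where $\ker(f)^C$ is the quasi-coherent ideal sheaf whose stalk at each $x$ cuts out the complement of $\mathrm{Supp}(\B_x)$ inside $\Spec(\A_x)$. The first step is to record that $\A\simeq\mathcal{C}\times\mathcal{D}$ as quasi-coherent algebras; this is exactly the splitting already appearing in Proposition \ref{prop charac monic}, and it is legitimate because $\mathrm{Supp}(\B_x)$ is open and closed for every $x$ by Lemma \ref{lemma support clopen}, so complementary idempotents split $\A_x$ stalkwise, and quasi-coherence (Corollary \ref{corollary qcoh is abelian}) lets these stalk splittings glue into a decomposition of sheaves of algebras. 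Both factors are then \'etale cover sheaves, as in Proposition \ref{proposition factorization}.

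Next I would identify $\mathcal{C}$ with the intermediate object of the epi–mono factorization of $f$. By Proposition \ref{proposition factorization}, $f$ factors as $\A\overset{g}{\to}\mathcal{C}=\A/\ker(f)\overset{h}{\to}\B$, with $g$ the quotient epimorphism and $h$ a monomorphism. The whole claim thus reduces to showing that $h\colon\mathcal{C}\to\B$ is an isomorphism. For surjectivity I would use that $f$ is an epimorphism, hence surjective by Proposition \ref{prop charac epic}; since $f=h\circ g$, the image of $h$ contains the image of $f$, which is all of $\B$, so $h$ is surjective. For injectivity, $h$ is a monomorphism by the factorization, and monomorphisms in $\mathbf{Qcoh}^{fet}(X)$ are injective by Proposition \ref{prop charac monic}. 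A morphism of \'etale cover sheaves that is simultaneously injective and surjective is stalkwise bijective, hence an isomorphism, so $\mathcal{C}\overset{\sim}{\to}\B$, completing the argument.

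I do not expect a real obstacle here, since the statement is in effect a corollary of Propositions \ref{proposition factorization}, \ref{prop charac epic} and \ref{prop charac monic}; the only point requiring care—and what I would regard as the heart of the matter—is verifying that the two ideal sheaves $\ker(f)$ and $\ker(f)^C$ are honestly complementary, i.e. that $\ker(f)_x$ is the annihilator $I(\mathrm{Supp}(\B_x))$ while $\ker(f)^C$ cuts out the complementary clopen piece, so that the stalk decomposition $\A_x\simeq\A_x/\ker(f)_x\times\A_x/(\ker(f)^C)_x$ genuinely holds. This is precisely the clopen–support bookkeeping already extracted in the proof of Proposition \ref{prop charac monic}. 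Once the decomposition $\A\simeq\mathcal{C}\times\mathcal{D}$ and the factorization are both in hand, the conclusion $\mathcal{C}\overset{\sim}{\to}\B$ is immediate, and this is exactly the direct–summand half of the third Galois-category axiom (Definition \ref{definition galois category}) transported into the opposite category $\mathbf{Qcoh}^{fet}(X)^{\mathrm{op}}$.
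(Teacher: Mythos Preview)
Your proposal is correct and follows essentially the same route as the paper: you take $\mathcal{C}=\A/\ker(f)=\A_1$ and $\mathcal{D}=\A/\ker(f)^C=\A_0$ from the decomposition in Proposition~\ref{prop charac monic}, then combine Proposition~\ref{prop charac epic} (surjectivity of $h$) with Propositions~\ref{proposition factorization} and~\ref{prop charac monic} (injectivity of $h$) to conclude that $\mathcal{C}\to\B$ is an isomorphism. The paper's proof is simply a terser version of exactly this argument.
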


\begin{corollary}
$\mathbf{Qcoh}^{fet}(X)^{\mathrm{op}}$ verifies axiom 3 of Definition \ref{definition galois category}.
\end{corollary}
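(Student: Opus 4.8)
The plan is to read off both halves of the third axiom of Definition~\ref{definition galois category} from the structural results already proved for $\mathbf{Qcoh}^{fet}(X)$, the only delicate point being the passage to the opposite category. Write $\mathfrak C:=\mathbf{Qcoh}^{fet}(X)^{\mathrm{op}}$. A morphism $u\colon P\to Q$ in $\mathfrak C$ is the same datum as a morphism $\tilde u\colon Q\to P$ in $\mathbf{Qcoh}^{fet}(X)$; under $(-)^{\mathrm{op}}$ monomorphisms and epimorphisms are interchanged, and the finite direct products of $\mathbf{Qcoh}^{fet}(X)$ become finite coproducts, i.e.\ direct sums, in $\mathfrak C$. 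These three dictionary entries are all that is needed.

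For the factorization clause I would start from an arbitrary $u\colon P\to Q$ in $\mathfrak C$, regard it as $\tilde u\colon Q\to P$ in $\mathbf{Qcoh}^{fet}(X)$, and apply Proposition~\ref{proposition factorization} to write $\tilde u=h\circ g$ with $g$ the (epimorphic) quotient by the quasi-coherent ideal $\ker\tilde u$ and $h$ the induced injection. Dualizing yields $u=g^{\mathrm{op}}\circ h^{\mathrm{op}}$ in $\mathfrak C$, where $h^{\mathrm{op}}$ is an epimorphism and $g^{\mathrm{op}}$ a monomorphism; this is exactly the required decomposition $u=u'\circ u''$ with $u''$ an epimorphism and $u'$ a monomorphism.

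For the direct-summand clause I would take a monomorphism $u\colon P\to Q$ in $\mathfrak C$, so that $\tilde u\colon Q\to P$ is an epimorphism in $\mathbf{Qcoh}^{fet}(X)$, hence surjective by Proposition~\ref{prop charac epic}. Applying the Proposition immediately above, which splits off a direct factor from any epimorphism ($f\colon\A\to\B$ epi $\Rightarrow$ $\A\simeq\CC\times\mathcal D$ with $\CC\overset{\sim}{\to}\B$), to $f=\tilde u$ produces a decomposition $Q\simeq\CC\times\mathcal D$ in $\mathbf{Qcoh}^{fet}(X)$ and an isomorphism $\CC\overset{\sim}{\to}P$ through which $\tilde u$ factors as the projection onto $\CC$ followed by that isomorphism. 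Translating to $\mathfrak C$, this reads $Q\simeq\CC\amalg\mathcal D$ with $u$ identified with the coproduct inclusion of the summand $\CC\cong P$, so $u$ exhibits $P$ as isomorphic to a direct summand of $Q$. Together with the previous paragraph this gives axiom~3 for $\mathfrak C$.

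I do not expect a genuine obstacle here: the substantive content has been front-loaded --- the identification of monomorphisms and epimorphisms of $\mathbf{Qcoh}^{fet}(X)$ with injective and surjective maps (Propositions~\ref{prop charac monic},~\ref{prop charac epic}), the quasi-coherence of kernels and of the resulting direct factors, and the fact that the relevant quotients remain \'etale cover sheaves, all rest on the stalkwise/affine-local arguments carried out earlier together with the theory of \'etale algebras over a ring without nontrivial idempotents. The only thing that still deserves attention is the bookkeeping of the opposite category, namely checking that ``direct summand in $\mathfrak C$'' genuinely unwinds to a ring-theoretic product decomposition in $\mathbf{Qcoh}^{fet}(X)$ and that the epi/mono roles are consistently swapped under $(-)^{\mathrm{op}}$.
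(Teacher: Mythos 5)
Your proposal is correct and follows the same route as the paper: the corollary is simply the compilation of Proposition \ref{proposition factorization} (for the epi--mono factorization, with roles swapped under $(-)^{\mathrm{op}}$) and the proposition immediately preceding the corollary (splitting an epimorphism of cover sheaves as a projection from a product $\CC\times\mathcal D$, which dualizes to the direct-summand clause since finite products of algebras become coproducts in the opposite category). Your bookkeeping of the opposite category is exactly what the paper leaves implicit.
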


\subsection{Verification of the fourth and fifth axioms:}\label{section axioms 4, 5}

\begin{corollary}
$(\mathbf{Qcoh}^{fet}(X)^{\mathrm{op}}, \mathrm{Fib}_{\overline{x}})$ verifies axiom 4 of Definition \ref{definition galois category}.
\begin{proof}

$\mathrm{Fib}_{\overline{x}}$ clearly sends the initial object $\OO_X$ (the final object of the opposite category) to the set consisting of one point (the final object of $\mathbf{Set}$).

If $\A\longrightarrow\mathcal{B}$ is a monomorphism in $\mathbf{Qcoh}^{fet}(X)$, then it is injective at every point and, consequently, the induced map between their localizations is injective; a property which is preserved after tensoring by a field (flatness) and thus induces a surjection between the spectra of their local rings that induces a surjection between the fibers.

Similarly, the tensor product of \'etale cover sheaves commutes with localization, and thus is mapped to the fibered product of schemes, whose underlying set coincides with the fibered product of sets because we are tensoring by a field. The direct sum of cover sheaves is trivially sent to the disjoint union of their respective fibers.

For the last part, just note that given $\A$ in $\mathbf{Qcoh}^{fet}(X)$ and a finite subgroup of automorphisms 
$G\subseteq \mathrm{Aut}_{\OO_{X}}\A$, the antiequivalence of categories (for every $x\in X$) between $\OO_{X, x}$-algebras and affine $\Spec(\OO_{X, x})$-schemes yields $$\Spec((\A^G)_x)=\Spec((\A_x)^G)=\Spec(\A_x)/G,$$ 
where taking invariants commutes with localization by any multiplicative subset (because automorphisms leave the base ring invariant) and the last quotient is taken with respect to the natural action induced on the spectrum ($\mathrm{Aut}_{\OO_{X, x}}\A_x=\mathrm{Aut}_{\Spec(\OO_{X, x})}\Spec(\A_x)$ for all $x\in X$).

The conclusion follows because the action of $G$ on $\Omega$ is trivial, so for any $\overline{x}\in X^\bullet(\Omega)$ we have $\Fib_{\overline{x}}(\A^G)=|\Spec(\Omega\otimes_{\OO_{X, x}}\A_x^G)|=|\Spec((\Omega\otimes_{\OO_{X, x}}\A_x)^G)|=\Fib_{\overline{x}}(\A)/G$.
\end{proof}
\end{corollary}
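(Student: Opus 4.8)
The plan is to verify separately each of the five clauses of Axiom 4 for $F=\mathrm{Fib}_{\overline{x}}$ on $\mathfrak{C}=\mathbf{Qcoh}^{fet}(X)^{\mathrm{op}}$, reducing every check to the stalk $\OO_{X,x}$ followed by the base change $-\otimes_{\OO_{X,x}}\Omega$. The decisive simplification is that, since $\A$ is an \'etale cover and $\Omega$ is algebraically closed, $\Omega\otimes_{\OO_{X,x}}\A_x$ is a finite \'etale $\Omega$-algebra, hence $\Omega\otimes_{\OO_{X,x}}\A_x\cong\prod_I\Omega$ for a finite set $I$ by Proposition \ref{proposition etale algebras over fields}; thus $\mathrm{Fib}_{\overline{x}}(\A)=|\Spec(\Omega\otimes_{\OO_{X,x}}\A_x)|$ is literally this finite set $I$, and all residue-field extensions become trivial. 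For the terminal object of $\mathfrak{C}$, namely $\OO_X$, one gets $\mathrm{Fib}_{\overline{x}}(\OO_X)=|\Spec(\Omega\otimes_{\OO_{X,x}}\OO_{X,x})|=|\Spec(\Omega)|$, a one-point set, the terminal object of $\mathbf{Set}$.

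For epimorphisms, I would use that an epimorphism of $\mathfrak{C}$ is a monomorphism of $\mathbf{Qcoh}^{fet}(X)$, hence an injective algebra map $\A\hookrightarrow\B$ at each stalk by Proposition \ref{prop charac monic}. An injective ring homomorphism $\A_x\hookrightarrow\B_x$ has dense image on prime spectra (its kernel being zero), and since $\A_x\to\B_x$ is finite its image is also closed, so $\Spec(\B_x)\to\Spec(\A_x)$ is surjective. Surjectivity of morphisms of schemes is stable under arbitrary base change, so after applying $-\otimes_{\OO_{X,x}}\Omega$ the induced map $\mathrm{Fib}_{\overline{x}}(\B)\to\mathrm{Fib}_{\overline{x}}(\A)$ is a surjection of finite sets, as required.

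The two remaining (co)limit clauses reduce to monoidality of base change. A fibered product in $\mathfrak{C}$ is a tensor product $\A\otimes_{\mathcal C}\B$ in $\mathbf{Qcoh}^{fet}(X)$; taking stalks at $x$ and base-changing to $\Omega$ gives $\Omega\otimes_{\OO_{X,x}}(\A\otimes_{\mathcal C}\B)_x\cong(\Omega\otimes_{\OO_{X,x}}\A_x)\otimes_{(\Omega\otimes_{\OO_{X,x}}\mathcal C_x)}(\Omega\otimes_{\OO_{X,x}}\B_x)$, and since all three factors are finite products of copies of the field $\Omega$, the underlying set of $\Spec$ of this tensor product is exactly the set-theoretic fibered product (the usual residue-field correction is absent because every residue field equals $\Omega$). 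A finite sum in $\mathfrak{C}$ is a product $\A\times\B$ in $\mathbf{Qcoh}^{fet}(X)$, and $\Spec$ carries a product of rings to a disjoint union of spectra, so $\mathrm{Fib}_{\overline{x}}$ sends it to the disjoint union of the fibers (the empty product yielding the empty set, i.e. the initial object).

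The genuinely delicate clause, and the main obstacle, is commutation with quotients by a finite group $G\subseteq\mathrm{Aut}_{\OO_X}(\A)$, where the quotient object is the invariant subalgebra $\A^G$ and the claim is $\mathrm{Fib}_{\overline{x}}(\A^G)=\mathrm{Fib}_{\overline{x}}(\A)/G$. The subtle point is that the base change $-\otimes_{\OO_{X,x}}\Omega$ factors through the non-flat reduction $\OO_{X,x}\to\kappa(x,\p)$, so the identity $\Omega\otimes_{\OO_{X,x}}(\A_x)^G=(\Omega\otimes_{\OO_{X,x}}\A_x)^G$ cannot be asserted by naive commutation of invariants with base change. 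I would circumvent this by reducing to the well-connected case (Proposition \ref{proposition well connected is top + pw}, together with the qc-isomorphism invariance already established) and invoking local triviality (Proposition \ref{proposition local triviality for \'etale sheaves}): choose a finite faithfully flat $\OO_X$-algebra $\mathcal F$ with $\A\otimes_{\OO_X}\mathcal F\cong\mathcal F^{\times n}$, so that $G$ acts through a permutation representation $G\to S_n$ on $[n]=\{1,\dots,n\}$. Because $\mathcal F$ is \emph{flat}, invariants (an equalizer) commute with $-\otimes_{\OO_X}\mathcal F$, giving $\A^G\otimes_{\OO_X}\mathcal F\cong(\mathcal F^{\times n})^G\cong\mathcal F^{\times|[n]/G|}$; thus $\A^G$ is trivialized by $\mathcal F$ with fiber $[n]/G$. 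It then remains to match the abstract permutation set $[n]$ with the actual geometric fiber: the $G$-action on $\A_x$ induces one on $\Omega\otimes_{\OO_{X,x}}\A_x\cong\prod_I\Omega$ by permuting idempotents, identifying $I=\mathrm{Fib}_{\overline{x}}(\A)$ with $[n]$ $G$-equivariantly (both being the $G$-set of points of the $n$-sheeted cover, as $\mathrm{deg}(\A)=n=|I|$), whence $\mathrm{Fib}_{\overline{x}}(\A^G)=[n]/G=\mathrm{Fib}_{\overline{x}}(\A)/G$. The care required is precisely this equivariant identification and the replacement of the non-flat field base change by the faithfully flat trivialization; once it is in place, the preceding clauses together establish Axiom 4.
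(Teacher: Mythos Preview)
Your argument is correct and, for the first four clauses (terminal object, epimorphisms, fibered products, finite sums), follows essentially the same strategy as the paper: reduce to the stalk at $x$, base-change to $\Omega$, and use that finite \'etale $\Omega$-algebras are finite products of copies of $\Omega$. Your treatment of epimorphisms via ``injective $+$ finite $\Rightarrow$ surjective on $\Spec$, then base-change surjectivity'' is a minor reordering of the paper's ``base-change injectivity (by flatness of $\Omega$ over $\kappa$), then pass to $\Spec$'', but the content is the same.

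The genuine difference is in the quotient clause. The paper simply asserts the identity $\Omega\otimes_{\OO_{X,x}}(\A_x)^G=(\Omega\otimes_{\OO_{X,x}}\A_x)^G$, justifying it only by the remark that $G$ acts trivially on $\Omega$, and then invokes the classical identification $\Spec(B^G)=\Spec(B)/G$ for a finite group action. You correctly observe that the commutation of invariants with the (non-flat) base change $\OO_{X,x}\to\Omega$ is not automatic from that remark alone, and you supply the missing mechanism by passing to a faithfully flat trivialization $\mathcal F$ (Proposition~\ref{proposition local triviality for \'etale sheaves}), where invariants visibly commute with the flat base change $-\otimes_{\OO_X}\mathcal F$ and the $G$-action becomes a permutation of the $n$ factors. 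This is in fact the standard way to prove that formation of $\A^G$ for finite \'etale $\A$ commutes with arbitrary base change, so your route makes explicit the step the paper leaves implicit. The cost is a small amount of extra bookkeeping (the $G$-equivariant identification of the abstract index set $[n]$ with the geometric fiber), which you handle correctly; the benefit is that the argument is self-contained and does not rely on an unproved commutation.
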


\begin{lemma}\label{lemma last lemma}
Let $X$ be well-connected and let $f:\A\longrightarrow\B$ be an injective morphism in $\mathbf{Qcoh}^{fet}(X)$. If $\mathrm{deg}(\A)=\mathrm{deg}(\B)$ as $\OO_X$-modules, then $f$ is an isomorphism.
\begin{proof}
Take two non-zero finite and locally free $\OO_X$-modules $\mathcal{C}$ and $\mathcal{D}$ trivializing $\A$ and $\B$ as in Proposition \ref{proposition local triviality for \'etale sheaves}; their tensor product as $\OO_X-$modules, denoted $\mathcal{G}$, trivializes both simultaneously. Notice that $\mathcal{G}$ is a finite faithfully flat $\OO_X-$module because both of its components are faithfully flat, so it is a covering trivializing both $\A$ and $\B$ in $X_{\mathbf{Qcoh}}^{fppf}$. Now, Lemma \ref{lemma triviality of morphisms} yields that the base change $\A\otimes_{\OO_X}\mathcal{G}\longrightarrow\B\otimes_{\OO_X}\mathcal{G}$, which is \textit{a priori} injective due to faithful flatness, is an isomorphism. Once again by faithful flatness, we conclude that $f$ is an isomorphism. 
\end{proof}
\end{lemma}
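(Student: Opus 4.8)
The plan is to trivialize $\A$ and $\B$ simultaneously by a single faithfully flat covering, reduce the statement to a question about morphisms of \emph{trivial} covers over a connected base, dispatch that by Lemma \ref{lemma triviality of morphisms}, and then descend. First I would apply the local triviality theorem (Proposition \ref{proposition local triviality for \'etale sheaves}) to each of $\A$ and $\B$, obtaining coverings $\mathcal{C}$ and $\mathcal{D}$ in $X_{\mathbf{Qcoh}}^{fppf}$ that trivialize them; their tensor product $\mathcal{G}:=\mathcal{C}\otimes_{\OO_X}\mathcal{D}$ is again finite and faithfully flat (these properties are stalkwise and stable under base change), hence a covering, and it trivializes both: $\A\otimes_{\OO_X}\mathcal{G}\simeq\mathcal{G}^{\times m}$ and $\B\otimes_{\OO_X}\mathcal{G}\simeq\mathcal{G}^{\times n}$. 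Since the degree of a finite locally free algebra is unaffected by flat base change, $m=\mathrm{deg}(\A)$ and $n=\mathrm{deg}(\B)$, which agree by hypothesis; call the common value $d$.

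Next I would base change $f$ along $\mathcal{G}$, obtaining $f\otimes\id_{\mathcal{G}}\colon\mathcal{G}^{\times d}\to\mathcal{G}^{\times d}$. As $f$ is injective at every stalk and $\mathcal{G}$ is stalkwise flat, this map is still injective. The base $(X,\mathcal{G})$ need not be well-connected, so I would pass to $\mathbf{pw}(X,\mathcal{G})$, which changes neither the category of \'etale covers nor the property of being an isomorphism (Corollary \ref{theorem stability by qc-iso}), and argue on each well-connected component. There Lemma \ref{lemma triviality of morphisms} shows that $f\otimes\id_{\mathcal{G}}$ is induced by a set map between the two $d$-element index sets; because the corresponding algebra map is injective, this set map must be surjective, hence bijective, so $f\otimes\id_{\mathcal{G}}$ is an isomorphism on each component, and therefore over $(X,\mathcal{G})$.

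To conclude, I would invoke faithfully flat descent stalkwise: for each $x\in X$ the ring map $\OO_{X,x}\to\mathcal{G}_x$ is faithfully flat and $f_x\otimes\id_{\mathcal{G}_x}$ is an isomorphism, so $f_x$ is an isomorphism; a morphism of sheaves on a finite space that is an isomorphism at every stalk is an isomorphism, hence so is $f$. The only real subtleties are bookkeeping — ensuring injectivity survives the base change and that the $\mathbf{pw}$-reduction is legitimate — together with keeping the contravariant conventions straight when applying Lemma \ref{lemma triviality of morphisms}; the genuine mathematical content is confined to that lemma plus the elementary observation that an injective morphism between trivial covers of equal finite degree is automatically bijective on fibers.
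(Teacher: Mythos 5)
Your proposal is correct and follows essentially the same route as the paper: trivialize $\A$ and $\B$ simultaneously by $\mathcal{G}=\mathcal{C}\otimes_{\OO_X}\mathcal{D}$, apply Lemma \ref{lemma triviality of morphisms} to see that the (still injective) base-changed map between trivial covers of equal degree is an isomorphism, and descend by faithful flatness. You in fact supply two details the paper leaves implicit --- the $\mathbf{pw}$-reduction needed so that the base of the trivialized covers is well-connected before invoking Lemma \ref{lemma triviality of morphisms}, and the observation that injectivity forces the inducing set map between two $d$-element sets to be bijective --- both of which are welcome.
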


\begin{corollary}
If $X$ is well-connected, $(\mathbf{Qcoh}^{fet}(X)^{\mathrm{op}}, \mathrm{Fib}_{\overline{x}})$ verifies axiom 5 of Definition \ref{definition galois category}.
\begin{proof}
Consider $u:\A\longrightarrow\mathcal{B}$ such that $\mathrm{Fib}_{\overline{x}}(u)$ is an isomorphism. We have the usual factorization $\A=\A_0\times\A_1\longrightarrow\A_1\longrightarrow \B$, with the first morphism surjective and the second one injective. Since $\Fib_{\overline{x}}$ sends direct products in $\mathbf{Qcoh}^{fet}(X)$ to disjoint unions, the hypothesis implies that $\mathrm{Fib}_{\overline{x}}(\A_1)\longrightarrow \mathrm{Fib}_{\overline{x}}(\A_0)\amalg \mathrm{Fib}_{\overline{x}}(\A_1)$ is surjective, and thus $\mathrm{Fib}_{\overline{x}}(\A_0)=\emptyset$. Since the degree of $\A$ as an $\OO_X-$module is constant, this implies that $\A_0=0$ and in particular $u:\A=\A_1\longrightarrow\B$ is an injective morphism.  

Since $\mathrm{Fib}_{\overline{x}}(u)$ is an isomorphism, $\mathrm{Fib}_{\overline{x}}(\A)$ and $\mathrm{Fib}_{\overline{x}}(\B)$ have the same number of elements, i.e. $\mathrm{deg}(\A)=\mathrm{deg}(\B)$ as $\OO_X$-modules. Lemma \ref{lemma last lemma} concludes the proof. 
\end{proof}
\end{corollary}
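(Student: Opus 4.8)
The plan is to reduce the conservativity of $\mathrm{Fib}_{\overline{x}}$ to the injective case already handled by Lemma \ref{lemma last lemma}, exploiting the constant-degree function that is available precisely because $X$ is well-connected. Since a morphism in $\mathbf{Qcoh}^{fet}(X)^{\mathrm{op}}$ is an isomorphism exactly when the corresponding arrow in $\mathbf{Qcoh}^{fet}(X)$ is, I would work throughout with an honest morphism $u\colon\A\to\B$ of \'etale cover sheaves and show that $\mathrm{Fib}_{\overline{x}}(u)$ being a bijection forces $u$ to be an isomorphism.

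First I would invoke the factorization and the description of monomorphisms from Propositions \ref{proposition factorization} and \ref{prop charac monic}: write $\A\simeq\A_0\times\A_1$ with $\A_1=\A/\ker(u)$ and $\A_0=\A/\ker(u)^C$, so that $u$ decomposes as the projection $\A\to\A_1$ (surjective) followed by an injection $\A_1\hookrightarrow\B$. The key observation is that on fibers the projection $\A_0\times\A_1\to\A_1$ becomes, after $-\otimes_{\OO_{X,x}}\Omega$ and $\Spec$, the inclusion of the $\A_1$-summand into $\mathrm{Fib}_{\overline{x}}(\A)=\mathrm{Fib}_{\overline{x}}(\A_0)\amalg\mathrm{Fib}_{\overline{x}}(\A_1)$, using that $\mathrm{Fib}_{\overline{x}}$ carries direct products of cover sheaves to disjoint unions (recorded in the verification of axiom 4). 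As $\mathrm{Fib}_{\overline{x}}(u)$ factors through this inclusion and is surjective by hypothesis, the inclusion itself is surjective, whence $\mathrm{Fib}_{\overline{x}}(\A_0)=\emptyset$.

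Next I would translate this emptiness into $\A_0=0$. The cardinality of $\mathrm{Fib}_{\overline{x}}(\A_0)=|\Spec(\Omega\otimes_{\OO_{X,x}}(\A_0)_x)|$ equals the degree of the finite locally free sheaf $\A_0$ evaluated at $x$; because $X$ is well-connected this degree is constant, so $\mathrm{Fib}_{\overline{x}}(\A_0)=\emptyset$ gives $\mathrm{deg}(\A_0)=0$ and hence $\A_0=0$. This is exactly where well-connectedness is indispensable: without the constant-degree function one could not propagate the vanishing of a single fiber to the whole sheaf. Consequently $\A\simeq\A_1$ and $u$ is injective.

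Finally, a second count of fibers finishes the argument: since $\mathrm{Fib}_{\overline{x}}(u)$ is a bijection, $|\mathrm{Fib}_{\overline{x}}(\A)|=|\mathrm{Fib}_{\overline{x}}(\B)|$, i.e. $\mathrm{deg}(\A)=\mathrm{deg}(\B)$ as $\OO_X$-modules. Lemma \ref{lemma last lemma} then applies directly to the injective morphism $u$ of equal degree and yields that $u$ is an isomorphism. The only genuinely delicate step is the middle one---turning the set-theoretic vanishing of $\mathrm{Fib}_{\overline{x}}(\A_0)$ into the algebraic statement $\A_0=0$---and it is precisely the step that forces the well-connectedness hypothesis; everything else is bookkeeping with the already-established factorization, the characterization of monomorphisms, and the compatibility of $\mathrm{Fib}_{\overline{x}}$ with products.
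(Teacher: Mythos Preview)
Your proposal is correct and follows essentially the same approach as the paper's proof: both use the decomposition $\A\simeq\A_0\times\A_1$ from the factorization results, the compatibility of $\mathrm{Fib}_{\overline{x}}$ with products to force $\mathrm{Fib}_{\overline{x}}(\A_0)=\emptyset$, the constant-degree argument (via well-connectedness) to conclude $\A_0=0$, and then Lemma \ref{lemma last lemma} to finish. Your version is slightly more explicit about where the factorization comes from and why well-connectedness is essential, but there is no substantive difference.
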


\section{Further work and examples}

The theory presented in this paper is only concerned with the existence of the fundamental group, and thus it is not practical to produce significant examples, these will come in future work. However, let us outline some elementary ideas that put this theory into perspective.

To begin with, we claim that the \'etale fundamental group of schematic spaces (not necessarily schemes) verifies most of the standard properties that one would expect for  the \'etale fundamental group. For most of them we will need to introduce additional notions regarding finite schematic spaces, ranging from a good <<schematic dimension theory>> to an adequate notion of proper morphisms, which is a task of independent interest that will be treated in future works. Even without requiring these concepts, as mentioned in the introduction, in future papers we will prove several versions of the Seifert-Van Kampen Theorem that will allow us to compute practical examples with rather elementary technology. The simplest one of them can be stated as follows:
\begin{theorem}[<<Internal>> Seifert-Van Kampen for topologically irreducible spaces]
Let $X$ be a connected schematic finite space that is topologically irreducible, i.e. $X=C_x$ for some $x\in X$. Let $\overline{x}$ be a geometric point with image $x$ and, for every $y\leq x$, let $\overline{x}_y$ be the geometric point of $\mathrm{Spec}(\OO_{X, y})$ defined by $\overline{x}$ (i.e. if $\overline{x}$ is defined by the ideal $\p\subseteq \OO_{X, x}$ and a field extension $\kappa(x, \p)\to \Omega$, this geometric point is defined on the affine scheme in question by $r_{yx}^{-1}(\p)$ and the same field extension). Then there is a canonical isomorphism of profinite groups
\begin{align*}
\underset{{y\in X}}{\mathrm{colim}}\text{ }\pi_1^{et}(\mathrm{Spec}(\OO_{X, y}), \overline{x}_y)\overset{\sim}{\to}\pi_1^{et}(X, \overline{x}).
\end{align*}
\end{theorem}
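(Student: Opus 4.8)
The plan is to identify the Galois category $(\mathbf{Qcoh}^{fet}(X)^{\mathrm{op}}, \mathrm{Fib}_{\overline{x}})$ with a $2$-categorical limit, indexed by the poset $X$ itself, of the ``local'' Galois categories attached to the stalks, and then to translate such a limit of Galois categories into the colimit of their profinite fundamental groups, which is exactly the asserted formula. For the first part: since $X = C_x$, every $y \in X$ satisfies $y \leq x$, and by the characterization of quasi-coherent sheaves (Proposition \ref{proposition: characterization qcoh, ftype, coh}) together with the fact that a quasi-coherent algebra $\A$ is an \'etale cover sheaf exactly when each $\OO_{X, y}\to\A_y$ is \'etale, an object of $\mathbf{Qcoh}^{fet}(X)$ is the same datum as a family $(\A_y)_{y\in X}$ with $\A_y$ a finite \'etale $\OO_{X, y}$-algebra together with canonical isomorphisms $\A_y\otimes_{\OO_{X, y}}\OO_{X, y'}\xrightarrow{\ \sim\ }\A_{y'}$ for $y\leq y'$. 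Thus $\mathbf{Qcoh}^{fet}(X)$ is the pseudolimit over the poset $X$ of the categories $\mathbf{Qcoh}^{fet}(\Spec(\OO_{X, y}))$ along the base-change functors. Moreover, because the ring map $\OO_{X, y}\to\Omega$ defining $\overline{x}_y$ is the composite of the one defining $\overline{x}$ with the restriction $r_{yx}$ (using that the residue fields $\kappa(\overline{x}_y)$ are canonically isomorphic, Proposition \ref{proposition residue field well defined}), one has, for every $y$ and every $\A$,
\begin{equation*}
\mathrm{Fib}_{\overline{x}_y}(\A_y) = |\Spec(\Omega\otimes_{\OO_{X, y}}\A_y)| = |\Spec(\Omega\otimes_{\OO_{X, x}}\A_x)| = \mathrm{Fib}_{\overline{x}}(\A);
\end{equation*}
so $\mathrm{Fib}_{\overline{x}}$ corresponds under this identification to the functor sending a compatible family to its well-defined common underlying fibre, and $(\mathbf{Qcoh}^{fet}(X)^{\mathrm{op}}, \mathrm{Fib}_{\overline{x}})$ is, as a Galois category with fibre functor, the $2$-limit over $X$ of the pairs $(\mathbf{Qcoh}^{fet}(\Spec(\OO_{X, y}))^{\mathrm{op}}, \mathrm{Fib}_{\overline{x}_y})$.

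\emph{Passing to fundamental groups.} For $X$ irreducible and connected one checks, using that $\mathfrak{Spec}(X)$ is connected (so that every \'etale cover sheaf on $X$ has constant degree) and that every $y\in X$ lies above a minimal point whose stalk necessarily has connected spectrum, that the relevant local categories are genuine Galois categories. Grothendieck's fundamental theorem then provides, compatibly with base change and with the fibre functors, equivalences with the categories of finite continuous $\pi_1^{et}(\Spec(\OO_{X, y}), \overline{x}_y)$-sets (taking, as usual, the connected component of $\overline{x}_y$ when $\Spec(\OO_{X, y})$ is disconnected --- which costs nothing, by the constancy of the degree and by Corollary \ref{theorem stability by qc-iso} together with Theorem \ref{theorem functorial qc-isomorphisms}). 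Substituting, an object of $\mathbf{Qcoh}^{fet}(X)^{\mathrm{op}}$ becomes the same as a finite set $S$ carrying a continuous action of $\pi_1^{et}(\Spec(\OO_{X, y}), \overline{x}_y)$ for every $y$, these actions being compatible in the sense that for $y\leq y'$ the action of $\pi_1^{et}(\Spec(\OO_{X, y'}), \overline{x}_{y'})$ factors through $\pi_1^{et}(\Spec(\OO_{X, y}), \overline{x}_y)$ along the homomorphism induced by $r_{yy'}$, and $\mathrm{Fib}_{\overline{x}}$ becomes $S\mapsto S$.

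\emph{Group-theoretic lemma and conclusion.} Let $\Pi := \varinjlim_{y\in X}\pi_1^{et}(\Spec(\OO_{X, y}), \overline{x}_y)$ denote the colimit taken in the category of profinite groups, i.e. the profinite group characterized by $\mathrm{Hom}_{\mathrm{cont}}(\Pi, F)\cong\varprojlim_{y}\mathrm{Hom}_{\mathrm{cont}}(\pi_1^{et}(\Spec(\OO_{X, y}), \overline{x}_y), F)$ for every finite group $F$. Its universal property says exactly that a compatible family of continuous homomorphisms $\pi_1^{et}(\Spec(\OO_{X, y}), \overline{x}_y)\to\mathrm{Sym}(S)$ is the same as a single continuous homomorphism $\Pi\to\mathrm{Sym}(S)$; together with the previous paragraph this yields an equivalence of Galois categories with fibre functor $(\mathbf{Qcoh}^{fet}(X)^{\mathrm{op}}, \mathrm{Fib}_{\overline{x}})\simeq(\Pi\text{-}\mathbf{Set}_f, \text{forget})$, hence $\pi_1^{et}(X, \overline{x}) = \mathrm{Aut}(\mathrm{Fib}_{\overline{x}})\cong\Pi$. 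It remains to check that the resulting isomorphism is the canonical one: restricted to each $\pi_1^{et}(\Spec(\OO_{X, y}), \overline{x}_y)$ it is the map induced by the extension functor $\mathbf{Qcoh}^{fet}(\Spec(\OO_{X, y}))\to\mathbf{Qcoh}^{fet}(X)$ of Theorem \ref{theorem extension of quasicoherent sheaves} (equivalently, by pullback of cover sheaves to the stalk), which agrees with the cocone structure of the colimit by the universal property.

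The hard part will be this last step, and specifically the careful treatment of the profinite topology: one must confirm that $\Pi$ is correctly computed by the ``compatible finite quotients'', that the actions assembled across the diagram remain continuous (here the finiteness of the poset $X$ keeps the colimit under control), and that $\varprojlim_{y\in X}(\pi_1^{et}(\Spec(\OO_{X, y}), \overline{x}_y)\text{-}\mathbf{Set}_f)$ genuinely satisfies the axioms of Definition \ref{definition galois category} with its forgetful functor --- this is the schematic-finite-space incarnation of the group-theoretic core of the Seifert--Van Kampen theorem. A secondary point needing care is the connectedness bookkeeping invoked above: that an irreducible connected $X$ cannot have disconnected minimal stalks, and that disconnectedness at non-minimal stalks does not affect the limit because each $\A_y$ is already determined by the stalks at the minimal points below $y$.
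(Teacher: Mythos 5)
The first thing to note is that the paper does not prove this statement: it appears in Section 6 as the simplest member of a family of Seifert--Van Kampen theorems explicitly deferred to future work, so there is no proof of record to compare yours against. Your strategy is the natural one, and its first two steps are correct: by the stalkwise characterization of quasi-coherence, $\mathbf{Qcoh}^{fet}(X)$ is the pseudolimit over the poset $X=C_x$ of the categories of finite \'etale $\OO_{X,y}$-algebras along base change, and the identity $\Omega\otimes_{\OO_{X,y}}\A_y\simeq\Omega\otimes_{\OO_{X,x}}\A_x$ does follow from $\A_x\simeq\A_y\otimes_{\OO_{X,y}}\OO_{X,x}$ and the factorization of $\OO_{X,y}\to\Omega$ through $r_{yx}$. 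However, the step you yourself flag as ``the hard part'' --- that $\varprojlim_{y}\bigl(\pi_1^{et}(\Spec(\OO_{X,y}),\overline{x}_y)\text{-}\mathbf{Set}_f\bigr)$ is equivalent, compatibly with the forgetful functors, to $\Pi\text{-}\mathbf{Set}_f$ for $\Pi$ the profinite colimit --- is precisely the mathematical content of the theorem, and you list what must be checked rather than checking it. It is in fact provable here more easily than in the general Seifert--Van Kampen setting, because $x$ is the maximum of the poset: given an object $(S_y,\phi_{yy'})$ of the $2$-limit one may transport every $S_y$ onto $S_x$ along $\phi_{yx}$, after which the cocycle condition forces all structure isomorphisms to be identities, so an object really is a single finite set with a compatible family of continuous actions, i.e.\ a continuous $\Pi$-set. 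Writing this rigidification out, together with full faithfulness and the reconstruction $\mathrm{Aut}(\mathrm{forget})\simeq\Pi$, is what a complete proof requires; none of it is present.

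The second and more serious gap is the connectedness bookkeeping. Your assertion that ``every $y\in X$ lies above a minimal point whose stalk necessarily has connected spectrum'' is false. Take $A=k[u,v]/(uv)$ and $X=\{m_1,m_2,x\}$ with $m_1,m_2<x$ incomparable, $\OO_{X,m_1}=\OO_{X,x}=A_u\times A_v$, $\OO_{X,m_2}=A$, where $r_{m_1x}$ is the identity and $r_{m_2x}$ is the localization map (a flat epimorphism because $A_{uv}=0$). This space is schematic, $\OO_X(X)\simeq A$ has connected spectrum, and $X=C_x$; yet the minimal point $m_1$ has disconnected stalk. For such a $y$ the pair $(\mathbf{Qcoh}^{fet}(\Spec(\OO_{X,y}))^{\mathrm{op}},\mathrm{Fib}_{\overline{x}_y})$ is not a Galois category, and replacing each node of the diagram by the covers of the connected component of $\overline{x}_y$ genuinely changes the categories entering the $2$-limit; your claim that this ``costs nothing'' needs an argument that the $2$-limit is unchanged. (It is true in the example above, because the $A_v$-part of $\A_{m_1}$ is forced by $\A_{m_2}$ through the compatibility at $x$, but that propagation along the poset is exactly what has to be proved in general.) Note also that the obvious escape route --- replacing $X$ by $\mathbf{pw}(X)$ via Theorem \ref{theorem pw connectification} --- is unavailable as stated, since $\mathbf{pw}(X)$ need not be topologically irreducible: in the example it is not of the form $C_{\tilde{x}}$ for any point. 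So the connectedness reduction cannot simply be outsourced to the qc-isomorphism invariance results and must be handled inside the inductive structure of the poset.
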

In more abstract terms, we can say $\overline{x}$ defines a codatum of profinite groups on $X$ and its <<global cosections>> compute the actual fundamental group. One can generalize this to not necessarily irreducible spaces $X$ considering either: 1) a system of geometric points with fixed \textit{Tannaka paths} between the fiber functors (replicating \cite{van kampen}); 2) a system of geometric points and computations up to inner automorphism; 3) fundamental groupoids instead of fundamental groups. Studying coproducts of schematic spaces along flat immersions (see Remark \ref{remark flat immersions}) also allows us to prove (as a corollary of the general <<internal>> version of the Theorem) that the fundamental group or groupoid is a costack for the Grothendieck topology of flat immersions, recovering a more classical formulation. This last result is known for the \'etale fundamental group of schemes and the \'etale topology (\cite{costack}), hence we expect it to hold for an analogue of the pro-\'etale topology for schematic spaces.
\medskip

Recall now Example \ref{example:schematic-not-scheme}, the most elementary case of a schematic space that is not a scheme: $X=\{x, y, z\}$, $x, y<z$, $\OO_{X, x}=A[x]$, $\OO_{X, y}=A[x]$, $\OO_{X, z}=K(x)$. By the Seifert-Van Kampen if $\eta$ is the generic point of $X$, i.e. $\eta\colon (\star, K)\to X$ given by $K(x)\to K$, then one has
\begin{align*}
\pi_1^{et}(X, \eta)\simeq \pi_1^{et}(\mathbb{A}^1_A, \eta_{A[x]})\ast_{\pi_1^{et}(\Spec(K(x)), \eta)}\pi_1^{et}(\mathbb{A}^1_A, \eta_{A[x]})
\end{align*}
where $\mathbb{A}_A^1:=\Spec(A[x])$, $\ast$ denotes the profinite amalgamation and $\eta_{A[x]}$ is the generic point of $\mathbb{A}_A^1$. If we take $A=\mathbb{Q}$, it is known that $ \pi_1^{et}(\mathbb{A}^1_{\mathbb{Q}}, \eta_{\mathbb{Q}[x]})\simeq \pi_1^{et}(\Spec(\mathbb{Q}))=\mathrm{Gal}(\mathbb{Q}_{\mathrm{sep}}/\mathbb{Q})$ (see \cite[6.23]{Lenstra}), so
\begin{align*}
\pi_1^{et}(X, \eta)\simeq \mathrm{Gal}(\mathbb{Q}_{\mathrm{sep}}/\mathbb{Q})\ast_{\mathrm{Gal}(\mathbb{Q}(x)_{\mathrm{sep}}/\mathbb{Q}(x))}\mathrm{Gal}(\mathbb{Q}_{\mathrm{sep}}/\mathbb{Q}).
\end{align*}
Notice that schematic finite spaces have more structure than simplicial schemes so it may be possible to compute $\pi_1^{et}(X, \eta)$ by different means, for instance  using different finite models (within the same qc-equivalence class) or relating them with other spaces. We are currently working on general ways to effectively use this additional structure: a finite schematic space is essentially a locally ringed space \textit{and} an open covering, but unlike in the case of \v{C}ech nerves, we preserve the combinatorial information of the covering; similarly, we also expect to produce <<hyperschematic covers>> of spaces as a richer version of hypercoverings of a space. This is a long-term goal that we hope will lead to a generalized \'etale homotopy theory that might shed some light into the components of this presentation of $\pi_1^{et}(X, \eta)$.

A modification of this example would be the gluing of projective lines. The finite model of the projective line $\mathbb{P}_k^1$ for a field $k$ that is universal among all ringed spaces  (see \cite{universal}) is, up to isomorphism,  the space $\{a, b, c\}$ with $a, b\leq c$ and $\OO_{\mathbb{P}_k^1, a}=[k[x, y]_x]_0$, $\OO_{\mathbb{P}_k^1, b}=[k[x, y]_y]_0$ and $\OO_{\mathbb{P}_k^1, c}=[k[x, y]_{xy}]_0$, where $[R]_0$ denotes the component of degree $0$ of a graded ring $R$. Our <<double projective line>> space (that is schematic and is not a scheme) is the space $\mathbb{PP}_k^1=\{a_1, b_1, c_1, a_2, b_2, c_2, d\}$ with $a_i, b_i\leq c_i$ (for $i=1, 2$) and $d$ a maximal point, such that $\{a_i, b_i, c_i\}\subseteq Y$ is the projective line described above and $\OO_{Y, d}=k(x, y)$. The generic point $\eta$ is given by the zero ideal of $k(x, y)$. An application of Van Kampen's Theorem and \cite[6.22]{Lenstra} tell us that 
\begin{align*}
\pi_1^{et}(\mathbb{PP}_k^1, \eta)\simeq \mathrm{Gal}({k}_{\mathrm{sep}}/{k})\ast_{\mathrm{Gal}({k}(x, y)_{\mathrm{sep}}/{k(x, y)})}\mathrm{Gal}({k}_{\mathrm{sep}}/{k}).
\end{align*}

\medskip
Besides gluing at the generic point, we can also glue spaces at any local ring. Of course, the fundamental groups that appear will, in general, no longer be the amalgamation of Galois groups of fields. Recall that, for any scheme $S$ and a point $s\in S$, the morphism 
\begin{align*}
\Spec(\OO_{S, s})\to S
\end{align*}
is a flat monomorphism, but not an open immersion (it is a limit of open immersions). The theory of schematic spaces allows us to consider gluings along these local rings. An interesting case happens when $s\in S$ is a singular point, i.e. $\OO_{S, s}$ is not a regular ring. We can consider, for example, the affine nodal curve given by the ring $R=k[x, y]/(y^2-x(x^2+1))$ and glue two copies of it along the singular <<point>> given by the maximal ideal $(x, y)$. This would be the space $N=\{a, b, c\}$ with $a, b<c$ and $\OO_{N, a}=\OO_{N, b}=R, \OO_{N, c}=R_{(x, y)}$. If we put $k=\mathbb C$ (for simplicity), the fundamental group of $\Spec(R)$ can be computed via analytic means and is isomorphic to $\widehat{\mathbb{Z}}\simeq \prod_{p\text{ prime}}\mathbb{Z}_p$. In particular, at the generic point $\eta$, we have
\begin{align*}
\pi_1^{et}(N, \eta)\simeq \left(\prod_{p\text{ prime}}\mathbb{Z}_p\right)\ast_{\pi_1^{et}(\Spec(R_{(x, y)}), \eta)}\left(\prod_{p\text{ prime}}\mathbb{Z}_p\right).
\end{align*}
In a similar setting, we may modify this example to consider affine surfaces (or open affines of a general surface) with global sections ring $A$ and $p\equiv \mathfrak{m}\in\Spec_{\mathrm{max}}(A)$ a singular point. The fundamental group $\pi_1^{et}(\Spec(A_{\mathfrak{m}}), \eta)$ is now related to the \textit{local \'etale fundamental group} of the singular point, which contains information about the singularity type and is defined as
\begin{align*}
\pi_1^{loc, et}(\Spec(A), p)=\pi_1^{et}(\Spec(A_{\mathfrak{m}}^{sh})-\{p\}, \eta),
\end{align*}
where the superscript $sh$ denotes the strict Henselization and $\eta$ is a chosen generic geometric point. Note that this makes sense because $\Spec(A_{\mathfrak{m}}^{sh})-\{p\}$ is connected. There is a natural morphism $$\pi_1^{loc, et}(\Spec(A), p)\to  \pi_1^{et}(\Spec(A_{\mathfrak{m}}), \eta)$$ given by the inclusion and the strict Henselization map. This situation generalizes to schemes $S$ of higher dimension and singular subschemes $S_{\mathrm{sing}}\subseteq S$ of codimension $\geq 2$.

Even more generally, given any scheme $S$ with singular points, we can consider finite models with respect to certain <<almost finite>> open coverings and isolate the local rings of singular points \textit{as actual open subsets} in order to later study their fundamental groups through manipulations in the schematic category (as briefly hinted at above). In turn, these are related to the local \'etale fundamental groups. Note as well that normalizations of schematic finite spaces are computed pointwise. Furthermore, introducing a theory of \'etale finite models (of algebraic spaces)---where everything should work in an analogous way---we could even consider gluings along strict Henselizations of local rings (i.e. the local rings in the \'etale topos, instead of the Zariski topos).

\end{document}